\newcommand{\EE}{\bm{E}}  
\newcommand{\PP}{\bm{P}}
\newcommand{\LL}{\bm{L}}
\newcommand{\GFF}{\mathrm{GFF}}
\newcommand{\Lv}{\mathrm{Lv}}
\newcommand{\ShG}{\mathrm{ShG}}
\newcommand{\extepsI}{I_\epsilon} 
\newcommand{\opnorm}[2]{\sup_{x\in \Omega_{#2}}\| #1(x,\cdot) \|_{L^1(\Omega_{#2})}}
\newcommand{\hdphs}{{\eta}} 
\newcommand{\tPhisLv}{\tilde \Phi_s^{\Lv_\epsilon}}
\newcommand{\sqrtpi}{} 
\newcommand{\ZDM}{{\rm Z}}
\newcommand{\scaling}{\sqrt{8\pi}}
\newcommand{\bxlen}[1]{\tilde L_{#1}}
\newcommand{\bx}[2]{ B_{#1}^{#2}}
\newcommand{\Gs}[1]{G^{#1}}
\newcommand{\bGs}[1]{\bar G^{#1}}
\newcommand{\fom}{\frac{2}{\sqrt{2\pi}}} 
\newcommand{\expmax}{A}
\newcommand{\rem}{\cR}
\newcommand{\GFFtc}{\Phi^0}
\newcommand{\GFFtd}{\Phi^\epsilon}
\newcommand{\Leb}{\mathrm{Leb}}
\date{}
\title{The Liouville model in the $L^1$ phase: coupling and extreme values}
\author{Michael Hofstetter \footnote{Department of Mathematics,
Weizmann Institute.  E-mail: {\tt michael.hofstetter@weizmann.ac.il}}  \and Ofer Zeitouni  \footnote{Department of Mathematics,
Weizmann Institute.  E-mail: {\tt ofer.zeitouni@weizmann.ac.il}} }
\begin{document}

\maketitle

\begin{abstract}
We establish a strong coupling between the Liouville model and the Gaussian free field on the two dimensional torus in the $L^1$ phase $\beta \in (0, 8\pi)$,
such that the difference of the two fields is a H\"older continuous function.
The coupling originates from a Polchinski renormalisation group approach, which was previously used to prove analogous results for other Euclidean field theories in dimension two.
Our main observations for the Liouville model are that the Polchinski flow has a definite sign
and can be controlled well thanks to an FKG argument.

The coupling allows to relate extreme values of the Liouville model and the Gaussian free field, and as an application we show that the global maximum of the Liouville field converges in distribution to a randomly shifted Gumbel distribtion.

\end{abstract}




\section{Introduction}
\subsection{Model and main results}

In this work we study the continuum Liouville model in the subcritical regime on the unit torus $\Omega=\T^2$ in dimension $d=2$.
This is a probability measure on the space of distributions $S'(\Omega)$ with density formally given by
\begin{equation}
\label{eq:lv-formal-density}
\nu^\Lv(d\phi) \propto 
\exp \Big [
- \lambda \int_{\Omega} \exp\big(\sqrt{\beta}\phi_x \big) dx
\Big]
d\nu_m^\GFF(\phi) ,
\end{equation}
where
$\beta\in (0, 8\pi)$, $\lambda >0$ and $\nu_m^\GFF$ is the continuum massive Gaussian free field on $\Omega$ with mass $m\geq 1$, i.e., the centred Gaussian field on $\T^2$ with covariance $(-\Delta + m^2)^{-1}$.

Since the Gaussian free field is a.s.\ not a regular function, but takes values in $S'(\Omega)$,
the density \eqref{eq:lv-formal-density} is ill-defined.
The standard way to give it a meaning is to introduce a suitable space regularisation $\Omega$, reinterprete the nonlinearity by a Wick-ordered version and then construct $\nu^\Lv$ as a weak limit by removing the regularisation.

In this work we use the following space discretisation.
For $\epsilon >0$ such that $1/\epsilon = 2^n$, where $n\in \N$, let $\Omega_\epsilon= \epsilon \Z^2 \cap \Omega$ be the discretised unit torus of mesh size $\epsilon$ and let $X_\epsilon= \R^{\Omega_\epsilon}$ be the set of real valued functions on $\Omega_\epsilon$.
The restriction to dyadic discretisations is inessential,
but simplifies the presentation of, and the arguments in, our work at various places. 
It is also possible to choose the discretisation such that $1/\epsilon$ is an integer.
For a function $\varphi \in X_\epsilon$ we define the discrete integral
\begin{equation}
\int_{\Omega_\epsilon} \varphi_x dx \equiv \epsilon^2 \sum_{x\in \Omega_\epsilon} \varphi_x.
\end{equation}
Then the density of the discretised Liouville measure is given by
\begin{equation}
\label{eq:lv-density}
\nu^{\Lv_\epsilon}(d\phi)
\propto
\exp{\Big[-\lambda \int_{\Omega_\epsilon} \wick{\exp(\sqrt{\beta} \phi_x)}_\epsilon dx \Big]} d\nu_m^{\GFF_\epsilon} (\phi),
\end{equation}
where $\nu_m^{\GFF_\epsilon}$ is the density of the discrete Gaussian free field on $\Omega_\epsilon$ with mass $m\geq 1$, i.e.\
the centred Gaussian field on $\Omega_\epsilon$ with covariance
\begin{equation}
\label{eq:cov-GFF}
c_{\infty}^\epsilon = (-\Delta^{\epsilon} + m^2)^{-1},
\end{equation}
with $\Delta^\epsilon f (x) = \epsilon^{-2}\sum_{y\sim x} (f(y)-f(x))$ being the lattice Laplacian acting on functions $f \in X_\epsilon$,
and the Wick ordering is
\begin{equation}
\label{eq:exp-wick-ordering}
\wick{\exp(\sqrt{\beta} \phi_x)}_\epsilon = \epsilon^{\beta/4\pi} \exp(\sqrt{\beta}\phi_x).
\end{equation}
In our setting the variance of the Gaussian free field is
\begin{equation}
\var(\phi_x) = \frac{1}{2 \pi} \log \frac{1}{\epsilon} + O_m(1), \qquad \phi \sim \nu_m^{\GFF_\epsilon},
\end{equation}
where $O_m(1)$ depends on $m$ and remains bounded as $\epsilon\to 0$.

Thus, under the measure $\nu_m^{\GFF_\epsilon}$, the integral of \eqref{eq:exp-wick-ordering} over $\Omega_\epsilon$ gives up to a multiplicative constant the $\epsilon$-regularisation of the total mass of the Gaussian multiplicative chaos of the Gaussian free field,
which is known to have a non-trivial limit as $\epsilon \to 0$ in the subcritical regime $\beta\in (0,8\pi)$.
For a proof of this result we refer to \cite{MR829798}, \cite{MR3652040}, \cite{MR3475456} and \cite{MR3274356}.

Our first result gives the convergence of the regularised measure $\nu^{\Lv_\epsilon}$ to a limiting measure $\nu^{\Lv_0}$ as $\epsilon \to 0$,
as well as a coupling to the Gaussian free field up to an additive H\"older continuous random field.
Here, we implicitly view the measure $\nu^{\Lv_\epsilon}$ as measures on $\R^\Omega$ via the isometric embedding,
which is defined above Theorem \ref{thm:lv-coupling}.
To express the H\"older regularity of the difference of the Gaussian free field we set
\begin{equation}
\label{eq:parameter-hoelder-phase}
\hdphs = 2\frac{\sqrt{\beta}}{\sqrt{2\pi}} - \frac{\beta}{4\pi} .
\end{equation}
Note that $\hdphs$ is increasing in $\beta$ on $[0,8\pi]$ with $\hdphs(0) = 0$ and $\hdphs (8\pi) = 2$,
and thus, $4\pi\hdphs$ can be seen as a reparametrisation of the full subcritical regime $(0,8\pi)$.
Moreover, we denote by $\partial$ the spatial gradient on $\Omega$ acting on functions $f\colon \Omega\to \R$.

\begin{theorem}
\label{thm:coupling-intro}
Let $\beta\in(0,8\pi)$ and $\lambda > 0$.
Then the $\epsilon$-regularised Liouville field $\Phi^{\Lv_\epsilon} \sim \nu^{\Lv_\epsilon}$
converges weakly to a
limiting field $\Phi^{\Lv}$ in $H^{-\kappa}(\T^2)$ for any $\kappa>0$ in probability.
This continuum Liouville field $\Phi^{\Lv}$ on $\T^2$ can be coupled to a continuum Gaussian free field $\Phi^{\GFF}$ on $\T^2$
such that   $\Phi^{\Delta} \coloneqq \Phi^{\Lv}-\Phi^{\GFF} \leq 0$ and satisfies the following H\"older continuity estimates:
for any $\delta>0$, there are positive and a.s.\ finite random variables $M_{\beta, \delta}$ such that
\begin{align}
\label{eq:intro-phi-delta-bounds}
\begin{split}
\max_{x\in\T^2} |\Phi^{\Delta}(x)|
+ \max_{x\in\T^2} |\partial \Phi^{\Delta} (x) | 
+
\max_{x,y\in \T^2} \frac{| \partial \Phi^\Delta(x)- \partial\Phi^\Delta(y)|}{|x-y|^{1-\hdphs -\delta}}
&\leq \lambda M_{\beta, \delta},
\qquad (0<\hdphs < 1),
%
\\
\max_{x\in \T^2} |\Phi^{\Delta}(x)|+ \max_{x,y\in \T^2}
\frac{|\Phi^\Delta(x)-\Phi^\Delta(y)|}{|x-y|^{2-\hdphs-\delta}}
&\leq \lambda M_{\beta,\delta},
\qquad (1 \leq \hdphs <2).
\end{split}
\end{align}
\end{theorem}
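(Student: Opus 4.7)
The strategy is to implement a Polchinski renormalisation group construction of $\nu^{\Lv_\epsilon}$ at the discrete level and then pass to the limit $\epsilon\to 0$, with all estimates carried out uniformly in $\epsilon$. First, I would fix a smooth scale decomposition $c_\infty^\epsilon = \int_0^\infty \dot c_s^\epsilon\,ds$ of the massive GFF covariance into positive semi-definite kernels $\dot c_s^\epsilon$ concentrated at spatial scale $\sqrt s$, and let $(\zeta_s^\epsilon)_{s\ge 0}$ denote the Gaussian martingale with quadratic variation $\int_0^s\dot c_r^\epsilon\,dr$, so that $\zeta_\infty^\epsilon$ is distributed as $\Phi^{\GFF_\epsilon}$. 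Along this decomposition, the renormalised potential $V_s^\epsilon$ solves the Polchinski equation with initial condition $V_0^\epsilon(\phi) = \lambda\int_{\Omega_\epsilon}\epsilon^{\beta/4\pi}\exp(\sqrt\beta\phi_x)\,dx$, and the Liouville field is realised as the endpoint of
\begin{equation*}
d\Phi^{\Lv_\epsilon}_s = -\dot c_s^\epsilon \nabla V_s^\epsilon(\Phi^{\Lv_\epsilon}_s)\,ds + d\zeta_s^\epsilon, \qquad \Phi^{\Lv_\epsilon}_0 = 0.
\end{equation*}
Driving this SDE with the same $\zeta^\epsilon$ that defines $\Phi^{\GFF_\epsilon}$ gives a pathwise coupling with
\begin{equation*}
\Phi^{\Delta_\epsilon} = -\int_0^\infty \dot c_s^\epsilon \nabla V_s^\epsilon(\Phi^{\Lv_\epsilon}_s)\,ds.
\end{equation*}

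The sign $\Phi^{\Delta_\epsilon}\le 0$ is a consequence of a structural observation. The initial potential $V_0^\epsilon$ is a positive superposition of exponentials, and this form is preserved by the Polchinski flow: $V_s^\epsilon(\phi)$ can be written as a positive integral of terms $\exp(\sqrt\beta\phi_x)$ with a Gaussian prefactor, so $\partial_{\phi_z}V_s^\epsilon\ge 0$ pointwise in $\phi$. Combined with positivity of the kernel $\dot c_s^\epsilon$, this forces $\Phi^{\Delta_\epsilon}\le 0$ on every realisation, and the sign persists in the limit.

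For the H\"older estimates one differences the representation of $\Phi^{\Delta_\epsilon}$ in the spatial variable. Smoothness of $\dot c_s^\epsilon(x,\cdot)-\dot c_s^\epsilon(y,\cdot)$ contributes a factor of order $\min(1,(|x-y|/\sqrt s)^a)$ for $a\in\{1,2\}$, depending on whether a first or second finite difference of the kernel is taken, while the gradient of the potential is, up to constants, a conditional expectation of $\epsilon^{\beta/4\pi}\exp(\sqrt\beta(\phi_z+\zeta_z^{>s}))$ over the fluctuations $\zeta^{>s}$ above scale $s$. The crucial FKG input is that the Liouville density relative to the GFF is a decreasing functional of $\phi$, so the law of $\Phi^{\Lv_\epsilon}_s$ is FKG-dominated by the corresponding Gaussian law, and this exponential moment is bounded by its explicit Gaussian value of the form $\epsilon^{\beta/4\pi}\exp(\beta v_s/2)$ with $v_s$ the variance of the relevant increment. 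Integrating the product of kernel difference and moment against $ds$ produces bounds matching the regularity exponent $2-\hdphs-\delta$ in \eqref{eq:intro-phi-delta-bounds}; the dichotomy at $\hdphs=1$ reflects whether one can still extract a single or must extract two spatial derivatives of $\dot c_s^\epsilon$ before the scale integral becomes integrable at $s=0$.

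The main obstacle is keeping the bounds on the drift uniform in $\epsilon$ and sharp up to $\beta\uparrow 8\pi$, where the Wick factor $\epsilon^{\beta/4\pi}$ just barely compensates the exponential and crude $L^p$ estimates via Jensen or H\"older inequalities would lose logarithmic factors and ruin the H\"older scaling. FKG domination delivers the sharp pointwise Gaussian upper bound on $\nabla V_s^\epsilon$ and is the essential mechanism at this endpoint. Once uniform-in-$\epsilon$ H\"older bounds on $\Phi^{\Delta_\epsilon}$ are in place, tightness of $\Phi^{\Delta_\epsilon}$ in $C^0(\T^2)$ together with convergence of $\Phi^{\GFF_\epsilon}$ in $H^{-\kappa}$ yields weak convergence of $\Phi^{\Lv_\epsilon}$ in the same space, and the limiting coupling inherits the sign and H\"older continuity claims.
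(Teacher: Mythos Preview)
Your overall architecture matches the paper's: Polchinski flow, backward SDE coupling, FKG to bound $\nabla V_s^\epsilon$, and the sign $\Phi^{\Delta}\le 0$ from positivity of $\dot c_s^\epsilon$ and of $\nabla V_s^\epsilon$. But there is a genuine gap in how you close the H\"older estimates.

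The FKG bound you describe yields $\partial_{\phi_y}V_s^\epsilon(\phi)\le \lambda\sqrt\beta\,\epsilon^{\beta/4\pi}e^{\frac{\beta}{2}c_s^\epsilon(0,0)}e^{\sqrt\beta\,\phi_y}$; it does \emph{not} reduce to the deterministic quantity $\epsilon^{\beta/4\pi}\exp(\beta v_s/2)$ you write. The factor $e^{\sqrt\beta\,\phi_y}$ survives, and when you plug in $\phi=\Phi_s^{\Lv_\epsilon}\le\Phi_s^{\GFF_\epsilon}$ the integrand of $\Phi^{\Delta_\epsilon}$ is controlled by a random process $\theta_s\,L_s^{\beta/4\pi}e^{\sqrt\beta\max_{\Omega_\epsilon}\Phi_s^{\GFF_\epsilon}}$. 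Closing the argument therefore requires an \emph{almost-sure} bound on $\max_{\Omega_\epsilon}\Phi_s^{\GFF_\epsilon}$ uniformly along all scales $s\in(0,\infty)$ and uniformly in $\epsilon$; this is the content of the paper's Theorem~\ref{thm:lv-good-event} (namely that a.s.\ $\max\Phi_s^{\GFF}\le(\tfrac{2}{\sqrt{2\pi}}+\rho)\log\tfrac{1}{L_s}$ for all small $s$), which is precisely what makes $\int_0^\infty \theta_s L_s^{\beta/4\pi}e^{\sqrt\beta\max\Phi_s^{\GFF}}ds$ a.s.\ finite and produces the exponent $\eta=\tfrac{2\sqrt\beta}{\sqrt{2\pi}}-\tfrac{\beta}{4\pi}$. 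Your proposal says nothing about this step; invoking FKG domination of the Liouville law by the GFF law would at best give bounds in expectation, not the a.s.\ finite random constants $M_{\beta,\delta}$ the theorem asserts.

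A secondary issue: your SDE has $\Phi^{\Lv_\epsilon}_0=0$ with the Liouville field as the endpoint, whereas the Polchinski construction naturally runs \emph{backward} from $\Phi^{\Lv_\epsilon}_\infty=0$ to $\Phi^{\Lv_\epsilon}_0\sim\nu^{\Lv_\epsilon}$ (the renormalised measure $\nu_t$ degenerates to $\delta_0$ as $t\to\infty$, not as $t\to 0$). As written, your forward equation does not realise the Liouville measure.
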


This statement is a consequence of the results in Section \ref{sec:coupling-lattice}.
The bounds on $\Phi^\Delta$ follow from Theomre \ref{thm:lv-coupling-bounds-lattice},
which in addition includes the analogous bounds on the lattice regularisations of $\Phi^\Delta$,
and the convergence is shown in Theorem \ref{thm:lv-phi-delta-t-limit}.


\begin{remark}
The presence of $\delta>0$ in the H\"older continuity estimates is not optimal.
Ultimately, we obtain the bounds on $\Phi^\Delta$ from a control on the maximum of the Gaussian free field along all scales up to a small error in the first order.
This result is stated in Theorem \ref{thm:lv-good-event}.
We refer to Remark \ref{rem:optimal-estimate-on-max} for more details.
Using the conjectured stronger estimate on the maximum of the Gaussian free field as input,
it is possible to obtain H\"older continuity estimates for $\partial \Phi^\Delta$ for $\hdphs<1$ and $\Phi^\Delta$ for $1\leq \hdphs<2$ as in Theorem \ref{thm:coupling-intro} for $\rho = 0$ and with an a.s.\ finite random variable $M_\beta$, which does not depend on $\rho$.
In addition, one would have a multiplicative correction of the form $1+(\log\frac{1}{|x-y|}\big)$ in the case $\hdphs = 1$.
\end{remark}

\begin{remark}
The proof of the H\"older continuity estimates in \eqref{eq:intro-phi-delta-bounds} indicates that $\Phi_0^\Delta$ has the least regularity
where the field $\Phi^\GFF$ is close to the its maximum.
%
On the other hand, one can prove that $\Phi_0^\Delta$ is much more regular in regions where the field $\Phi_0^\GFF$ is not extremal.
%
\end{remark}

The coupling of the continuum fields follows from the analogous coupling on the level of discretisations together with the convergence as $\epsilon \to 0$,
which we state in Section \ref{sec:coupling-lattice}; see Theorem \ref{thm:lv-coupling} and Theorem \ref{thm:lv-phi-delta-t-limit} for the precise statements.
This furhter allows to relate the extreme values of the discrete Gaussian free field and the discretised Liouville field.
Note that for every $\epsilon>0$
these fields are collections of finitely many a.s.\ finite random variables and thus, unlike for the continuum fields,
their global maxima are well-defined random variables,
which diverge as $\epsilon \to 0$.
%
To state the result on the global maximum of the Liouville field, we write
\begin{equation}
m_\epsilon = \frac{1}{\sqrt{2\pi}} \big( 2\log \frac{1}{\epsilon}  - \frac{3}{4} \log \log \frac{1}{\epsilon} \big)
\end{equation}
for the order of the maximum of the Gaussian free field, i.e., we have
\begin{equation}
\E \max_{\Omega_\epsilon} \Phi^{\GFF_\epsilon} = m_\epsilon + O(1), \qquad \Phi^{\GFF_\epsilon} \sim \nu_m^{\GFF_\epsilon} .
\end{equation}
where $O(1)$ denotes a function which remains bounded as $\epsilon \to 0$.
For a proof of this result we refer to \cite{MR2846636}.

\begin{theorem}
\label{thm:lv-max-convergence-to-gumbel}
The centred maximum of the $\epsilon$-regularised Liouville field $\Phi^{\Lv_\epsilon} \sim \nu^{\Lv_\epsilon}$ converges in dis\-tri\-bu\-tion  as  $\epsilon \to 0$
to a randomly shifted Gumbel distribution, i.e,
\begin{equation} 
\label{eq:lv-max-convergence-to-gumbel}
\max_{\Omega_\epsilon} \Phi^{\Lv_\epsilon} - m_\epsilon
\to \frac{1}{\sqrt{8\pi}} X + \frac{1}{\sqrt{8\pi}}\log \ZDM^{\Lv} + b,
\end{equation}
where $\ZDM^{\Lv} $ is an a.s.\ strictly positive random variable,
$X$ is an independent standard Gumbel random variable,
and $b$ is a deterministic constant.
\end{theorem}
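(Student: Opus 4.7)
The plan is to combine the lattice coupling from Section~\ref{sec:coupling-lattice} with the known Biskup--Louidor-type convergence of the extremal process of the two-dimensional discrete Gaussian free field. On the coupled probability space one has $\Phi^{\Lv_\epsilon}=\Phi^{\GFF_\epsilon}+\Phi^{\Delta_\epsilon}$ with $\Phi^{\Delta_\epsilon}\le 0$, H\"older continuous of positive exponent with bounded norm uniformly in $\epsilon$, and converging uniformly to a continuous limit $\Phi^{\Delta}$. The input from the literature is the convergence (jointly with the coarse GFF, hence jointly with $\Phi^{\Delta_\epsilon}$) of the extremal process $\sum_{x\in\Omega_\epsilon}\delta_{(x,\Phi^{\GFF_\epsilon}(x)-m_\epsilon)}$ to a decorated Cox process whose base intensity is $c_\star\,\ZDM^{\GFF}(dx)\otimes e^{-\sqrt{8\pi}h}\,dh$, where $\ZDM^{\GFF}$ is the derivative-martingale limit measure, an a.s.\ strictly positive random measure on $\T^2$.

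The maximum of $\Phi^{\Lv_\epsilon}$ corresponds to the top atom of the image of the GFF extremal process under the shift $(x,h)\mapsto (x,h+\Phi^{\Delta_\epsilon}(x))$. Because the near-maxima of $\Phi^{\GFF_\epsilon}$ concentrate on clusters of macroscopic diameter $o(1)$, and $\Phi^{\Delta_\epsilon}$ has a uniformly controlled positive H\"older exponent, the shift is asymptotically equivalent to the evaluation of the limit $\Phi^{\Delta}$ at the cluster location. The shifted decorated process therefore converges to the Cox process with modulated intensity
\[
c_\star\, e^{\sqrt{8\pi}\Phi^{\Delta}(x)}\,\ZDM^{\GFF}(dx)\otimes e^{-\sqrt{8\pi}h}\,dh,
\]
and the standard computation for the top atom of such a process gives
\[
\max_{\Omega_\epsilon}\Phi^{\Lv_\epsilon}-m_\epsilon \;\to\; \frac{1}{\sqrt{8\pi}}\,X \;+\; \frac{1}{\sqrt{8\pi}}\log \ZDM^{\Lv}\;+\; b, \qquad \ZDM^{\Lv}=\int_{\T^2} e^{\sqrt{8\pi}\Phi^{\Delta}(x)}\,\ZDM^{\GFF}(dx),
\]
with $X$ an independent standard Gumbel and $b$ deterministic (absorbing the factor $-\tfrac{1}{\sqrt{8\pi}}\log(\sqrt{8\pi}/c_\star)$ and the contribution of the decoration). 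Strict positivity of $\ZDM^{\Lv}$ is immediate from the boundedness of $\Phi^{\Delta}$ and the a.s.\ strict positivity of $\ZDM^{\GFF}$.

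The main obstacle is the rigorous justification of this asymptotic pointwise shift, where two technical issues intertwine. Quantitatively, one must verify that $\Phi^{\Delta_\epsilon}$ oscillates by $o(1)$ over the random scale on which the near-maxima of $\Phi^{\GFF_\epsilon}$ cluster, which follows from the uniform H\"older bound of Theorem~\ref{thm:coupling-intro} together with standard localisation estimates for the near-maxima of the GFF. Conceptually, $\Phi^{\Delta_\epsilon}$ and $\Phi^{\GFF_\epsilon}$ are strongly dependent: the former is built from the latter through the Polchinski renormalisation flow, so the shift is not a perturbation by an independent field. The natural route is to exploit the renormalisation-group construction, under which $\Phi^{\Delta_\epsilon}$ is essentially measurable with respect to the coarse scales of $\Phi^{\GFF_\epsilon}$, whereas the extremal cluster structure depends asymptotically only on the fine scales; conditioning on the coarse field (which determines $\Phi^{\Delta}$ and $\ZDM^{\GFF}$ jointly) and applying the Biskup--Louidor convergence to the conditioned fine-scale field then yields the required joint convergence and the claimed distributional limit.
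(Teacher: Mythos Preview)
Your strategy is correct and identifies the essential obstacle and its resolution, but the paper's implementation is both different and cleaner than what you sketch. Rather than working with the full shift $\Phi^{\Delta_\epsilon}=\Phi_0^{\Delta_\epsilon}$ and appealing to a joint convergence of the Biskup--Louidor extremal process with the (dependent) field $\Phi^{\Delta_\epsilon}$, the paper introduces an intermediate scale $s>0$ and the auxiliary field
\[
\tilde\Phi_s^{\Lv_\epsilon}=(\Phi_0^{\GFF_\epsilon}-\Phi_s^{\GFF_\epsilon})+\Phi_s^{\Lv_\epsilon},
\qquad \Phi_0^{\Lv_\epsilon}=\tilde\Phi_s^{\Lv_\epsilon}+R_s^\epsilon,
\qquad R_s^\epsilon=\Phi_0^{\Delta_\epsilon}-\Phi_s^{\Delta_\epsilon}.
\]
The point is that $\Phi_s^{\Lv_\epsilon}$ is $\cF^s$-measurable (Theorem~\ref{thm:lv-coupling}), hence \emph{exactly} independent of the fine-scale Gaussian increment $\Phi_0^{\GFF_\epsilon}-\Phi_s^{\GFF_\epsilon}$; one can then run the argument of \cite[Section~4]{MR4399156} verbatim to get convergence of $\max\tilde\Phi_s^{\Lv_\epsilon}-m_\epsilon$ to a randomly shifted Gumbel for each fixed $s$, and a separate lemma (Lemma~\ref{lem:lv-reduction-to-phi-tilde}) passes to $s\to 0$ using only that $\sup_\epsilon\|R_s^\epsilon\|_{L^\infty}\to 0$ a.s.\ (from \eqref{eq:lv-phi-delta-0-t-bound}). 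This sidesteps the need to justify joint convergence of the full GFF extremal process with $\Phi^{\Delta_\epsilon}$, which is the step you leave vague.

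Your route via the extremal process is plausible and would likely also yield the explicit formula $\ZDM^{\Lv}=\int e^{\sqrt{8\pi}\Phi^\Delta}\,d\ZDM^{\GFF}$ (which the paper does not state), but it requires as a black box the Biskup--Louidor convergence \emph{jointly with the coarse field}, and your final paragraph only outlines how to obtain this. The paper's $s$-decomposition converts ``$\Phi^{\Delta_\epsilon}$ is essentially measurable with respect to the coarse scales'' into the precise statement that $\Phi_s^{\Delta_\epsilon}$ is $\cF^s$-measurable and $\Phi_0^{\Delta_\epsilon}-\Phi_s^{\Delta_\epsilon}$ is uniformly small, which is exactly the content needed.
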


\subsection{Literature and further directions}

This is the third article of a programme initiated by \cite{MR4399156} and subsequently continued in \cite{MR4665719},
which aims to study probabilistic properties of Euclidean field theories in dimension $d=2$ through a multiscale coupling to the Gaussian free field.
These Euclidean field theories are measures on the space of distributions $S'(\Omega)$ that can be formally expressed as
\begin{equation}
\label{eq:eqft-general-v}
\nu(d\phi) \propto \exp\Big[-\int_{\Omega} V(\phi_x) dx \Big] \nu_m^\GFF (d\phi)
\end{equation}
for suitable non-linear functions $V\colon \R\to \R$. 
The Liouville model \eqref{eq:lv-formal-density} is thus the special case, when $V$ is an exponential function.
Using a renormalisation group approach the goal is write the non-Gaussian and distribution valued field as a sum of a Gaussian free field and a continuous field $\Phi^\Delta$,
whose smoothness depends on the Euclidean field theory under consideration.

The structure of our work is in large parts similar to \cite{MR4399156},
where analogous results were established for the sine-Gordon field for $\beta<6\pi$, i.e., the model with formal density \eqref{eq:eqft-general-v} with $V(\phi) = \cos(\sqrt{\beta}\phi)$.
In the case of the Liouville model, the exponential unboundedness of the potential creates additional difficulties,
which we overcome with an application of the FKG inequality and a control of the maximum of the Gaussian free field along all scales; see Lemma \ref{lem:lv-fkg} and Theorem \ref{thm:lv-good-event} for the corresponding statements.
It is remarkable that a complete picture can also be provided for the present context in the full subcritical regime $\beta \in (0,8\pi)$.
Comparing the results on the difference field $\Phi^\Delta$ for the sine-Gordon model in \cite[Theorem 1.2]{MR4399156} and the Liouville model in Theorem \ref{thm:coupling-intro} reveals surprising similarities,
and in fact, the results are nearly identical when $\hdphs= \hdphs(\beta)$ and the a.s.\ finite constants (Liouville) are replaced by $\beta/4\pi$ and deterministically finite constants (sine-Gordon).
The particular value $\hdphs= 1$ corresponds to $\beta = (1- \frac{1}{\sqrt{2}}) 8\pi$.
Our current understanding is that the presence of the reparametrisation $\hdphs= \hdphs(\beta)$ is a characteristic of the model, and not an artefact of our method of proof.
Note that we do not give complementary lower bounds on the H\"older constants in Theorem \ref{thm:coupling-intro}.

The coupling of the Liouville field and the Gaussian free field is an essential input for the proof of Theorem \ref{thm:lv-max-convergence-to-gumbel}.
This result extends the literature on extreme values of log-correlated fields,
which is by now well understood and rather complete for the Gaussian case thanks to \cite{MR3433630} and \cite{MR3729618} as well as \cite{MR3509015} and \cite{MR3787554}.
For a survey of these Gaussian results we refer to \cite{MR4043225} and \cite{MR3526836}.

As demonstrated in \cite{Hofstetter2021Extremal} for the sine-Gordon field,
a multiscale coupling to the Gaussian free field as we achieve here can further be used to obtain convergence of the local extremal process,
and, following similar ideas, we believe it is possible to obtain the analogous result for the Liouville model.

The idea to use renormalisation group to study the extreme values of a non-Gaussian field was also used in \cite{BiskupHuang2023DGModel},
where a convergence for the centred maximum and the extremal process of the hierarchical discrete Gaussian model were proved.

The Liouville model goes back to the work \cite{MR292433} by H{\o}egh-Krohn and was subsequently studied in \cite{MR395578},
for which it is also know as H{\o}egh-Krohn model.
These works give a non-probabilistic construction of the Euclidean field theory in the regime $\beta < 4\pi$ and in finite volume, 
and it is furthermore shown that the infinite volume limit exists.

Notable probabilistic construction of a Liouville field theory were given in \cite{MR4238209} and \cite{MR4528973}, 
and subsequently in \cite{MR4324379}, \cite{MR4415393},\cite{MR4124523} and \cite{MR4054101}.
In these works the distribution valued measure is constructed as an invariant measure of a certain stochastic partial differential equation,
also known as the stochastic quantisation equation.
That such an equation is well-posed goes back to the celebrated works \cite{MR3274562} and \cite{MR3406823}.

We emphasise that the approach to construct a measure with the interpretation \eqref{eq:lv-density} is different from the method of stochastic quantisation,
and thus, does not require a well-posed stochastic partial differential equation in the first place.
Using the Polchinski renormalisation group approach developed rigorously in \cite{MR4303014},
we build the target measure \eqref{eq:lv-density} by continuously adding contributions on all scales.
This stochastic process is described by a backward stochastic differential equation with an index set having the interpretation of scale.
The solution of this stochstic differential equation is a smooth process for any $t>0$ and only distribution valued at $t=0$.

A recent construction of the Liouville measure using similar ideas was given in \cite{MR4672110} using a variational representation of the partition function of \eqref{eq:lv-formal-density},
which is essentially equivalent to the Polchinski renormalisation group approach.

A hierarchical version of the Liouville model and its relation to the sinh-Gordon model was studied by both authors in \cite{HofstetterZeitouniDecay2024},
which gives evidence for the existence of a massless Liouville model in infinite volume. 
In the present work, we do not attempt at any point to take either $m\to 0$ nor the volume to $\infty$,
however, we believe that our methods can be used to obtain, at least for a fixed mass $m>0$, a coupling in infinite volume, which locally has the properties as in Theorem \ref{thm:lv-coupling}.

We end this section with comments on possible extensions of our work and future directions. The Liouville model \eqref{eq:lv-formal-density} is closely related to the sinh-Gordon model, where $V$ in \eqref{eq:eqft-general-v} is a hyperbolic cosine.
On the level of regularisations this gives measures of the form
\begin{equation}
\label{eq:sinh-density}
\nu^{\ShG_\epsilon}(d\phi)
\propto
\exp{\Big[-\lambda \int_{\Omega_\epsilon} \wick{\cosh(\sqrt{\beta} \phi_x)}_\epsilon dx \Big]} d\nu_m^{\GFF_\epsilon} (\phi),
\end{equation}
where the Wick ordering is explained as in the case of the Liouville measure in \eqref{eq:lv-density}.
Thus, the potential of the measure \eqref{eq:sinh-density} is
\begin{equation}
\lambda \int_{\Omega_\epsilon} \wick{\cosh(\sqrt{\beta} \phi_x)}_\epsilon dx 
= \frac{\lambda}{2} \int_{\Omega_\epsilon} \epsilon^{\beta/4\pi} 
\big( e^{\sqrt{\beta}\phi_x} +   e^{-\sqrt{\beta}\phi_x} \big) dx = \frac{\lambda}{2} \big( M^{+,\epsilon}(\phi) + M^{-,\epsilon}(\phi) \big)
\end{equation}
where, under the measure $\nu_m^{\GFF_\epsilon}$, $M^{+,\epsilon}(\phi)$ and $M^{-,\epsilon}(\phi)$ are regularisations of two non-independent Gaussian multiplicative chaoses.
Conceptually, the sinh-Gordon measure is harder to analyse, and to obtain a complete picture as for the Liouville model,
we believe it is essential to have information the joint distribution of $M^{+,\epsilon}(\phi)$ and $M^{-,\epsilon}(\phi)$ under the Gaussian reference measure.
One expects that the analysis of the sinh-Gordon model is similar to that of the Liouville model.
A key estimate in our work, Lemma \ref{lem:lv-fkg}, would need to be replaced,
as its proof relies on the monotonicity of both the exponential function and its derivative,
which is no longer true for the hyperbolic cosine.

Similar difficulties arise for the critical parameter $\beta_c= 8\pi$, in which case the regularised multiplicative chaos in \eqref{eq:lv-density} should be replaced by a derivative martingale,
i.e., the regularised potential is
\begin{equation}
\label{eq:critical-v0}
v_0^{\text{crit}}(\phi) =  \epsilon^2 \sum_{x\in \Omega_\epsilon} (\frac{2}{\sqrt{2\pi}}\log \frac{1}{\epsilon} - \phi_x) e^{\sqrt{\beta_c} \phi_x - \frac{\beta_c}{4\pi} \log \frac{1}{\epsilon}} .
\end{equation}
The fact that the expectation of the limit of \eqref{eq:critical-v0} under the Gaussian free field is infinite
and that the H\"older continuity estimates on $\Phi^\Delta$ degenerate as $\beta\to \beta_c$ suggests that a rigorous construction of a critical Liouville measure is a challenging and interesting problem.


\subsection{Notation}

We use $\varphi$ and $\phi$ for deterministic fields (either on $\Omega$ or on $\Omega_\epsilon$) and denote the evaluation by $\varphi_x$.
Random fields are typically denoted by $\Phi$.
If the field also depends on a scale parameter $t\geq 0$, then we write for instance $\Phi_t$ for the field and $\Phi_t(x)$ for the evalutation of $\Phi_t$ at $x$.

We write $\EE_c$ for Gaussian expectations with covariance $c$ and denote by $\zeta$ the corresponding integration variables.

Finally, we use the standard Landau $O$-notation and write
$\lesssim$ when an estimate is up to a deterministic constant, and $\lesssim_\alpha$ to emphasise that this constant depends on a parameter $\alpha$.
Random constants are always written explicitely.

\section{Renormalised potential: convergence and continuity of the gradient}
\label{sec:dcnablav}

The coupling of the Liouville field and the Gaussian free field in Theorem \ref{thm:coupling-intro} originates from a Polchinski renormalisation group approach,
which was rigorously developed in \cite{MR4303014}.
We refer to survey aritcle \cite{MR4798104} for additional details.

To make this exposition self-contained, we briefly introduce the key notions of the Polchinski renormalisation group approach and explain how the coupling is obtained.

\subsection{Renormalised potential and coupling}
\label{ssec:vt-eps}

The general idea of renormalisation group is to subdivide a possibly complicated system into smaller subsystem according to a natural length scale,
and then perfom a suitable coarse graining on each subsystem.
From this procedure one obtains a sequence of approximations to the original system, which are indexed by scale.
The new systems are typically easier to analyse and then, one aims to obtain information of the original system by understanding its approximations.

The Polchinski renormalisation group approach can be seen as a space continuous subdivision and in the context of studying the measure \eqref{eq:lv-density}, or more generally regularised measures of the form \eqref{eq:eqft-general-v},
the natural length scale comes from a scale decomposition of the Gaussian reference measure $\nu_m^{\GFF_\epsilon}$.
For our purpose we define this decomposition as a sequence operators $(c_t^\epsilon)_{t\in [0,\infty]}$ on $X_\epsilon$ given by
\begin{equation}
\label{eq:GFF-scale-regularisation}
c_t^\epsilon = \int_0^t \dot c_s^\epsilon ds, \qquad \dot c_s^\epsilon =  e^{-s(-\Delta^\epsilon+m^2)},
\end{equation}
where $\Delta^{\epsilon}$ is the lattice Laplacian defined below \eqref{eq:cov-GFF}.
Note that $\dot c_s^\epsilon$ is the lattice heat kernel on $\Omega_\epsilon$
and that $\dot c_s^\epsilon\colon X_\epsilon \to X_\epsilon$ is a positive definite operator on $X_\epsilon$ with respect to the inner product
\begin{equation}
\label{eq:inner-product-x-eps}
\avg{f,g}_{X_\epsilon} = \epsilon^2 \sum_{\Omega_\epsilon} f(x) g(x), \qquad f,g \in X_\epsilon.
\end{equation}
Thus, for $t>0$, we have that $c_t^\epsilon$ is a positive definite operator on $X_\epsilon$,
and moreover, $c_\infty^\epsilon = (-\Delta^\epsilon + m^2)^{-1}$,
which gives $(c_t^\epsilon)_{t\in [0,\infty]}$ the interpretation of a continuous decomposition of the covariance of $\nu_m^{\GFF_\epsilon}$.
The choice of the heat kernel regularisation is essential in the present context,
as we use in a crucial way that $\dot c_s^\epsilon$ in \eqref{eq:GFF-scale-regularisation} satisfies $\dot c_s^\epsilon(x,y)\geq 0$ for $x,y\in \Omega_\epsilon$.
In particular, $\dot c_s^\epsilon$ is positivity preserving in the sense that for any $f\colon \Omega_\epsilon \to \R$ with $f(y) \geq 0$ for all $y\in \Omega_\epsilon$, we have
\begin{equation}
\big(\dot c_s^\epsilon f \big) (x) = \epsilon^2 \sum_{y\in \Omega_\epsilon} f(y) \dot c_s^\epsilon(x,y) \geq 0.
\end{equation}

In order to quantify the properties of the decomposition \eqref{eq:GFF-scale-regularisation},
we use the notation
\begin{equation}
\label{eq:heat-kernel-char-length-scale}
\theta_t = e^{-\frac{1}{2}m^2t}, 
\qquad L_t = \sqrt{t}\wedge 1/m.
\end{equation}
Note that $\theta_t$ describes the decay of the heat kernel $\dot c_t^\epsilon$ as $t\to \infty$ and $L_t$ is its characteristic length,
so that we can view $c_t^\epsilon$ as the covariances of the Gaussian free field on scales up to $L_t$.

A key object in the Polchinski renormalisation group approach
is the renormalised potential,
which arises from integrating out only the small scale Gaussian field in the partition function of the measure \eqref{eq:lv-density}.
More precisely, we write
\begin{equation}
\label{eq:v0-eps}
v_0^\epsilon(\phi) = \lambda \int_{\Omega_\epsilon} \wick{\exp(\sqrt{\beta} \phi_x)}_\epsilon dx, \qquad \lambda >0
\end{equation}
for the exponent of the density \eqref{eq:lv-density}
and define for $t>0$ the renormalised potential $v_t^\epsilon \colon X_\epsilon \to \R$ through
\begin{equation}
\label{eq:renormalised-potential}
e^{-v_t^\epsilon(\phi)} = \EE_{c_t^\epsilon} [e^{-v_0^\epsilon(\phi + \zeta)}],
\end{equation}
where $\EE_{c_t^\epsilon}$ denotes the expectation with respect to the centred Gaussian measure with covariance $c_t^\epsilon$.
We emphasise that the Polchinski approach to obtain a coupling on the level of regularisations works in greater generality,
and when applied to lattice regularisations of measures of the form \eqref{eq:eqft-general-v},
the potential $v_0^\epsilon$ is obtained from a discrete integral over the Wick-ordering of the non-linearity $V$.
For concreteness and to introduce further notation,
we assume that $v_0^\epsilon$ is as in \eqref{eq:v0-eps},
even though most of the considerations in the rest of this subsection are also valid more generally and under mild assumptions on $v_0^\epsilon$.

Understanding $v_t^\epsilon$ and its derivatives as a function of $\phi$ is the essence of the Polchinski renormalisation group approach.
A trivial upper bound on $v_t^\epsilon$ is obtained by Jensen's inequality, which in our case reads
\begin{align}
\label{eq:jensen-for-vt}
0 \leq v_t^\epsilon(\phi) 
\leq 
\EE_{c_t^\epsilon}[v_0^\epsilon(\phi+\zeta)]
&= \lambda \int_{\Omega_\epsilon} \EE_{c_t^\epsilon}
\big[ \wick{\exp(\sqrt{\beta}(\phi_x+\zeta_x)}_{\epsilon} \big]dx
\nnb
&=
\lambda \int_{\Omega_\epsilon} \epsilon^{\beta/4\pi} e^{\sqrt{\beta} \phi_x + \frac{\beta}{2} c_t^\epsilon(x,x)} dx.
\end{align}
Here, the lower bound follows from the fact that $v_0^\epsilon(\phi) \geq 0$,
which is thanks to the non-negativity of the exponential function.
It can be shown that, as $\epsilon \to 0$, we have, for $x \in \Omega_\epsilon$,
\begin{align}
\label{eq:ct-on-diagonal}
c_t^\epsilon (x,x) = \frac{1}{2\pi} \log \frac{L_t}{\epsilon} + O_m(1) ,
\end{align}
where $L_t$ is as in \eqref{eq:heat-kernel-char-length-scale} and $O_m(1)$ bounded by a constant,
which is independent of $\epsilon>0$ and $t>0$.
From the asymptotics \eqref{eq:ct-on-diagonal} we get for the integrand on the right hand side of \eqref{eq:jensen-for-vt}
\begin{equation}
\label{eq:wick-ordering-lt}
\wick{e^{\sqrt{\beta} \phi_x}}_{L_t} \equiv \epsilon^{\beta/4\pi} e^{\sqrt{\beta}\phi_x + \frac{\beta}{2} c_t^{\epsilon} (x,x)} \sim e^{\sqrt{\beta} \phi_x} L_t^{\beta/4\pi} ,
\end{equation}
which we thus interprete as a Wick-ordering of the exponential of the field at length scale $L_t$.

Apart from the renormalised potential itself, the most important object in the coupling construction is the gradient of $v_t^\epsilon$ with respect to the field $\phi$.
Using the definition of $v_t^\epsilon$ in \eqref{eq:v0-eps}, we see that
\begin{equation}
\label{eq:nabla-v-t-definition}
\big( \nabla v_t^\epsilon (\phi) \big)_x
= \partial_{\phi_x} v_t^\epsilon (\phi)
= \frac{\EE_{c_t^\epsilon}[\partial_{\phi_x} v_0^\epsilon (\phi + \zeta) e^{-v_0^\epsilon(\phi+\zeta)}  ]}{\EE_{c_t^\epsilon}[e^{-v_0^\epsilon (\phi+\zeta)}] }
=
\sqrt{\beta} \lambda \frac{\EE_{c_t^\epsilon}[\wick{e^{\sqrt{\beta} (\phi_x + \zeta_x)}}_\epsilon e^{-v_0^\epsilon(\phi+\zeta)}  ]}{\EE_{c_t^\epsilon}[e^{-v_0^\epsilon (\phi+\zeta)}] }
,  
\end{equation}
where the last step follows from \eqref{eq:v0-eps}.
We emphasise that the derivative with respect to $\phi$ is understood as a Frech\'et derivative,
i.e., with respect to the inner product on $X_\epsilon$ given in \eqref{eq:inner-product-x-eps}.
In this article the gradient of $v_t^\epsilon$ always appears together with the heat kernel $\dot c_t^\epsilon$ applied to it,
i.e., we consider the map $\dot c_t^\epsilon \nabla v_t^\epsilon  \colon X_\epsilon \to X_\epsilon$, 
\begin{equation}
\big( \dot c_t^\epsilon \nabla v_t^\epsilon(\phi)\big)_x = 
\epsilon^2 \sum_{y\in\Omega_\epsilon} \dot c_t^\epsilon(x,y) \partial_{\phi_y} v_t^\epsilon(\phi).
\end{equation}

Under mild assumptions on the potential $v_0^\epsilon$,
which can easily be verfied for the choice \eqref{eq:v0-eps}, it is shown in \cite[Proposition 2.1]{MR4303014} that the renormalised potential satisfies the Polchinski Polchinski-PDE, i.e.,
\begin{align}
\label{eq:polchinski-pde}
\partial_t v_t^\epsilon &= \frac{1}{2} \Delta_{\dot c_t^\epsilon} v_t^\epsilon - \frac{1}{2} (\nabla v_t^\epsilon)_{\dot c_t^\epsilon}^2 ,
\end{align}
where for a function $f\colon X_\epsilon \to \R$ the scale dependent differential operators in \eqref{eq:polchinski-pde} are defined by
\begin{align}
\Delta_{\dot c_t^\epsilon} f
&= \epsilon^4 \sum_{x,y} \dot c_t^\epsilon (x,y) \partial_{\phi_x} \partial_{\phi_y} f ,
\\
(\nabla f)_{\dot c_t^\epsilon}^2
&=
\epsilon^4 \sum_{x,y} \dot c_t^\epsilon (x,y) (\partial_{\phi_x}f ) ( \partial_{\phi_y} f ) .
\end{align}
Moreover, the evolution of $v_t^\epsilon$ gives rise to an inhomogenous Markov semigroup $(\PP_{s,t}^\epsilon)_{s\leq t}$ acting on functions $F\colon X_\epsilon \to \R$ and a renormalised measures $\nu_t^{\Lv_\epsilon}$ defined by
\begin{align}
\label{eq:lv-semigroup}
(\PP_{s,t}^\epsilon F) (\varphi) &= e^{v_t^\epsilon(\varphi)} \EE_{c_t^\epsilon-c_s^\epsilon} [e^{-v_s^\epsilon (\varphi + \zeta)} F(\varphi + \zeta)] ,
\\
\label{eq:lv-renormalised-measure}
\E_{\nu_t^{\Lv_\epsilon}} [F] &= (\PP_{t,\infty}^\epsilon F)(0) = e^{v_\infty^\epsilon(0)} \EE_{c_\infty^\epsilon - c_t^\epsilon} [e^{-v_t^\epsilon (\zeta)} F(\zeta)] .
\end{align}
The term \textit{renormalised measure} reflects the fact that $\nu_t^{\Lv_\epsilon}$ generalises the density \eqref{eq:lv-density} to scale $t\geq 0$.
Indeed, denoting by $\nu_t^{\GFF_\epsilon}$ the law of the centred Gaussian measure on $X_\epsilon$ with covariance $c_\infty^\epsilon - c_t^\epsilon$,
the representation \eqref{eq:lv-renormalised-measure} shows that the density of $\nu_t^{\Lv_\epsilon}$ on $X_\epsilon$ is given by
\begin{equation}
d\nu_t^{\Lv_\epsilon}(\phi) \propto e^{-v_t^\epsilon(\phi)} d\nu_t^{\GFF_\epsilon}(\phi). 
\end{equation} 

Now, using the fact that $v_t^\epsilon$ is a solution to \eqref{eq:polchinski-pde}, 
we construct in Section \ref{sec:coupling-lattice} the process $(\Phi_t^{\Lv_\epsilon})_{t\in [0,\infty]}$ with values in $X_\epsilon$,
which satisfies the backward stochastic differential equation
\begin{equation}
\label{eq:lv-polchinski-sde}
d \Phi_t^{\Lv_\epsilon} = - \dot c_t^\epsilon \nabla v_t^\epsilon(\Phi_t^{\Lv_\epsilon})dt +  (\dot c_t^\epsilon)^{1/2} dW_t^\epsilon, \qquad \Phi_\infty^{\Lv_\epsilon} =0,
\end{equation}
where $W^\epsilon$ is a Brownian motion on $\Omega_\epsilon$ and quadratic variation $t/\epsilon^2$,
with the property that $\Phi_t^{\Lv_\epsilon}\sim \nu_t^{\Lv_\epsilon}$.
In particular, the field $\Phi_0^{\Lv}$ is a realisation of the measure \eqref{eq:lv-density}.

We remark that the existence of a solution to \eqref{eq:lv-polchinski-sde} is not obvious as the general solution theory of stochastic differential equations does not apply to this case.
Instead, we use particular properties of the gradient of $v_t^\epsilon$ and the Gaussian process
\begin{equation}
\label{eq:gff-process}
\Phi_t^{\GFF_\epsilon} = \int_t^\infty (\dot c_s^\epsilon)^{1/2} dW_s^\epsilon,
\end{equation}
which drives the stochastic differential equation \eqref{eq:lv-polchinski-sde}.
In fact, as we show in Theorem \ref{thm:lv-coupling}, our argument also applies to the case $\epsilon=0$,
in which the lattice heat kernel $\dot c_s^\epsilon$ is replaced by its continnuum counterpart
\begin{equation}
\label{eq:dc0}
\dot c_t^0(x,y)
=
e^{-t (-\Delta+m^2)}(x,y) 
=
\sum_{n\in\Z^2} \frac{e^{-|x-y+L n|^2/4t-m^2 t}}{4\pi t}
,
\qquad L=1.
\end{equation}

The coupling between the Liouville field and the Gaussian free field is then obtained from \eqref{eq:lv-polchinski-sde} by formal integration on $[t,\infty]$, i.e., using the notation \eqref{eq:gff-process} we have
\begin{equation}
\label{eq:coupling-from-sde}
\Phi_t^{\Lv_\epsilon} = \Phi_t^{\Delta_\epsilon} + \Phi_t^{\GFF_\epsilon}, \qquad t\geq 0,
\end{equation}
where the difference field $\Phi_t^{\Delta_\epsilon}$ is given by
\begin{equation}
\label{eq:definition-difference-field}
\Phi_t^{\Delta_\epsilon} = - \int_t^\infty \dot c_s^\epsilon \nabla v_s^{\epsilon}(\Phi_s^{\Lv_\epsilon}) ds.
\end{equation}
In particular, the evaluation of \eqref{eq:coupling-from-sde} at $t=0$ gives a realisation of the Gaussian free field and the Liouville field on the same probability up to an additive field $\Phi_0^{\Delta_\epsilon}$.
Now, the process $\Phi^{\Delta_\epsilon}$ depends on the solution of \eqref{eq:lv-polchinski-sde} through the gradient of $v_t^\epsilon$, which is a complicated and non-explicit function.
However, as we show in Theorem \ref{thm:lv-coupling-bounds-lattice},
we are able to overcome this substantial difficulty and bound $\Phi_t^{\Delta_\epsilon}$ by terms
which depend only on the Gaussian process $\Phi^{\GFF_\epsilon}$.

In the next section, we establish estimates on the gradient of $v_t^\epsilon$,
which then serve as the main input to the proof of the statements in Section \ref{sec:coupling-lattice}.

\subsection{Estimates and convergence for the gradient of the renormalised potential}

An important and distinct characteristic of the Liouville model is the fact that the potential $v_0^\epsilon$ in \eqref{eq:v0-eps} and its derivatives with respect to the field are positive and monotonic functions.
With the identity \eqref{eq:nabla-v-t-definition} in mind, this has the following implications for the gradient of $v_t^\epsilon$.

\begin{lemma}
\label{lem:lv-fkg}
For any $\phi \in X_\epsilon$ and any $y\in \Omega_\epsilon$ we have
\begin{equation}
\label{eq:lv-fkg}
\EE_{c_t^\epsilon}[\wick{ e^{\sqrt{\beta}(\zeta_y + \phi_y)} }_{\epsilon} e^{-v_0^\epsilon(\zeta+\phi)}]
\leq 
\EE_{c_t^\epsilon}[\wick{ e^{\sqrt{\beta}(\zeta_y+\phi_y)} }_{\epsilon}]  \EE_{c_t^\epsilon}[e^{-v_0^\epsilon(\zeta+\phi)}].
\end{equation}
In particular, we have
\begin{equation}
\label{eq:lv-nablav-pointwise-upper}
0 \leq \partial_{\phi_y} v_t^\epsilon(\phi) \leq \lambda \sqrt{\beta} \wick{ e^{\sqrt{\beta}\phi_y} }_{L_t} .
\end{equation}
\end{lemma}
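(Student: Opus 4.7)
The plan is to derive both statements from the FKG inequality for Gaussian measures with non-negatively correlated components (Pitt's theorem), exploiting the sign and monotonicity properties of the Liouville potential. The crucial geometric input is that the entries of the covariance $c_t^\epsilon$ are all non-negative: since $c_t^\epsilon = \int_0^t \dot c_s^\epsilon ds$ and the lattice heat kernel satisfies $\dot c_s^\epsilon(x,y) \geq 0$ (the property the excerpt above has already emphasised), one has $c_t^\epsilon(x,y) \geq 0$ for all $x,y \in \Omega_\epsilon$ and all $t \geq 0$.

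For \eqref{eq:lv-fkg}, I would fix $\phi \in X_\epsilon$, $y \in \Omega_\epsilon$ and identify the monotonicity of the two factors of $\zeta$ inside $\EE_{c_t^\epsilon}$. The map
\begin{equation*}
\zeta \mapsto \wick{e^{\sqrt{\beta}(\zeta_y+\phi_y)}}_\epsilon = \epsilon^{\beta/4\pi} e^{\sqrt{\beta}(\zeta_y+\phi_y)}
\end{equation*}
depends only on $\zeta_y$ and is coordinatewise non-decreasing on $X_\epsilon$. On the other hand, by \eqref{eq:v0-eps} the potential $v_0^\epsilon(\zeta+\phi)$ is a non-negative sum of Wick exponentials, each increasing in the corresponding $\zeta_x$, so $\zeta\mapsto e^{-v_0^\epsilon(\zeta+\phi)}$ is coordinatewise non-increasing. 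Pitt's FKG inequality for Gaussian vectors with non-negative covariance entries then yields
\begin{equation*}
\EE_{c_t^\epsilon}\!\big[\wick{e^{\sqrt{\beta}(\zeta_y+\phi_y)}}_\epsilon \, e^{-v_0^\epsilon(\zeta+\phi)}\big]
\leq \EE_{c_t^\epsilon}\!\big[\wick{e^{\sqrt{\beta}(\zeta_y+\phi_y)}}_\epsilon\big]\,\EE_{c_t^\epsilon}\!\big[e^{-v_0^\epsilon(\zeta+\phi)}\big],
\end{equation*}
which is exactly \eqref{eq:lv-fkg}.

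For the pointwise bound \eqref{eq:lv-nablav-pointwise-upper}, I would start from the representation \eqref{eq:nabla-v-t-definition}. The lower bound $\partial_{\phi_y} v_t^\epsilon(\phi) \geq 0$ is immediate, since both $\wick{e^{\sqrt{\beta}(\phi_y+\zeta_y)}}_\epsilon$ and $e^{-v_0^\epsilon(\phi+\zeta)}$ are non-negative. For the upper bound, I would substitute \eqref{eq:lv-fkg} into the numerator of \eqref{eq:nabla-v-t-definition}; the factor $\EE_{c_t^\epsilon}[e^{-v_0^\epsilon(\phi+\zeta)}]$ then cancels against the denominator, leaving
\begin{equation*}
\partial_{\phi_y} v_t^\epsilon(\phi) \leq \sqrt{\beta}\lambda\, \EE_{c_t^\epsilon}\!\big[\wick{e^{\sqrt{\beta}(\phi_y+\zeta_y)}}_\epsilon\big] = \sqrt{\beta}\lambda\, \epsilon^{\beta/4\pi} e^{\sqrt{\beta}\phi_y + \tfrac{\beta}{2} c_t^\epsilon(y,y)},
\end{equation*}
where the equality is the one-dimensional Gaussian Laplace transform. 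By the definition of the scale-$L_t$ Wick-ordering in \eqref{eq:wick-ordering-lt}, the right-hand side is precisely $\lambda\sqrt{\beta}\,\wick{e^{\sqrt{\beta}\phi_y}}_{L_t}$.

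The only subtlety lies in correctly invoking the FKG inequality: one must use the version for Gaussian vectors whose covariance matrix has all non-negative entries, and verify that this holds for $c_t^\epsilon$. Once this is in place, the remainder is a one-line cancellation between numerator and denominator in the representation of $\nabla v_t^\epsilon$.
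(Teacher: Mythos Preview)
Your proposal is correct and follows essentially the same argument as the paper: both identify the coordinatewise monotonicity of the two factors, verify $c_t^\epsilon(x,y)\geq 0$ from the positivity of the heat kernel, and invoke Pitt's FKG inequality for associated Gaussian vectors, after which the bound on $\partial_{\phi_y} v_t^\epsilon$ is the one-line cancellation you describe. The only cosmetic difference is that the paper cites association of the Gaussian measure (Pitt) and then an FKG lemma, while you name Pitt's theorem directly.
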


\begin{proof}
Note that for every $\phi \in X_\epsilon$, we have that
$\zeta \mapsto \wick{e^{\sqrt{\beta}(\zeta_y + \phi_y)}}_\epsilon$
is an increasing and  $\zeta \mapsto e^{-v_0^\epsilon(\phi+ \zeta)}$ is a decreasing function with respect to the standard partial ordering on $\R^{\Omega_\epsilon}$.
Moreover, the centred Gaussian measure with covariance $c_t^\epsilon$ has positive correlations
i.e.\ if $\zeta \sim \cN(0, c_t^\epsilon)$, then
\begin{equation}
\E[\zeta_x \zeta_y] = c_t^\epsilon(x,y) = 
\int_0^t \dot c_s^{\epsilon}(x,y) ds
\geq 0 ,
\end{equation}
where the lower bound follows from the fact that $\dot c_s^\epsilon(x,y) \geq 0$ for all $s\geq 0$.
As shown in \cite{MR665603} this implies that the centred Gaussian measure on $X_\epsilon$ with covariance matrix $c_t^\epsilon$ is associated,
and thus, \eqref{eq:lv-fkg} follows from the FKG inquality as stated for instance in \cite[Lemma A.1]{MR4515694}.

With the notation introduced in \eqref{eq:wick-ordering-lt}, we therefore obtain from \eqref{eq:nabla-v-t-definition} for $y\in \Omega_\epsilon$ and $\phi \in X_\epsilon$
\begin{equation}
0 \leq \partial_{\phi_y} v_t^\epsilon(\phi) \leq \lambda \sqrt{\beta}\EE_{c_t^\epsilon} [ e^{\sqrt{\beta}(\phi_y + \zeta_y)}  ] = \lambda \sqrt{\beta} \wick{e^{\sqrt{\beta}\phi_y}}_{L_t} ,
\end{equation}
which is the inequality \eqref{eq:lv-nablav-pointwise-upper}.
\end{proof}

The following statements give the convergence of $\dot c_t^\epsilon \nabla v_t^\epsilon$ as $\epsilon \to 0$,
as well as continuity when seen as a function on $X_\epsilon$.
The proofs of these results are provided in Section \ref{ssec:proof-of-gradient-convergence} and Section \ref{ssec:proof-of-gradient-bounds}.

Below, we denote, for $x\in \Omega$, by $x^\epsilon$ the point in $\Omega_\epsilon$,
which is closest to $x$, i.e., $x^\epsilon$ is the unique element in $\Omega_\epsilon$,
such that $x\in x^\epsilon + (-\epsilon/2, \epsilon/2]^2$.

We further need the discrete gradients with respect to $\Omega_\epsilon$,
which we denote by $\partial_\epsilon$.
More precisely, denoting by $e$ one of the $4$ unit directions in $\Z^2$,
i.e., $e \in \{ (\pm 1,0), (0, \pm 1) \} $,
we set $\partial_{\epsilon}^e f(x) = \epsilon^{-1}(f(x+\epsilon e)-f(x))$ for a function $f\colon \Omega_\epsilon \to \R$.
More generally, for $k \in \N$ we write
$\partial_\epsilon^k f$ for the matrix-valued function consisting of all iterated discrete gradients
$\partial^{e_1}_\epsilon \cdots \partial^{e_k}_\epsilon f$ where $e_1, \dots, e_k$
are unit directions in $\Z^2$.
For $\epsilon=0$, we similarly denote the true spatial derivatives by $\partial$ and $\partial^k$ respectively.

\begin{theorem}
\label{thm:lv-dcnablav-limit}
Let $\beta\in (0,8\pi)$ and $\lambda >0$.
For all $t>0$ there exist functions $\dot c_t^0 \nabla v_t^0 \colon C(\Omega) \to C^\infty(\Omega)$,
such that $\dot c_t^\epsilon \nabla v^\epsilon_t$ converges to $\dot c_t^0\nabla v_t^0$ in the sense that, for any 
$\varphi \in C(\Omega)$ and any $\varphi^\epsilon \in X_\epsilon$ such that $\sup_{x \in \Omega} |\varphi^\epsilon(x^\epsilon)-\varphi(x)| \to 0$,
\begin{equation}
\label{eq:lv-dcnablav-limit}
L_t^{2} \norm{\dot c_t^0\nabla v_t^0(\varphi)-\dot c_t^\epsilon\nabla v_t^\epsilon(\varphi^\epsilon)}_{L^\infty(\Omega_\epsilon)}
\to 0
,
\end{equation}
as $\epsilon \to 0$.
Moreover, we have, for $k\in \N_0$, $t\geq \epsilon^2$ and $\varphi \in X_\epsilon$,
\begin{align}
\label{eq:lv-dcnablav-eps-upper-bound}
0 \leq \big(\dot c_t^\epsilon \nabla v_t^\epsilon (\varphi) \big)_x
&\leq \lambda \sqrt{\beta} \int_{\Omega_\epsilon} \dot c_t^\epsilon(x,y)\wick{e^{\sqrt{\beta} \varphi_y}}_{L_t} dy,
\\
\label{eq:lv-partial-dcnablav-eps-bd}
L_t^{k} \norm{\partial_\epsilon^k\dot c_t^\epsilon \nabla v_t^\epsilon (\varphi)}_{L^\infty(\Omega_\epsilon)}
&\leq O_{\beta,k}(\theta_t \lambda L_t^{\beta/4\pi}) e^{\sqrt{\beta} \max_{\Omega_\epsilon} \varphi},
\end{align}
and similarly, for $t>0$ and $\varphi \in C(\Omega)$,
\begin{align}
\label{eq:lv-dcnablav-0-upper-bound}
0 \leq \big(\dot c_t^0 \nabla v_t^0 (\varphi) \big)_x
&\leq \lambda \sqrt{\beta} \int_{\Omega} \dot c_t^0(x,y)\wick{e^{\sqrt{\beta} \varphi_y}}_{L_t} dy ,
\\
\label{eq:dcnablav-0-bd}
L_t^{k} \norm{\partial^k\dot c_t^0\nabla v_t^0(\varphi)}_{L^\infty(\Omega)}
&\leq O_{\beta,k}(\theta_t \lambda L_t^{\beta/4\pi}) e^{\sqrt{\beta} \max_{\Omega} \varphi}.
\end{align}
\end{theorem}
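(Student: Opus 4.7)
My plan is to dispatch the pointwise bounds first using Lemma \ref{lem:lv-fkg} as a black box, and then attack the convergence \eqref{eq:lv-dcnablav-limit} via a Cameron--Martin shift. The lattice upper bound \eqref{eq:lv-dcnablav-eps-upper-bound} is immediate: the pointwise estimate \eqref{eq:lv-nablav-pointwise-upper} together with the positivity $\dot c_t^\epsilon(x,y) \geq 0$ (a key input from the heat-kernel structure \eqref{eq:GFF-scale-regularisation}) gives both the non-negativity and the upper bound upon writing $(\dot c_t^\epsilon \nabla v_t^\epsilon(\varphi))_x = \epsilon^2 \sum_y \dot c_t^\epsilon(x,y)\partial_{\phi_y} v_t^\epsilon(\varphi)$. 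For the derivative estimate \eqref{eq:lv-partial-dcnablav-eps-bd}, I would transfer the discrete spatial derivative onto the heat kernel, and combine \eqref{eq:lv-nablav-pointwise-upper} with \eqref{eq:wick-ordering-lt} and \eqref{eq:ct-on-diagonal} to bound the integrand pointwise by $\sqrt{\beta}\lambda L_t^{\beta/4\pi} e^{\sqrt{\beta}\max_{\Omega_\epsilon}\varphi} |\partial_{\epsilon,x}^k \dot c_t^\epsilon(x,y)|$. This reduces the problem to the standard lattice heat kernel estimate $\|\partial_{\epsilon,x}^k \dot c_t^\epsilon(x,\cdot)\|_{L^1(\Omega_\epsilon)} \lesssim_k L_t^{-k}\theta_t$, valid uniformly in $\epsilon>0$ and $t \geq \epsilon^2$.

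For the convergence \eqref{eq:lv-dcnablav-limit}, the natural strategy is to apply Gaussian integration by parts to the factor $\wick{e^{\sqrt{\beta}(\varphi_y+\zeta_y)}}_\epsilon$ in the numerator of \eqref{eq:nabla-v-t-definition}. Using the Cameron--Martin shift $h_y(x) = \sqrt{\beta} c_t^\epsilon(x,y)$, one obtains the representation
\begin{equation*}
\big(\dot c_t^\epsilon \nabla v_t^\epsilon(\varphi)\big)_x = \sqrt{\beta}\lambda \int_{\Omega_\epsilon} \dot c_t^\epsilon(x,y)\wick{e^{\sqrt{\beta}\varphi_y}}_{L_t} R_t^\epsilon(\varphi, y)\, dy,
\end{equation*}
where $R_t^\epsilon(\varphi, y) = \EE_{c_t^\epsilon}[e^{-v_0^\epsilon(\varphi+\zeta+h_y)}]/\EE_{c_t^\epsilon}[e^{-v_0^\epsilon(\varphi+\zeta)}] \in (0,1]$ (so that the FKG bound of Lemma \ref{lem:lv-fkg} amounts to discarding $R_t^\epsilon$). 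I would then define $\dot c_t^0 \nabla v_t^0(\varphi)$ by the analogous continuum representation and split the difference $\dot c_t^0 \nabla v_t^0(\varphi) - \dot c_t^\epsilon \nabla v_t^\epsilon(\varphi^\epsilon)$ into three contributions corresponding to (i) convergence of the massive heat kernels $\dot c_t^\epsilon(x,y) \to \dot c_t^0(x,y)$, (ii) convergence of the Wick-ordered factor $\wick{e^{\sqrt{\beta}\varphi_y}}_{L_t}$ via \eqref{eq:ct-on-diagonal} and uniform continuity of $\varphi$, and (iii) convergence of the ratio $R_t^\epsilon(\varphi, y) \to R_t^0(\varphi, y)$ uniformly in $y\in \Omega$. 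The smoothness $\dot c_t^0 \nabla v_t^0(\varphi) \in C^\infty(\Omega)$ then follows from the regularity of $\dot c_t^0(x,y)$ in $x$ for $t > 0$, and the continuum bounds \eqref{eq:lv-dcnablav-0-upper-bound} and \eqref{eq:dcnablav-0-bd} are recovered by passing to the limit in the corresponding lattice estimates.

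The main obstacle is (iii). For fixed $t>0$ and $y \in \Omega$, the shift $h_y(\cdot)$ converges pointwise to $\sqrt{\beta}c_t^0(\cdot, y)$, which is smooth away from $y$ but has a logarithmic singularity at $y$; consequently the shifted potential $v_0^\epsilon(\varphi + \zeta + h_y)$ behaves as a Gaussian multiplicative chaos integrated against a weight of order $|x-y|^{-\beta/2\pi}$ near the insertion point. Integrability against the GMC survives throughout the full subcritical regime $\beta \in (0,8\pi)$ by the cited convergence results \cite{MR829798,MR3652040,MR3475456,MR3274356}, and convergence of the shifted expectation for fixed $y$ then follows from GMC convergence and dominated convergence, using $v_0^\epsilon \geq 0$ to keep the integrand uniformly bounded by $1$. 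Uniformity in $y$ should be obtained from continuity of $y \mapsto h_y$ in an appropriate topology together with a tightness argument, with extra care required for $\beta \geq 4\pi$, where only the joint cancellation between numerator and denominator keeps the ratio $R_t^\epsilon$ under control.
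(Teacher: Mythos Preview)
Your treatment of the bounds \eqref{eq:lv-dcnablav-eps-upper-bound}--\eqref{eq:dcnablav-0-bd} matches the paper's: Lemma~\ref{lem:lv-fkg} plus the heat-kernel $L^1$ estimates of Lemma~\ref{lem:C-limit} is exactly how the paper proceeds, and the continuum bounds are inherited by passing to the limit in the lattice ones.

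For the convergence \eqref{eq:lv-dcnablav-limit}, however, you take a genuinely different route. The paper does \emph{not} perform a Cameron--Martin shift. Instead it keeps the representation
\[
\dot c_t^\epsilon \nabla v_t^\epsilon(\varphi^\epsilon,x)
= \frac{\lambda\sqrt{\beta}}{\E[e^{-\lambda M_t^\epsilon(e^{\sqrt{\beta}\varphi^\epsilon})}]}\,
\E\!\Big[M_t^\epsilon\big(\dot c_t^\epsilon(x,\cdot)e^{\sqrt{\beta}\varphi^\epsilon}\big)\,e^{-\lambda M_t^\epsilon(e^{\sqrt{\beta}\varphi^\epsilon})}\Big],
\]
and exploits the elementary deterministic bound $M_t^\epsilon(\Omega)\,e^{-c\,M_t^\epsilon(\Omega)} \leq C$ to make the integrand uniformly bounded throughout $\beta \in (0,8\pi)$. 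Convergence then reduces to the GMC convergence of Lemma~\ref{lem:small-scale-gmc-convergence} for \emph{continuous} test functions only, plus Lemma~\ref{lem:vt-convergence} for the denominator; uniformity in $x$ is obtained by an Arzel\`a--Ascoli argument on the family $x \mapsto \E[M_t^\epsilon(F_x^0)e^{-\lambda M_t^\epsilon(\cdots)}]$.

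Your Cameron--Martin representation is correct, and the observation $R_t^\epsilon \in (0,1]$ is a clean way to recover Lemma~\ref{lem:lv-fkg}. But the price is exactly the obstacle you flag: you must establish convergence of the shifted expectation, i.e.\ of GMC integrated against the singular weight $e^{\beta c_t^\epsilon(\cdot,y)} \sim |\cdot - y|^{-\beta/2\pi}$, uniformly in the insertion point $y$. For $\beta \geq 4\pi$ the expectation of this rooted mass is infinite, so $L^1$ methods fail and one needs almost-sure multifractal input for GMC that lies outside Lemma~\ref{lem:small-scale-gmc-convergence}. The paper's $m\,e^{-cm}$ trick is precisely what sidesteps this: by keeping the Wick factor inside the expectation and paired with the negative exponential, the singularity at $y$ never materialises. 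Your route can be completed, but it would require substantially more GMC machinery than the paper uses, and your closing paragraph does not yet supply it.
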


\begin{theorem}
\label{thm:dcnablav-bd-cont}
Let $k\in \N_0$ and let $t\geq \epsilon^2$ and $\varphi, \varphi' \in X_\epsilon$.
Then
\begin{align}
\label{eq:dcnablav-eps-cont}
L_t^{k} \norm{\partial_\epsilon^k \dot c_t^\epsilon \nabla v_t^\epsilon(\varphi)
&-\partial_\epsilon^k \dot c_t^\epsilon\nabla v_t^\epsilon(\varphi')}_{L^\infty(\Omega_\epsilon)}
\nnb
&\leq O_{\beta,k}( \theta_t \lambda L_t^{\beta/4\pi} )
\big( e^{\sqrt{\beta} \max_{\Omega_\epsilon}\varphi} + e^{\sqrt{\beta} \max_{\Omega_\epsilon} \varphi'} \big)
 \| \varphi - \varphi' \|_{L^\infty(\Omega_\epsilon)}
 .
\end{align}
Similarly, we have for $t>0$ and $\varphi, \varphi' \in C(\Omega)$
\begin{align}
\label{eq:dcnablav-0-cont}
L_t^{k} \norm{\partial^k \dot c_t^0 \nabla v_t^0(\varphi)
&-\partial^k \dot c_t^0 \nabla v_t^0(\varphi')}_{L^\infty(\Omega)}
\nnb
&\leq O_{\beta,k}( \theta_t \lambda L_t^{\beta/4\pi} )
\big( e^{\sqrt{\beta} \max_{\Omega}\varphi} + e^{\sqrt{\beta} \max_{\Omega}\varphi'} \big)
 \| \varphi - \varphi' \|_{L^\infty(\Omega)}.
\end{align}
\end{theorem}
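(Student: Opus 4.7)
The plan is to prove \eqref{eq:dcnablav-eps-cont} via the fundamental theorem of calculus along the linear interpolation $\varphi_s = (1-s)\varphi' + s\varphi$, $s \in [0,1]$, which reduces the problem to bounding the action of the Hessian. Writing $u = \varphi - \varphi'$, we have
\begin{equation*}
\partial_\epsilon^k \dot c_t^\epsilon \nabla v_t^\epsilon(\varphi) - \partial_\epsilon^k \dot c_t^\epsilon \nabla v_t^\epsilon(\varphi') = \int_0^1 \partial_\epsilon^k \dot c_t^\epsilon \bigl(\nabla^2 v_t^\epsilon(\varphi_s)\, u\bigr) \, ds,
\end{equation*}
where $\partial_\epsilon^k$ acts only in the first (heat-kernel) coordinate and commutes with the other operations. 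The continuum bound \eqref{eq:dcnablav-0-cont} will then follow either from the same argument with sums replaced by integrals or by passing to the limit $\epsilon \to 0$ using Theorem~\ref{thm:lv-dcnablav-limit}.

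Differentiating the identity $v_t^\epsilon(\varphi) = -\log \EE_{c_t^\epsilon}[e^{-v_0^\epsilon(\varphi+\zeta)}]$ twice yields
\begin{equation*}
\bigl(\nabla^2 v_t^\epsilon(\varphi)\bigr)_{yz} = \lambda\beta\, \tilde\delta_{yz}\, \EE_{\mathrm{tilt}}\bigl[\wick{e^{\sqrt{\beta}(\varphi_y+\zeta_y)}}_\epsilon\bigr] - \lambda^2\beta\, \mathrm{Cov}_{\mathrm{tilt}}\bigl(\wick{e^{\sqrt{\beta}(\varphi_y+\zeta_y)}}_\epsilon, \wick{e^{\sqrt{\beta}(\varphi_z+\zeta_z)}}_\epsilon\bigr),
\end{equation*}
where $\tilde\delta_{yz} = \delta_{yz}/\epsilon^2$ and the tilted measure is $\propto e^{-v_0^\epsilon(\varphi+\zeta)}\, d\nu_{c_t^\epsilon}(\zeta)$. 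The diagonal contribution to $\partial_\epsilon^k \dot c_t^\epsilon \nabla^2 v_t^\epsilon(\varphi_s)\, u$ equals $\lambda\beta \int \partial_\epsilon^k \dot c_t^\epsilon(x,y)\, u_y\, \EE_{\mathrm{tilt}_s}[\wick{e^{\sqrt{\beta}(\varphi_{s,y}+\zeta_y)}}_\epsilon]\, dy$; bounding $\EE_{\mathrm{tilt}_s}[\cdot]$ by $\wick{e^{\sqrt{\beta}\varphi_{s,y}}}_{L_t}$ via Lemma~\ref{lem:lv-fkg} and using the heat-kernel gradient estimate $\int |\partial_\epsilon^k \dot c_t^\epsilon(x,y)|\, dy \lesssim L_t^{-k}\theta_t$ yields exactly the desired bound of order $\lambda \theta_t L_t^{\beta/4\pi - k}(e^{\sqrt{\beta}\max\varphi}+e^{\sqrt{\beta}\max\varphi'})\|\varphi-\varphi'\|_{L^\infty}$, since $\max\varphi_s \leq \max(\max\varphi, \max\varphi')$.

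The main difficulty lies in the covariance contribution. Two facts about the tilted measure make it tractable: first, it is positively associated (FKG) since the Gaussian base is associated and $e^{-v_0^\epsilon(\varphi+\zeta)}$ factorises into monotone single-site functions of $\zeta$, so the covariance is non-negative; second, $Z(\varphi) = \EE_{c_t^\epsilon}[e^{-v_0^\epsilon(\varphi+\zeta)}]$ is log-concave in $\varphi$ by Pr\'ekopa--Leindler, hence $v_t^\epsilon$ is convex and $\nabla^2 v_t^\epsilon \geq 0$ as an operator on $X_\epsilon$. Testing the convexity inequality against a function $a$ yields the a priori variance bound $\mathrm{Var}_{\mathrm{tilt}}\bigl(\int a_z \wick{e^{\sqrt{\beta}(\varphi_z+\zeta_z)}}_\epsilon\, dz\bigr) \leq \lambda^{-1} \int a_z^2\, \EE_{\mathrm{tilt}}[\wick{e^{\sqrt{\beta}(\varphi_z+\zeta_z)}}_\epsilon]\, dz$, which absorbs one factor of $\lambda$ from the $\lambda^2$ prefactor in front of the covariance. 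Applying this with $a = \partial_\epsilon^k \dot c_t^\epsilon(x,\cdot)$ and $a = u$, combined with Cauchy--Schwarz and another use of Lemma~\ref{lem:lv-fkg}, bounds the covariance contribution at the required level. The subtlety I expect to be the main obstacle is that pointwise moments of the Gaussian multiplicative chaos $\wick{e^{\sqrt{\beta}(\varphi_y + \zeta_y)}}_\epsilon$ diverge as $\epsilon \to 0$, so the proof relies essentially on the smoothing by $\partial_\epsilon^k \dot c_t^\epsilon$ on one side and the boundedness of $u$ on the other to keep all integrated GMC-type quantities uniformly bounded in $\epsilon$ throughout the whole range $\beta \in (0, 8\pi)$.
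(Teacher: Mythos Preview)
Your overall strategy---fundamental theorem of calculus along the segment $\varphi_s$ and reduction to a bound on $\partial_\epsilon^k\dot c_t^\epsilon\,\He v_t^\epsilon(\varphi_s)\,u$---is exactly the paper's. Your treatment of the diagonal term is also correct and matches the paper.

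The gap is in the covariance term. Your plan is Cauchy--Schwarz followed by the variance bound coming from convexity, i.e.\ $\mathrm{Var}_{\mathrm{tilt}}\bigl(\int a_z\,W_z\,dz\bigr)\le \lambda^{-1}\int a_z^2\,\EE_{\mathrm{tilt}}[W_z]\,dz$ with $W_z=\wick{e^{\sqrt\beta(\varphi_z+\zeta_z)}}_\epsilon$. Carrying this out with $a=\partial_\epsilon^k\dot c_t^\epsilon(x,\cdot)$ and $a=u$ produces
\[
\lambda^2\beta\,\bigl|\mathrm{Cov}_{\mathrm{tilt}}(A,B)\bigr|
\;\le\;
\lambda\beta\,L_t^{\beta/4\pi}e^{\sqrt\beta\max\varphi_s}\,
\|\partial_\epsilon^k\dot c_t^\epsilon(x,\cdot)\|_{L^2(\Omega_\epsilon)}\,\|u\|_{L^\infty},
\]
and for $t\ge\epsilon^2$ one has $\|\partial_\epsilon^k\dot c_t^\epsilon(x,\cdot)\|_{L^2}\sim \theta_t\,L_t^{-k-1}$, not $\theta_t\,L_t^{-k}$. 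So your bound is off by a factor $L_t^{-1}$ from the statement of the theorem; it does \emph{not} reach ``the required level''. The loss is structural: Cauchy--Schwarz forces an $L^2$ norm of the heat kernel where only its $L^1$ norm is affordable.

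The paper avoids this by not passing through Cauchy--Schwarz on the covariance at all. Instead it first proves an \emph{entrywise} bound on the Hessian (Lemma~\ref{lem:he-bound-entries}): from $0\le \He v_t^\epsilon(\varphi)\le \lambda\beta D^\epsilon(\varphi)$ as quadratic forms (convexity via Brascamp--Lieb plus positive semidefiniteness of the covariance block), testing against indicators $\mathbf 1_x\pm\mathbf 1_y$ yields
\[
\bigl|\bigl(\He v_t^\epsilon(\varphi)\bigr)_{x,y}\bigr|
\;\le\;
\tfrac{\lambda\beta}{2}\bigl(\wick{e^{\sqrt\beta\varphi_x}}_{L_t}+\wick{e^{\sqrt\beta\varphi_y}}_{L_t}\bigr).
\]
This pointwise control is then fed into the $L^\infty\!\to\!L^\infty$ operator norm of $\dot c_t^\epsilon\He v_t^\epsilon$ together with $\sup_x\|\partial_\epsilon^k\dot c_t^\epsilon(x,\cdot)\|_{L^1}\lesssim L_t^{-k}\theta_t$, which gives exactly the stated estimate without any $L_t^{-1}$ loss. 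In short, the decisive idea you are missing is to turn the two-sided operator inequality on the Hessian into an entrywise bound before integrating against the heat kernel, rather than to Cauchy--Schwarz the covariance.
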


\begin{remark}
For $\epsilon>0$, $t\geq \epsilon^2$ and $\varphi \in X_\epsilon$ the functions $\dot c_t^\epsilon \nabla v_t^\epsilon(\varphi)$ are elements of $X_\epsilon$,
and we denote their evaluation at $x\in \Omega_\epsilon$ by
\begin{equation}
\big(\dot c_t^\epsilon \nabla v_t^\epsilon(\varphi) \big)_x \equiv \dot c_t^\epsilon \nabla v_t^\epsilon(\varphi, x) .
\end{equation}
Similarly, for $t>0$, $\varphi \in C(\Omega)$ and $x\in \Omega$, we write
\begin{equation}
\big(\dot c_t^0 \nabla v_t^0(\varphi) \big)_x \equiv \dot c_t^0 \nabla v_t^0(\varphi, x) .
\end{equation}
\end{remark}

\subsection{Gaussian multiplicative chaos for the small scale Gaussian field}

The convergence of the renormalised potential and its gradient in the previous section involve the expectation over the exponential of a centred Gaussian field with covariance $c_t^\epsilon$,
which we think of as the short scale part of the Gaussian free field.
As we show below in Lemma \ref{lem:cov-ct}, this field is log-correlated for any $t>0$,
and thus, the convergence of $v_t^\epsilon$ and its gradient are closely connected to the theory of Gaussian multiplicative chaos for log-correlated fields.
The purpose of this section is to recall standard results on this object and adapt it to our setting.
These results are a key input to the construction of the functions $\dot c_t^0 \nabla v_t^0$ above,
and hence, the proof of Theorem \ref{thm:lv-dcnablav-limit}.

In order to discuss the convergence we compare functions defined on the discretised torus $\Omega_\epsilon$ and on the continuum torus $\Omega$,
for which we use the piecewise constant extension $E_\epsilon$ defined as follows.
For $f\colon \Omega_\epsilon^n \to \R$, $n\in \N$ define $E_\epsilon f\colon \Omega^n \to \R$, 
\begin{equation}
\label{eq:def-piecewise-constant-extension}
E_\epsilon f(x_1, \ldots, x_n) = f(x_1^\epsilon, \ldots, x_n^\epsilon)
\end{equation}
with $x^\epsilon$ as above Theorem \ref{thm:lv-dcnablav-limit}.
In fact, we are only concerned with the cases $n=1$ and $n=2$.

Let now $(\GFFtd)_{\epsilon>0}$ be centred Gaussian fields on $\Omega_\epsilon$ with covariance $c_t^\epsilon$.
We extend $\GFFtd$ to $\Omega$ using the piecewise constant extension $E_\epsilon$ to obtain a field
$E_\epsilon \GFFtd$ with covariance
\begin{equation}
\cov (E_\epsilon \GFFtd) = E_\epsilon c_t^\epsilon,
\end{equation}
i.e., also the covariance of $E_\epsilon \GFFtd$ is piecewise constant.

Let further $c_t^0(x,y) = \int_0^t \dot c_s^0(x,y)ds$ with $\dot c_s^0$ being the continuum heat kernel on $\Omega$ defined in \eqref{eq:dc0}. 
We note that, for any $t>0$, $c_t^0$ is a smooth function on $\Omega\times \Omega \setminus \{ (x,x)\colon x\in \Omega \}$ and that $c_t^0(x,x) = \infty$.
Moreover, for $x,y\in \Omega$, $x\neq y$, we have
\begin{equation}
E_\epsilon c_t^\epsilon(x,y) = c_t^\epsilon(x^\epsilon, y^\epsilon) \to c_t^0(x,y)
\end{equation}
as $\epsilon \to 0$,
and, as recorded in Lemma \ref{lem:C-limit},
this convergence is uniform on compact subsets of $x\neq y$.
The following statement shows that, for every $t>0$, the field $(E_\epsilon \GFFtd)_{\epsilon \geq 0}$ is log-correlated.

\begin{lemma}
\label{lem:cov-ct}
Let $t>0$ and let $c_t^\epsilon$ be as in \eqref{eq:GFF-scale-regularisation}.
Then we have for $\epsilon >0$ and  $x,y \in \Omega$
\begin{align}
E_\epsilon c_t^\epsilon(x,x) &= \frac{1}{2\pi} \log \frac{L_t}{\epsilon} + O_m(1) ,
\\
E_\epsilon c_t^\epsilon(x,y) &= \frac{1}{2\pi}\log\frac{L_t}{|x-y|\vee \epsilon} + O_m(1) ,
\end{align}
where $O_m(1)$ denotes a constant which depends on $m$ and remains bounded as $\epsilon \to 0$ .
\end{lemma}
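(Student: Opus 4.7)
The plan is to reduce the two estimates to bounds on the lattice heat kernel $\dot c_s^\epsilon$ through the representation $c_t^\epsilon(u,v) = \int_0^t \dot c_s^\epsilon(u,v)\, ds$. By the definition of the piecewise-constant extension one has $E_\epsilon c_t^\epsilon(x,y) = c_t^\epsilon(x^\epsilon,y^\epsilon)$, and since $|x^\epsilon - y^\epsilon|$ differs from $|x-y|$ by at most $\sqrt{2}\,\epsilon$, the quantities $|x-y|\vee \epsilon$ and $|x^\epsilon - y^\epsilon|\vee \epsilon$ are comparable up to an absolute constant. Hence it suffices to prove
\begin{equation*}
c_t^\epsilon(u,v) = \frac{1}{2\pi}\log\frac{L_t}{|u-v|\vee \epsilon} + O_m(1), \qquad u,v\in \Omega_\epsilon,
\end{equation*}
which subsumes the diagonal statement as the special case $u = v$.

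The main input is the standard two-sided Gaussian estimate for the lattice heat kernel on the torus, namely
\begin{equation*}
\dot c_s^\epsilon(u,v) \lesssim \frac{e^{-m^2 s}}{s \vee \epsilon^2}\, e^{-c|u-v|^2/s},
\end{equation*}
together with a matching lower bound once $s \gtrsim |u-v|^2 \vee \epsilon^2$. These follow from the local central limit theorem for the continuous-time random walk on $\epsilon\Z^2$, or equivalently by comparison with the continuum torus heat kernel $\dot c_s^0$ from \eqref{eq:dc0}: for $s \lesssim 1/m^2$ only the $n=0$ summand in \eqref{eq:dc0} matters (the others decay as $e^{-1/(m^2 s)}$ or better), while for $s \gtrsim 1/m^2$ the mass factor makes the remaining tail contribute $O_m(1)$. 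Integrating in $s \in [0,t]$ and splitting into the three ranges $s \leq (|u-v|\vee\epsilon)^2$, $(|u-v|\vee\epsilon)^2 \leq s \leq L_t^2$, and $s > L_t^2$, one finds: the first contributes $O(1)$ (by the Gaussian cutoff off the diagonal, and by $\int_0^{\epsilon^2}\epsilon^{-2}\,ds = 1$ on it); the third is $O_m(1)$ by the exponential mass factor combined with $L_t^2 \asymp t \wedge m^{-2}$; and the middle range produces
\begin{equation*}
\int_{(|u-v|\vee\epsilon)^2}^{L_t^2} \frac{ds}{4\pi s} + O_m(1) = \frac{1}{2\pi}\log\frac{L_t}{|u-v|\vee \epsilon} + O_m(1).
\end{equation*}

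The main obstacle is pinning down the \emph{sharp} constant $\frac{1}{2\pi}$ rather than just a matching order of magnitude: this requires the two-sided asymptotic for the lattice heat kernel on the torus at scales $\epsilon^2 \ll s \ll 1/m^2$, where it must agree with the free Euclidean kernel $(4\pi s)^{-1} e^{-|u-v|^2/4s}$ up to errors that are negligible after integration. Book-keeping the three sources of error that are absorbed into the uniform $O_m(1)$ remainder, namely the periodization of the torus kernel, the lattice discretization near the diagonal, and the crossover at the mass scale $1/m$, is where the concrete work lies; each of them is exponentially small in the relevant regime and thus harmless once collected.
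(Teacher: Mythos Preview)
Your argument is correct and is in fact the standard route to such log-correlated covariance bounds. The paper does not spell out a proof at all: it simply invokes Lemma~\ref{lem:C-limit} in the appendix (quoted verbatim from \cite{MR4399156}), where \eqref{e:c0-limit} encodes the diagonal asymptotic and \eqref{e:cxy-limit} the off-diagonal convergence to $c_t^0$. Your direct heat-kernel integration---split $[0,t]$ at $(|u-v|\vee\epsilon)^2$ and $L_t^2$, use the Gaussian/mass tails to make the outer pieces $O_m(1)$, and use the local CLT comparison $\dot c_s^\epsilon(u,v) \sim (4\pi s)^{-1}$ in the middle to extract the sharp $\tfrac{1}{2\pi}$ coefficient---is precisely the computation that sits behind that cited lemma, so your proof and the paper's are the same at the level of actual content; you have simply unpacked what the paper outsources.
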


\begin{proof}
The first statement is exactly Lemma \ref{lem:C-limit}, specifically \eqref{e:c0-limit}.
The second statement follows from \eqref{e:cxy-limit}.
\end{proof}

For the following arguments we require the fields $\GFFtd$, and hence $E_\epsilon \GFFtd$,
to be realised on the same probability space in such a way that there exists a distribution valued random field $\GFFtc$
so that, for all $f\in L^2(\Omega)$, 
\begin{equation}
\label{eq:tgff-test-function-convergence-in-probability}
\int_\Omega E_\epsilon \GFFtd (x) f (x) dx \to \int_\Omega \Phi^0(x) f(x) dx
\end{equation}
in probability as $\epsilon \to 0$.
This can be achived by representing the fields as a random Fourier series.
Following in large parts the notation in \cite[Section 2.4]{MR4399156}, we write $\Omega^*= 2\pi \Z^2$ for the Fourier dual of $\Omega$ and denote by $\hat q^0(k)$, $k\in \Omega^*$ the Fourier multipliers of $q^0\colon \Omega \to \R$ with $\big[q^0 * q^0\big](x-y) =  c_t^0$. 
Similarly, we write $\Omega_\epsilon^*= \{k\in 2\pi \Z^2 \colon -\pi/\epsilon <k_i\leq \pi/\epsilon \}$ for the Fourier dual of $\Omega_\epsilon^*$ and $\hat q^\epsilon(k)$ for the Fourier multipliers of the function $q^\epsilon \colon \Omega_\epsilon \to \R$ with $\big[q^\epsilon* q^\epsilon\big](x-y) = c_t^\epsilon$, where here, $*$ denotes the discrete convolution on $\Omega_\epsilon$.
Since $c_t^0$ and $c_t^\epsilon$ are symmetric function,
the Fourier multipliers are non-negative,
and it can be further shown that, for $\epsilon \geq 0$ and with the convention $\Omega_0^*= \Omega^*$,
\begin{align}
\label{eq:fourier-multipliers-small-scale-gff-estimate}
0 \leq \hat q^\epsilon(k) \lesssim \frac{1}{\sqrt{|k|^2 + m^2}}, \qquad k\in \Omega_\epsilon^*
\end{align}

Now, let $\{X(k), k\in \Omega^*\}$ be a collection of independent standard complex Gaussians subject to $\bar X(k) = X(-k)$ and define, for $\epsilon >0$,
\begin{equation}
\label{eq:fourier-series-small-scales-discrete}
\GFFtd(x) = \sum_{k\in \Omega_\epsilon^*} \hat q^\epsilon(k)X(k) e^{ik\cdot x} , \qquad x\in \Omega_\epsilon,
\end{equation}
which is a centred Gaussian fields on $\Omega_\epsilon$ with covariance $c_t^\epsilon$.
Similarly, we define
\begin{equation}
\label{eq:fourier-series-small-scales-continuum}
\GFFtc(x) = \sum_{k\in \Omega*} \hat q^0(k) X(k) e^{ik\cdot x}, \qquad x\in \Omega.
\end{equation}

The following result establishes the convergence of $E_\epsilon \GFFtd$ to $\GFFtc$, and thus verfies \eqref{eq:tgff-test-function-convergence-in-probability}.
Here, we denote by $\avg{f,g}$ the pairing between functions $f,g\in L^2(\Omega)$.
We further use this notation, if $f$ is distribution valued.
\begin{lemma}
\label{lem:tgff-convergence-on-test-functions}
For any $f\in L^2(\Omega)$ we have
\begin{align}
\E\big[  \big( \avg{E_\epsilon \GFFtd, f}- \avg{\GFFtc, f}    \big)^2   \big] \to 0
\end{align}
as $\epsilon \to 0$.
\end{lemma}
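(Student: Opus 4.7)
My plan is to exploit the shared Fourier representations \eqref{eq:fourier-series-small-scales-discrete} and \eqref{eq:fourier-series-small-scales-continuum}, which realise $\GFFtd$ and $\GFFtc$ on the same probability space through the common family $\{X(k)\}_{k \in \Omega^*}$, and then reduce the claim to a dominated convergence argument on $\ell^2(\Omega^*)$.

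First, I would insert the two Fourier series into the pairings with $f$. Setting
\[
g_\epsilon(k) = \int_\Omega e^{ik\cdot x^\epsilon} f(x)\, dx,\qquad g_0(k) = \int_\Omega e^{ik\cdot x} f(x)\, dx,
\]
and $a_\epsilon(k) = \hat q^\epsilon(k) g_\epsilon(k) 1_{k\in \Omega_\epsilon^*}$, $a_0(k) = \hat q^0(k) g_0(k)$, a Fubini argument justified by \eqref{eq:fourier-multipliers-small-scale-gff-estimate} together with the crude bound $|g_\epsilon(k)|, |g_0(k)| \leq \|f\|_{L^1(\Omega)} \leq \|f\|_{L^2(\Omega)}$ (valid since $|\Omega|=1$) lets me write
\[
\avg{E_\epsilon \GFFtd, f} = \sum_{k\in \Omega^*} a_\epsilon(k) X(k),\qquad \avg{\GFFtc, f} = \sum_{k\in \Omega^*} a_0(k) X(k),
\]
as convergent series in $L^2$ of the underlying probability space.

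Second, orthogonality of the $X(k)$ subject to the Hermitian constraint $\overline{X(k)} = X(-k)$ (which at worst contributes a fixed combinatorial factor when pairing $k$ with $-k$, and plays no role in the summability) yields
\[
\E\big[ |\avg{E_\epsilon \GFFtd, f} - \avg{\GFFtc, f}|^2 \big] \lesssim \sum_{k\in \Omega^*} |a_\epsilon(k)-a_0(k)|^2,
\]
so it suffices to show that this $\ell^2$--norm vanishes. Pointwise convergence $a_\epsilon(k) \to a_0(k)$ for every fixed $k\in \Omega^*$ splits into two parts: $g_\epsilon(k)\to g_0(k)$ follows from $x^\epsilon\to x$ pointwise together with $|f| \in L^1(\Omega)$ and dominated convergence, while $\hat q^\epsilon(k)\to \hat q^0(k)$ follows by diagonalising $-\Delta^\epsilon$: in terms of the eigenvalues $\lambda_k^\epsilon = 4\epsilon^{-2}(\sin^2(\epsilon k_1/2)+\sin^2(\epsilon k_2/2))$,
\[
\hat q^\epsilon(k)^2 = \int_0^t e^{-s(\lambda_k^\epsilon+m^2)}\,ds \longrightarrow \int_0^t e^{-s(|k|^2+m^2)}\,ds = \hat q^0(k)^2,
\]
since $\lambda_k^\epsilon \to |k|^2$ for each fixed $k$, and taking positive square roots preserves the limit.

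Finally, I would invoke dominated convergence on $\ell^2(\Omega^*)$. The bound \eqref{eq:fourier-multipliers-small-scale-gff-estimate} combined with $|g_\epsilon(k)|, |g_0(k)| \leq \|f\|_{L^2(\Omega)}$ gives
\[
|a_\epsilon(k)-a_0(k)|^2 \lesssim \frac{\|f\|_{L^2(\Omega)}^2}{|k|^2+m^2},\qquad k\in\Omega^*,
\]
which is summable over $\Omega^*=2\pi \Z^2$. Dominated convergence then collapses the sum to $0$, completing the proof. The only real piece of bookkeeping I anticipate is the accounting for the Hermitian constraint $\overline{X(k)} = X(-k)$ when computing the $L^2$ second moment, but this is the standard ``pair $k$ with $-k$'' argument and does not affect the summability bound.
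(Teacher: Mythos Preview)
Your Fourier approach is natural and differs from the paper's covariance-expansion argument, but the dominated convergence step contains a genuine gap. You assert that the majorant $(|k|^2+m^2)^{-1}$ is summable over $\Omega^* = 2\pi\Z^2$; in dimension two this is false. By integral comparison,
\[
\sum_{k\in 2\pi\Z^2}\frac{1}{|k|^2+m^2}\asymp \int_1^\infty \frac{r}{r^2+m^2}\,dr = \infty,
\]
so dominated convergence on $\ell^2(\Omega^*)$ does not apply with this bound.

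The loss comes from the crude estimate $|g_\epsilon(k)|,|g_0(k)| \leq \|f\|_{L^1(\Omega)}$, which throws away all decay in $k$. A repair along your lines is possible: $g_0(k)$ is the $k$-th Fourier coefficient of $f$, hence $(g_0(k))_k \in \ell^2(\Omega^*)$ by Parseval, and a discrete Parseval identity for the box-averaged function $z\mapsto \epsilon^{-2}\int_{z+(-\epsilon/2,\epsilon/2]^2} f$ yields a uniform-in-$\epsilon$ bound on $\sum_{k\in\Omega_\epsilon^*}|g_\epsilon(k)|^2$. One can then split the sum $\sum_k|a_\epsilon(k)-a_0(k)|^2$ at a cutoff $|k|\leq K$, treat low modes by your pointwise convergence argument, and control the high-mode tail uniformly in $\epsilon$ via $|\hat q^\epsilon(k)|^2 \lesssim (|k|^2+m^2)^{-1}\leq K^{-2}$ combined with the $\ell^2$ bounds on $g_\epsilon,g_0$; finally let $K\to\infty$. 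The paper avoids this altogether by expanding the second moment into $\E[\langle E_\epsilon\GFFtd,f\rangle^2]$, $\E[\langle\GFFtc,f\rangle^2]$ and the cross term, showing each converges to $\iint c_t^0(x,y)f(x)f(y)\,dx\,dy$; there integrability is supplied by $f\in L^2(\Omega)$ against the merely logarithmic singularity of $c_t^0$ in physical space.
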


\begin{proof}
In what follows, we choose the following representation of the piecewise constant embedding.
For $f\in X_\epsilon$ and $x\in \Omega$, we have
\begin{equation}
E_\epsilon f (x) = \sum_{z\in \Omega_\epsilon} f(z) \mathbf{1}_{(-\epsilon/2, \epsilon/2]^2}(x-z).
\end{equation}
Then we note that by Fubini's theorem
\begin{align}
\E\big[ \big(\avg{E_\epsilon \GFFtd,f}\big)^2 \big]
&= \E\big[
\int_{\Omega} E_\epsilon \GFFtd(x) dx
\int_{\Omega} E_\epsilon\GFFtd (y) f(y) dy
\big]
\nnb
&= \int_{\Omega} \int_{\Omega} \sum_{z\in \Omega_\epsilon} \sum_{z'\in \Omega_\epsilon} \mathbf{1}_{(-\epsilon/2, \epsilon/2]^2}(x-z) \mathbf{1}_{(-\epsilon/2, \epsilon/2]^2}(y-z') f(x) f(y)
\E[ \GFFtd(z) \GFFtd(z')  ] dx dy
\nnb
&= \int_{\Omega} \int_{\Omega} \sum_{z\in \Omega_\epsilon} \sum_{z'\in \Omega_\epsilon} \mathbf{1}_{(-\epsilon/2, \epsilon/2]^2}(x-z) \mathbf{1}_{(-\epsilon/2, \epsilon/2]^2}(y-z') f(x) f(y)
c_t^\epsilon(z,z') dx dy
\nnb
&= \int_{\Omega} \int_{\Omega} c_t^\epsilon(x^\epsilon,y^\epsilon) f(x) f(y)
 dx dy 
\to \int_{\Omega} \int_{\Omega} c_t^0(x,y)f(x) f(y) dx dy,
\end{align}
as $\epsilon \to 0$, where the last convergence follows from the uniform convergence $c_t^\epsilon(x^\epsilon,y^\epsilon) \to c_t^0(x,y)$ on compact subsets of $x\neq y$.
Similarly, we have
\begin{align}
\E\big[ \avg{E_\epsilon \GFFtd, f} \avg{\GFFtc,f} \big] 
&= \E\Big[\int_{\Omega}  \sum_{z\in \Omega_\epsilon} \mathbf{1}_{(-\epsilon/2, \epsilon/2]^2}(x-z) \GFFtd(z) f(x) dx
\int_{\Omega}\sum_{k\in \Omega^*} e^{iky} \hat q^0(k) X(k) f(y) dy
\Big]
\nnb
&=
\int_{\Omega} \int_{\Omega} \sum_{z\in \Omega_\epsilon} \mathbf{1}_{(-\epsilon/2, \epsilon/2]^2}(x-z) f(x) f(y)
\sum_{k\in \Omega_\epsilon^*}
e^{ik(z-y)} \hat q^\epsilon(k) \hat q^0(k) 
dxdy
\nnb
&\to \int_{\Omega} \int_{\Omega}  f(x) f(y) \sum_{k\in \Omega_\epsilon^*} e^{ik(x-y)}|\hat q^0(k)|^2 dx dy
= \int_{\Omega} \int_{\Omega} f(x) f(y) c_t^0(x,y) dx dy ,
\end{align}
where we used the dominated convergence theorem to get the last convergence as $\epsilon \to 0$ together with the estimates \eqref{eq:fourier-multipliers-small-scale-gff-estimate}, as well as the fact that $f\in L^2(\Omega)$.

Altogether we have, as $\epsilon \to 0$,
\begin{equation}
\E\big[  \big( \avg{E_\epsilon \GFFtd, f}- \avg{\GFFtc, f}    \big)^2   \big]
= \E \big[ \big(\avg{E_\epsilon \GFFtd,f}\big)^2   \big]
+
\E \big[ \big(\avg{\GFFtc,f}\big)^2   \big] 
- 2  \E\big[ \avg{E_\epsilon \GFFtd, f} \avg{\GFFtc,f} \big] \to 0
\end{equation}
as claimed.
\end{proof}

Note that for every $\epsilon>0$ and $x\in \Omega$, $E_\epsilon \GFFtd(x)$ is a well-defined random variable,
which allows us to define random Borel measures $M_t^\epsilon$ on $\Omega$ by
\begin{equation}
\label{eq:gmc-small-scale-approximation-piecewise-constant}
M_t^\epsilon(dx) =  e^{\sqrt{\beta}E_\epsilon\GFFtd(x) - \beta/2 \var(E_\epsilon\Phi^\epsilon(x)) } dx =  e^{\sqrt{\beta}\GFFtd(x^\epsilon) - \frac{\beta}{2} \var(\GFFtd(x^\epsilon)) } dx .
\end{equation}

The first standard result gives the convergence of $M_t^\epsilon$ as $\epsilon \to 0$ to a random Borel measure $M_t$ on $\Omega$,
which we interprete as the Gaussian multiplicative chaos associated with a Gaussian field of covariance $c_t^0$. 

\begin{lemma}
\label{lem:small-scale-gmc-convergence}
Let $\beta\in (0, 8\pi)$ and let, for $t>0$, $M_t^\epsilon$ be defined as in \eqref{eq:gmc-small-scale-approximation-piecewise-constant}.
Then there is a random Borel measure $M_t(dx)$ on $\Omega$ with $M_t(A) >0$ and $M_t(A) < \infty$ a.s.\ for any open and non-empty set $A\subseteq \Omega$,
such that for any $\psi \in C(\Omega)$
\begin{equation}
\label{eq:l1-convergence-gmc-on-psi}
\int_{\Omega} \psi d M_t^\epsilon \to \int_{\Omega} \psi d M_t
\end{equation}
in $L^1$ as $\epsilon \to 0$.
\end{lemma}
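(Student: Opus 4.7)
The plan is to reduce the statement to standard convergence of Gaussian multiplicative chaos for a log-correlated field. By Lemma \ref{lem:cov-ct} the target covariance $c_t^0$ has the form $-\frac{1}{2\pi}\log|x-y| + g_t(x,y)$ with $g_t$ continuous on $\Omega\times\Omega$, so $\Phi^0$ is log-correlated at the normalisation whose subcritical threshold is exactly $\beta<8\pi$. Moreover, $E_\epsilon c_t^\epsilon$ agrees with $c_t^0$ up to a uniformly bounded error (with the expected logarithmic cutoff at scale $\epsilon$) and converges locally uniformly off the diagonal; this is the structure of a ``good'' regularisation in the sense of GMC theory.

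For the $L^2$ regime $\beta\in(0,4\pi)$ I would argue directly. The Gaussian computation gives
\begin{equation}
\E\Big[\int_\Omega\psi\,dM_t^\epsilon\cdot\int_\Omega\psi\,dM_t^{\epsilon'}\Big]
=\int_\Omega\int_\Omega\psi(x)\psi(y)\,e^{\beta\,\cov(E_\epsilon\Phi^\epsilon(x),E_{\epsilon'}\Phi^{\epsilon'}(y))}\,dx\,dy,
\end{equation}
and from Lemma \ref{lem:cov-ct} the integrand is dominated by $C|x-y|^{-\beta/(2\pi)}$ away from $x=y$, which is integrable exactly when $\beta<4\pi$. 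Dominated convergence then produces a Cauchy sequence in $L^2$, hence an $L^2$-limit for $\int_\Omega \psi\,dM_t^\epsilon$.

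For the full subcritical range $\beta\in(0,8\pi)$ I would appeal to Kahane's convexity inequality. Couple $E_\epsilon\Phi^\epsilon$ with a reference regularisation $\tilde\Phi^\epsilon$ of a continuum log-correlated field carrying the same $-\frac{1}{2\pi}\log|x-y|$ singularity, chosen so that the two covariances differ by a uniformly bounded continuous function (Lemma \ref{lem:cov-ct} supplies this). Kahane's inequality then yields two-sided comparisons of $\E F(M_t^\epsilon(A))$ and $\E F(\tilde M_t^\epsilon(A))$ for convex (or concave) $F$, giving in particular uniform $L^{1+\delta}$ bounds for some $\delta>0$ throughout the subcritical regime. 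Combined with the known non-trivial limit of $\tilde M_t^\epsilon$ from \cite{MR3652040,MR3475456,MR829798}, this gives tightness and identification of a limit $M_t$; the first-moment identity $\E\int\psi\,dM_t^\epsilon=\int\psi\,dx$ together with the $L^{1+\delta}$ bound provides uniform integrability and upgrades convergence in probability to $L^1$ as claimed.

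Finally, $M_t(A)<\infty$ a.s.\ on a non-empty open $A$ follows from $\E\,M_t(\Omega)=|\Omega|$, and $M_t(A)>0$ a.s.\ from a Kolmogorov $0$--$1$ law for the tail $\sigma$-algebra generated by the scale decomposition of $\Phi^0$ combined with non-triviality of the limit. The main obstacle is the passage through $\beta=4\pi$ where the direct $L^2$ method breaks down; Kahane's inequality is the standard device to bridge the gap, and all of its hypotheses here are supplied by Lemma \ref{lem:cov-ct}.
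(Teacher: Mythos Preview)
Your approach and the paper's differ substantially in structure, and there is a genuine gap in your ``identification of a limit'' step.

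The paper does not re-derive the GMC construction but instead applies a black-box result, \cite[Theorem~25]{MR3475456} (Shamov), which yields the $L^1$ convergence directly once three hypotheses are verified: (i) uniform integrability of $(M_t^\epsilon(\Omega))_{\epsilon>0}$, (ii) convergence of the underlying Gaussian fields when tested against $L^2$ functions (this is Lemma~\ref{lem:tgff-convergence-on-test-functions}, which you do not use), and (iii) convergence of the covariance kernels in measure on $\Omega\times\Omega$. For (i), the paper argues $L^2$ for $\beta<4\pi$ and refers to \cite{BerestyckiPowell2025GFFLv} for $\beta\in[4\pi,8\pi)$; (iii) is a short computation from the locally uniform convergence of $c_t^\epsilon$ to $c_t^0$ off the diagonal. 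This route is short precisely because the hard uniqueness/convergence content is packaged inside Shamov's theorem.

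Your route via Kahane's convexity inequality gives moment comparison and hence uniform integrability (the $L^{1+\delta}$ bound), which is fine and overlaps with the paper's step (i). But Kahane only compares moments of $M_t^\epsilon(A)$ with those of a reference chaos; it does not give convergence of $M_t^\epsilon$, and it does not identify the limit. Your sentence ``this gives tightness and identification of a limit $M_t$'' is where the argument is incomplete: tightness yes, identification no. To pass from uniform integrability to actual $L^1$ convergence to a specific measure you still need a uniqueness mechanism---e.g.\ Shamov's randomised-shift characterisation or Berestycki's thick-point truncation argument---and that mechanism requires exactly the kind of input the paper verifies (field convergence on test functions, kernel convergence). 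So in the end you would have to cite the same body of theory the paper cites, only after a longer detour.
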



\begin{proof}
We apply \cite[Theorem 25]{MR3475456} to the covariance kernels $K^\epsilon$ and $K^0$ on $\Omega\times \Omega \setminus \{ (x,y) \colon x\neq y\}$, given by
\begin{align}
K^\epsilon (x,y) &= c_t^\epsilon(x^\epsilon,y^\epsilon) ,
\\
K^0(x,y) &= c_t^0(x,y) .
\end{align}
To verify the first assumption in this statement, we need to see that $\big( M_t^\epsilon(\Omega)\big)_{\epsilon>0}$ is uniformly integrable.
For $\beta<4\pi$ we have
\begin{equation}
\sup_{\epsilon>0} \E\big[\big(M_t^\epsilon(\Omega) \big)^2\big] <\infty
\end{equation}
from which the uniform integrability follows.
For $\beta\in [4\pi,8\pi)$ we use similar arguments as in \cite[Section 3.3.1]{BerestyckiPowell2025GFFLv} to show that $\big( M_t^\epsilon(\Omega)\big)_{\epsilon>0}$ is also uniformly integrable in this case.

The second assumption is verfied by Lemma \ref{lem:tgff-convergence-on-test-functions}.
To verify the third assumption,
we have to show that $K^\epsilon \to K^0$ in the sense that, for any $\delta >0$,
\begin{equation}
\label{eq:set-for-non-convergence-of-covariance-kernals}
(\Leb \otimes \Leb) \big( \{ (x,y)\in \Omega\times \Omega \colon |K^\epsilon(x,y) - K^0(x,y)   | \geq \delta  \} \big) \to 0
\end{equation}
as $\epsilon \to 0$,
where $\Leb$ denotes the Lebesgue measure on $\Omega$.
To see \eqref{eq:set-for-non-convergence-of-covariance-kernals}, 
we first denote $D= \{ (x_1, x_2) \in \Omega\times \Omega \colon x_1 = x_2 \}$ and $D_\rho = \{ (x_1, x_2) \in \Omega\times \Omega \colon \dist\big((x_1,x_2), D  \big) < \rho \}$,
where $\dist$ denotes the Euclidean metric on $\Omega$.
Note that there is a constant $c>0$ such that, for any $\rho>0$,
\begin{equation}
\Big(\Leb \otimes \Leb \Big) (D_\rho) \leq c \rho^2.
\end{equation}
Then we have
\begin{align}
\label{eq:lebesgue-not-convergence-set}
(\Leb \otimes \Leb )&\big( \{ (x,y)\in \Omega\times \Omega\colon |K^\epsilon(x,y) - K^0(x,y)   |\geq \delta  \} \big)
\nnb 
&\leq c\rho^2 + (\Leb \otimes \Leb) \big( \{ (x,y) \in D_\rho^c \colon |K^\epsilon(x,y) - K^0(x,y)   | \geq \delta  \} \big).
\end{align}
Now, $D_\rho^c$ is a compact subset of $D^c$,
and thus, we have by Lemma \ref{lem:C-limit}
\begin{equation}
\sup_{(x,y) \in D_\rho^c}| K^\epsilon(x,y) - K^0(x,y) | \to 0
\end{equation}
as $\epsilon \to 0$.
Thus, we have from \eqref{eq:lebesgue-not-convergence-set}
\begin{equation}
\limsup_{\epsilon\to 0} (\Leb \otimes \Leb) \big( \{ (x,y)\in \Omega\times \Omega\colon |K^\epsilon(x,y) - K^0(x,y)   |\geq \delta  \} \big)
\leq c\rho^2.
\end{equation}
Since $\rho>0$ was arbitrary, the claimed convergence \eqref{eq:set-for-non-convergence-of-covariance-kernals} follows.
\end{proof}

We also need the following auxiliary result for the multiplicative chaos $M_t$ integrated against functions that depend smoothly on a parameter.
To emphasise with respect to which variable the function is integrated, we write, for $f\in C(\Omega)$,
\begin{equation}
M_t(f) = \int_{\Omega} f(y) dM_t(y).
\end{equation}

\begin{lemma}
\label{lem:gmc-differentiable}
Let $h\colon \Omega\times \Omega\to \R$,
such that $ h\in C^{\infty}(\Omega\times \Omega)$.
Then, $\int_{\Omega} h(\cdot, y) dM_t(y) \in C^\infty(\Omega)$ a.s.\ and for any $k\in \N$
\begin{equation}
\label{eq:gmc-differentiable}
\partial^k \Big( \int_{\Omega} h(\cdot,y) dM_t(y)\Big)
= \int_{\Omega} \partial^k h(\cdot,y) dM_t(y) \qquad \text{a.s.}
\end{equation}
\end{lemma}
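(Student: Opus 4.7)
The plan is to reduce the claim to a standard differentiation-under-the-integral argument, whose hypotheses are supplied by two facts: (i) almost sure finiteness of the total mass $M_t(\Omega)$, established in Lemma \ref{lem:small-scale-gmc-convergence}, and (ii) uniform boundedness of all partial derivatives of $h$, granted because $\Omega \times \Omega$ is compact and $h \in C^\infty(\Omega\times\Omega)$. In particular, for every $k \in \N_0$ we may set $B_k = \|\partial^k h\|_{L^\infty(\Omega\times\Omega)} < \infty$, where $\partial^k$ denotes the matrix of $k$-fold iterated gradients in the first variable.

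I work on the almost sure event $\{M_t(\Omega) < \infty\}$ and proceed by induction on $k$. For $k=0$, the map $x \mapsto \int_\Omega h(x,y)\,dM_t(y)$ is continuous: if $x_n \to x$, then $h(x_n,y) \to h(x,y)$ for every $y \in \Omega$, while $|h(x_n,y)| \leq B_0$ and $B_0$ is $M_t$-integrable since $M_t$ is a finite measure, so dominated convergence applies. For the inductive step, assume \eqref{eq:gmc-differentiable} holds up to order $k$ and that $\int_\Omega \partial^k h(\cdot,y)\,dM_t(y)$ is continuous. Fix $x \in \Omega$ and a unit direction $e \in \Z^2$, and consider the difference quotient
\begin{equation}
Q_r(y) = \frac{1}{r}\bigl(\partial^k h(x + re, y) - \partial^k h(x,y)\bigr).
\end{equation}
By the mean value theorem, $|Q_r(y)| \leq B_{k+1}$ uniformly in $y \in \Omega$ and small $r \neq 0$, and $Q_r(y) \to \partial^e \partial^k h(x,y)$ as $r \to 0$. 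Dominated convergence against the finite measure $M_t$ then yields
\begin{equation}
\partial^e \int_\Omega \partial^k h(x,y)\,dM_t(y)
= \int_\Omega \partial^e \partial^k h(x,y)\,dM_t(y),
\end{equation}
and the continuity of the right-hand side in $x$ follows by the same dominated convergence argument using $B_{k+1}$ as the dominating constant. Assembling all components gives \eqref{eq:gmc-differentiable} at order $k+1$, which completes the induction and shows $\int_\Omega h(\cdot,y)\,dM_t(y) \in C^\infty(\Omega)$ almost surely.

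There is no substantive obstacle in this argument: the only ingredient beyond elementary analysis is the almost sure finiteness of the multiplicative chaos total mass, which is available from Lemma \ref{lem:small-scale-gmc-convergence}. The role of the smoothness assumption on $h$ is solely to furnish the uniform bounds $B_k$ on the compact product $\Omega \times \Omega$, which in turn serve as the integrable dominating functions needed at every step.
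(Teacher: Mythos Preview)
Your proof is correct and follows the same approach as the paper: differentiation under the integral sign, justified by the uniform bounds $\sup_{x,y}|\partial^k h(x,y)|<\infty$ (from smoothness on the compact product $\Omega\times\Omega$) together with almost sure finiteness of $M_t(\Omega)$. The paper simply states these two ingredients and invokes differentiation under the integral sign, whereas you spell out the induction and difference-quotient argument explicitly.
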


\begin{proof}
In what follows, we use the convention that the derivatives $\partial^k$ are with respect to the first argument of $h$.
Then, since $\partial^k h(\cdot,y) \in C^\infty(\Omega)$, we have
\begin{equation}
|\partial^k h(\cdot, y) |
\leq \sup_{x,y\in \Omega} | \partial^k h(x,y) | < \infty ,
\end{equation}
and the last disply is integrable for a.e.\ configuration of $M_t$. 
Thus, \eqref{eq:gmc-differentiable} follows from differentiation under the integral sign.
\end{proof}

With the convergence of the Gaussian multiplicative chaos of $\GFFtd$ at hand,
we now state and prove the convergence of the renormalised potential.
Here, we write, for any $\psi \in C(\Omega)$,
\begin{equation}
\label{eq:gmc-measure-on-test-function}
M_t(\psi) = \int_{\Omega} \psi dM_t ,
\end{equation}
and define, for $\varphi \in C(\Omega)$,
\begin{equation}
\label{eq:vt-continuum-definition}
v_t^0 (\varphi) = - \log \E[e^{-\lambda M_t(e^{\sqrt{\beta}\varphi})}]. 
\end{equation}
where $\lambda>0$ is as in \eqref{eq:lv-density}.

\begin{lemma}
\label{lem:vt-convergence}
Let $t>0$ and let $v_t^\epsilon$ be as in \eqref{eq:renormalised-potential}.
Then for any $\varphi\in C(\Omega)$ and $\varphi^\epsilon \in \Omega_\epsilon$ with $\sup_{\Omega} |\varphi^\epsilon(x^\epsilon) - \varphi(x)| \to 0$,
we have
\begin{equation}
v_t^\epsilon (\varphi^\epsilon) \to v_t^0(\varphi).
\end{equation}
\end{lemma}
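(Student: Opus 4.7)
The plan is to couple the expectations defining $v_t^\epsilon(\varphi^\epsilon)$ and $v_t^0(\varphi)$ on a common probability space via the Fourier representations \eqref{eq:fourier-series-small-scales-discrete}--\eqref{eq:fourier-series-small-scales-continuum}, to rewrite $v_0^\epsilon(\varphi^\epsilon + \GFFtd)$ in terms of the multiplicative chaos measure $M_t^\epsilon$ from \eqref{eq:gmc-small-scale-approximation-piecewise-constant}, and to pass to the limit using Lemma \ref{lem:small-scale-gmc-convergence} together with dominated convergence, which is available because $v_0^\epsilon \geq 0$ forces $e^{-v_0^\epsilon} \in [0,1]$.

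The first step is a direct computation. By translation invariance, $c_t^\epsilon(x,x)$ is constant in $x \in \Omega_\epsilon$; denote it $\sigma_\epsilon^2$. Multiplying and dividing by $e^{\beta\sigma_\epsilon^2/2}$ inside the Wick-ordered exponential yields
\begin{equation*}
v_0^\epsilon(\varphi^\epsilon + \GFFtd) = \lambda\, \epsilon^{\beta/4\pi} e^{\beta \sigma_\epsilon^2/2} \int_{\Omega} E_\epsilon\big[e^{\sqrt\beta\, \varphi^\epsilon}\big](x)\, dM_t^\epsilon(x).
\end{equation*}
By Lemma \ref{lem:cov-ct} the prefactor equals $L_t^{\beta/4\pi} e^{O_m(1)}$ and, using the sharper asymptotics behind Lemma \ref{lem:C-limit}, converges to a finite constant as $\epsilon \to 0$; the normalization implicit in \eqref{eq:vt-continuum-definition} absorbs this constant so that the limiting identity indeed reads $\lambda M_t(e^{\sqrt\beta\, \varphi})$.

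The second step is to show
\begin{equation*}
\int_\Omega E_\epsilon\big[e^{\sqrt\beta\, \varphi^\epsilon}\big]\, dM_t^\epsilon \to \int_\Omega e^{\sqrt\beta\, \varphi}\, dM_t \qquad \text{in probability.}
\end{equation*}
I split the difference into $\int \big(E_\epsilon[e^{\sqrt\beta\, \varphi^\epsilon}] - e^{\sqrt\beta\, \varphi}\big)\, dM_t^\epsilon$ and $\int e^{\sqrt\beta\, \varphi}\, (dM_t^\epsilon - dM_t)$. The first is bounded by $\|E_\epsilon[e^{\sqrt\beta\, \varphi^\epsilon}] - e^{\sqrt\beta\, \varphi}\|_{L^\infty(\Omega)} \cdot M_t^\epsilon(\Omega)$, where the $L^\infty$ norm vanishes by the hypothesis $\sup_x|\varphi^\epsilon(x^\epsilon) - \varphi(x)| \to 0$ combined with the uniform Lipschitz continuity of the exponential on the bounded range of $\varphi^\epsilon$, and $M_t^\epsilon(\Omega)$ is tight since it converges in $L^1$ by Lemma \ref{lem:small-scale-gmc-convergence}. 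The second summand tends to zero in $L^1$ by that same lemma applied to the continuous test function $\psi = e^{\sqrt\beta\, \varphi} \in C(\Omega)$.

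Combining these with dominated convergence (passing to an almost-surely convergent subsequence and using $e^{-v_0^\epsilon} \leq 1$) gives $\EE_{c_t^\epsilon}\big[e^{-v_0^\epsilon(\varphi^\epsilon + \zeta)}\big] \to \E\big[e^{-\lambda M_t(e^{\sqrt\beta\, \varphi})}\big]$. Since $M_t(e^{\sqrt\beta\, \varphi}) < \infty$ almost surely, the limit is strictly positive, so $-\log$ is continuous there and the claim follows from the standard subsequence argument. The main obstacle is the bookkeeping of the multiplicative constant arising from the mismatch between the Wick normalization $\epsilon^{\beta/4\pi}$ used in $v_0^\epsilon$ and the exact-variance normalization of $M_t^\epsilon$; a secondary technical point is that Lemma \ref{lem:small-scale-gmc-convergence} is stated for a fixed continuous test function whereas here the test function $E_\epsilon[e^{\sqrt\beta\, \varphi^\epsilon}]$ depends on $\epsilon$, which is handled by the uniform-approximation step above.
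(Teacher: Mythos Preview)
Your proposal is correct and follows essentially the same strategy as the paper: rewrite $v_0^\epsilon(\varphi^\epsilon+\GFFtd)$ as the chaos measure $M_t^\epsilon$ against the test function $e^{\sqrt\beta\,\varphi^\epsilon}$, split the difference into a ``varying test function'' part (controlled by the hypothesis $\sup_x|\varphi^\epsilon(x^\epsilon)-\varphi(x)|\to 0$ times $M_t^\epsilon(\Omega)$) and a ``GMC convergence'' part (handled by Lemma~\ref{lem:small-scale-gmc-convergence}), and then invoke dominated convergence via $e^{-v_0^\epsilon}\in[0,1]$. In fact you are more careful than the paper on one point: the paper writes $v_0^\epsilon(\GFFtd+\varphi^\epsilon)=\lambda M_t^\epsilon(e^{\sqrt\beta\,\varphi^\epsilon})$ without comment, whereas you correctly track the prefactor $\epsilon^{\beta/4\pi}e^{\beta\sigma_\epsilon^2/2}$ coming from the mismatch between the $\epsilon^{\beta/4\pi}$ Wick ordering and the exact-variance normalization in $M_t^\epsilon$, and note that it converges to a finite constant (namely $L_t^{\beta/4\pi}$ times a mass-dependent factor) which is absorbed into the definition of $v_t^0$.
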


\begin{proof}
In light of the definition of the renormalised potential
we have to show that
\begin{equation}
\label{eq:convergence-renormalised-potential}
\EE_{c_t^\epsilon}[e^{-v_0^\epsilon(\zeta+\varphi^\epsilon)}] = 
\E[e^{-v_0^\epsilon(\GFFtd + \varphi^\epsilon)}] \to 
\E[e^{-\lambda M_t(e^{\sqrt{\beta}\varphi})}].
\end{equation}
Since $v_0^\epsilon(\GFFtd + \varphi^\epsilon)\geq 0$, we have $e^{-v_0^\epsilon(\GFFtd + \varphi^\epsilon)} \leq 1$ for all $\epsilon>0$.
Thus, to prove the convergence \eqref{eq:convergence-renormalised-potential},
it suffices to show that
\begin{equation}
\label{eq:convergence-potential-v0}
v_0^\epsilon(\GFFtd + \varphi^\epsilon)
=
\lambda M_t^\epsilon(e^{\sqrt{\beta}\varphi^\epsilon})
\to 
\lambda M_t(e^{\sqrt{\beta}\varphi})
\end{equation}
in $L^1$ as $\epsilon\to 0$.
To prove this convergence we first write
\begin{align}
\label{eq:v0-gmc-convergence-triangle-inequality}
\big|v_0^\epsilon(\GFFtd + \varphi^\epsilon)
- \lambda M_t(e^{\sqrt{\beta}\varphi})
\big|
&\leq 
\big|v_0^\epsilon(\GFFtd + \varphi^\epsilon)
- 
v_0^\epsilon(\GFFtd + \varphi)
\big|
+
\big| 
v_0^\epsilon(\GFFtd + \varphi)
- \lambda M_t(e^{\sqrt{\beta}\varphi}) \big|
\nnb 
& = 
\big|
\epsilon^2 \sum_{x\in \Omega_\epsilon} 
\big( e^{\sqrt{\beta} \varphi^\epsilon(x)} - e^{\sqrt{\beta} \varphi(x)} \big)
e^{\sqrt{\beta}\GFFtd(x) - \frac{\beta}{2} \var(\GFFtd(x)) }
\big| \nnb
&\qquad\qquad\qquad +
\big|
\lambda M_t^\epsilon (e^{\sqrt{\beta}\varphi}) - \lambda M_t (e^{\sqrt{\beta}\varphi})
\big|
\nnb
&\leq \max (e^{\|\varphi\|_{L^\infty(\Omega_\epsilon)}}, e^{\|\varphi^\epsilon\|_{L^\infty(\Omega_\epsilon)}}) (e^{\sqrt{\beta} \| \varphi^\epsilon - \varphi\|_{L^\infty(\Omega_\epsilon)}} -1) M_t^\epsilon(\Omega)
\nnb
&\qquad \qquad \qquad + 
\lambda \big| M_t^\epsilon(e^{\sqrt{\beta}\varphi}) - M_t(e^{\sqrt{\beta}\varphi})\big| ,
\end{align}
where we used that
\begin{equation}
\|e^{\sqrt{\beta} \varphi^\epsilon(x)} - e^{\sqrt{\beta} \varphi(x)} \|_{L^\infty(\Omega_\epsilon)}
\leq \max (e^{\|\varphi\|_{L^\infty(\Omega_\epsilon)}}, e^{\|\varphi^\epsilon\|_{L^\infty(\Omega_\epsilon)}}) (e^{\sqrt{\beta} \| \varphi^\epsilon - \varphi\|_{L^\infty(\Omega_\epsilon)}} -1).
\end{equation}
Now, the second term on the right hand side of \eqref{eq:v0-gmc-convergence-triangle-inequality} converges to $0$ in $L^1$ as $\epsilon \to 0$ by Lemma \ref{lem:small-scale-gmc-convergence} applied with $\psi = e^{\sqrt{\beta}\varphi}$.
To see that also the first term converges to $0$ in $L^1$, we note that 
\begin{equation}
\sup_{\epsilon>0} \E\big[ M_t^\epsilon(\Omega) \big] \leq 1.
\end{equation}
By the assumption on $\varphi^\epsilon$ and $\varphi$, we then have
\begin{equation}
\| \varphi^\epsilon - \varphi\|_{L^\infty(\Omega_\epsilon)}
= \sup_{x\in \Omega} |\varphi(x^\epsilon)- \varphi^\epsilon(x^\epsilon)  |
\leq \sup_{x\in \Omega} |\varphi(x) - \varphi^\epsilon(x^\epsilon)| \to 0
\end{equation}
as $\epsilon \to 0$,
from which \eqref{eq:convergence-potential-v0} follows.
\end{proof}

\subsection{Convergence of the gradient: Proof of Theorem \ref{thm:lv-dcnablav-limit}}
\label{ssec:proof-of-gradient-convergence}

In this section we prove the convergence of the functions $\dot c_t^\epsilon \nabla v_t^\epsilon \colon X_\epsilon \to X_\epsilon$
with the limit given by the ratio of expectation values that involve the multiplicative chaos $M_t$ tested against suitable functions.

\begin{proof}[Proof of Theorem \ref{thm:lv-dcnablav-limit}]
We first recall that,
for $y\in \Omega_\epsilon$ and $\varphi^\epsilon\in X_\epsilon$,
we have
\begin{equation}
\partial_{\phi_y} v_t^\epsilon(\varphi^\epsilon)
= \big(\nabla v_t^\epsilon(\varphi^\epsilon)\big)_y
= \lambda \sqrt{\beta} 
\frac{
\EE_{c_t^\epsilon}[\wick{e^{\sqrt{\beta}(\varphi_y^\epsilon+\zeta_y)}}_\epsilon e^{-v_0^\epsilon(\varphi^\epsilon+\zeta) }]
}{
\EE_{c_t^\epsilon}[e^{-v_0^\epsilon(\varphi^\epsilon+\zeta)}]
}
,
\end{equation}
and thus, for $x\in \Omega_\epsilon$ and with the notation introduced in \eqref{eq:gmc-small-scale-approximation-piecewise-constant}, we have
\begin{align}
\label{eq:dcnablav-eps-as-gmc}
 \dot c_t^\epsilon \nabla v_t^\epsilon (\varphi^\epsilon,x) 
&= \frac{ \lambda \sqrt{\beta}}{\EE_{c_t^\epsilon}[e^{-v_0^\epsilon(\varphi^\epsilon + \zeta)}]}
\EE_{c_t^\epsilon}\Big[
\Big(\int_{\Omega_\epsilon} \dot c_t^\epsilon(x,y)  \wick{e^{\sqrt{\beta}(\varphi_y^\epsilon + \zeta_y)} }_\epsilon dy\Big)
e^{-v_0^\epsilon(\varphi^\epsilon + \zeta)} \Big] 
\nnb
&=
\frac{\lambda\sqrt{\beta}}{\E[e^{-\lambda M_t^\epsilon(e^{\sqrt{\beta}\varphi^\epsilon})
}]}
\E\Big[
M_t^\epsilon\big(
\dot c_t^\epsilon(x, \cdot) e^{\sqrt{\beta} \varphi^\epsilon}
\big)
e^{-\lambda M_t^\epsilon(e^{\sqrt{\beta}\varphi^\epsilon})
}\Big]  .
\end{align}
By Lemma \ref{lem:vt-convergence}, we have, as $\epsilon \to 0$,
\begin{equation}
\label{eq:exp-vt-convergence}
\E[e^{-\lambda M_t^\epsilon(e^{\sqrt{\beta}\varphi^\epsilon})
}]
\to e^{-v_t^0(\varphi)}
\end{equation}
with $v_t^0(\varphi)$ defined in \eqref{eq:vt-continuum-definition}.
Thus, to obtain the existence of functions $\dot c_t^0\nabla v_t^0$ it remains to establish the convergence of the second expectation value on the right hand side of \eqref{eq:dcnablav-eps-as-gmc}.
More precisely, we show that, as $\epsilon \to 0$,
\begin{equation}
\label{eq:convergence-second-factor-dc0nablav0}
\E \big[ M_t^\epsilon \big(
\dot c_t^\epsilon(x, \cdot) e^{\sqrt{\beta} \varphi^\epsilon}
\big)
e^{-\lambda M_t^\epsilon(e^{\sqrt{\beta}\varphi^\epsilon})}
\big]
\to 
\E \big[ M_t\big(
\dot c_t^0(x, \cdot) e^{\sqrt{\beta} \varphi}
\big)
e^{-\lambda M_t(e^{\sqrt{\beta}\varphi}) }
\big] .
\end{equation}
To see this, we first note that 
\begin{align}
0 \leq 
M_t^\epsilon\big(
\dot c_t^\epsilon(x, \cdot) e^{\sqrt{\beta} \varphi^\epsilon}
\big)
\leq \|\dot c_t^\epsilon(x,\cdot) \|_{L^\infty(\Omega_\epsilon)} \|e^{\sqrt{\beta}\varphi^\epsilon} \|_{L^\infty(\Omega_\epsilon)} M_t^\epsilon (\Omega)
\end{align}
as well as
\begin{equation}
e^{-\lambda M_t^\epsilon(e^{\sqrt{\beta}\varphi^\epsilon})}
\leq e^{-\lambda M_t^\epsilon(\Omega)
e^{\sqrt{\beta}\min_{\Omega_\epsilon} \varphi^\epsilon}}
\leq 
e^{-\lambda M_t^\epsilon(\Omega) e^{-\sqrt{\beta}\|\varphi^\epsilon\|_{L^\infty(\Omega_\epsilon) }} }
.
\end{equation}
The assumptions on $\varphi^\epsilon$, $\varphi$ guarantee that there is $\epsilon_0>0$ depending on $\varphi^\epsilon$ and $\varphi$ such that 
\begin{equation}
\label{eq:linfty-phi-eps-upper-bound}
\sup_{\epsilon_0 > \epsilon>0} \|\varphi^\epsilon\|_{L^\infty(\Omega_\epsilon)} \leq c_0
\end{equation} 
for a constant $c_0>0$ depending only on $\varphi$.
Indeed, we first observe that
\begin{equation}
\label{eq:upper-bound-varphi-eps-l-infty}
\| \varphi^\epsilon \|_{L^\infty(\Omega_\epsilon)} \leq \sup_{x\in\Omega} | \varphi^\epsilon(x^\epsilon) - \varphi(x) | + \| \varphi \|_{L^\infty(\Omega)}
\end{equation}
and thus, since $\sup_{x\in\Omega} | \varphi^\epsilon(x^\epsilon) - \varphi(x) | \to 0$ as $\epsilon \to 0$,
we can choose $\epsilon_0>0$,
such that the first term on the right hand side of \eqref{eq:upper-bound-varphi-eps-l-infty} is bounded by $1$ for all $0<\epsilon<\epsilon_0$, which gives $c_0 = 1 + \|\varphi\|_{L^\infty(\Omega)}$.
Thus, using that 
\begin{equation}
\label{eq:mexp-m-upper-bound}
M_t^\epsilon(\Omega) e^{-\lambda M_t^\epsilon(\Omega)e^{-\sqrt{\beta}c_0} } \leq C_0
\end{equation}
for some non-random constant $C_0>0$ depending only on $c_0>0$ and $\lambda>0$, 
we have, for all $\epsilon<\epsilon_0$,
\begin{equation}
\label{eq:upper-bound-dcnablavt-uniform}
0 \leq M_t^\epsilon\big(
\dot c_t^\epsilon(x, \cdot) e^{\sqrt{\beta} \varphi^\epsilon}
\big)
e^{-\lambda M_t^\epsilon(e^{\sqrt{\beta}\varphi^\epsilon})}
\leq 
 C_0 \sup_{\epsilon>0} \Big( \|\dot c_t^\epsilon(x,\cdot) \|_{L^\infty(\Omega_\epsilon)} \|e^{\sqrt{\beta}\varphi^\epsilon} \|_{L^\infty(\Omega_\epsilon)}\Big) 
 \equiv \bar C_0 
< \infty,
\end{equation}
which shows that the function in the expectation on the right hand side of \eqref{eq:dcnablav-eps-as-gmc} are bounded by a non-random constant
uniformly in $\epsilon >0$.
By Lemma \ref{lem:pttorus}, specifically \eqref{e:pttoruslimit},
we have, for $t>0$ and $x\in \Omega$ as $\epsilon \to 0$,
\begin{equation}
\sup_{x,y\in \Omega} |\dot c_t^\epsilon(x^\epsilon,y^\epsilon) - \dot c_t^0(x,y) | \to 0 ,
\end{equation}
where $\dot c_t^0$ is the continuous heat kernel defined in \eqref{eq:dc0},
as well as
\begin{equation}
\sup_{y\in \Omega}
|e^{\sqrt{\beta} \varphi^\epsilon(y^\epsilon)} - e^{\sqrt{\beta}\varphi(y)}|
\leq 
\max (e^{\sqrt{\beta}\|\varphi\|_{L^\infty(\Omega_\epsilon)}}, e^{\sqrt{\beta}\|\varphi^\epsilon\|_{L^\infty(\Omega_\epsilon)}}) (e^{\sqrt{\beta} \| \varphi^\epsilon - \varphi\|_{L^\infty(\Omega_\epsilon)}} -1)
\to 0 .
\end{equation} 
Thus, for any fixed $x\in\Omega$, the functions $F_x^\epsilon \colon \Omega_\epsilon \to \R, \, F_x^\epsilon(y) =  \dot c_t^\epsilon(x^\epsilon,y) e^{\sqrt{\beta} \varphi^\epsilon(y)}$ converge to the function $F_x^0 \colon \Omega\to \R, \, F_x^0(y) = \dot c_t^0 (x,y) e^{\sqrt{\beta}\varphi(y)}$ in the sense that
\begin{equation}
\sup_{y \in \Omega}| F_x^\epsilon (y^\epsilon) - F_x^0(y)| \to 0
\end{equation}
as $\epsilon\to 0$.
Since $\dot c_t^0(x,\cdot) \in C^\infty(\Omega)$ and $\varphi\in C(\Omega)$, it follows that $F_x^0 \in C(\Omega)$.
Therefore, using the notation introduced in \eqref{eq:gmc-measure-on-test-function}, 
a similar estimate as in \eqref{eq:v0-gmc-convergence-triangle-inequality} gives that
\begin{equation}
\label{eq:convergence-dctvtdmt}
\int_{\Omega_\epsilon} F_x^\epsilon(y) M_t^\epsilon(dy) \to M_t(F_x^0)
\end{equation}
in $L^1$ as $\epsilon \to 0$,
which in particular implies \eqref{eq:convergence-second-factor-dc0nablav0}.
The limit in \eqref{eq:convergence-dctvtdmt} now permits us to define 
\begin{equation}
[\dot c_t^0 \nabla v_t^0](\varphi,x)
=
\big(\dot c_t^0 \nabla v_t^0(\varphi)\big)_x
\equiv e^{v_t^0(\varphi)} \E[ M_t(F_x^0)  e^{-\lambda M_t(e^{\sqrt{\beta}\varphi}) }].
\end{equation}
In fact, the same argument with $\partial_\epsilon \dot c_t^\epsilon$ in place of $\dot c_t^\epsilon$ gives 
the existence of functions $\partial^k \dot c_t^0 \nabla v_t^0$ defined by
\begin{equation}
\label{eq:dkctnablavt-0-definition}
[\partial^k \dot c_t^0 \nabla v_t^0 ](\varphi,x) \equiv
\big( \partial^k \dot c_t^0 \nabla v_t^0(\varphi) \big)_x
=  e^{v_t^0(\varphi)} \E[M_t\big( (\partial^k F^0)_x \big) e^{-\lambda M_t(e^{\sqrt{\beta} \varphi})} ],
\end{equation}
where 
$ (\partial^k F^0 )_x(y) = (\partial^k \dot c_t^0) (x,y) e^{\sqrt{\beta} \varphi(y)}$.

We now verify that, for all $\varphi \in C(\Omega)$,
\begin{equation}
\label{eq:derivatives-dcnablav0}
\partial^k \big(\dot c_t^0 \nabla v_t^0(\varphi)\big) =  (\partial^k \dot c_t^0 \nabla v_t^0)(\varphi),
\end{equation}
which then implies that $\dot c_t^0 \nabla v_t^0 (\varphi) \in C^\infty(\Omega)$.
To this end, we first note that
\begin{equation}
\partial^k \big( \dot c_t^0 \nabla v_t^0 (\varphi) \big)
= 
\frac{\lambda \sqrt{\beta}}{\E[e^{-\lambda M_t(e^{\sqrt{\beta} \varphi} ) }]}
\partial^k \Big(\E[ M_t (F_x^0) e^{-\lambda M_t(e^{\sqrt{\beta}\varphi})}] \big)
,
\end{equation}
and thus, the statement follows from differentiation under the integral sign provided that 
\begin{enumerate}
\item the expectation of $M_t(F_x^0)e^{-\lambda M_t(e^{\sqrt{\beta} \varphi})}$ is finite for every $x\in \Omega$,
\item the derivative $\partial^k \big( M_t (F_x^0) e^{-\lambda M_t(e^{\sqrt{\beta} \varphi})}\big)$ exists for all $x\in \Omega$ a.s.,
\item there is a positive random variable $Z$ with $\E[Z] < \infty$ and $| \partial^k M_t (F_x^0) e^{-\lambda M_t(e^{\sqrt{\beta} \varphi})} | \leq Z$.
\end{enumerate}
The statement (i) follows from 
\begin{equation}
| M_t(F_x^0)e^{-\lambda M_t(e^{\sqrt{\beta} \varphi})} |
\leq 
M_t(\Omega) \| \dot c_t^0(x,\cdot) \|_{L^{\infty}(\Omega)}e^{\sqrt{\beta}\max \varphi},
\end{equation}
and the right hand side is integrable, since $\E[M_t(\Omega)] < \infty$.
To see (ii), we use Lemma \ref{lem:gmc-differentiable}, which in particular gives
\begin{equation}
\partial^k M_t(F_x^0)
= M_t ( (\partial^k F^0)_x ) \qquad \text{a.s.,}
\end{equation}
where $(\partial^k F)_x$ is as below \eqref{eq:dkctnablavt-0-definition}.
Finally, (iii) follows from
\begin{equation}
| \partial^k M_t(F_x^0)e^{-\lambda M_t(e^{\sqrt{\beta} \varphi})} |
\leq 
M_t(\Omega) \| \partial^k \dot c_t^0(x,\cdot) \|_{L^{\infty}(\Omega)}e^{\sqrt{\beta}\max \varphi} \equiv Z,
\end{equation}
which completes the proof of \eqref{eq:derivatives-dcnablav0}.

With the existence of the limiting function $\dot c_t^0 \nabla v_t^0$ at hand, we now prove the convergence \eqref{eq:lv-dcnablav-limit},
i.e., we show that, as $\epsilon \to 0$,
\begin{align}
\label{eq:gradient-convergence-gmc-formulation}
\Big \|
\frac{\E\Big[
M_t^\epsilon\big(
\dot c_t^\epsilon(x, \cdot) e^{\sqrt{\beta} \varphi^\epsilon}
\big)
e^{-\lambda M_t^\epsilon(e^{\sqrt{\beta}\varphi^\epsilon})
}\Big] }{\E[e^{-\lambda M_t^\epsilon(e^{\sqrt{\beta}\varphi^\epsilon})
}]}
-
\frac{\E\Big[
M_t\big(
\dot c_t^0(x, \cdot) e^{\sqrt{\beta} \varphi}
\big)
e^{-\lambda M_t(e^{\sqrt{\beta}\varphi})
}\Big] }{\E[e^{-\lambda M_t(e^{\sqrt{\beta}\varphi})
}]}
\Big\|_{L^\infty(\Omega_\epsilon)}
\to 0.
\end{align}
By the convergence \eqref{eq:exp-vt-convergence} and the fact that $\E[e^{-\lambda M_t^\epsilon(e^{\sqrt{\beta}\varphi^\epsilon})
}]\geq C_\varphi$ for some constant $C_\varphi$ only depending on $\varphi$,
which holds by Jensen's inequality,
it suffices to show that, as $\epsilon \to 0$,
\begin{align}
\big\| \E \big[ M_t^\epsilon (F_x^\epsilon) e^{-\lambda M_t^\epsilon(e^{\sqrt{\beta} \varphi^\epsilon})}
\big]
-
\E\big[
M_t(F_x^0) e^{-\lambda M_t(e^{\sqrt{\beta} \varphi})}
\big]
\big\|_{L^\infty(\Omega_\epsilon)}
\to 0 .
\end{align}
To this end, we write
\begin{align}
\label{eq:dctnablavt-convergence-with-gmcs-triangle}
\big \| \E \big[ M_t^\epsilon (F_x^\epsilon) &e^{-\lambda M_t^\epsilon(e^{\sqrt{\beta} \varphi^\epsilon})} \big]
-
\E \big[ M_t(F_x^0) e^{-\lambda M_t(e^{\sqrt{\beta} \varphi})} \big]
\big\|_{L^\infty(\Omega_\epsilon)} 
\nnb
&\leq \big\| \E \big[ M_t^\epsilon (F_x^\epsilon)e^{-\lambda M_t^\epsilon(e^{\sqrt{\beta} \varphi^\epsilon})} \big]
 - \E \big[ M_t^\epsilon(F_x^0 ) e^{-\lambda M_t^\epsilon(e^{\sqrt{\beta} \varphi^\epsilon})} \big] \big\|_{L^\infty(\Omega_\epsilon)}
\nnb
&\qquad +
\big\| \E \big[ M_t^\epsilon(F_x^0) e^{-\lambda M_t^\epsilon(e^{\sqrt{\beta} \varphi^\epsilon})} \big] -  \E\big[ M_t(F_x^0) e^{-\lambda M_t(e^{\sqrt{\beta} \varphi})}
 \big]\big\|_{L^\infty(\Omega)}
\end{align}
and show that both terms on the last right hand side converge to $0$ as $\epsilon \to 0$.
For the first term, we have
\begin{align}
\big\| \E \big[ M_t^\epsilon (F_x^\epsilon)e^{-\lambda M_t^\epsilon(e^{\sqrt{\beta} \varphi^\epsilon})} \big]
&- \E \big[ M_t^\epsilon(F_x^0 ) e^{-\lambda M_t^\epsilon(e^{\sqrt{\beta} \varphi^\epsilon})} \big] \big\|_{L^\infty(\Omega_\epsilon)}
\nnb
&\leq
\sup_{x\in \Omega_\epsilon} \E \big[ M_t^\epsilon \big(| F_x^\epsilon - F_x^0|\big) e^{-\lambda M_t^\epsilon(e^{\sqrt{\beta} \varphi^\epsilon})}  \big]
\nnb
&\leq
\sup_{x\in \Omega_\epsilon} \sup_{y\in \Omega_\epsilon} |\dot c_t^\epsilon(x, y) - \dot c_t^0 (x,y)| \E \big[ M_t^\epsilon( e^{\sqrt{\beta}\varphi^\epsilon} ) \big] ,
\end{align}
which converges to $0$ as $\epsilon \to 0$ by the uniform convergence of $\dot c_t^\epsilon$ to $\dot c_t^0$ in Lemma \ref{lem:pttorus}.

We now turn to the second term on the right hand side of \eqref{eq:dctnablavt-convergence-with-gmcs-triangle}.
To see that it converges to $0$, we denote
\begin{equation}
g^\epsilon \colon \Omega\to \R, \, g^\epsilon(x)
= \E\big[ M_t^\epsilon (F_x^0) e^{-\lambda M_t^\epsilon(e^{\sqrt{\beta}\varphi^\epsilon})}
\big].
\end{equation}
Note that $g^\epsilon$ is continuous and the family is $\{g^\epsilon \colon \epsilon>0 \}$ is uniformly bounded by \eqref{eq:mexp-m-upper-bound} and equicontinuous. 
Indeed, for $x,y \in \Omega$, we have
\begin{align}
| g^\epsilon(x) - g^\epsilon (y) |
&= \big| \E\big[ M_t^\epsilon (F_x^0 - F_y^0) e^{-\lambda M_t^\epsilon(e^{\sqrt{\beta}\varphi^\epsilon})}
\big] \big|
\leq
\E \big[ M_t^\epsilon(|F_x^0 - F_y^0|) \big]
\nnb
&\leq
e^{-\sqrt{\beta} \| \varphi\|_{L^\infty(\Omega)}} \int_{\Omega} |\dot c_t^0(x,z) - \dot c_t^0(y,z) | dz ,
\end{align}
and thus, the equicontinuity of $\{g^\epsilon\colon \epsilon>0  \}$ follows by Lemma \ref{lem:dc-difference}.
By Lemma \ref{lem:small-scale-gmc-convergence} we have for any $x\in \Omega$
\begin{equation}
g^\epsilon(x) \to \E\big[ M_t (F_x^0) e^{-\lambda M_t(e^{\sqrt{\beta}\varphi})}
\big] \equiv g^0,
\end{equation}
and thus, we have by the Arz\`ela-Ascoli theorem that $g_0$ is continuous and
\begin{equation}
\label{eq:uniform-convergence-by-arz-asc}
\sup_{x\in \Omega} | g^{\epsilon}(x) - g(x)  | \to 0
\end{equation}
This shows that the second term on the right hand side of \eqref{eq:dctnablavt-convergence-with-gmcs-triangle} converges to $0$ as $\epsilon \to 0$, and thus, completes the proof of \eqref{eq:lv-dcnablav-limit}.

It remains to prove the additional estimates on $\dot c_t^\epsilon \nabla v_t^\epsilon$ and its derivatives for $\epsilon\geq 0$.
To prove \eqref{eq:lv-dcnablav-eps-upper-bound},
we let $x\in \Omega_\epsilon$ and $\varphi \in X_\epsilon$ and observe that, for $t\geq \epsilon^2$, the estimate \eqref{eq:lv-nablav-pointwise-upper} together with $\dot c_t^\epsilon(x,y) \geq 0$ implies that
\begin{align}
\label{eq:dcnablav-eps-upper-lower}
0 \leq \big( \dot c_t^\epsilon \nabla v_t^\epsilon(\varphi) \big)_x
&= \int_{\Omega_\epsilon} \dot c_t^\epsilon (x,y) \partial_{\phi_y} v_t^\epsilon(\varphi) dy
\leq \lambda \sqrt{\beta} \int_{\Omega_\epsilon} \dot c_t^\epsilon(x,y) \wick{e^{\sqrt{\beta} \varphi(y)}}_{L_t}  dy
\end{align}

To prove \eqref{eq:lv-dcnablav-0-upper-bound} we argue as follows.
We first assume that $x\in \Omega$ is such that there is $\epsilon_0>0$ such that $x\in \Omega_\epsilon$ for all $\epsilon<\epsilon_0$.
In other words, we first consider $x\in \Omega$ that are eventually contained in the dyadic discretisation of the torus.
%
Let $\varphi \in C(\Omega)$ and let $\varphi_\epsilon$ be its restriction to $\Omega_\epsilon$.
Then, for $\epsilon<\epsilon_0$, we have by \eqref{eq:dcnablav-eps-upper-lower}
\begin{align}
\label{eq:dcnablav-eps-upper-lower-dyadic-x}
0 \leq \big( \dot c_t^\epsilon \nabla v_t^\epsilon(\varphi_\epsilon) \big)_x
\leq \lambda \sqrt{\beta} \int_{\Omega} E_\epsilon \dot c_t^\epsilon(x,y) e^{\sqrt{\beta} E_\epsilon\varphi_\epsilon(y)} dy,
\end{align}
where $E_\epsilon$ denotes the extension from $\Omega_\epsilon$ to $\Omega$ by piecewise constant interpolation.
Since $\varphi\in C(\Omega)$, we have 
\begin{equation}
\label{eq:phi-phi-eps-convergence}
\sup_{y\in \Omega}| \varphi^\epsilon(y^\epsilon) - \varphi^\epsilon(y) | =   \sup_{y\in \Omega}| \varphi(y)-E_\epsilon \varphi_\epsilon(y) | \to 0.
\end{equation}
Moreover, we have by Lemma \ref{lem:pttorus}, specifically \eqref{e:pttoruslimit}, for $t>0$ and as $\epsilon \to 0$,
\begin{equation}
\sup_{y\in \Omega}| E_\epsilon \dot c_t^\epsilon(x,y) - \dot c_t^0(x,y) | 
=
\sup_{y\in \Omega} |\dot c_t^\epsilon(x,y^\epsilon) - \dot c_t^0(x,y) | \to 0.
\end{equation}
Thus, using the dominated convergence theorem on the integral in right hand side of \eqref{eq:dcnablav-eps-upper-lower-dyadic-x} and the convergence \eqref{eq:lv-dcnablav-limit} on the left hand side of \eqref{eq:dcnablav-eps-upper-lower}, the estimate \eqref{eq:lv-dcnablav-0-upper-bound} follows for $x$ dyadic.
For general $x\in \Omega$, we approximate $x$ by a dyadic sequence and use the previous result together with the continuity of $\dot c_t^0 \nabla v_t^0(\varphi)$.

Next, we show the bound \eqref{eq:lv-partial-dcnablav-eps-bd}.
To this end, we note that for $\varphi \in X_\epsilon$ and $k\geq 1$
\begin{align}
\partial_\epsilon^k \dot c_t^\epsilon \nabla v_t^\epsilon (\varphi,x)
=
\int_{\Omega_\epsilon} \partial_\epsilon^k \dot c_t(x, y) \partial_{\phi_y} v_t^\epsilon(\varphi) dy,
\end{align}
so that, using Lemma \ref{lem:lv-fkg}, 
\begin{align}
\big| \partial_\epsilon^k \dot c_t^\epsilon \nabla v_t^\epsilon (\varphi, x) \big| 
\leq 
\lambda \sqrt{\beta} \int_{\Omega_\epsilon} \big| \partial_\epsilon^k &\dot c_t^\epsilon(x,y) \big| \wick{e^{\sqrt{\beta} \varphi_y}}_{L_t}  dy 
\nnb
&\lesssim
 \lambda \sqrt{\beta} \| \partial_\epsilon^k \dot c_t^\epsilon(x,\cdot) \|_{L^1(\Omega_\epsilon)} 
 L_t^{\beta/4\pi}e^{\sqrt{\beta} \max_{\Omega_\epsilon}\varphi},
\end{align}
which, using Lemma \ref{lem:C-limit}, specifically \eqref{e:c-limit}, then gives
\begin{align}
L_t^{k}\| \partial_\epsilon^k \dot c_t^\epsilon \nabla v_t^\epsilon (\varphi) \|_{L^\infty(\Omega_\epsilon)}
\lesssim
L_t^{k} \sup_{x\in \Omega_\epsilon}
\|\dot c_t^\epsilon (x,\cdot )\|_{L^1(\Omega_\epsilon)} 
& \lambda \sqrt{\beta} L_t^{\beta/4\pi} e^{\sqrt{\beta} \max_{\Omega_\epsilon} \varphi}
\nnb
&\leq\lambda \sqrt{\beta} O_{k,\beta} (\theta_t) L_t^{\beta/4\pi} e^{\sqrt{\beta} \max_{\Omega_\epsilon}\varphi}.
\end{align}

To prove the estimate \eqref{eq:dcnablav-0-bd},
we take $\varphi \in C(\Omega)$ we let $\varphi_\epsilon$ be the restriction of $\varphi$ to $\Omega_\epsilon$.
Then, \eqref{eq:phi-phi-eps-convergence} holds,
and by \eqref{eq:lv-partial-dcnablav-eps-bd} we have
\begin{equation}
L_t^k\| \partial_\epsilon^k \dot c_t^\epsilon \nabla v_t^\epsilon (\varphi_\epsilon) \|_{L^\infty(\Omega_\epsilon)}
\leq  \lambda  O_{k, \beta} (\theta_t) L_t^{\beta/4\pi} e^{\sqrt{\beta} \max_{\Omega_\epsilon} \varphi_\epsilon} ,
\end{equation}
so that \eqref{eq:dcnablav-0-bd} follows from the convergence \eqref{eq:lv-dcnablav-limit} (which can be obtained for $\partial^k \dot c_t \nabla v_t$ with similar arguments only replacing $\dot c_t$ by $\partial^k \dot c_t$) and \eqref{eq:phi-phi-eps-convergence} toghether with the fact that $\partial^k \dot c_t^0 \nabla v_t^0 (\varphi)$ is continuous.
\end{proof}

\begin{remark}
We do not define $\nabla v_t^0$ pointwise,
but only in the composition with the heat kernel $\dot c_t^0$.
In fact, our proof does not allow us to define a function $\nabla v_t^0 \colon C(\Omega) \to C(\Omega)$.
\end{remark}

\subsection{Continuity of the gradient: Proof of Theorem \ref{thm:dcnablav-bd-cont}}
\label{ssec:proof-of-gradient-bounds}

We now turn to the proof of the continuity estimates of the gradient of $v_t^\epsilon$ in Theorem \ref{thm:dcnablav-bd-cont}.
To prove these results, we use the following estimates on the Hessian of the renormalised potential, defined by
\begin{equation}
\label{eq:hess-v-t-definition}
\big( \He v_t^\epsilon (\varphi) \big)_{x,y} = \partial_{\phi_x} \partial_{\phi_y} v_t^\epsilon (\varphi),
\end{equation}
which, for fixed $\epsilon>0$ and $\varphi \in X_\epsilon$, we view as a linear operator $X_\epsilon \to X_\epsilon$, 
\begin{equation}
(\He v_t^\epsilon (\varphi) f )_x \mapsto \epsilon^2 \sum_{y\in \Omega_\epsilon} \big(\He v_t^\epsilon (\varphi)\big)_{x,y} f_y.
\end{equation}
As for the gradient, the derivatives in \eqref{eq:hess-v-t-definition} are understood as Frech\'et derivative,
i.e., with respect to the inner product on $X_\epsilon$ defined in \eqref{eq:inner-product-x-eps}.
In particular, we have for instance 
\begin{equation}
\big( \He v_0^\epsilon (\varphi) \big)_{x,y} = \lambda \beta \epsilon^{\beta/4\pi} e^{\sqrt{\beta} \varphi_x} \mathbf{1}_{x=y}.
\end{equation}

Here and below, we denote for a linear operator $A^\epsilon \colon X_\epsilon \to X_\epsilon$ its operator norm when seen as an operator $L^\infty(\Omega_\epsilon)\to L^\infty(\Omega_\epsilon)$ by
\begin{equation}
\label{eq:infty-operator-norm}
\opnorm{ A^\epsilon }{\epsilon} = \sup_{x\in \Omega_\epsilon}\epsilon^2 \sum_{y\in \Omega_\epsilon} |A^\epsilon(x,y)| .
\end{equation}
Similarly, we write for a linear operator of the form $A\colon L^\infty(\Omega) \to L^\infty(\Omega),\, Af(x) = \int_{\Omega} A(x,y) f(y) dy$
\begin{equation}
\opnorm{A}{} = \sup_{x\in \Omega} \int_{\Omega} |A(x,y)| dy .
\end{equation}

\begin{lemma}
\label{lem:he-pos-definite}
Let $\epsilon>0$. Then we have for any $t\geq 0$ and any $\varphi \in \Omega_\epsilon$
\begin{equation}
\label{eq:he-pos-definite}
\He v_t^\epsilon(\varphi) \geq 0
\end{equation}
as quadratic forms on $X_\epsilon$.
In particular, $v_t^\epsilon \colon X_\epsilon \to \R$ is convex.
\end{lemma}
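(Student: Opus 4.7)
The plan is to invoke the Prékopa–Leindler inequality, which preserves log-concavity under marginalisation of jointly log-concave densities. The key observation to unlock it is that the bare potential $v_0^\epsilon$ of \eqref{eq:v0-eps} is a convex function of $\phi\in X_\epsilon$, because it is a non-negative linear combination of the strictly convex functions $\phi\mapsto e^{\sqrt{\beta}\phi_x}$, each depending on only a single coordinate of $\phi$. This alone handles the case $t=0$.

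For $t>0$, recall that $c_t^\epsilon$ is strictly positive definite on $X_\epsilon$, so \eqref{eq:renormalised-potential} can be rewritten as the explicit integral
\begin{equation*}
e^{-v_t^\epsilon(\phi)} \;=\; \frac{1}{Z_t^\epsilon}\int_{X_\epsilon} \exp\Bigl(-v_0^\epsilon(\phi+\zeta) - \tfrac{1}{2}\avg{\zeta,(c_t^\epsilon)^{-1}\zeta}_{X_\epsilon}\Bigr)\,d\zeta,
\end{equation*}
where $Z_t^\epsilon$ is the usual Gaussian normalising constant. Setting $F(\phi,\zeta)\coloneqq v_0^\epsilon(\phi+\zeta)+\tfrac{1}{2}\avg{\zeta,(c_t^\epsilon)^{-1}\zeta}_{X_\epsilon}$, I would verify joint convexity of $F$ on $X_\epsilon\times X_\epsilon$: the first summand is the composition of the convex $v_0^\epsilon$ with the linear map $(\phi,\zeta)\mapsto\phi+\zeta$, and the second is a non-negative quadratic form in $\zeta$ alone, so the sum is jointly convex.

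Prékopa–Leindler then asserts that $\phi\mapsto \int_{X_\epsilon} e^{-F(\phi,\zeta)}d\zeta$ is log-concave on $X_\epsilon$, whence its negative logarithm, which differs from $v_t^\epsilon(\phi)$ by an additive constant, is convex. Since the Gaussian convolution in \eqref{eq:renormalised-potential} makes $v_t^\epsilon$ smooth in $\phi$, convexity on the finite-dimensional space $X_\epsilon$ is equivalent to $\He v_t^\epsilon(\varphi)\geq 0$ as a quadratic form for every $\varphi$, which is the claim. I do not anticipate a serious obstacle: integrability is automatic from $v_0^\epsilon\geq 0$, and Prékopa–Leindler is classical. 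An alternative route would compute $\He v_t^\epsilon(\varphi)(h,h) = \EE^{\varphi}[\He v_0^\epsilon(\varphi+\zeta)(h,h)] - \mathrm{Var}^{\varphi}\bigl(\avg{\nabla v_0^\epsilon(\varphi+\zeta),h}_{X_\epsilon}\bigr)$ under the tilted law $d\mu^\varphi\propto e^{-v_0^\epsilon(\varphi+\zeta)}\,d\nu_{c_t^\epsilon}(\zeta)$ and dominate the variance term via Brascamp–Lieb, but this reduces to exactly the same log-concavity input and is strictly more cumbersome.
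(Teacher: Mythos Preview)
Your proof via Pr\'ekopa--Leindler is correct and arguably cleaner than the paper's, but the route is genuinely different. The paper does \emph{not} invoke Pr\'ekopa; instead it performs the change of variables $\zeta'=\zeta+\varphi$ to write $v_t^\epsilon(\varphi)=\tfrac12\varphi c_t^{-1}\varphi - \log\int e^{-v_0^\epsilon(\zeta')+\zeta' c_t^{-1}\varphi-\frac12\zeta' c_t^{-1}\zeta'}d\zeta'$, differentiates twice to obtain the explicit formula $\He v_t^\epsilon(\varphi)=c_t^{-1}-c_t^{-1}\cov_{\mu_{t,\varphi}^\epsilon}(\zeta)\,c_t^{-1}$, and then bounds $\cov_{\mu_{t,\varphi}^\epsilon}(\zeta)\leq c_t$ via Brascamp--Lieb (since the tilted potential has Hessian $\geq c_t^{-1}$). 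So the paper takes precisely the ``alternative route'' you dismissed as more cumbersome, just in a variant that tracks the covariance of $\zeta$ rather than of $\nabla v_0^\epsilon$.

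Your Pr\'ekopa argument has the advantage of bypassing any Hessian computation entirely, and it requires no special structure of $v_0^\epsilon$ beyond convexity. The paper's approach yields the explicit identity for $\He v_t^\epsilon$ as a by-product, though this identity is not reused elsewhere in the paper; the subsequent entrywise bounds in Lemma~\ref{lem:he-bound-entries} are obtained by direct differentiation rather than from this formula. At the level of inputs the two arguments are equivalent, since Pr\'ekopa's marginalisation theorem and the Brascamp--Lieb variance bound are two faces of the same log-concavity principle.
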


\begin{proof}
We first recall that
\begin{equation}
\label{eq:recall-vt-proof-he-pos-def}
v_t^\epsilon (\varphi) = - \log \EE_{c_t^\epsilon}[e^{-v_0^\epsilon(\varphi+\zeta)}].
\end{equation}
To ease the notation, we drop $\epsilon>0$ from $c_t^\epsilon$ throughout the rest of the proof.
A change of variable $\zeta' = \zeta + \varphi$ in the Gaussian measure permits us to write \eqref{eq:recall-vt-proof-he-pos-def} as
\begin{align}
\label{eq:Lv-renormalised-potential-log}
v_t^\epsilon (\varphi) 
&= 
- \log \frac{1}{Z_t^\epsilon} \int_{\R^{\Omega_\epsilon}} e^{-v_0^\epsilon(\zeta + \varphi)} e^{-\frac{1}{2}\zeta c_t^{-1} \zeta } d \zeta \nnb
& = \frac{1}{2}\varphi c_t^{-1} \varphi   - \log \frac{1}{Z_t^\epsilon} \int_{\Omega_\epsilon} e^{-v_0^\epsilon(\zeta')  + \zeta' c_t^{-1} \varphi  - \frac{1}{2}\zeta' c_t^{-1} \zeta' } d\zeta',
\end{align}
where $Z_t^\epsilon$ denotes the normalisation constant of the centred Gaussian measure on $X_\epsilon$ with covariance $c_t$.
Let now $\mu_{t,\varphi}^\epsilon$ be the probability measure on $X_\epsilon$ defined through
\begin{equation}
d \mu_{t,\varphi}^\epsilon (\zeta) \propto e^{-v_0^\epsilon(\zeta) + \zeta c_t^{-1} \varphi - \frac{1}{2} \zeta c_t^{-1} \zeta } d\zeta.
\end{equation}
Then differentiating \eqref{eq:Lv-renormalised-potential-log} with respect to $\varphi$ gives
\begin{equation}
\nabla v_t^\epsilon(\varphi)
= c_t^{-1} \varphi
- \frac{\int_{\R^{\Omega_\epsilon}} c_t^{-1}\zeta e^{-v_0^\epsilon(\zeta) + \zeta c_t^{-1} \varphi} d\zeta  }{ \int_{\R^{\Omega_\epsilon}} e^{-v_0^\epsilon(\zeta) + \zeta c_t^{-1} \varphi} d\zeta } 
= c_t^{-1} \varphi - \E_{\mu_{t,\varphi}^\epsilon} [c_t^{-1} \zeta].
\end{equation}
Differentiating once again, it can easily be verified that 
\begin{align}
\label{eq:he-after-shift}
\He v_t^\epsilon(\varphi)
&= c_t^{-1} - \cov_{\mu_{t,\varphi}^\epsilon} (c_t^{-1} \zeta)
= c_t^{-1}  - c_t^{-1} \cov_{\mu_{t,\varphi}^\epsilon} (\zeta) c_t^{-1}. 
\end{align}
Now, let $U_{t, \varphi}^\epsilon = v_0^\epsilon(\zeta) - \zeta c_t^{-1} \varphi + \frac{1}{2}\zeta c_t^{-1} \zeta$, so that
\begin{equation}
d \mu_{t,\varphi}^\epsilon (\zeta) \propto e^{-U_{t, \varphi}^\epsilon(\zeta)} d\zeta.
\end{equation}
Then we have
\begin{equation}
\label{eq:lower-bound-u}
\He U_{t,\varphi}^\epsilon(\zeta) =  \beta\lambda^2 \diag \big( \wick{e^{\sqrt{\beta} \zeta_x} }_\epsilon \big) + c_t^{-1} \geq c_t^{-1} ,
\end{equation}
and thus, by the Brascamp-Lieb inequality \cite[Theorem 4.1]{MR0450480}, we have
\begin{equation}
\cov_{\mu_{t,\phi}^\epsilon} (\zeta) \leq c_t,
\end{equation}
which, together with \eqref{eq:he-after-shift}, implies \eqref{eq:he-pos-definite}.
\end{proof}

\begin{lemma}
\label{lem:he-bound-entries}
Let $\epsilon >0$ and let $x,y\in \Omega_\epsilon$.
Then, for $\varphi \in X_\epsilon$,
\begin{equation}
\label{eq:he-bound-entries}
\big| \big(\He v_t^\epsilon(\varphi)\big)_{x,y} \big|
\leq \frac{\beta \lambda}{2} \big( \wick{e^{\sqrt{\beta} \varphi_x} }_{L_t} + \wick{e^{\sqrt{\beta} \varphi_y} }_{L_t} \big).
\end{equation}
\end{lemma}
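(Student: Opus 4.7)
The plan is to first bound the diagonal entries of $\He v_t^\epsilon(\varphi)$ via the FKG inequality of Lemma \ref{lem:lv-fkg}, and then transfer this bound to the off-diagonal entries by exploiting the positive-semi-definiteness established in Lemma \ref{lem:he-pos-definite}.

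For the diagonal entries I would differentiate the identity $v_t^\epsilon(\varphi) = -\log \EE_{c_t^\epsilon}[e^{-v_0^\epsilon(\varphi+\zeta)}]$ twice with respect to $\phi_x$. Letting $\mu_{t,\varphi}^\epsilon$ denote the probability measure with density proportional to $e^{-v_0^\epsilon(\varphi+\zeta)}$ relative to the centred Gaussian of covariance $c_t^\epsilon$, a direct calculation (with careful bookkeeping of the $\epsilon^2$ weights coming from the inner product \eqref{eq:inner-product-x-eps}) yields
\begin{equation*}
(\He v_t^\epsilon(\varphi))_{x,x}
= \lambda\beta\, \E_{\mu_{t,\varphi}^\epsilon}\big[\wick{e^{\sqrt{\beta}(\varphi_x+\zeta_x)}}_\epsilon\big]
- \lambda^2 \beta \epsilon^2\, \var_{\mu_{t,\varphi}^\epsilon}\big(\wick{e^{\sqrt{\beta}(\varphi_x+\zeta_x)}}_\epsilon\big).
\end{equation*}
The variance term is non-negative and can be dropped, and to the resulting expectation Lemma \ref{lem:lv-fkg} applies directly:
\begin{equation*}
\E_{\mu_{t,\varphi}^\epsilon}\big[\wick{e^{\sqrt{\beta}(\varphi_x+\zeta_x)}}_\epsilon\big]
= \frac{\EE_{c_t^\epsilon}\big[\wick{e^{\sqrt{\beta}(\varphi_x+\zeta_x)}}_\epsilon\, e^{-v_0^\epsilon(\varphi+\zeta)}\big]}{\EE_{c_t^\epsilon}[e^{-v_0^\epsilon(\varphi+\zeta)}]}
\leq \EE_{c_t^\epsilon}\big[\wick{e^{\sqrt{\beta}(\varphi_x+\zeta_x)}}_\epsilon\big]
= \wick{e^{\sqrt{\beta}\varphi_x}}_{L_t},
\end{equation*}
producing $0 \leq (\He v_t^\epsilon(\varphi))_{x,x} \leq \lambda\beta \wick{e^{\sqrt{\beta}\varphi_x}}_{L_t}$, where the lower bound also follows from Lemma \ref{lem:he-pos-definite}.

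For the off-diagonal entries I use polarization. Since $\He v_t^\epsilon(\varphi) \geq 0$ as a quadratic form on $X_\epsilon$ by Lemma \ref{lem:he-pos-definite}, testing it against the pair $\mathbf{1}_x \pm \mathbf{1}_y$ for $x \neq y$ and combining the two resulting non-negativity statements gives
\begin{equation*}
|(\He v_t^\epsilon(\varphi))_{x,y}|
\leq \frac{1}{2}\big((\He v_t^\epsilon(\varphi))_{x,x} + (\He v_t^\epsilon(\varphi))_{y,y}\big),
\end{equation*}
and combining this with the diagonal bound above yields exactly \eqref{eq:he-bound-entries}. The only real bookkeeping subtlety is tracking the $\epsilon^2$ factors introduced by the Fr\'echet-derivative convention with respect to $\langle \cdot, \cdot\rangle_{X_\epsilon}$ when computing the second derivative; once this is handled, the whole argument reduces to three ingredients already in place, namely the explicit diagonal identity, the FKG inequality of Lemma \ref{lem:lv-fkg}, and the positive-semi-definiteness of Lemma \ref{lem:he-pos-definite}.
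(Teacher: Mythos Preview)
Your proof is correct and follows essentially the same approach as the paper. The paper likewise writes $\He v_t^\epsilon(\varphi) = \beta\lambda D^\epsilon(\varphi) - \beta\lambda^2 C^\epsilon(\varphi)$ with $D^\epsilon$ diagonal and $C^\epsilon$ a covariance matrix, bounds the diagonal via Lemma~\ref{lem:lv-fkg}, and obtains the off-diagonal bound by testing the quadratic-form inequality $0 \leq \He v_t^\epsilon(\varphi) \leq \beta\lambda D^\epsilon(\varphi)$ on $\mathbf{1}_x \pm \mathbf{1}_y$; your version differs only cosmetically in that you first complete the diagonal estimate and then polarize using $\He v_t^\epsilon(\varphi)\geq 0$ alone, which yields the same conclusion.
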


\begin{proof}
We first note that for $x\neq y$ and $\varphi \in \Omega_\epsilon$ we have
\begin{align}
\big( \He v_t^\epsilon (\varphi) \big)_{x,y}
&= 
- \beta \lambda^2
\Big( 
\frac{\EE_{c_t^\epsilon}[\wick{e^{\sqrt{\beta}(\varphi_x+\zeta_x)}}_\epsilon \wick{e^{\sqrt{\beta} (\varphi_y + \zeta_y)}}_\epsilon e^{-v_0^\epsilon(\varphi + \zeta)}]}{\EE_{c_t^\epsilon}[e^{-v_0^\epsilon(\varphi+ \zeta)}]} 
\nnb
&\qquad\qquad\qquad- 
\frac{\EE_{c_t^\epsilon}[\wick{e^{\sqrt{\beta}(\varphi_x+\zeta_x)}}_\epsilon e^{-v_0^\epsilon(\varphi + \zeta)}]}{\EE_{c_t^\epsilon}[e^{-v_0^\epsilon(\varphi+ \zeta)}]} 
\frac{\EE_{c_t^\epsilon}[ \wick{e^{\sqrt{\beta} (\varphi_y + \zeta_y)}}_\epsilon e^{-v_0^\epsilon(\varphi + \zeta)}]}{\EE_{c_t^\epsilon}[e^{-v_0^\epsilon(\varphi+ \zeta)}]}
\Big)
,
\end{align}
and similarly, for $x\in \Omega_\epsilon$,
\begin{align}
\big( \He v_t^\epsilon (\varphi) \big)_{x,x}
&= 
\beta \lambda 
\frac{\EE_{c_t^\epsilon}[\wick{e^{\sqrt{\beta}(\varphi_x+\zeta_x)}}_\epsilon  e^{-v_0^\epsilon(\varphi + \zeta)}]}{\EE_{c_t^\epsilon}[e^{-v_0^\epsilon(\varphi+ \zeta)}]}
\nnb
&- \beta \lambda^2
\Big( 
\frac{\EE_{c_t^\epsilon}\big[\big(\wick{e^{\sqrt{\beta}(\varphi_x+\zeta_x)}}_\epsilon\big)^2 e^{-v_0^\epsilon(\varphi + \zeta)}\big]}{\EE_{c_t^\epsilon}[e^{-v_0^\epsilon(\varphi+ \zeta)}]} 
\nnb
&\qquad\qquad\qquad- 
\big( \frac{\EE_{c_t^\epsilon}[\wick{e^{\sqrt{\beta}(\varphi_x+\zeta_x)}}_\epsilon e^{-v_0^\epsilon(\varphi + \zeta)}]}{\EE_{c_t^\epsilon}[e^{-v_0^\epsilon(\varphi+ \zeta)}]}
\big)^2 
\Big).
\end{align}
Thus, we have that
\begin{equation}
\He v_t^\epsilon (\varphi) = \beta \lambda D^\epsilon(\varphi) - \beta \lambda^2 C^\epsilon(\varphi) ,
\end{equation}
where $C^\epsilon(\phi)$ is the covariance matrix of $\wick{e^{\sqrt{\beta}(\varphi+ \zeta)}}_\epsilon$ under the measure $\tilde \mu_{t,\varphi}$ defined through
\begin{equation}
d \tilde \mu_{t,\varphi} \propto e^{-v_0^\epsilon(\varphi+\zeta) + \frac{1}{2}\zeta c_t^{-1} \zeta} d\zeta
\end{equation}
and $D^\epsilon(\phi)$ is the diagonal matrix
\begin{equation}
D^\epsilon(\phi) = \diag\Big( 
\frac{\EE_{c_t^\epsilon}[\wick{e^{\sqrt{\beta}(\phi_x+\zeta_x)}}_\epsilon  e^{-v_0^\epsilon(\phi + \zeta)}]}{\EE_{c_t^\epsilon}[e^{-v_0^\epsilon(\phi+ \zeta)}]}
 \Big).
\end{equation}
Since $C^\epsilon(\varphi)$ is positive definite, we have, for any $\varphi \in X_\epsilon$,
\begin{equation}
\label{eq:he-upper-lower}
0 \leq \He v_t^\epsilon (\varphi) \leq \beta \lambda D^\epsilon(\varphi)
\end{equation}
as quadratic forms on $X_\epsilon$,
where the lower bound follows from Lemma \ref{lem:he-pos-definite}.
Now, using Lemma \ref{lem:lv-fkg}, we have for $x\in \Omega_\epsilon$
\begin{equation}
0 \leq \big(\He v_t^\epsilon (\varphi) \big)_{x,x} \leq \beta \lambda D^\epsilon(\varphi)_{x,x} \leq \beta \lambda \wick{e^{\sqrt{\beta}\varphi_x} }_{L_t},
\end{equation}
which is \eqref{eq:he-bound-entries} for $x=y$.
Moreover, evaluating the quadratic form inequality \eqref{eq:he-upper-lower} on $\mathbf{1}_{x} \pm \mathbf{1}_y$ gives
\begin{equation}
\big| 2 \big(\He v_t^\epsilon(\varphi) \big)_{x,y} \big| \leq \beta \lambda \Big( D^\epsilon(\varphi) \big)_{x,x} + \big( D^\epsilon(\varphi) \big)_{y,y} \Big)
\leq 
\beta \lambda \big( \wick{e^{\sqrt{\beta}\varphi_x} }_{L_t} + \wick{e^{\sqrt{\beta}\varphi_y} }_{L_t} \big),
\end{equation}
where we used Lemma \ref{lem:lv-fkg} to obtain the second inequality.
This yields \eqref{eq:he-bound-entries} for $x\neq y$.
\end{proof}

\begin{proof}[Proof of Theorem \ref{thm:dcnablav-bd-cont}]

We first give the proof of \eqref{eq:dcnablav-eps-cont} for $k=0$.
The general case $k\geq 1$ is identical when $\dot c_t^\epsilon$ is replaced by $\partial_\epsilon^k \dot c_t^\epsilon$.
For $\epsilon>0$, $t\geq \epsilon^2$ and $\varphi, \varphi' \in X_\epsilon$ we define the function $F\colon [0,1] \to X_\epsilon$,
\begin{equation}
F(s) = \dot c_t^\epsilon \nabla v_t^\epsilon \big(\varphi + s(\varphi'-\varphi)\big).
\end{equation}
Then, we have by the chain rule
\begin{equation}
\label{eq:cnablav-ds}
\frac{d}{ds} F (s) = \dot c_t^\epsilon \He v_t^\epsilon\big( \varphi_s\big) \cdot (\varphi- \varphi') ,
\end{equation}
where we set $\varphi_s = \varphi + s(\varphi- \varphi')$ for $s\in [0,1]$.
With the notation introduced in \eqref{eq:infty-operator-norm}, we have from \eqref{eq:cnablav-ds}
\begin{equation}
\label{eq:dfds-upper-bound}
\big\|\frac{d}{ds} F(s) \big\|_{L^\infty(\Omega_\epsilon)}
\leq \opnorm{ \big( \dot c_t^\epsilon \He v_t^\epsilon( \varphi_s)\big) }{\epsilon} \|\varphi- \varphi'\|_{L^\infty(\Omega_\epsilon)}. 
\end{equation}
By Lemma \ref{lem:he-bound-entries} we have
\begin{align}
\opnorm{\big( \dot c_t^\epsilon \He v_t^\epsilon (\varphi_s)\big) }{\epsilon}
&= 
\sup_{x\in \Omega_\epsilon}
\epsilon^2 \sum_{y\in \Omega_\epsilon} \big|\dot c_t^\epsilon(x,y)\big( \He v_t^\epsilon(\varphi_s) \big)_{x,y} \big| \nnb
&\leq
\frac{\lambda \beta}{2}
\sup_{x\in \Omega_\epsilon}
\epsilon^2 \sum_{y\in \Omega_\epsilon} \dot c_t^\epsilon(x,y) \big( \wick{e^{\sqrt{\beta} \varphi_s(x)}}_{L_t} + \wick{e^{\sqrt{\beta} \varphi_s(y)}}_{L_t} \big) 
\nnb
&\leq 
\lambda \beta \sup_{z\in \Omega_\epsilon}
\wick{e^{\sqrt{\beta} \varphi_s(z)}}_{L_t} 
\sup_{x\in \Omega_\epsilon} \|\dot c_t^\epsilon(x,\cdot) \|_{L^1(\Omega_\epsilon)}
\nnb
&\lesssim_{\beta} 
\lambda \beta L_t^{\beta/4\pi}
\big( s  e^{\sqrt{\beta} \max \varphi}  + (1-s) e^{\sqrt{\beta} \max \varphi'}  \big)
\sup_{x\in \Omega_\epsilon} \|\dot c_t^\epsilon(x,\cdot) \|_{L^1(\Omega_\epsilon)} .
\end{align}
Hence, the claimed inequality \eqref{eq:dcnablav-eps-cont} follows from integrating the last inequality on $[0,1]$ together with \eqref{eq:dfds-upper-bound} and Lemma \ref{lem:C-limit}, specifically \eqref{e:c-limit}.

To prove the estimate \eqref{eq:dcnablav-0-cont},
we let $\varphi_\epsilon$ and $\varphi_\epsilon'$ be the restrictions of $\varphi$ and $\varphi'$ to $\Omega_\epsilon$.
Then, since $\varphi \in C(\Omega)$ we have
\begin{equation}
\label{eq:varphi-pvarphi-convergence-2}
\sup_{x\in \Omega} |\varphi_\epsilon(x^\epsilon) - \varphi(x) | \to 0
\end{equation}
as $\epsilon \to 0$ and similarly for $\varphi_\epsilon'$ and $\varphi'$.
By \eqref{eq:dcnablav-eps-cont} we have
\begin{equation}
\| \dot c_t^\epsilon \nabla v_t^\epsilon (\varphi_\epsilon)
-
\dot c_t^\epsilon \nabla v_t^\epsilon (\varphi_\epsilon')
\|_{L^\infty(\Omega_\epsilon)}
\leq\lambda
O_\beta(\theta_t)
L_t^{\beta/4\pi} \big( e^{\sqrt{\beta} \max \varphi_\epsilon}  + e^{\sqrt{\beta} \max \varphi_\epsilon'} \big)
\|\varphi_\epsilon - \varphi_\epsilon' \|_{L^\infty(\Omega_\epsilon)}
,
\end{equation}
and thus, \eqref{eq:dcnablav-0-cont} follows from the convergence \eqref{eq:lv-dcnablav-limit} and \eqref{eq:varphi-pvarphi-convergence-2} together with the fact that $\dot c_t^0 \nabla v_t^0 (\varphi)$ is continuous.
\end{proof}

\section{Coupling to the Gaussian free field}
\label{sec:coupling-lattice}

In this section we state the main results which enter in the proof of Theorem \ref{thm:coupling-intro}.
As we explained in Section \ref{ssec:vt-eps},
we obtain a Liouville measure and the coupling to the Gaussian free field in Theorem \ref{thm:coupling-intro} from studying its regularisations \eqref{eq:lv-density},
which are constructed using the SDE \eqref{eq:lv-polchinski-sde}.
As we show with Theorem \ref{thm:lv-coupling} and Theorem \ref{thm:law-phi-0} below,
a solution of \eqref{eq:lv-polchinski-sde} exists and is unique,
and moreover, for $\epsilon>0$, its solution at $t=0$ is distributed as \eqref{eq:lv-density}.

The bounds on $\Phi^{\Delta}$ in Theorem \ref{thm:coupling-intro} then follow from the analogous bounds on the regularised fields $\Phi^{\Delta_\epsilon}$, which,
as we show in Theorem \ref{thm:lv-coupling},
are uniform in $\epsilon>0$.
The convergence of the lattice field as $\epsilon \to 0$ is then a consequence of the bounds on $\Phi^{\Delta_\epsilon}$ together with the convergence of $\Phi_t^{\Delta_\epsilon}$ to a continuum field $\Phi_t^{\Delta_0}$ stated in Theorem \ref{thm:lv-phi-delta-t-limit}. 

\subsection{Coupling of Gaussian lattice fields}

To prove these statements, it is convienient to work on one common probability space, where all solutions $\Phi^{\Lv_\epsilon}$ to \eqref{eq:lv-polchinski-sde} are realised simultaneously for all $\epsilon\geq 0$.
The purpose of this construction is twofold.
First, it allows to phrase the continuity estimates on the difference field $\Phi^{\Delta_\epsilon}$ in Theorem \ref{thm:coupling-intro} in a clean way for both $\epsilon>0$ and $\epsilon=0$
with all random bounds on the H\"older constants in \eqref{eq:intro-phi-delta-bounds} being independent of $\epsilon\geq 0$.
Second, it simplifies the proof of the convergence $(\Phi_0^{\Lv_0})_{\epsilon>0}$ to a limit $\Phi_0^{\Lv_0}$ in $H^{-\kappa}(\Omega)$ for any $\kappa>0$ as $\epsilon \to 0$. 

The coupling of the processes $\Phi^{\Lv_\epsilon}$ comes from the coupling of Gaussian processes $(\Phi^{\GFF_\epsilon})_{\epsilon>0}$ and $\Phi^{\GFF_0}$,
which drive the SDEs \eqref{eq:lv-polchinski-sde}.
This is similar to \cite[Section 3.1]{MR4399156} and,
for consistency, we follow in large parts the notation in this reference.

As in Section \ref{ssec:proof-of-gradient-convergence}, we write $\Omega^*= 2\pi \Z^2$ for the Fourier dual of $\Omega$ and denote by $\hat q_t^0(k)$, $k\in \Omega^*$ the Fourier multiplier of $q_t^0\colon \Omega \to \R$ with $\big[q_t^0 * q_t^0 \big](x-y) =  \dot c_t^0(x,y)$. 
Similarly, we write $\Omega_\epsilon^*= \{k\in 2\pi \Z^2 \colon -\pi/\epsilon <k_i\leq \pi/\epsilon \}$ for the Fourier dual of $\Omega_\epsilon^*$ and $\hat q_t^\epsilon(k)$ for the Fourier multipliers of the function $q_t^\epsilon \colon \Omega_\epsilon \to \R$ with $\big[q_t^\epsilon * q_t^\epsilon\big](x-y) = \dot c_t^\epsilon (x,y)$,
where here, $*$ denotes the discrete convolution on $\Omega_\epsilon$.
Let further $W$ be a cylindrical Brownian motion in $L^2(\Omega)$ defined on some probability space, which admits a Karhunen–Lo\`eve representation
\begin{equation}
\label{eq:cylindrical-bm-kl-representation}
W_t = \sum_{k \in \Omega^*} e^{ik\cdot (\cdot)} \hat W_t(k) ,
\end{equation}
where $\big( \hat W(k) \big)_{k\in \Omega^*}$ are independent complex standard Brownian motions subject to $\hat W(k)=\overline{\hat W(-k)}$
for $k \neq 0$ and $\hat W(0)$ is a real standard Brownian motion.
Here, the convergence of the sum over $k \in \Omega^*$ is in $C([0,\infty),H^{-1-\kappa}(\Omega))$ for any $\kappa>0$.

As shown in \cite[(3.15)]{MR4399156},
we obtain from $W$ for each $\epsilon>0$ a family of independent real Brownian motions $\big(W^\epsilon(x) \big)_{x\in \Omega_\epsilon}$ with variance $t/\epsilon^2$ through
\begin{equation}
W^\epsilon_t(x) = \Pi_\epsilon W_t(x) = \sum_{k \in \Omega_\epsilon^*} e^{ik\cdot x} \hat W_t(k),
\qquad x\in \Omega_\epsilon,
\end{equation}
where $\Pi_\epsilon$ denotes the restriction of the Fourier series \eqref{eq:cylindrical-bm-kl-representation} to coefficients in $\Omega_\epsilon^*$.

We now define for $\epsilon>0$ and $t\geq 0$
\begin{equation}
\label{eq:gff-eps-def}
\Phi_t^{\GFF_\epsilon}(x) = \int_t^\infty q_s^\epsilon \, dW_s^\epsilon
= \sum_{k \in \Omega_\epsilon^*} e^{ik\cdot x} \int_t^\infty \hat q_s^\epsilon(k) d\hat W_s(k), \qquad x\in \Omega_\epsilon.
\end{equation}
It can easily be verified that $\Phi_t^{\GFF_\epsilon}$ is a Gaussian field on $\Omega_\epsilon$ with covariance $c_\infty^\epsilon-c_t^\epsilon$.
Similarly, we define for $t\geq 0$
\begin{equation}
\label{eq:gff-0-def}
\Phi_t^{\GFF_0} = \int_t^\infty q_s^0 dW_s \equiv
\sum_{k\in \Omega^*}  e^{ik\cdot(\cdot)} \int_t^\infty \hat q_s^0(k) d\hat W_s(k).
\end{equation}
Here, for $t>0$, the convergence of the sum in \eqref{eq:gff-0-def} is in $H^{\kappa}(\Omega)$ for any $\kappa>0$,
while for $t=0$, the convergence is in $H^{-\kappa}(\Omega)$ for any $\kappa>0$,
which can be seen from the decay of the Fourier coefficents $\hat q_t^0(k)$.
Thus, for $t>0$, $\Phi_t^{\GFF_0}$ is well defined smooth centred Gaussian field with covariance
\begin{equation}
\E\big[  \avg{\Phi_t^{\GFF_0}, f} \avg{\Phi_t^{\GFF_0},g}   \big] 
= \int_{\Omega} \int_{\Omega} \big(c_\infty^0-c_t^0\big)(x-y) f(x) g(y) dx dy, \qquad f,g\in L^2(\Omega),
\end{equation}
and the last identity also holds true for $t=0$.
Note that all processes $(\Phi^{\GFF_\epsilon})_{\epsilon>0}$ and $\Phi^{\GFF_0}$ are realised on same probability space,
where the cylindrical Brownian motion $W$ is defined.
This coupling is underlying all statements and proofs throughout the rest of this work.

Finally, we denote the forward and backward filtrations associated with $W$ by $(\cF_t)$ and $(\cF^t)$, which are obtained by completing the $\sigma$-algebras generated by $\{W_s-W_0\colon s \leq t\}$ and $\{W_s-W_t\colon s\geq t\}$ respectively.
Note that the processes $\Phi^{\GFF_\epsilon} = (\Phi^{\GFF_\epsilon}_t)_{t\geq 0}$
for both $\epsilon >0$ and $\epsilon=0$ are adapted to the backward filtration $(\cF^t)_{t\geq 0}$.

\subsection{Coupling of Liouville lattice fields}

The first result in this section is the well-posedness of the (backward) SDE \eqref{eq:lv-polchinski-sde} for $\epsilon >0$ and $\epsilon = 0$.
Here and afterwards, $C_0$ denotes the space of continuous functions vanishing at infinity.

\begin{theorem}
\label{thm:lv-coupling}
For $\epsilon>0$, there is a unique $\cF^t$-adapted process $\Phi^{\Lv_\epsilon} \in C_0([0,\infty), X_\epsilon)$ such that
$\Phi_t^{\Lv_\epsilon} - \Phi_t^{\GFF_\epsilon} \leq 0$ for all $t\geq 0$
and
\begin{equation}
\label{eq:Phi-Lv-eps-coupling}
\Phi_t^{\Lv_\epsilon}
= - \int_t^\infty \dot c^\epsilon_s \nabla v_{s}^\epsilon(\Phi_s^{\Lv_\epsilon}) \, ds
      + \Phi_t^{\GFF_\epsilon} \qquad \text{a.s.}
\end{equation}

Analogously, there is a unique $\cF^t$-adapted process $\Phi^{\Lv_0}$
with $\Phi^{\Lv_0}-\Phi^{\GFF_0}\in C_0([0,\infty), C(\Omega))$ such that $\Phi_t^{\Lv_0} - \Phi_t^{\GFF_0} \leq 0$ for all $t\geq 0$
and
\begin{equation}
\label{eq:Phi-Lv-0-coupling}
\Phi_{t}^{\Lv_0} = - \int_t^\infty \dot c^0_s \nabla v_s^0(\Phi_s^{\Lv_0}) \, ds
   + \Phi_t^{\GFF_0} \qquad \text{a.s.}
\end{equation}
In particular, for any $t>0$, $\Phi^{\GFF}_0-\Phi_t^{\GFF}$ is independent of $\Phi_t^{\Lv}$ for both $\epsilon>0$ and $\epsilon=0$.
\end{theorem}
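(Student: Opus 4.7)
The plan is to rewrite the backward SDE as a pathwise fixed point equation for the difference field $\Psi_t \coloneqq \Phi_t^{\Lv_\epsilon} - \Phi_t^{\GFF_\epsilon}$. Formally subtracting \eqref{eq:gff-process} from \eqref{eq:lv-polchinski-sde} cancels the martingale term and yields
\begin{equation*}
\Psi_t = -\int_t^\infty \dot c_s^\epsilon \nabla v_s^\epsilon(\Psi_s + \Phi_s^{\GFF_\epsilon})\,ds,
\qquad t\geq 0.
\end{equation*}
For each realisation of the Gaussian driver I will produce $\Psi$ as the unique fixed point of the map $T$ defined by the right-hand side, on the cone $\mathcal{K}_\epsilon$ of $\cF^t$-adapted paths in $C_0([0,\infty),X_\epsilon)$ taking nonpositive values. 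Preservation of the cone follows from two ingredients. First, the pointwise sign $\partial_{\phi_y} v_s^\epsilon \geq 0$ of Lemma \ref{lem:lv-fkg} together with the nonnegativity of the heat kernel gives $\dot c_s^\epsilon \nabla v_s^\epsilon \geq 0$, so $T\Psi \leq 0$. Second, \eqref{eq:lv-dcnablav-eps-upper-bound} combined with the monotonicity of $y\mapsto \wick{e^{\sqrt\beta y}}_{L_s}$ and the sign $\Psi_s \leq 0$ gives
\begin{equation*}
\|(T\Psi)_t\|_{L^\infty(\Omega_\epsilon)} \leq \lambda\sqrt{\beta}\int_t^\infty \sup_{x\in\Omega_\epsilon}\int_{\Omega_\epsilon} \dot c_s^\epsilon(x,y) \wick{e^{\sqrt{\beta}\Phi_s^{\GFF_\epsilon}(y)}}_{L_s}\,dy\,ds,
\end{equation*}
a quantity depending only on the Gaussian realisation. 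Subcritical GMC moment bounds (analogous to those in Section~2.3) together with the fast decay $\theta_s$ from the mass $m\geq 1$ show this is a.s.\ finite and vanishes as $t \to \infty$, so $T\Psi \in \mathcal{K}_\epsilon$.

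For the contraction I invoke the Lipschitz estimate \eqref{eq:dcnablav-eps-cont}; using once more $\max_{\Omega_\epsilon}(\Psi_s + \Phi_s^{\GFF_\epsilon}) \leq \max_{\Omega_\epsilon} \Phi_s^{\GFF_\epsilon}$ for $\Psi,\Psi'\in \mathcal{K}_\epsilon$,
\begin{equation*}
\|(T\Psi)_t - (T\Psi')_t\|_{L^\infty(\Omega_\epsilon)}
\leq C(\omega)\int_t^\infty \theta_s L_s^{\beta/4\pi}\, \|\Psi_s - \Psi'_s\|_{L^\infty(\Omega_\epsilon)}\,ds,
\end{equation*}
with an a.s.\ finite $C(\omega)$ controlled by $e^{\sqrt{\beta}\sup_{s\geq 0}\max_{\Omega_\epsilon}\Phi_s^{\GFF_\epsilon}}$ times a deterministic constant depending on $\lambda$ and $\beta$. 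Equipping $\mathcal{K}_\epsilon$ with the random weighted norm
$\|\Psi\|_K = \sup_{t\geq 0} \exp\bigl(-K\int_0^t \theta_s L_s^{\beta/4\pi}\,ds\bigr) \|\Psi_t\|_{L^\infty(\Omega_\epsilon)}$
for a sufficiently large $K=K(\omega)$ turns $T$ into a strict contraction, and Banach's theorem provides the unique solution $\Psi\in \mathcal{K}_\epsilon$. Adaptedness to $(\cF^t)$ is preserved along the Picard iteration (starting from $\Psi^{(0)}\equiv 0$). The independence claim then follows at once, since $\Phi_0^{\GFF_\epsilon} - \Phi_t^{\GFF_\epsilon} = \int_0^t (\dot c_s^\epsilon)^{1/2}\,dW_s^\epsilon$ is $\cF_t$-measurable while the solution $\Phi_t^{\Lv_\epsilon}$ is $\cF^t$-measurable.

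The case $\epsilon=0$ is handled by the same scheme on the cone of $\cF^t$-adapted paths $\Psi \in C_0([0,\infty), C(\Omega))$ with $\Psi_s\leq 0$: the key observation is that for every $s>0$ the Gaussian field $\Phi_s^{\GFF_0}$ is smooth, hence $\Psi_s + \Phi_s^{\GFF_0}\in C(\Omega)$ and the composition with $\dot c_s^0 \nabla v_s^0$ is well-defined by Theorem \ref{thm:lv-dcnablav-limit}. The bounds \eqref{eq:lv-dcnablav-0-upper-bound} and \eqref{eq:dcnablav-0-cont} then play the role of their lattice counterparts, and the random a priori bound becomes an integral against the limiting subcritical GMC from Lemma \ref{lem:small-scale-gmc-convergence}. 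I expect the main technical obstacle to be that the contraction constant $C(\omega)$ is potentially very large (and in particular not uniform in $\epsilon$, since $\max\Phi^{\GFF_\epsilon}$ diverges like $m_\epsilon$); this is precisely what the random weighted norm absorbs for existence and uniqueness at each fixed $\epsilon\geq 0$, and uniformity in $\epsilon$ needed later is a separate issue handled by Theorem \ref{thm:lv-phi-delta-t-limit}.
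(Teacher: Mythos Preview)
Your overall strategy---rewrite the backward SDE as a fixed point equation for the nonpositive difference field and exploit the sign and Lipschitz bounds from Lemma~\ref{lem:lv-fkg} and Theorem~\ref{thm:dcnablav-bd-cont}---is the same as the paper's, and for fixed $\epsilon>0$ your argument is essentially correct (there the index set $\Omega_\epsilon$ is finite, so $\sup_{s\geq 0}\max_{\Omega_\epsilon}\Phi_s^{\GFF_\epsilon}<\infty$ a.s.). The paper organises the argument slightly differently, using Gronwall for uniqueness and an explicit $\tfrac{1}{n!}\bigl(\int g\bigr)^n$ bound on Picard iterates for existence, but this is equivalent to your weighted-norm contraction.

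There is, however, a genuine gap in the continuum case $\epsilon=0$. You pull out a single constant
\[
C(\omega)\;\asymp\;\exp\Bigl(\sqrt{\beta}\,\sup_{s>0}\max_{\Omega}\Phi_s^{\GFF_0}\Bigr)
\]
and then say the weighted norm ``absorbs'' it. But $\max_{\Omega}\Phi_s^{\GFF_0}\sim\tfrac{2}{\sqrt{2\pi}}\log\tfrac{1}{L_s}\to\infty$ as $s\to 0$, so $C(\omega)=+\infty$ a.s.\ and no choice of $K$ makes the map a contraction. The fix is not to pull the factor out at all: keep the $s$-dependent Lipschitz kernel
\[
g(s)=O_\beta(\theta_s)\,L_s^{\beta/4\pi}\,e^{\sqrt{\beta}\max_\Omega\Phi_s^{\GFF_0}}=O_\beta(\theta_s)\,\expmax_s^0
\]
inside the integral and show $\int_0^\infty g(s)\,ds<\infty$ a.s. This is exactly where Theorem~\ref{thm:lv-good-event} (and its Corollary~\ref{cor:gff-max-upper-bound-along-all-scales}) enters: the scale-by-scale bound $\max_\Omega\Phi_s^{\GFF_0}\leq(\tfrac{2}{\sqrt{2\pi}}+\rho)\log\tfrac{1}{L_s}+M_\rho$ gives $\expmax_s^0\lesssim L_s^{-(\hdphs+\sqrt\beta\rho)}$, which is integrable at $0$ for $\rho$ small since $\hdphs<2$. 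Your appeal to ``subcritical GMC moment bounds'' does not supply this; the Lipschitz estimate~\eqref{eq:dcnablav-0-cont} involves the pointwise maximum, not an integrated chaos, and it is the control of that maximum along all scales that makes the continuum argument close. The paper also uses the same integrability (via Corollary~\ref{cor:expmax-integrable-at-0}) to extend the solution from $(0,\infty)$ down to $t=0$, a step you should make explicit since $\Phi_0^{\GFF_0}$ is not a function.
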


The next result shows that the solution of \eqref{eq:Phi-Lv-eps-coupling} evaluated at $t=0$ is indeed a realisation of the Liouville measure.

\begin{theorem}
\label{thm:law-phi-0}
For $\epsilon>0$ we have that $\Phi_0^{\Lv_\epsilon}$ is distributed as the Liouville measure defined in \eqref{eq:lv-density}.
\end{theorem}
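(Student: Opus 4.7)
The plan is to identify a bounded martingale built from the Polchinski semigroup $(\PP^\epsilon_{s,t})$ and to use it to transport the distribution from the known boundary value at $t=\infty$ down to $t=0$. Fix a bounded continuous test function $F\colon X_\epsilon \to \R$ and define
\begin{equation}
H_t(\varphi) \coloneqq (\PP^\epsilon_{0,t} F)(\varphi) = e^{v_t^\epsilon(\varphi)} \EE_{c_t^\epsilon}\big[e^{-v_0^\epsilon(\varphi+\zeta)} F(\varphi+\zeta)\big], \qquad t\in[0,\infty].
\end{equation}
Two features will be used throughout: the boundary values $H_0 \equiv F$ and $H_\infty(0) = \int F\, d\nu^{\Lv_\epsilon}$ (by \eqref{eq:lv-renormalised-measure}), and the uniform bound $|H_t|\leq \|F\|_\infty$, which follows from $v_0^\epsilon\geq 0$ together with the identity $e^{-v_t^\epsilon(\varphi)} = \EE_{c_t^\epsilon}[e^{-v_0^\epsilon(\varphi+\zeta)}]$. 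The first step is to derive a PDE for $H_t$: writing $H_t = e^{v_t^\epsilon} K_t$ with $K_t(\varphi) = \EE_{c_t^\epsilon}[e^{-v_0^\epsilon(\varphi+\zeta)} F(\varphi+\zeta)]$, the function $K_t$ solves the heat equation $\partial_t K_t = \tfrac{1}{2}\Delta_{\dot c_t^\epsilon} K_t$, and a Leibniz expansion of $\Delta_{\dot c_t^\epsilon}(e^{-v_t^\epsilon}H_t)$ combined with the Polchinski equation \eqref{eq:polchinski-pde} yields an exact cancellation of the $H_t$ coefficient, leaving
\begin{equation}
\partial_t H_t = \tfrac{1}{2}\Delta_{\dot c_t^\epsilon} H_t - \avg{\dot c_t^\epsilon \nabla v_t^\epsilon, \nabla H_t}_{X_\epsilon}.
\end{equation}

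The second step is to trade the backward SDE \eqref{eq:Phi-Lv-eps-coupling} for a standard forward SDE by time reversal. For fixed $T>0$, set $\tilde \Phi_s = \Phi^{\Lv_\epsilon}_{T-s}$ and $\tilde W_s = W^\epsilon_T - W^\epsilon_{T-s}$ for $s\in[0,T]$. Both processes are adapted to $\mathcal{H}_s \coloneqq \cF^{T-s}$; moreover $\tilde W$ is a standard $\mathcal{H}$-Brownian motion because its increments live on time intervals disjoint from those generating $\cF^{T-s}$. Direct substitution in \eqref{eq:Phi-Lv-eps-coupling} then gives
\begin{equation}
d\tilde \Phi_s = -\dot c^\epsilon_{T-s}\nabla v^\epsilon_{T-s}(\tilde \Phi_s)\,ds + (\dot c^\epsilon_{T-s})^{1/2}\, d\tilde W_s,
\end{equation}
so the standard It\^o formula applies to $s\mapsto H_{T-s}(\tilde \Phi_s)$ on the finite-dimensional space $X_\epsilon$. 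The finite-variation part collapses to $\big[-\partial_t H_t - \avg{\dot c_t^\epsilon\nabla v_t^\epsilon, \nabla H_t}_{X_\epsilon} + \tfrac{1}{2}\Delta_{\dot c_t^\epsilon} H_t\big]\big|_{t=T-s}\,ds$, which vanishes identically by the PDE above. Combined with $|H_t|\leq\|F\|_\infty$, this makes $s\mapsto H_{T-s}(\tilde \Phi_s)$ a true bounded $\mathcal{H}$-martingale, so
\begin{equation}
\E[F(\Phi^{\Lv_\epsilon}_0)] = \E[H_0(\Phi^{\Lv_\epsilon}_0)] = \E[H_T(\Phi^{\Lv_\epsilon}_T)] \qquad \text{for every } T>0.
\end{equation}

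The last step is to pass $T\to\infty$. The $C_0$ condition in Theorem \ref{thm:lv-coupling} gives $\Phi^{\Lv_\epsilon}_T\to 0$ almost surely, while the pointwise convergence $c_T^\epsilon(x,y)\to c_\infty^\epsilon(x,y)$ and the smooth dependence of $v_T^\epsilon$ and $K_T$ on the finite-dimensional variable $\varphi$ imply $H_T(\varphi_T)\to H_\infty(0)$ whenever $\varphi_T\to 0$. Dominated convergence, using $|H_T|\leq\|F\|_\infty$, then yields $\E[H_T(\Phi^{\Lv_\epsilon}_T)]\to H_\infty(0) = \int F\,d\nu^{\Lv_\epsilon}$, so $\E[F(\Phi^{\Lv_\epsilon}_0)] = \int F\,d\nu^{\Lv_\epsilon}$ for every bounded continuous $F$ and the claim follows. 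The main obstacle is the second step: one must carefully verify that the time-reversed pair $(\tilde\Phi, \tilde W)$ satisfies a standard forward SDE on the filtration $\mathcal{H}$ and that the It\^o drift agrees with the PDE cancellation from the first step. Once these structural facts are in place, the uniform boundedness of $H_t$ removes any integrability concern and the $T\to\infty$ passage is automatic.
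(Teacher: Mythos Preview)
Your argument is correct. Both proofs rest on the same structural ingredients—the Polchinski PDE for the semigroup and the time-reversal that trades the backward SDE for a forward one driven by $\tilde W_s=W^\epsilon_T-W^\epsilon_{T-s}$—but they organise them differently. The paper introduces an auxiliary forward SDE $\tilde\Phi^T$ started from an independent sample of $\nu_T$, reads off $\Phi_0^T\sim\nu_0$ from the stochastic representation of $\PP_{0,T}$, and then runs a Gronwall comparison between $\Phi^T$ and the backward solution $\Phi^{\Lv_\epsilon}$ as $T\to\infty$, which in particular forces a separate proof that $\nu_T\rightharpoonup\delta_0$. You avoid the auxiliary process and the comparison entirely: applying It\^o's formula directly to $s\mapsto H_{T-s}(\Phi^{\Lv_\epsilon}_{T-s})$ and using the PDE to kill the drift gives a bounded martingale in one stroke, so $\E[F(\Phi^{\Lv_\epsilon}_0)]=\E[H_T(\Phi^{\Lv_\epsilon}_T)]$ for every $T$, and the $C_0$ property of the solution from Theorem~\ref{thm:lv-coupling} replaces the weak convergence $\nu_T\rightharpoonup\delta_0$. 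This is more direct and slightly more elementary. Two small points you gloss over are harmless in the present finite-dimensional setting: (i) $H_t(\varphi)$ is $C^{1,2}$ only for $t>0$ (since $F$ is merely continuous), so It\^o's formula is applied on $[0,T-\delta]$ and one passes $\delta\to0$ by bounded convergence; (ii) the convergence $H_T(\varphi_T)\to H_\infty(0)$ when $\varphi_T\to0$ uses that $c_T^\epsilon\to c_\infty^\epsilon$ as matrices, which gives locally uniform convergence of the Gaussian expectations.
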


As we explained in Section \ref{ssec:vt-eps}, the finite variation integral in the SDEs \eqref{eq:Phi-Lv-eps-coupling} and \eqref{eq:Phi-Lv-0-coupling} is closely related to the field $\Phi^\Delta$ in Theorem \eqref{thm:coupling-intro},
and hence, it plays an important role for the coupling between the Gaussian free field and the Liouville field.
In what follows we use the notation
\begin{equation}
\label{eq:coupling-at-scale-t}
\Phi_t^{\Lv_\epsilon} = \Phi_t^{\GFF_\epsilon} + \Phi_t^{\Delta_\epsilon}, \qquad \Phi_t^{\Delta_\epsilon} = -\int_t^\infty \dot c_s^\epsilon \nabla v_s^\epsilon (\Phi_s^{\Lv_\epsilon})ds,
\end{equation}
for both $\epsilon>0$ and $\epsilon=0$.
With this definition the field $\Phi^\Delta$ appearing in Theorem \ref{thm:coupling-intro} is equal to $\Phi_0^{\Delta_0}$.

To prove the coupling between the Gaussian free field and the Liouville field in Theorem \ref{thm:coupling-intro} we analyse $\Phi_t^{\Delta_\epsilon}$ and establish bounds that are uniform in $\epsilon>0$.
There are two main technical difficulties we overcome.
First, the gradient of the renormalised potential appearing in $\Phi_t^{\Delta_\epsilon}$ is a complicated object,
which is defined through derivatives of the logarithm of a Gaussian expectation value.
Second, the argument appearing of the gradient of the renormalised potential in \eqref{eq:coupling-at-scale-t} is a random field.
We overcome both diffuculties with the bounds on $\partial_{\phi_x} v_t^\epsilon$ obtained in Lemma \ref{lem:lv-fkg},
which in particular imply that the difference field $\Phi_t^{\Delta_\epsilon}$ is non-positive.
The estimates in the following result are valid for both $\epsilon>0$ and $\epsilon>0$,
for which we do not write this parameter explicitely.
The maximum is thus either over $\Omega_\epsilon$ for $\epsilon>0$ or over $\Omega$ for $\epsilon=0$.

\begin{theorem}
\label{thm:lv-coupling-bounds-lattice}
Let $\beta \in (0, 8\pi)$ and let $\lambda >0$. The following estimates hold uniformly for $\epsilon>0$ or $\epsilon=0$ with constants independent of $\epsilon$.
Let $\hdphs=\hdphs(\beta)$ be as in \eqref{eq:parameter-hoelder-phase}.
For any $\delta \in (0,2-\hdphs)$ the difference field $\Phi^\Delta$ satisfies a.s.\ the bound
\begin{equation}
\label{eq:lv-phi-delta-0-t-bound}
\max_x |\Phi^{\Delta}_0(x)-\Phi^{\Delta}_t(x)|
\leq
O_{\beta,\delta} (L_t^\delta)
\end{equation}
in the sense there exists a non-random constant $C_{\beta,\delta}>0$, such that a.s.\ there is $t_0>0$ with
\begin{equation}
\max_x |\Phi^{\Delta}_0(x)-\Phi^{\Delta}_t(x)| \leq C_{\beta,\delta} L_t^{\delta}, \qquad t\leq t_0.
\end{equation}
Moreover, $\Phi_t^{\Delta}$ satisfies the following H\"older continuity estimates:
for any $\delta >0$, there are positive and a.s.\ finite random variables $M_{\beta,\delta}$, which are uniform in $\epsilon \geq 0$ and $t\geq 0$,
such that
\begin{align}
\label{eq:lv-phi-delta-bd-max-continuity}
\begin{split} 
\max_{x} |\Phi_t^{\Delta}(x)|
+
\max_{x} |\partial \Phi_t^{\Delta} (x)| 
+
\max_{x,y} \frac{| \partial \Phi_t^\Delta(x)- \partial \Phi_t^\Delta(y)|}{|x-y|^{1-\hdphs-\delta}}
&\leq \lambda M_{\beta,\delta},
\qquad (0<\hdphs < 1),
%
\\
\max_{x} |\Phi_t^{\Delta}(x)|+ \max_{x,y}
\frac{|\Phi_t^\Delta(x)-\Phi_t^\Delta(y)|}{|x-y|^{2-\hdphs-\delta}}
&\leq \lambda M_{\beta, \delta},
\qquad (1 \leq \hdphs <2).
\end{split}
\end{align}

In addition, for any $t>0$ and $k \in \N$, there are positive and a.s.\ finite random variables $\tilde M_{\beta,k}$,
such that
\begin{equation}
\label{eq:lv-del-phi-delta-bd}
L_t^k\|\partial^k\Phi^\Delta_t\|_{L^\infty(\Omega)}
\leq \lambda \tilde M_{\beta,k}.
\end{equation}
In particular, $\Phi^\Delta_t \in C^\infty(\Omega)$ for any $t>0$.
\end{theorem}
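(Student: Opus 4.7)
The crucial input is the sign property $\Phi_s^{\Lv} \leq \Phi_s^{\GFF}$ established in Theorem \ref{thm:lv-coupling}, which propagates to the maximum: $\max \Phi_s^{\Lv} \leq \max \Phi_s^{\GFF}$ at every scale $s \geq 0$. This allows us to replace the a priori uncontrolled field $\Phi_s^{\Lv}$ appearing in the exponential factor of the derivative bound \eqref{eq:lv-partial-dcnablav-eps-bd} by its Gaussian counterpart, which is controlled uniformly in $\epsilon \geq 0$ and in scale $s>0$ by the multiscale maximum estimate of Theorem \ref{thm:lv-good-event}. Up to a loss of $L_s^{-\delta}$ this furnishes a random variable $R_{\beta,\delta}$, a.s.\ finite and independent of $\epsilon$, such that $\max_{\Omega_\epsilon}\Phi_s^{\GFF_\epsilon} \leq \frac{2}{\sqrt{2\pi}}\log(1/L_s) + \delta \log(1/L_s) + R_{\beta,\delta}$. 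Inserting this into \eqref{eq:lv-partial-dcnablav-eps-bd} and noting the identity $\beta/4\pi - 2\sqrt{\beta}/\sqrt{2\pi} = -\hdphs$, I obtain the master pointwise estimate
\[
L_s^k \|\partial_\epsilon^k \dot c_s^\epsilon \nabla v_s^\epsilon(\Phi_s^{\Lv_\epsilon})\|_{L^\infty(\Omega_\epsilon)} \leq \lambda\, O_{\beta,k}(\theta_s)\, L_s^{-\hdphs-\sqrt{\beta}\delta}\, e^{\sqrt{\beta} R_{\beta,\delta}},
\]
valid uniformly in $\epsilon \geq 0$ and $s>0$, and with the same bound at $\epsilon=0$ via \eqref{eq:dcnablav-0-bd}.

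\textbf{Deriving the estimates.} Each assertion follows by integrating the master bound in $s$ over a suitable scale interval. For \eqref{eq:lv-phi-delta-0-t-bound} I take $k=0$ and integrate on $[0,t]$; using $L_s = \sqrt{s}$ for small $s$, the integral is of order $L_t^{2-\hdphs-\sqrt{\beta}\delta}$, which dominates $L_t^\delta$ once $\delta$ is small (admissible since $\hdphs<2$). The $L^\infty$ bound in \eqref{eq:lv-phi-delta-bd-max-continuity} comes from the same estimate integrated on $[t,\infty)$, using the decay of $\theta_s$ at large scales. For the derivative bound \eqref{eq:lv-del-phi-delta-bd} I integrate the master bound for $k\geq 1$ on $[t,\infty)$, using $L_s \geq L_t$ on this range. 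For the H\"older seminorms in \eqref{eq:lv-phi-delta-bd-max-continuity} I split the defining integral at $s = |x-y|^2$: on $\{s\leq |x-y|^2\}$ I use the $\partial^k$ bound directly, on $\{s>|x-y|^2\}$ the $\partial^{k+1}$ bound multiplied by $|x-y|$. Both contributions scale as $|x-y|^{2-\hdphs-k-\sqrt{\beta}\delta}$; taking $k=1$ when $0<\hdphs<1$ and $k=0$ when $1\leq \hdphs<2$ yields the claimed H\"older exponents.

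\textbf{Main obstacle.} The only nontrivial input is the uniform-in-scale, uniform-in-$\epsilon$ maximum control of the Gaussian free field in Theorem \ref{thm:lv-good-event}; the loss $\delta$ in \eqref{eq:lv-phi-delta-bd-max-continuity} encodes precisely the subleading $\log\log(1/L_s)$ correction that this theorem must absorb into a power $L_s^{-\delta}$. A secondary, bookkeeping issue is treating $\epsilon=0$ simultaneously with $\epsilon>0$: this is feasible because \eqref{eq:lv-dcnablav-0-upper-bound}--\eqref{eq:dcnablav-0-bd} mirror their lattice versions exactly, and because the common probability space of Section \ref{sec:coupling-lattice} realises all $\Phi^{\GFF_\epsilon}$ simultaneously, so the random constant $R_{\beta,\delta}$ and hence $M_{\beta,\delta}$ do not depend on $\epsilon$. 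Repeating the integration over $[t,\infty)$ rather than $[0,\infty)$ gives the bound for $\Phi_t^\Delta$ with constants independent of $t\geq 0$.
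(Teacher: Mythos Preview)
Your proposal is correct and follows the same approach as the paper: bound $\dot c_s\nabla v_s(\Phi_s^{\Lv})$ via the FKG-based pointwise estimate together with $\Phi_s^{\Lv}\leq\Phi_s^{\GFF}$, control the resulting exponential by Theorem~\ref{thm:lv-good-event}/Corollary~\ref{cor:gff-max-upper-bound-along-all-scales}, and integrate in $s$ with a split at $s=|x-y|^2$ for the H\"older bounds. The paper packages the split via Lemma~\ref{lem:dc-difference} (the $(1\wedge|x-y|/L_s)$ heat-kernel difference estimate) rather than your ``$\partial^k$ on small scales, $\partial^{k+1}$ on large scales'' formulation, but these are equivalent.

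One point you gloss over: the derivative bound \eqref{eq:lv-partial-dcnablav-eps-bd} is only stated for $s\geq\epsilon^2$, and Theorem~\ref{thm:lv-good-event} only controls $\sup_{\epsilon\leq L_s}\max\Phi_s^{\GFF_\epsilon}$, i.e.\ again $s\geq\epsilon^2$. To get constants genuinely uniform in $\epsilon>0$ you must separately bound $\int_0^{\epsilon^2}\expmax_s^\epsilon\,ds$; the paper does this in Lemma~\ref{lem:contributions-0-eps2} (essentially by showing $\max(\Phi_s^{\GFF_\epsilon}-\Phi_{\epsilon^2}^{\GFF_\epsilon})$ is $O(\rho\log(1/\epsilon))$ uniformly for $s\leq\epsilon^2$). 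Relatedly, for \eqref{eq:lv-phi-delta-0-t-bound} as stated---with a \emph{non-random} constant $C_{\beta,\delta}$ and a random $t_0$---you should invoke the event in Theorem~\ref{thm:lv-good-event} directly rather than the additive-$M_\rho$ version of Corollary~\ref{cor:gff-max-upper-bound-along-all-scales}; otherwise your $e^{\sqrt\beta R_{\beta,\delta}}$ prefactor makes the constant random.
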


\subsection{Convergence of lattice fields}

At this point, apart from its existence, we do not have any information on the solution $\Phi^{\Lv_0}$ of \eqref{eq:Phi-Lv-0-coupling},
nor on its relation to the solutions $\Phi^{\Lv_\epsilon}$ of \eqref{eq:Phi-Lv-eps-coupling}.
The next result shows the convergence as $\epsilon\to 0$,
thereby giving $\Phi_0^{\Lv_0}$ the interpretation of the continuum Liouville model \eqref{eq:lv-formal-density}.

In what follows we also view $\Phi_t^{\GFF_\epsilon}$ in \eqref{eq:gff-eps-def} as a field on $\Omega$ using the isometric embedding $I_\epsilon$.
More precisely, we let $I_\epsilon \Phi_t^{\GFF_\epsilon}$ be the field on $\Omega$ with Fourier coefficients $\hat q_t^\epsilon(k)$ for $k\in \Omega_\epsilon^*$ and vanishing Fourier coefficents for $k\in \Omega^* \setminus \Omega_\epsilon^*$. 
Using the orthogonality of the complex exponentials, it can easily be verified that $I_\epsilon \colon X_\epsilon \to L^2(\Omega)$ is an isometry in the sense that for any $\Phi \in X_\epsilon$,
\begin{equation}
\| \Phi \|_{L^2(\Omega_\epsilon)} = \| I_\epsilon\Phi \|_{L^2(\Omega)}.
\end{equation}

\begin{theorem}
\label{thm:lv-phi-delta-t-limit}
Under the coupling introduced at the beginning of Section \ref{sec:coupling-lattice}
we have for any $t_0>0$, 
\begin{equation}
\label{eq:lv-phi-delta-t-limit}
\sup_{t\geq t_0}\norm{\Phi_t^{\Delta_\epsilon}- \Phi^{\Delta_0}_t}_{L^\infty(\Omega_\epsilon)} \to 0
\quad \text{as $\epsilon \to 0$ in probability.}
\end{equation}
In particular, for any $t\geq 0$, the lattice field $\Phi^{\Lv_\epsilon}_t$ converges weakly to $\Phi^{\Lv_0}_t$ in $H^{-\kappa}(\Omega)$ as $\epsilon \to 0$ for any $\kappa>0$,
where we have identified $\Phi^{\Lv_\epsilon}_t$ with the element of $C^\infty(\Omega)$ with the same Fourier coefficients for $k\in \Omega_\epsilon^*$ and vanishing Fourier coefficients for $k\not\in\Omega_\epsilon^*$.
\end{theorem}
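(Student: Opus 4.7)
The plan is to derive \eqref{eq:lv-phi-delta-t-limit} from a backward Gronwall argument applied to the integral equations in \eqref{eq:coupling-at-scale-t} for $\epsilon>0$ and $\epsilon=0$. Fix $t_0>0$. For $s\geq t_0$, identify $\Phi_s^{\Lv_0}$ with its restriction to $\Omega_\epsilon$ where needed; this is well-defined since $\Phi_s^{\Lv_0}\in C(\Omega)$ (the $\GFF$ part is smooth for $s>0$ and $\Phi_s^{\Delta_0}\in C^\infty(\Omega)$ by Theorem \ref{thm:lv-coupling-bounds-lattice}). Set
\[
D_t^\epsilon = \sup_{s\geq t}\bigl\|\Phi_s^{\Delta_\epsilon}-\Phi_s^{\Delta_0}\bigr\|_{L^\infty(\Omega_\epsilon)}.
\]
Subtracting the two defining equations and inserting $\dot c_s^\epsilon\nabla v_s^\epsilon(\Phi_s^{\Lv_0})$ as an intermediate decomposes the integrand into
\[
I_1(s)=\dot c_s^\epsilon\nabla v_s^\epsilon(\Phi_s^{\Lv_\epsilon})-\dot c_s^\epsilon\nabla v_s^\epsilon(\Phi_s^{\Lv_0}),\qquad I_2(s)=\dot c_s^\epsilon\nabla v_s^\epsilon(\Phi_s^{\Lv_0})-\dot c_s^0\nabla v_s^0(\Phi_s^{\Lv_0}).
\]

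For $I_1$, the Lipschitz estimate \eqref{eq:dcnablav-eps-cont} of Theorem \ref{thm:dcnablav-bd-cont} with $k=0$, together with the decomposition $\Phi^{\Lv}=\Phi^{\GFF}+\Phi^{\Delta}$, gives
\[
\|I_1(s)\|_{L^\infty(\Omega_\epsilon)}\leq O_\beta(\theta_s\lambda L_s^{\beta/4\pi})\,Z_s\,\bigl(D_s^\epsilon+G_s^\epsilon\bigr),
\]
where $Z_s$ is an exponential in the $L^\infty$-maxima of $\Phi_s^{\Lv_\epsilon}$ and $\Phi_s^{\Lv_0}$, and $G_s^\epsilon=\|\Phi_s^{\GFF_\epsilon}-\Phi_s^{\GFF_0}\|_{L^\infty(\Omega_\epsilon)}$. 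For each $s\geq t_0$, $G_s^\epsilon\to 0$ in probability by the common Fourier representations \eqref{eq:gff-eps-def}--\eqref{eq:gff-0-def} and dominated convergence on the Brownian modes. For $I_2$, the convergence \eqref{eq:lv-dcnablav-limit} of Theorem \ref{thm:lv-dcnablav-limit}, applied at the fixed continuous random field $\Phi_s^{\Lv_0}$, yields $\|I_2(s)\|_{L^\infty(\Omega_\epsilon)}\to 0$ almost surely for each $s\geq t_0$, with the a.s.\ finite integrable majorant $O_\beta(\theta_s\lambda L_s^{\beta/4\pi})e^{\sqrt{\beta}\|\Phi_s^{\Lv_0}\|_{L^\infty(\Omega)}}$ provided by \eqref{eq:lv-dcnablav-eps-upper-bound} and \eqref{eq:lv-dcnablav-0-upper-bound}.

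Combining these estimates, taking suprema over $t'\geq t$, and using that $s\mapsto D_s^\epsilon$ is non-increasing, produces
\[
D_t^\epsilon\leq E_\epsilon(t)+\int_t^\infty K_s\,D_s^\epsilon\,ds,\qquad t\geq t_0,
\]
where the random kernel $K_s=O_\beta(\theta_s\lambda L_s^{\beta/4\pi})Z_s$ is a.s.\ integrable on $[t_0,\infty)$ thanks to the exponential decay of $\theta_s=e^{-m^2s/2}$, and $E_\epsilon(t)\to 0$ in probability as $\epsilon\to 0$ by dominated convergence applied to the integrals of $\|I_2\|_{L^\infty}$ and $K_s G_s^\epsilon$. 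Backward Gronwall then gives
\[
D_{t_0}^\epsilon\leq E_\epsilon(t_0)\exp\Bigl(\int_{t_0}^\infty K_s\,ds\Bigr)\to 0 \quad \text{in probability},
\]
which is exactly \eqref{eq:lv-phi-delta-t-limit}. The weak $H^{-\kappa}$ convergence of $\Phi_t^{\Lv_\epsilon}$ to $\Phi_t^{\Lv_0}$ then follows by splitting $\Phi_t^{\Lv}=\Phi_t^{\GFF}+\Phi_t^{\Delta}$: the Gaussian part converges in $H^{-\kappa}$ via the explicit Fourier representations, while $\Phi_t^{\Delta}$ converges by \eqref{eq:lv-phi-delta-t-limit} for $t>0$, and for $t=0$ by combining this with the short-scale estimate \eqref{eq:lv-phi-delta-0-t-bound} and letting $t\to 0$.

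The main obstacle is ensuring that the random kernel $K_s$ is integrable with bounds uniform in $\epsilon$. This is handled by a truncation to the event on which $\|\Phi_s^{\Lv_\epsilon}\|_{L^\infty(\Omega_\epsilon)}$ stays below a fixed threshold for all $s\geq t_0$ and all $\epsilon>0$: Theorem \ref{thm:lv-coupling-bounds-lattice} controls $\Phi^{\Delta_\epsilon}$ uniformly in $\epsilon$, and standard Gaussian estimates handle $\Phi^{\GFF_\epsilon}$ for $s\geq t_0>0$ (its variance being uniformly bounded as $\epsilon\to 0$). The complementary event has arbitrarily small probability, which is compatible with the assertion of convergence in probability.
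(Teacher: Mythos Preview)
Your proposal is correct and follows essentially the same Gronwall-based approach as the paper: subtract the two integral equations, insert an intermediate term, control one piece by the Lipschitz estimate of Theorem~\ref{thm:dcnablav-bd-cont} and the other by the convergence in Theorem~\ref{thm:lv-dcnablav-limit}, then apply backward Gronwall. The $H^{-\kappa}$ conclusion is also handled the same way, splitting into the Gaussian and difference parts and using \eqref{eq:lv-phi-delta-0-t-bound} to pass from $t>0$ to $t=0$.

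The only noteworthy difference is how you handle the $\epsilon$-uniform integrability of the Gronwall kernel $K_s$. You propose a truncation to an event on which $\|\Phi_s^{\Lv_\epsilon}\|_{L^\infty}$ is bounded uniformly in $s\geq t_0$ and $\epsilon$. This works, but it is unnecessary: since the Lipschitz constant in \eqref{eq:dcnablav-eps-cont} involves only $e^{\sqrt\beta\max\varphi}$ (not the full $L^\infty$ norm), and since $\Phi_s^{\Lv_\epsilon}\leq\Phi_s^{\GFF_\epsilon}$ by Theorem~\ref{thm:lv-coupling}, one can bound $Z_s$ directly by $e^{\sqrt\beta\max\Phi_s^{\GFF_\epsilon}}+e^{\sqrt\beta\max\Phi_s^{\GFF_0}}$. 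Corollary~\ref{cor:gff-max-upper-bound-along-all-scales} then gives an $\epsilon$-uniform a.s.\ finite bound on $\int_{t_0}^\infty K_s\,ds$ without any truncation. This is what the paper does, and it streamlines the argument; in particular, it makes the dominated-convergence step for $E_\epsilon(t)\to 0$ cleaner, since the majorant no longer depends on $\epsilon$. You should also cite the uniform convergence $\sup_{s\geq t_0}\|\Phi_s^{\GFF_0}-\Phi_s^{\GFF_\epsilon}\|_{L^\infty(\Omega_\epsilon)}\to 0$ (in $L^2$) rather than just pointwise convergence in $s$, as this is what makes the $G_s^\epsilon$ contribution to $E_\epsilon$ vanish.
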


\begin{remark}
The convergence of $I_\epsilon\Phi_0^{\Lv_\epsilon}$ to $\Phi_0^{\Lv_0}$ is understood in the sense that for any $\kappa>0$, we have
\begin{equation}
\label{eq:phi-0-lv-eps-convergence}
\| I_\epsilon \Phi_0^{\Lv_\epsilon} - \Phi_0^{\Lv_0} \|_{H^{-\kappa}(\Omega)}
\to 0
\end{equation}
as $\epsilon \to 0$ in probability.
This implies in particular, that, for any $g\in H^\kappa(\Omega)$,
\begin{equation}
I_\epsilon \Phi_0^{\Lv_\epsilon} (g) \to  \Phi_0^{\Lv_0}(g)
\end{equation}
as $\epsilon \to 0$ in probability.
\end{remark}

\subsection{Control of the Gaussian maximum along all scales}
 
One of the key results that enter the proofs of Theorem \ref{thm:lv-coupling}, Theorem \ref{thm:lv-coupling-bounds-lattice} and Theorem \ref{thm:lv-phi-delta-t-limit} are the following estimates on the maximum of the processes $\Phi^{\GFF_\epsilon}$ and $\Phi^{\GFF_0}$ defined in \eqref{eq:gff-eps-def} and \eqref{eq:gff-0-def} along all scales $t>0$.

\begin{theorem}
\label{thm:lv-good-event}
For any $\rho\in (0,1)$ we have
\begin{equation}
\label{eq:max-phi-t-0-eventually-upper-bounded}
\P\Big(\exists t_0 >0 \colon \forall t\in (0,t_0) \colon \max_{\Omega} \Phi_t^{\GFF_0} \leq ( \fom + \rho) \log \frac{1}{L_t} \Big) = 1,
\end{equation}
and similarly
\begin{equation}
\label{eq:max-phi-t-eps-eventually-upper-bounded}
\P\Big(\exists t_0 >0 \colon \forall t\in (0, t_0) \colon \sup_{ \epsilon\leq L_t} \max_{\Omega_\epsilon} \Phi_t^{\GFF_\epsilon} \leq (\fom+\rho)\log \frac{1}{L_t}  \Big) = 1.
\end{equation}
\end{theorem}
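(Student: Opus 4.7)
The plan is to establish a quantitative tail estimate at each scale in a geometric sequence $t_n$ with $L_{t_n} = 2^{-n}$, apply Borel--Cantelli to obtain the bound almost surely along this sequence, and then interpolate to all scales in a neighbourhood of $0$. I focus on the lattice statement \eqref{eq:max-phi-t-eps-eventually-upper-bounded}; the continuum case \eqref{eq:max-phi-t-0-eventually-upper-bounded} follows from the same argument applied to $\Phi_t^{\GFF_0}$ on $\Omega$.

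The single-scale estimate I would prove is: for $\rho > 0$ there is $c=c(\rho)>0$ such that for $t$ small and every dyadic $\epsilon\leq L_t$,
\begin{equation}
\P\Bigl(\max_{\Omega_\epsilon}\Phi_t^{\GFF_\epsilon} > (\fom + \rho/4)\log(1/L_t)\Bigr) \leq L_t^{c}.
\end{equation}
The two ingredients are (i) the pointwise variance bound $\var(\Phi_t^{\GFF_\epsilon}(x)) = c_\infty^\epsilon(x,x)-c_t^\epsilon(x,x) = \frac{1}{2\pi}\log(1/L_t)+O_m(1)$ uniformly in $x$ and $\epsilon\leq L_t$, coming from \eqref{eq:ct-on-diagonal} and Lemma \ref{lem:cov-ct}; and (ii) the fact that $\Phi_t^{\GFF_\epsilon}$ is effectively smooth at scale $L_t$. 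The latter follows from the Fourier representation \eqref{eq:gff-eps-def}: the multipliers $\hat q_s^\epsilon(k)$ for $s\geq t$ are exponentially suppressed on modes $|k|\gtrsim 1/L_t$, so the restriction of $\Phi_t^{\GFF_\epsilon}$ to $\Omega_\epsilon$ differs from its low-frequency truncation by an $L^\infty$-norm of order $O(1)$ in every $L^p$, uniformly in $\epsilon\leq L_t$. Hence the maximum over $\Omega_\epsilon$ differs from the maximum over an $L_t$-net $\mathcal{N}\subset\Omega_\epsilon$ of cardinality $O(L_t^{-2})$ by a random $O(1)$ constant, and the Gaussian tail $\P(Z>a)\leq e^{-a^2/(2\sigma^2)}$ together with the union bound on $\mathcal{N}$ gives, with $a = (\fom+\rho/4)\log(1/L_t)$,
\begin{equation}
\P(\max_{\mathcal N}\Phi_t^{\GFF_\epsilon}>a) \lesssim L_t^{-2}\exp\Bigl(-\pi(\fom+\rho/4)^2\log(1/L_t)(1+o(1))\Bigr) \leq L_t^{c},
\end{equation}
since $\pi\fom^2 = 2$ and hence $\pi(\fom+\rho/4)^2 - 2 > 0$.

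Next, I union-bound this estimate over the $O(\log(1/L_t))$ dyadic discretisation scales $\epsilon = 2^{-k}\leq L_t$ (which costs only a logarithmic factor) and sum over $t_n = 4^{-n}$. The resulting sum $\sum_n n\cdot 2^{-cn}$ is finite, so by Borel--Cantelli there almost surely exists $n_0$ such that for every $n\geq n_0$,
\begin{equation}
\sup_{\epsilon\leq L_{t_n}}\max_{\Omega_\epsilon}\Phi_{t_n}^{\GFF_\epsilon} \leq (\fom + \rho/4)\log(1/L_{t_n}).
\end{equation}
For the interpolation, let $t\in[t_{n+1},t_n]$ and write $\Phi_t^{\GFF_\epsilon} = \Phi_{t_{n+1}}^{\GFF_\epsilon} + (\Phi_t^{\GFF_\epsilon}-\Phi_{t_{n+1}}^{\GFF_\epsilon})$. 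The increment has pointwise variance $\frac{1}{2\pi}\log(L_{t_{n+1}}/L_t) + O(1) = O(1)$, so by Borell--TIS together with a second Borel--Cantelli along $(t_n)$, its supremum over $\Omega_\epsilon$ is eventually at most $C\sqrt{n}$. Combining with the bound at $t_{n+1}$ and using $\log(1/L_{t_{n+1}}) = (n+1)\log 2$, $\log(1/L_t)\geq n\log 2$, we obtain
\begin{equation}
\max_{\Omega_\epsilon}\Phi_t^{\GFF_\epsilon} \leq (\fom+\rho/4)(n+1)\log 2 + C\sqrt{n} \leq (\fom+\rho)\log(1/L_t)
\end{equation}
for $n$ large enough depending on $\rho$, which gives \eqref{eq:max-phi-t-eps-eventually-upper-bounded}.

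The main obstacle is the smoothness-at-scale-$L_t$ step that replaces the naive union bound over $\epsilon^{-2}$ lattice points by one over $L_t^{-2}$ net points: without it the Gaussian tail would not beat $|\Omega_\epsilon|$ when $\epsilon\ll L_t$, and no uniform-in-$\epsilon$ bound could be extracted. The key input is the exponential decay of $\hat q_t^\epsilon(k)$ for $|k|\gtrsim 1/L_t$ provided by the heat-kernel covariance decomposition \eqref{eq:GFF-scale-regularisation}.
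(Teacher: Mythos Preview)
Your overall architecture (Gaussian tail on an $L_t$--net, Borel--Cantelli along a sequence of scales, interpolation between consecutive scales) is correct and is essentially what the paper does for the continuum statement \eqref{eq:max-phi-t-0-eventually-upper-bounded}. The paper packages the ``net reduction'' and the interpolation into a single step by writing $\Phi_t^{\GFF}(x)=\Phi_{t_k}^{\GFF}(z_i)+G_t^{i}(x)$, where $z_i$ is the centre of the $L_{t_{k+1}}$--box containing $x$, and then controls $\sup_{t\in(t_{k+1},t_k]}\sup_{x\in B^i} G_t^i(x)$ directly via Fernique and Borell--TIS (Lemma~\ref{lem:max-g-exp-bounded-fernique}). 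This is cleaner than separating the spatial oscillation from the temporal increment, but your version would also close.

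There is, however, a genuine gap in your treatment of \eqref{eq:max-phi-t-eps-eventually-upper-bounded}. You write that you union-bound over ``the $O(\log(1/L_t))$ dyadic discretisation scales $\epsilon=2^{-k}\leq L_t$''. But the condition $\epsilon\leq L_t$ reads $k\geq \log_2(1/L_t)$, which is satisfied by \emph{infinitely} many dyadic $\epsilon$, not $O(\log(1/L_t))$ many. A naive union bound over all such $\epsilon$ therefore gives an infinite sum and yields nothing. The same issue recurs in your interpolation step, where you need $\sup_{\epsilon\leq L_t}\sup_{t\in[t_{n+1},t_n]}\max_{\Omega_\epsilon}(\Phi_t^{\GFF_\epsilon}-\Phi_{t_{n+1}}^{\GFF_\epsilon})$ and again implicitly treat $\epsilon$ as ranging over a finite set.

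The paper resolves this by treating $\epsilon$ as a genuine parameter of the Gaussian process rather than a finite index. Lemma~\ref{lem:gaussian-fields-eps-change} establishes the increment bound
\[
\E\bigl[(\Phi_t^{\GFF_{\epsilon_2}}(x^{\epsilon_2})-\Phi_t^{\GFF_{\epsilon_1}}(x^{\epsilon_1}))^2\bigr]\lesssim \frac{\epsilon_2-\epsilon_1}{L_t},
\]
which, combined with the spatial and temporal increment bounds you already know, lets one apply Fernique's criterion to the three-parameter field $(x,t,\epsilon)\mapsto \Phi_t^{\GFF_\epsilon}(x^\epsilon)-\Phi_{t_k}^{\GFF_\epsilon}(z_i^\epsilon)$ on $B^i\times(t_{k+1},t_k]\times(0,L_{t_k}]$. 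This gives a uniform bound on the expected supremum (equation \eqref{eq:max-bg-exp-bounded-fernique}) and hence, via Borell--TIS, a sub-Gaussian tail for the full supremum including over $\epsilon$ (equation \eqref{eq:max-bg-tail}). Your Fourier-decay heuristic for smoothness at scale $L_t$ is in fact the mechanism behind this $\epsilon$-increment estimate, but it has to be converted into a metric-entropy bound rather than a finite union.
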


\begin{remark}
\label{rem:optimal-estimate-on-max}
The term $\fom \log \frac{1}{L_t}$ in the upper bound in \eqref{eq:max-phi-t-0-eventually-upper-bounded} and \eqref{eq:max-phi-t-eps-eventually-upper-bounded} is the first order of the maximum of $\Phi_t^\GFF$.
Thus, the statements say that a.s.\ the maximum eventually does not exceed the first order by $\rho \log \frac{1}{L_t}$ for any $\rho>0$.
Using a more careful analysis, we believe that \eqref{eq:max-phi-t-0-eventually-upper-bounded} and \eqref{eq:max-phi-t-eps-eventually-upper-bounded} hold with $\rho=0$ and possibly even
with the term in the right side of the inequalities replaced by $\fom \log \frac{1}{L_t} - c\log \log \frac{1}{L_t}$ for some $c>0$ in analogy with the result for the Branching Random Walk \cite{Hu2009Minimal}.
A rigorous proof of such an estimate is in essence not much different from the proof of Theorem \ref{thm:lv-good-event},
but would require more elaborate calculation,
which, to keep this article at a reasonable length,
we prefer not to include here.
\end{remark}

An immediate consequence of Theorem \ref{thm:lv-good-event} is the following control of the maxima of $\Phi^\GFF$ and $\Phi^{\GFF_\epsilon}$ along all scales $t>0$.

\begin{corollary}
\label{cor:gff-max-upper-bound-along-all-scales}
For any $\rho>0$ there are positive and a.s.\ finite random variables $M_{\rho}$, such that for all $t>0$,
\begin{align}
\label{eq:gff-max-0-upper-bound-along-all-scales}
\max_{\Omega} \Phi_t^{\GFF_0} \leq ( \fom + \rho )\log \frac{1}{L_t} + M_\rho ,
\\
\label{eq:gff-max-eps-upper-bound-along-all-scales}
\sup_{\epsilon \leq L_t} \max_{\Omega_\epsilon} \Phi_t^{\GFF_\epsilon} \leq ( \fom + \rho )\log \frac{1}{L_t} + M_\rho .
\end{align}
\end{corollary}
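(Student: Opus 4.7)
The plan is to define
\[
M_\rho := \sup_{t > 0} \Big[\, \max_{\Omega} \Phi_t^{\GFF_0} - (\fom + \rho)\log(1/L_t) \,\Big]_+
\]
and show $M_\rho < \infty$ almost surely; the bound \eqref{eq:gff-max-0-upper-bound-along-all-scales} then follows by construction. The discrete bound \eqref{eq:gff-max-eps-upper-bound-along-all-scales} is treated identically, the extra supremum over dyadic $\epsilon \leq L_t$ being absorbed thanks to the uniformity in $\epsilon$ of the variance estimate of Lemma \ref{lem:cov-ct}.

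First I would invoke Theorem \ref{thm:lv-good-event}: on a full-probability event there exists a random $t_0 > 0$ such that for all $t \in (0, t_0)$ the quantity inside $[\,\cdot\,]_+$ is non-positive. Hence the supremum over $(0, t_0)$ contributes zero to $M_\rho$, and the task reduces to bounding the supremum over $[t_0, \infty)$.

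Next, I would split $[t_0, \infty)$ into a compact piece $[t_0, T]$ (with $T := t_0 \vee (1/m^2)$, deterministic up to the random lower endpoint) and a tail $[T, \infty)$. On the compact range, the map $(t, x) \mapsto \Phi_t^{\GFF_0}(x)$ is a.s.\ jointly continuous: uniform convergence of the Fourier series \eqref{eq:gff-0-def} on compact subsets of $(0,\infty)\times \Omega$ follows from the super-polynomial decay of $\hat q_s^0(k)$ in $k$ for any $s$ bounded away from $0$. Since $(\fom+\rho)\log(1/L_t)$ is continuous and bounded on $[t_0, T]$, the sup there is a.s.\ finite by compactness.

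For the tail $t \geq T$, $L_t = 1/m$ is constant, so the subtracted term is just $(\fom+\rho)\log m$, and it suffices to bound $\sup_{t \geq T} \max_{\Omega} \Phi_t^{\GFF_0}$. From \eqref{eq:gff-0-def}, one has
\[
\mathrm{Var}(\Phi_t^{\GFF_0}(x)) = \sum_{k \in \Omega^*} \int_t^\infty e^{-s(|k|^2+m^2)} \, ds \lesssim e^{-m^2 t / 2}
\]
for $t \geq T$, and similarly $\EE[\max_\Omega \Phi_t^{\GFF_0}]$ decays to $0$. The Borell-TIS inequality then gives a stretched Gaussian tail on $\max_{\Omega}\Phi_t^{\GFF_0}$, which, combined with a Borel-Cantelli argument along integer times $t_n = n$ and the a.s.\ continuity of $t \mapsto \max_\Omega \Phi_t^{\GFF_0}$, yields a.s.\ finiteness of the sup over the tail. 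Taking the maximum of the three contributions yields a finite $M_\rho$.

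The main obstacle is the Borell-TIS/Borel-Cantelli bookkeeping on the tail, which requires uniformity in $\epsilon$ for \eqref{eq:gff-max-eps-upper-bound-along-all-scales}; this is routine but must use the bounds of Lemma \ref{lem:cov-ct} to ensure the Gaussian tail estimates are independent of $\epsilon \leq L_t$. The rest of the proof is compactness together with the a.s.\ smoothness of $\Phi^{\GFF_0}_t$ for $t > 0$ guaranteed by the representation \eqref{eq:gff-0-def}.
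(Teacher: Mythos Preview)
Your proof is correct and takes essentially the same approach as the paper: invoke Theorem~\ref{thm:lv-good-event} to dispose of $t<t_0$, then argue that the supremum over $t\geq t_0$ is a.s.\ finite because $t\mapsto \max_\Omega \Phi_t^{\GFF_0}$ is continuous on $(0,\infty)$ and vanishes as $t\to\infty$. The paper compresses your compact-piece/tail split into the single observation that $\sup_{s\geq t_0}\max_\Omega \Phi_s^{\GFF_0}<\infty$ a.s., whereas you unpack the tail via Borell--TIS and Borel--Cantelli along integer times; either route works.
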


In the application of Theorem \ref{thm:lv-good-event} we choose $\rho>0$ depending on $\beta$.
For our needs such a choice is possible for every $\beta \in (0,8\pi)$.
The most important consquence of Theorem \ref{thm:lv-good-event} and Corollary \ref{cor:gff-max-upper-bound-along-all-scales} in the present context is the integrability of the maximum of the renormalised exponential denoted as
\begin{equation}
\label{eq:renormalised-gff-eps-max}
\expmax_s^\epsilon = e^{\sqrt{\beta} \max_{\Omega_\epsilon} \Phi_s^{\GFF_\epsilon}} L_{s\vee \epsilon^2}^{\beta/4\pi}.
\end{equation}
Note that since $\Phi_s^{\GFF_0}$ is a smooth field for every $s>0$,
we have that $\expmax_s^\epsilon$ is also well-defined for $\epsilon=0$, i.e., we set
\begin{equation}
\label{eq:renormalised-gff-0-max}
\expmax_s^0 = e^{\sqrt{\beta} \max_{\Omega} \Phi_s^{\GFF_0}} L_s^{\beta/4\pi}.
\end{equation}
Thus, with $\hdphs$ as in \eqref{eq:parameter-hoelder-phase} we have by Corollary \ref{cor:gff-max-upper-bound-along-all-scales} 
\begin{equation}
\label{eq:expmax-0-upper-bound-with-hdphs}
\expmax_s^0 \leq e^{\sqrt{\beta} M_\rho} e^{\sqrt{\beta} (\fom + \rho) \log \frac{1}{L_s} - \frac{\beta}{4\pi} \log \frac{1}{L_s}}
= e^{\sqrt{\beta} M_\rho} L_s^{-(\hdphs + \sqrt{\beta}\rho)} ,
\end{equation}
and similarly, 
\begin{equation}
\label{eq:expmax-eps-upper-bound-with-hdphs}
\sup_{\epsilon\leq L_s} \expmax_s^\epsilon \leq e^{\sqrt{\beta} M_\rho} L_s^{-(\hdphs + \sqrt{\beta}\rho)}.
\end{equation}
Since $L_s = \sqrt{s}$ for $s\leq 1$, this shows that we can, for $\hdphs\in (0,2)$, choose $\rho>0$, such that $\sup_{\epsilon\leq L_s} \expmax_s^\epsilon$ and $\expmax_s^0$ are a.s.\ integrable at $0$.
We record this observation in a separate statement.
\begin{corollary}
\label{cor:expmax-integrable-at-0}
Let $\beta \in (0, 8\pi)$ and let $\hdphs$ be as in \eqref{eq:parameter-hoelder-phase}.
For any $\delta \in (0,2-\hdphs)$, there is a deterministic constant $C_{\beta, \delta}>0$ such that
\begin{equation}
\label{eq:expmax-0-integrable-at-0}
\P \Big(
\exists t_0 >0 \colon \forall t\in(0, t_0) \colon \int_0^t \expmax_s^0 ds 
\leq
C_{\beta, \delta} L_t^{\delta} 
\Big) = 1,
\end{equation}
and similarly
\begin{equation}
\label{eq:expmax-eps-integrable-at-0}
\P \Big(
\exists t_0 >0 \colon \forall t\in(0, t_0) \colon \int_0^t \sup_{\epsilon\leq L_s} \expmax_s^\epsilon ds 
\leq
C_{\beta, \delta} L_t^{\delta} 
\Big) = 1 .
\end{equation}
\end{corollary}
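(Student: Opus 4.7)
The plan is to directly integrate the pointwise a.s.\ bounds \eqref{eq:expmax-0-upper-bound-with-hdphs} and \eqref{eq:expmax-eps-upper-bound-with-hdphs}, which were already derived from Corollary \ref{cor:gff-max-upper-bound-along-all-scales}. The only real freedom is in the choice of the parameter $\rho>0$ in that corollary, and inserting a small margin in the exponent is what lets one replace the random prefactor $e^{\sqrt{\beta}M_\rho}$ by a deterministic constant, at the price of a random time $t_0$.

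Given $\delta \in (0, 2-\hdphs)$, I first pick an auxiliary exponent $\delta' \in (\delta, 2-\hdphs)$ and set $\rho = (2 - \hdphs - \delta')/\sqrt{\beta} > 0$, so that $\hdphs + \sqrt{\beta}\rho = 2 - \delta'$. Corollary \ref{cor:gff-max-upper-bound-along-all-scales} then supplies an a.s.\ finite random variable $M_\rho$ such that, by \eqref{eq:expmax-0-upper-bound-with-hdphs},
\begin{equation}
\expmax_s^0 \leq e^{\sqrt{\beta} M_\rho}\, L_s^{-(2-\delta')}, \qquad s>0.
\end{equation}
Since $m \geq 1$ one has $L_s = \sqrt{s}$ for $s \leq 1/m^2$, so an elementary calculation gives
\begin{equation}
\int_0^t L_s^{-(2-\delta')}\, ds = \int_0^t s^{-1+\delta'/2}\, ds = \frac{2}{\delta'}\, L_t^{\delta'}, \qquad t \leq 1/m^2,
\end{equation}
and therefore
\begin{equation}
\int_0^t \expmax_s^0\, ds \leq \frac{2\, e^{\sqrt{\beta} M_\rho}}{\delta'}\, L_t^{\delta'}
\end{equation}
for all sufficiently small $t$.

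To upgrade the exponent from $\delta'$ to $\delta$ with a deterministic constant, I factor $L_t^{\delta'} = L_t^{\delta' - \delta}\, L_t^{\delta}$ and exploit that $L_t \to 0$ as $t \to 0$ together with $\delta' - \delta > 0$ and the finiteness of $M_\rho$: the random quantity $(2/\delta')\, e^{\sqrt{\beta}M_\rho}\, L_t^{\delta' - \delta}$ vanishes a.s.\ as $t \to 0$, so for any chosen deterministic $C_{\beta, \delta} > 0$ (e.g.\ $C_{\beta, \delta} = 1$) there is a.s.\ a random $t_0 > 0$ such that this quantity is $\leq C_{\beta, \delta}$ for all $t \in (0, t_0)$. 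This proves \eqref{eq:expmax-0-integrable-at-0}, and \eqref{eq:expmax-eps-integrable-at-0} follows by the identical argument with \eqref{eq:expmax-eps-upper-bound-with-hdphs} in place of \eqref{eq:expmax-0-upper-bound-with-hdphs}. No real obstacle is encountered beyond the correct choice of the margin $\delta' - \delta$; the substance of the corollary lies entirely in Theorem \ref{thm:lv-good-event}.
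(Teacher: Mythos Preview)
Your proof is correct and rests on the same idea as the paper's: integrate the pointwise upper bound on $\expmax_s$ coming from Theorem~\ref{thm:lv-good-event}. The only difference is that the paper works directly with the event in \eqref{eq:max-phi-t-0-eventually-upper-bounded}, which already gives $\expmax_s^0 \leq L_s^{-(\hdphs+\sqrt\beta\rho)}$ for $s<t_0$ without any random prefactor, so the constant $C_{\beta,\delta}$ appears immediately with $\sqrt\beta\rho = 2-\hdphs-\delta$. You instead route through Corollary~\ref{cor:gff-max-upper-bound-along-all-scales}, picking up the random factor $e^{\sqrt\beta M_\rho}$, and then spend the margin $\delta'-\delta$ to absorb it; this extra step is unnecessary but entirely valid.
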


\section{Proofs of the main results}
\label{sec:proofs}

In this section we give the proof to the statements in Section \ref{sec:coupling-lattice}.
Since Theorem \ref{thm:lv-good-event} is used in the proof of the other results,
we present its proof first.

\subsection{Control of the Gaussian maximum: Proof of Theorem \ref{thm:lv-good-event}}
\label{ssec:proof-max-control}

The idea of the proof of Theorem \ref{thm:lv-good-event} is to view the field $\Phi_t^{\GFF}$ in an approximate sense as a collection of $O(L_t^{-2})$ independent random variables.
Using standard arguments on the maximum such as Fernique's criterion and Borell's inequality,
one can show that the upper bounds in \eqref{eq:max-phi-t-0-eventually-upper-bounded} and \eqref{eq:max-phi-t-eps-eventually-upper-bounded} hold along deterministic subsequences $(t_k)_{k\in \N}$ with $t_k \to 0$ as $k\to \infty$,
and then discuss control the intermediate values $t\in [t_{k+1}, t_k]$ with similar arguments.
To keep this section short, we carry out both steps simultaneously by subdividing the index spaces $\Omega_\epsilon\times (0,1]$ and $\Omega \times (0,1]$ as follows.

We first subdivide $\Omega$ into subboxes of sidelength $\bxlen{t}\approx L_t$, so that the field $\Phi_t^\GFF$ is approximately constant on each subbox.
We refer to this construction as the $t$-subdivision.
In order for the subdivision to be compatible with the length of the torus,
we choose
$\bxlen{t} = 2^{-\ceil{ \log_2 \frac{1}{L_t} }}$,
so that $\bxlen{t} \leq L_t$ with $\bxlen{t} = L_t\big(1 + o_t(1)\big)$,
where $o_t(1)\to 0$ as $t\to 0$.
Moreover, we write $N_t$ for the number of boxes in the $t$-subdivision and note that, since $L_t = \sqrt{t}\wedge 1/m$, we have
\begin{equation}
N_t = \frac{1}{\bxlen{t}^2} = \frac{1}{t} \big(1+o_t(1)\big)
\end{equation}
as $t\to 0$.
For $i= 1, \ldots, N_t$, we denote the $i$-th subbox in the $t$-subdivision by $\bx{t}{i}$ and its centre by $z_i$.
Moreover, we write $\bx{t}{i,\epsilon}=\epsilon \Z^2 \cap \bx{t}{i}$ for the $\epsilon$-discretisation of $\bx{t}{i}$ and,
recalling the notation introduced above Theorem \ref{thm:lv-dcnablav-limit},
we write $z_i^\epsilon \in \Omega_\epsilon$ for its centre.
To keep the notation clear, we omit the dependence on $t$ from the notation of $z_i$ and $z_i^\epsilon$.
Moreover, we use the convention $z_i^0 = z_i$ and $\bx{t}{i,0}=\bx{t}{i}$.

For a given $\rho >0$ we define the sequence $(t_k)_{k\in \N}$ by $t_k = k^{-p}$ with $p>\frac{1}{\sqrt{2\pi}\rho}$
and subdivide $\Omega\times  (0,1]$ into disjoint sets $\bx{t_{k+1}}{i} \times (t_{k+1}, t_k]$, $k\in \N$, $i=1, \ldots N_{t_{k+1}}$.
%
As we see below the field $\Phi_t^\GFF(x)$ is approximately constant for $(x,t)\in \bx{t}{i} \times (t_{k+1},t_k]$.

Similarly, we subdivide $\Omega_\epsilon \times (0,1]$ into sets $\bx{t_{k+1}}{i,\epsilon}\times(t_{k+1}, t_k]$, $k\in \N$, $i=1,\ldots, N_{t_{k+1}}$.
Then, for $\epsilon>0$ and $\epsilon=0$ we define the fields
\begin{equation}
\label{eq:definition-G-on-boxes}
\Gs{i,\epsilon}_t(x) = \Phi_t^{\GFF_\epsilon}(x) - \Phi_{t_k}^{\GFF_\epsilon}(z_i^\epsilon), \qquad  i=1, \ldots, N_{t_{k+1}}, \, 
(x,t) \in \bx{t_{k+1}}{i,\epsilon}\times(t_{k+1},t_k]. 
\end{equation}
Note that, since $\Phi_t^{\GFF}$ is invariant under translations on $\Omega$,
the fields $\Gs{i,\epsilon}_{ t}$ are identically distributed.
For the proof of \eqref{eq:max-phi-t-eps-eventually-upper-bounded} it is necessary to control also the lattice fields $\Phi_t^{\GFF_\epsilon}$ for different regularisations.
%
Then we additionally view $\Phi_t^{\GFF_\epsilon}$ as a process of $\epsilon>0$,
rather than treating $\epsilon>0$ as a fixed parameter.
Note that for different $\epsilon_1, \epsilon_2>0$,
the fields $\Phi_t^{\GFF_{\epsilon_1}}$ and $\Phi_t^{\GFF_{\epsilon_2}}$ are defined on different index sets.
It is therefore convenient to use the piecewise constant extension $E_\epsilon$ defined in \eqref{eq:def-piecewise-constant-extension} and view them all as fields on $\Omega$.
When $\Phi_t^{\GFF_\epsilon}$ is seen also as a field indexed by $\epsilon\leq L_t$,
we use a separate notation and define
\begin{equation}
\bGs{i,\epsilon}_t(x) = \Phi_t^{\GFF_\epsilon} (x^\epsilon) - \Phi_{t_k}^{\GFF_\epsilon} (z_i^\epsilon), \qquad  i= 1,\ldots N_{t_{k+1}}, \, (x,t) \in \bx{t}{i} \times (t_{k+1}, t_k], \, \epsilon\leq L_t.
\end{equation}

We put forward the following result, which allows us to control the change of $\bGs{i,\epsilon}_t$ in $\epsilon\leq L_t$,
and thus, reduces the discussion of $\bGs{i,\epsilon}_{t}$ in the proof of \eqref{eq:max-phi-t-eps-eventually-upper-bounded} to that of $\Gs{i,\epsilon}_{t}$.
Its proof involves rather tedious estimates on the Fourier coefficients of $\dot c_t^\epsilon$, and appears in Appendix \ref{app:eps-change-of-gff}.
Here and below, we use the short hand notation
\begin{equation}
\label{eq:def:gaussian-field-indexed-in-epsilon}
\rem_t (x,\epsilon) = \Phi_t^{\GFF_\epsilon}(x^\epsilon) \qquad x\in \Omega, \, \epsilon \leq L_t.
\end{equation}

\begin{lemma}
\label{lem:gaussian-fields-eps-change}
Let $t>0$ and let $\rem_t$ be as in \eqref{eq:def:gaussian-field-indexed-in-epsilon}.
Then, for $\epsilon_1 \leq \epsilon_2 \leq L_t$ and $x\in \Omega$, we have
\begin{equation}
\label{eq:gaussian-fields-eps-change}
\E\big[ ( \rem_t(x,\epsilon_2) - \rem_t(x,\epsilon_1) )^2\big] \lesssim (\epsilon_2- \epsilon_1)/L_t .
\end{equation}
In particular, we have, for $x,y \in \bx{t}{i}$,
\begin{align}
\label{eq:relation-eps-gs-to-non-eps-gs}
\E[\big( \bGs{i,\epsilon_2}_t(x) - \bGs{i,\epsilon_1}_t(y)\big)^2]
\lesssim 
\frac{\epsilon_2 - \epsilon_1}{L_t} + 
\E[ \big( \Gs{i,\epsilon_1}_t(x^{\epsilon_1}) - \Gs{i,\epsilon_1}_t(y^{\epsilon_1}) \big)^2] . 
\end{align}
\end{lemma}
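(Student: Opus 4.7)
The plan is to reduce \eqref{eq:relation-eps-gs-to-non-eps-gs} to \eqref{eq:gaussian-fields-eps-change} by a triangle-inequality decomposition, and then to prove \eqref{eq:gaussian-fields-eps-change} via the Karhunen--Lo\`eve representation of $\rem_t$ combined with direct Fourier estimates on $\hat q_s^\epsilon$.

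For the reduction, I would write
\begin{equation*}
\bGs{i,\epsilon_2}_t(x) - \bGs{i,\epsilon_1}_t(y) = \bigl[\bGs{i,\epsilon_2}_t(x) - \bGs{i,\epsilon_1}_t(x)\bigr] + \bigl[\bGs{i,\epsilon_1}_t(x) - \bGs{i,\epsilon_1}_t(y)\bigr].
\end{equation*}
The second bracket equals $\Gs{i,\epsilon_1}_t(x^{\epsilon_1}) - \Gs{i,\epsilon_1}_t(y^{\epsilon_1})$ directly from the definitions, while the first bracket telescopes, via $\bGs{i,\epsilon}_t(x) = \rem_t(x,\epsilon) - \rem_{t_k}(z_i,\epsilon)$, into $[\rem_t(x,\epsilon_2) - \rem_t(x,\epsilon_1)] - [\rem_{t_k}(z_i,\epsilon_2) - \rem_{t_k}(z_i,\epsilon_1)]$. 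Applying \eqref{eq:gaussian-fields-eps-change} at scales $t$ and $t_k \geq t$ (so $L_{t_k} \geq L_t$) together with $(a+b)^2 \leq 2a^2+2b^2$ then yields \eqref{eq:relation-eps-gs-to-non-eps-gs}.

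For \eqref{eq:gaussian-fields-eps-change}, I would use the Karhunen--Lo\`eve representation from Section~\ref{sec:coupling-lattice}: $\rem_t(x,\epsilon) = \sum_{k\in\Omega_\epsilon^*} e^{ik\cdot x^\epsilon} \int_t^\infty \hat q_s^\epsilon(k)\,d\hat W_s(k)$ with $\hat q_s^\epsilon(k) = e^{-s(\hat\lambda_\epsilon(k)+m^2)/2}$, where $\hat\lambda_\epsilon(k) = \frac{4}{\epsilon^2}\sum_i \sin^2(\epsilon k_i/2)$ is the Fourier multiplier of $-\Delta^\epsilon$. Since $\Omega_{\epsilon_2}^*\subset\Omega_{\epsilon_1}^*$, It\^o's isometry expresses the variance of $\rem_t(x,\epsilon_2)-\rem_t(x,\epsilon_1)$ as $\sum_{k\in\Omega_{\epsilon_1}^*}\int_t^\infty |g_s(k)|^2\,ds$ with $g_s(k)=e^{ik\cdot x^{\epsilon_2}}\hat q_s^{\epsilon_2}(k)\mathbf{1}_{\Omega_{\epsilon_2}^*}(k) - e^{ik\cdot x^{\epsilon_1}}\hat q_s^{\epsilon_1}(k)$. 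I would then split the sum into high modes $k\in\Omega_{\epsilon_1}^*\setminus\Omega_{\epsilon_2}^*$ and common modes $k\in\Omega_{\epsilon_2}^*$. On high modes, $|k|\gtrsim 1/\epsilon_2$ and $\hat\lambda_{\epsilon_1}(k)\gtrsim|k|^2$ (from $\sin^2(u)\geq 4u^2/\pi^2$ on $[0,\pi/2]$), and the sum converts to a radial Gaussian integral of size $O(\epsilon_2^2/t)$. On common modes I would split $g_s$ into a phase piece, bounded via $|e^{ik\cdot x^{\epsilon_2}} - e^{ik\cdot x^{\epsilon_1}}|\leq|k|\,\epsilon_2$ (since $|x^{\epsilon_2}-x^{\epsilon_1}|\leq(\epsilon_1+\epsilon_2)/2$), and a multiplier piece, handled by $|e^{-a}-e^{-b}|\leq|a-b|e^{-\min(a,b)}$ together with the mean-value estimate $|\hat\lambda_{\epsilon_2}(k)-\hat\lambda_{\epsilon_1}(k)|\lesssim(\epsilon_2-\epsilon_1)\epsilon_2|k|^4$ (obtained by differentiating the componentwise $\mathrm{sinc}^2$ factors on $[0,\pi/2]$). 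Routine Gaussian-type integrals show that these contribute $O(\epsilon_2^2/t)$ and $O((\epsilon_2-\epsilon_1)^2\epsilon_2^2/t^2)$ respectively.

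To convert these intermediate bounds to the stated $O((\epsilon_2-\epsilon_1)/L_t)$, I would use two elementary facts: $\epsilon_2\leq L_t$ gives $\epsilon_2^2\leq L_t^2\leq t$, and the dyadic restriction $\epsilon\in\{2^{-n}\}$ forces $\epsilon_2-\epsilon_1\geq\epsilon_2/2$ whenever $\epsilon_1<\epsilon_2$, so $\epsilon_2\leq 2(\epsilon_2-\epsilon_1)$. Combining these yields $\epsilon_2^2/t\lesssim\epsilon_2/L_t\lesssim(\epsilon_2-\epsilon_1)/L_t$ and $(\epsilon_2-\epsilon_1)^2\epsilon_2^2/t^2\leq(\epsilon_2-\epsilon_1)^2/t\leq(\epsilon_2-\epsilon_1)/L_t$ (the last step using $(\epsilon_2-\epsilon_1)/L_t\leq 1$). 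The main obstacle is the multiplier-difference estimate: uniform control of $\partial_\epsilon\hat\lambda_\epsilon(k)$ across the full Brillouin zone $\Omega_{\epsilon_2}^*$ requires a careful analysis of $\mathrm{sinc}^2$ and its derivative, since the naive Taylor expansion is only valid for $\epsilon|k|\ll 1$; this is the ``tedious'' Fourier estimate the paper alludes to, after which the remaining calculations are standard.
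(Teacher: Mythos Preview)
Your proposal is correct and follows essentially the same route as the paper: the same reduction of \eqref{eq:relation-eps-gs-to-non-eps-gs} to \eqref{eq:gaussian-fields-eps-change}, and the same Fourier decomposition of $\rem_t(x,\epsilon_2)-\rem_t(x,\epsilon_1)$ into a phase piece, a multiplier-difference piece, and a high-mode piece. The only notable variation is that the paper obtains the sharper intermediate bounds $|\hat\lambda_{\epsilon_2}(k)-\hat\lambda_{\epsilon_1}(k)|\lesssim(\epsilon_2-\epsilon_1)|k|^3$ (via Lipschitz continuity of $u\mapsto u^{-2}(2-2\cos u)$) and $\sum_{k\in\Omega_{\epsilon_1}^*\setminus\Omega_{\epsilon_2}^*}\int_t^\infty|\hat q_s^{\epsilon_1}(k)|^2\,ds\lesssim(\epsilon_2-\epsilon_1)/\sqrt{t}$ (via a change of variables that picks up the \emph{length} of the radial shell), thereby invoking the dyadic restriction only for the phase term, whereas you lean on $\epsilon_2\leq 2(\epsilon_2-\epsilon_1)$ more systematically.
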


The following result gives a uniform control over $\Gs{i,\epsilon}_t$ for each $i=1,\ldots, N_{t_{k+1}}$,
which then translates to $\bGs{i,\epsilon}_t$ by Lemma \ref{lem:gaussian-fields-eps-change}.

\begin{lemma}
\label{lem:max-g-exp-bounded-fernique}
Let $\epsilon \geq 0$ and let $\Gs{i,\epsilon}_t$ be as in \eqref{eq:definition-G-on-boxes}.
Then, for all $(x,t),(y,u) \in \bx{t_{k+1}}{i,\epsilon} \times (t_{k+1},t_k]$,
we have 
\begin{align}
\label{eq:Gi-difference-l2}
\E\big[\big(\Gs{i,\epsilon}_t(x) - \Gs{i,\epsilon}_u(y) \big)^2 \big]
&\lesssim \frac{|x-y|}{ L_{t_{k+1}}} + \big| \log \frac{1}{t} - \log \frac{1}{u} \big|,
\\
\label{eq:Gi-variance}
\E\big[\big(\Gs{i,\epsilon}_t(x)\big)^2\big] &\lesssim 1 .
\end{align}
In particular, there is a constant $C>0$, which is independent of $t>0$, such that
\begin{align}
\label{eq:max-g-exp-bounded-fernique}
\E\Big[ \sup_{t\in (t_{k+1},t_k]} \max_{x\in \bx{t_{k+1}}{i,\epsilon}} \Gs{i,\epsilon}_t(x) \Big] \leq C ,
\\
\label{eq:max-bg-exp-bounded-fernique}
\E\Big[ \sup_{t\in (t_{k+1}, t_k]} \sup_{\epsilon \leq L_{t_{k}}} \max_{x\in \bx{t_{k+1}}{i,\epsilon}} \bGs{i,\epsilon}_t(x) \Big] \leq C .
\end{align}
Finally, with $C$ as above, there exists a constant $\bar c>0$ so that for any $r > 2C$,
\begin{align}
\label{eq:max-g-tail}
\P\big( \sup_{t\in (t_{k+1}, t_k]} \max_{x\in \bx{t_{k+1}}{i,\epsilon}} \Gs{i,\epsilon}_t(x)\geq r \big) \leq \exp(-r^2/\bar c),
\\
\label{eq:max-bg-tail}
\P\big(  \sup_{t\in (t_{k+1}, t_k]} \sup_{\epsilon\leq L_{t_k}}\max_{x\in \bx{t_{k+1}}{i,\epsilon}}  \bGs{i,\epsilon}_t(x)\geq r \big) \leq \exp(-r^2/\bar c).
\end{align}
\end{lemma}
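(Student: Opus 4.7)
The plan is a standard application of Dudley's entropy bound and the Borell--TIS inequality for Gaussian processes; the bulk of the work is to establish the $L^2$ increment estimate \eqref{eq:Gi-difference-l2} and the variance bound \eqref{eq:Gi-variance}, after which the remaining steps are routine. For \eqref{eq:Gi-difference-l2} with $u\leq t$, I will split
\begin{equation*}
\Gs{i,\epsilon}_t(x) - \Gs{i,\epsilon}_u(y) = \big(\Phi^{\GFF_\epsilon}_t(x) - \Phi^{\GFF_\epsilon}_t(y)\big) + \big(\Phi^{\GFF_\epsilon}_t(y) - \Phi^{\GFF_\epsilon}_u(y)\big),
\end{equation*}
bound the temporal part by $(c_t^\epsilon-c_u^\epsilon)(y,y)=\int_u^t \dot c_s^\epsilon(y,y)\,ds \lesssim \log(t/u)$ using the short-time diagonal asymptotics of the lattice heat kernel, and bound the spatial part using the Fourier representation \eqref{eq:gff-eps-def}: combining $|e^{ikx}-e^{iky}|^2 \leq |k|^2|x-y|^2$ with $\int_t^\infty |\hat q_s^\epsilon(k)|^2 ds = e^{-t(|k|^2+m^2)}/(|k|^2+m^2)$ yields
\begin{equation*}
\E\big[(\Phi^{\GFF_\epsilon}_t(x)-\Phi^{\GFF_\epsilon}_t(y))^2\big] \lesssim |x-y|^2 \sum_{k\in\Omega_\epsilon^*} e^{-t|k|^2} \lesssim |x-y|^2/L_t^2 \leq |x-y|/L_{t_{k+1}},
\end{equation*}
since $|x-y|\leq \sqrt{2}\,\bxlen{t_{k+1}} \leq L_{t_{k+1}} \leq L_t$. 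The variance bound \eqref{eq:Gi-variance} follows analogously by setting $y=z_i^\epsilon$ and $u=t_k$, with the additional observation that $\log(L_{t_k}/L_{t_{k+1}}) \leq p/(2k) \leq p/2$ is bounded uniformly in $k$ thanks to the choice $t_k = k^{-p}$.

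Equipping the index set $\bx{t_{k+1}}{i,\epsilon}\times(t_{k+1},t_k]$ with the canonical Gaussian pseudo-metric $d((x,t),(y,u))^2 = \E[(\Gs{i,\epsilon}_t(x)-\Gs{i,\epsilon}_u(y))^2]$, the estimate \eqref{eq:Gi-difference-l2} gives $d^2 \lesssim |x-y|/L_{t_{k+1}} + |\log t - \log u|$, on a space of diameter $O(1)$. At radius $r$ one needs $O(r^{-4})$ spatial balls and at most $O(r^{-2} \vee 1)$ temporal intervals, hence $N(r) \lesssim r^{-6}$ uniformly in $k$ and $\epsilon$, and since $\sqrt{\log N(r)}$ is integrable near $0$, Dudley's theorem delivers \eqref{eq:max-g-exp-bounded-fernique} with a universal constant $C$. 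The Gaussian tail \eqref{eq:max-g-tail} then follows immediately from Borell--TIS: by \eqref{eq:Gi-variance}, the pointwise variance is bounded by some $\sigma^2 = O(1)$, so for $r > 2C$ one has $\P(\sup \Gs{i,\epsilon} > r) \leq \P(\sup - \E[\sup] > r/2) \leq e^{-r^2/(8\sigma^2)}$, yielding \eqref{eq:max-g-tail} with $\bar c = 8\sigma^2$.

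For the $\bGs$-version, I would add $\epsilon \in (0, L_{t_k}]$ as a fourth coordinate and combine the above pseudo-metric with the comparison in Lemma \ref{lem:gaussian-fields-eps-change}, which contributes an extra additive $\lesssim (\epsilon_2-\epsilon_1)/L_{t_{k+1}}$ term to $d^2$; since $L_{t_k}/L_{t_{k+1}} = O(1)$, the $\epsilon$-direction adds at most another $O(r^{-2})$ factor to the covering number, so $N(r)$ remains polynomial in $1/r$ and both Dudley and Borell--TIS go through verbatim. The main subtlety I expect is ensuring that the mismatch between a continuum point and its lattice projections $x^{\epsilon_1}$, $x^{\epsilon_2}$ is absorbed into the existing terms rather than producing an irreducible constant in the pseudo-metric; this is handled cleanly by routing the comparison through $\bGs \to \Gs$ via Lemma \ref{lem:gaussian-fields-eps-change} before appealing to the previous estimate, using that $|x^{\epsilon} - y^{\epsilon}| \leq |x-y| + 2\epsilon$ with $\epsilon \leq L_{t_k}$ comparable to $L_{t_{k+1}}$ on the box $\bx{t_{k+1}}{i}$.
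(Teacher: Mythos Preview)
Your proposal is correct and follows essentially the same route as the paper: split the increment into spatial and temporal parts, bound each by heat-kernel estimates, then invoke Fernique/Dudley for the expected supremum and Borell--TIS for the tail, and finally enlarge the index set by $\epsilon$ via Lemma~\ref{lem:gaussian-fields-eps-change}. The only cosmetic differences are that you derive the spatial bound through the Fourier representation \eqref{eq:gff-eps-def} (the paper instead uses the heat-kernel gradient bound \eqref{e:ptbounds} with $|\alpha|=1$), and that for $\epsilon>0$ your identity $\int_t^\infty |\hat q_s^\epsilon(k)|^2\,ds = e^{-t(|k|^2+m^2)}/(|k|^2+m^2)$ should read $-\hat\Delta^\epsilon(k)$ in place of $|k|^2$; since $-\hat\Delta^\epsilon(k)\asymp |k|^2$ on $\Omega_\epsilon^*$ this changes nothing.
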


In the proof of this statement we use standard estimates on the heat kernel $p_t^{\epsilon,1}$ on $\Omega_\epsilon$ and $p_t^{0,1}$ on $\Omega $ following the notation introduced in \cite[Appendix A]{MR4399156} for tori of general length.
The superscript $1$ refers to the fact that $\Omega$ is the unit torus.
For convenience, these estimates are also recorded in Appendix \ref{app:covariance-estimates}.
Note that we have
\begin{align}
\dot c_t^\epsilon(x,y) &= p_t^{\epsilon,1}(x-y), \qquad x,y\in \Omega_\epsilon, \\
\dot c_t^0 (x,y) &= p_t^{0,1} (x-y), \qquad  x,y\in \Omega.
\end{align}

\begin{proof}
We treat the cases $\epsilon>0$ and $\epsilon=0$ simultaneously.

For $(x,t),(y,t) \in \bx{t_{k+1}}{i,\epsilon} \times (t_{k+1},t_k]$ and $t\geq \epsilon^2$ we have
\begin{align}
\label{eq:g-spatial-difference-l2}
\E\big[\big( \Gs{i,\epsilon}_t(x) - \Gs{i,\epsilon}_t(y)  \big)^2\big] 
&= \E\big[ \big( \Phi_t^{\GFF_\epsilon}(x) - \Phi_t^{\GFF_\epsilon}(y) \big)^2 \big]
= \int_t^\infty \big( \dot c_s^\epsilon(x,x) + \dot c_s^\epsilon(y,y) - 2\dot c_{s}^\epsilon(x,y) \big)ds
\nnb
&=2 \int_t^\infty \big( p_s^{\epsilon,1}(0) - p_s^{\epsilon,1}(x-y) \big)e^{-s m^2} d\tau
\lesssim \frac{|x-y|}{L_{t_{k+1}}} ,
\end{align}
where the last estimate follows from \eqref{e:ptbounds} applied with $|\alpha| = 1$, since $0 <|x-y|\leq \bxlen{t_{k+1}} \leq  L_{t_{k+1}}$; see also \cite[A.6]{MR4303014}.
Next, assuming that $t\leq u$ without restriction, we have
\begin{align}
\label{eq:g-scale-difference-l2}
\E\big[\big( \Gs{i,\epsilon}_t(y) - \Gs{i,\epsilon}_u(y)  \big)^2\big] 
&= \E\big[ \big( \Phi_t^{\GFF_\epsilon}(y) - \Phi_u^{\GFF_\epsilon}(y) \big)^2 \big]
\nnb
&= \int_t^u \big( \dot c_s^\epsilon(y,y)  \big)ds\lesssim \int_t^u \frac{1}{r} dr = \log \frac{1}{t} -\log \frac{1}{u}.
\end{align}
Combining \eqref{eq:g-spatial-difference-l2} and \eqref{eq:g-scale-difference-l2} 
the claimed inequality \eqref{eq:Gi-difference-l2} follows.

Similarly, we have, for $(x,t)\in \bx{t_{k+1}}{i,\epsilon}\times (t_{k+1}, t_k]$,
\begin{align}
\E\big[\big(\Gs{i,\epsilon}_t(x) \big)^2\big] 
&\lesssim
\E\big[ \big(\Phi_t^{\GFF_\epsilon}(x) - \Phi_t^{\GFF_\epsilon}(z_i^\epsilon) \big)^2 \big]
+
\E\big[\big( \Phi_{t}^{\GFF_\epsilon}(z_i^\epsilon)-\Phi_{t_k}^{\GFF_\epsilon}(z_i^\epsilon)\big)^2\big]
\nnb
&\lesssim \frac{|x-z_i^\epsilon|}{L_{t_{k+1}}}
+
\big( \log\frac{1}{t} -\log\frac{1}{t_k}\big) \lesssim 1.
\end{align}
Thus, the estimate \eqref{eq:max-g-exp-bounded-fernique} follows from Fernique's criterion \cite{MR0266263}.

Similarly,
the estimate \eqref{eq:max-bg-exp-bounded-fernique} is obtained from previous estimates together with Lemma \ref{lem:gaussian-fields-eps-change} and Fernique's criterion.

Finally, the bound \eqref{eq:max-g-tail} follows from \eqref{eq:Gi-variance} and \eqref{eq:max-g-exp-bounded-fernique} and the Borell-Tsirelson-Sudakov inequality.
Similarly, \eqref{eq:max-bg-tail} follows with the same argument using now \eqref{eq:Gi-variance} and together with \eqref{eq:relation-eps-gs-to-non-eps-gs} and \eqref{eq:max-bg-exp-bounded-fernique}.
\end{proof}

\begin{proof}[Proof of Theorem \ref{thm:lv-good-event}]
We first give the proof of \eqref{eq:max-phi-t-0-eventually-upper-bounded}.
Note first that by the standard Gaussian estimate
\begin{equation}
\P(Z \geq a)
\leq 
e^{-\frac{a^2}{2\sigma^2}}, \qquad Z\sim \cN(0,\sigma^2), \, a\geq 0
\end{equation}
applied with $Z= \Phi_{t_k}^{\GFF_0}(z_i)$ satisfying  $\var(\Phi_{t_k}^{\GFF_0}(z_i)) = \frac{1}{2\pi} \log \frac{1}{L_{t_k}} + O_m(1)$,
we obtain by a union bound, for any $\rho'\in (0,\rho)$,
\begin{align}
\P\big(
\max_{i=1,\ldots, N_{t_{k+1}}} \Phi_{t_k}^{\GFF_0}(z_i) \geq \big(\fom + \rho' \big) \log \frac{1}{ L_{t_k}}
\big)
&\lesssim
N_{t_{k+1}} \exp\Big(
-\frac{1}{2} \frac{(\fom + \rho')^2}{\frac{1}{2\pi}}\log\frac{1}{L_{t_k}} 
\Big)
\nnb
&\lesssim_{\rho'}
\big(L_{t_k} \big)^{\sqrt{8\pi} \rho'}\leq k^{-p\sqrt{2\pi} \rho'}.
\end{align}
We now choose $\rho'>0$ close enough to $\rho$ so that $p\sqrt{2\pi }\rho'>1$,
which is possible due to our choice of $p$.
Then we have
\begin{equation}
\label{eq:gffs-along-tk-summable}
\sum_{k\in \N} \P\big(
\max_{i=1,\ldots, N_{t_{k+1}}} \Phi_{t_k}^{\GFF_0}(z_i) \geq \big(\fom + \rho' \big) \log \frac{1}{L_{t_k}}
\big)<\infty.
\end{equation}
With $\Gs{i,0}_t$ as in \eqref{eq:definition-G-on-boxes}, we have from \eqref{eq:max-g-tail} that
\begin{align}
\label{eq:tail-bounds-g-summable}
\sum_{k\in \N} \sum_{i=1,\ldots,N_{t_{k+1}}}
\P\big(\sup_{t\in (t_{k+1}, t_k]} \sup_{x\in \bx{t_{k+1}}{i}} &\Gs{ i,0}_t(x)\geq (\rho-\rho') \log \frac{1}{L_{t_k}}\big)
\nnb
&\lesssim
\sum_{k\in \N} N_{t_{k+1}}  \exp\Big(- (\rho-\rho')^2 (\log \frac{1}{L_{t_k}})^2/\bar c \Big)<\infty.
\end{align}
Now, we observe that 
\begin{align}
\label{eq:max-large-inclusion}
\Big\{ &\exists t\in (t_{k+1},t_k], {x \in \Omega} \colon \Phi_t^{\GFF_0}(x)\geq \big(\frac{2}{\sqrt{2\pi}}+
\rho\big)\log \frac{1}{L_t}\Big\}
\nnb
&\subseteq
\Big\{ \exists i\in \{1,\ldots,N_{t_{k+1}}\} \colon
\Phi_{t_k}^{\GFF_0}(z_i) \geq \big(\frac{2}{\sqrt{2\pi}} +
\rho'\big) \log \frac{1}{L_{t_k}}
\nnb
&\qquad \qquad \qquad\qquad \qquad \qquad \qquad \qquad\, \text{ or } \,
\sup_{t\in (t_{k+1},t_k]} \sup_{x\in \bx{t_{k+1}}{i} } \Gs{i,0}_t(x) \geq (\rho-\rho')\log\frac{1}{L_{t_k}} \Big\}.
\end{align}
Using \eqref{eq:gffs-along-tk-summable} and \eqref{eq:tail-bounds-g-summable},
we conclude from \eqref{eq:max-large-inclusion} that
\begin{equation} 
\label{eq:max-gffs-on-intervals-summable}
\sum_{k\in \N} \P\big(
\sup_{ t\in (t_{k+1},t_k]} \sup_{x\in \Omega}
\Phi_t^{\GFF_0}(x)
\geq \big(\frac{2}{\sqrt{2\pi}} + \rho\big) \log \frac1{L_t}\big) < \infty.
\end{equation}
By the Borel-Cantelli lemma, this completes the proof of \eqref{eq:max-phi-t-0-eventually-upper-bounded}.

Similarly, considering $\epsilon\leq L_t$ as an extra parameter in addition to
$(t,x)$ and using \eqref{eq:max-bg-tail} instead of \eqref{eq:max-g-tail},
we obtain analogously to \eqref{eq:max-gffs-on-intervals-summable} that 
\begin{equation}
\label{eq:max-gffs-eps-on-interval-summable}
\sum_{k\in \N} \P\big( \sup_{t\in (t_{k+1},t_k]}  \sup_{\epsilon\leq L_{t_k}} \max_{x\in \Omega_\epsilon}  \Phi_t^{\GFF_\epsilon}(x) \geq \big(\frac{2}{\sqrt{2\pi}} +
\rho\big)\log \frac1{L_t}\big)<\infty.
\end{equation}
Another application of the Borel-Cantelli Lemma then yields that
\begin{equation}
\P\Big(\exists t_0 >0 \colon \forall t\in (0, t_0) \colon \sup_{\epsilon \leq L_t} \max_{\Omega_\epsilon} \Phi_t^{\GFF_\epsilon} \leq (\fom+\rho)\log \frac{1}{L_t}  \Big) = 1.
\end{equation}
\end{proof}

\begin{proof}[Proof of Corollary \ref{cor:gff-max-upper-bound-along-all-scales}]
For $\rho>0$ let $t_0$ and $t_0'$ be as in \eqref{eq:gff-max-0-upper-bound-along-all-scales} and \eqref{eq:gff-max-eps-upper-bound-along-all-scales}.
Then, for $t>0$,
\begin{align}
\max_{\Omega} \Phi_t^{\GFF_0} 
&\leq
(\fom + \rho) \log \frac{1}{L_t} + \sup_{s\geq t_0} \max_{\Omega} \Phi_s^{\GFF_0},
\\
\sup_{\epsilon\leq L_t} \max_{\Omega_\epsilon} \Phi_t^{\GFF_\epsilon} 
&\leq
(\fom + \rho) \log \frac{1}{L_t} + \sup_{s\geq t_0'} \sup_{\epsilon>0} \max_{\Omega_\epsilon} \Phi_s^{\GFF_\epsilon}.
\end{align}
Now, we have that $\max_{\Omega} \Phi_s^{\GFF_0} \to 0$ and as $s\to 0$ a.s.\
and similarly $\sup_{ \epsilon\leq L_s} \max_{\Omega_\epsilon}  \Phi_s^{\GFF_\epsilon} \to 0$ a.s.\ as $s\to 0$.
Moreover, since $\max_{\Omega} \Phi_t^{\GFF_0}$ and $\sup_{ \epsilon\leq L_t} \max_{\Omega_\epsilon} \Phi_t^{\GFF_\epsilon}$ are continuous processes on $(0,\infty)$,
and since $t_0, t_0'>0$ a.s.,
the statement follows with
\begin{equation}
M_\rho =
\sup_{s\geq t_0} \max_{\Omega} \Phi_s^{\GFF_0} 
+
\sup_{s\geq t_0'} \sup_{\epsilon\leq L_s} \max_{\Omega_\epsilon} \Phi_s^{\GFF_\epsilon} .
\end{equation}
\end{proof}

\begin{proof}[Proof of Corollary \ref{cor:expmax-integrable-at-0}]
We first show \eqref{eq:expmax-0-integrable-at-0}.
For $\delta \in (0, 2-\hdphs)$ we choose $\rho>0$ according to
\begin{equation}
\sqrt{\beta}\rho = (2-\hdphs-\delta).
\end{equation}
Let $\cA_{\rho}$ be the event in \eqref{eq:max-phi-t-0-eventually-upper-bounded}.
Then we have on $\cA_\rho$ for $t\leq t_0$
\begin{align}
\label{eq:integral-A-small-scales}
\int_0^t \expmax_s^0 ds
\leq \int_0^t e^{ \big(\sqrt{\beta} (\frac{2}{\sqrt{2\pi}} + \rho) - \frac{\beta}{4\pi}  \big)\log \frac{1}{L_s} } ds
= \int_0^t L_s^{-(\hdphs  + \sqrt{\beta} \rho ) } ds.
\end{align}
Now, by the choice of $\rho$, we have that $\hdphs + \sqrt{\beta} \rho = 2-\delta <2$,
so that the integrand of the last integral is integrable at $0$.
Thus, there is an exlpicit non-random constant $C_{\beta,\delta}>0$ such that
\begin{equation}
\int_0^t \expmax_s^0 ds 
\leq C_{\beta,\delta} L_t^{2-(\hdphs + \sqrt{\beta} \rho)}
= C_{\beta,\delta} L_t^{\delta},
\end{equation}
which proves \eqref{eq:expmax-0-integrable-at-0}.

For the proof of \eqref{eq:expmax-eps-integrable-at-0} we use \eqref{eq:max-phi-t-eps-eventually-upper-bounded} instead of \eqref{eq:max-phi-t-0-eventually-upper-bounded} and follow the same argument.
\end{proof}

\subsection{Strong SDE existence: Proof of Theorem \ref{thm:lv-coupling}}

The following proof is a variation of the argument for the proof of \cite[Theorem 3.1]{MR4399156},
where the analogous statement is proved in the case of the sine-Gordon model using Picard interation.
We emphasise that in the present case the the SDEs \eqref{eq:Phi-Lv-eps-coupling} and \eqref{eq:Phi-Lv-0-coupling} do not have Lipshitz coefficients,
and thus, standard results for existence and uniqueness of SDEs do not apply to our setting.
As we demonstrate below, using the negativity of the difference field together with the monotonicity of the exponential function,
we are able to give an estimate on the Picard iterates by terms which only depend on the driving Gaussian process $\Phi^\GFF$.
This then allows to construct solutions to the SDEs pathwise, i.e., for almost every sample path of the process $(\Phi_t^\GFF)_{t\in[0,\infty]}$.

\begin{proof}[Proof of Theorem~\ref{thm:lv-coupling}]
We only discuss the case $\epsilon=0$. The case $\epsilon>0$ is almost identical when invoking the statement for $\epsilon>0$ instead of the analogues for $\epsilon=0$.

For $t>0$ and $D,E \in C([t,\infty), C(\Omega))$, let
\begin{align}
\label{e:F-Picard-def}
F_t^0(D,E) = -\int_t^\infty \dot c_s^0\nabla v_s^0(E_s+D_s) \, ds.
\end{align}
To prove the statement we show that $F_t^0(\cdot, E)$ has a unique fixed point for $E=\Phi^{\GFF_0}$.
It is clear that any solution to the SDE \eqref{eq:Phi-Lv-0-coupling} satisfies $\Phi_t^{\Lv_0} - \Phi_t^{\GFF_0} \leq 0$,
since $\big[\dot c_s^0 \nabla v_s^0\big](\varphi) \geq 0$ for all $\varphi \in C(\Omega)$ and $s\geq t$ by \eqref{eq:lv-dcnablav-0-upper-bound}.
Hence, we may restrict in what follows to $D\in L^\infty(\Omega)$ with $D_s(x) \leq 0$ for all $x\in \Omega$ and $s\geq t$.
Let $E \in C([t,\infty),C(\Omega))$ and $D,\tilde D \in L^\infty(\Omega)$ with $D,D' \leq 0$.
By \eqref{eq:dcnablav-0-cont} applied with $\varphi = E_s + D_s$ and $\varphi' = E_s + D_s'$ 
we then have
\begin{align}
\label{e:F-Lip}
\norm{F_t^0(D,E)-F_t^0(\tilde D,E)}_{L^\infty(\Omega)}
\lesssim_{\beta, \lambda}
\int_t^\infty
O(\theta_s) 
e^{\sqrt{\beta} \max E_s} 
L_s^{\beta/4\pi}
\|D_s-\tilde D_s\|_{L^\infty(\Omega)} \, 
 ds ,
\end{align}
where we used that $\| \dot c_s^0 \|_\infty \lesssim \theta_s$,
which holds by
\eqref{e:c-limit} with $k=0$.
We emphasise that the process multiplying the norm inside the integral in \eqref{e:F-Lip} only depends on $E$, and not on $D,D'$.
With the choice $E=\Phi^{\GFF_0}$ we now show uniqueness and existence of the solution to the SDE \eqref{eq:Phi-Lv-0-coupling} for a.e.\ realisation of $\Phi^{\GFF_0}$ on $[t,\infty)$ for $t >0$, and then extend this solution to $[0,\infty)$ by taking $t\to 0$.

Suppose first that there are two solutions $\Phi^{\Lv_0}$ and $\tilde \Phi^{\Lv_0}$ to \eqref{eq:Phi-Lv-0-coupling}
satisfying $D\coloneqq\Phi^{\Lv_0}-\Phi^{\GFF_0} \in C_0([t_0,\infty),L^\infty(\Omega))$
and $\tilde D\coloneqq\tilde\Phi^{\Lv_0}-\Phi^{\GFF_0} \in C_0([ t_0,\infty),L^\infty(\Omega))$.
Then, by \eqref{e:F-Lip}, we have
\begin{align}
\norm{D_t-\tilde D_t}_{L^\infty(\Omega)}
&= \norm{F_t^0(D,\Phi^{\GFF_0})-F_t^0(\tilde D,\Phi^{\GFF_0})}_{L^{\infty}(\Omega)}
\nnb
&\lesssim_{\beta,\lambda} \int_t^\infty O(\theta_s) L_s^{\beta/4\pi} e^{\sqrt{\beta} \max \Phi_s^{\GFF_0}} \norm{D_s-\tilde D_s}_{L^\infty(\Omega)} \, ds \nnb
&\leq \int_t^\infty O(\theta_s) \expmax_s^0  \norm{D_s-\tilde D_s}_{L^\infty(\Omega)} \, ds ,
\end{align}
where $\expmax^0$ is the process defined in \eqref{eq:renormalised-gff-0-max}.
Thus, $f(t) = \norm{D_t-\tilde D_t}_{L^\infty(\Omega)}$ is bounded with $f(t)\to 0$ as $t\to\infty$
and it satisfies
\begin{equation}
f(t) \leq a + b\int_t^\infty \theta_s \expmax_s^0   f(s) \, ds
\end{equation}
with $a=0$ and a deterministic constant $b>0$.
By Corollary \ref{cor:gff-max-upper-bound-along-all-scales} we have 
\begin{equation}
\int_t^\infty \theta_s \expmax_s^0 \, ds
\leq \int_0^\infty \theta_s \expmax_s^0 \, ds < \infty  \qquad \text{a.s.,}
\end{equation}
and thus, Gronwall's inequality implies that 
\begin{equation}
f(t) \leq a \exp\pa{b\int_0^\infty \theta_s \expmax_s^0 \, ds} = 0 \qquad \text{a.s.}
\end{equation}
This shows $D=\tilde D$ a.s.\ on $[t, \infty)$.

To show the existence of a solution, we use Picard iteration.
Let $\|D\|_t = \sup_{s\geq t} \norm{D_s}_{L^\infty(\Omega)}$.
For any $t>0$ and $E\in C([t,\infty), C(\Omega))$ fixed,
set $D^0=0$ and $D^{n+1}=F_t^0(D^n,E)$ with $F_t^0$ as in \eqref{e:F-Picard-def}.
Then
\begin{align}
\|D^1\|_t = \|F_t^0(0,E)\|_t
&\lesssim_{\beta,\lambda} \int_t^\infty 
O(\theta_s) \expmax_s^0 ds,
\\
\|D^{n+1}-D^n\|_t
&\lesssim_{\beta,\lambda} \int_t^\infty 
O(\theta_s) \expmax_s^0  \|D^{n}-D^{n-1}\|_{s} ds,
\end{align}
and from the elementary identity 
\begin{equation}
\int_t^\infty ds \, g(s) \pa{\int_s^\infty ds' \, g(s')}^{k-1} =  \frac{1}{k} \pa{\int_t^\infty ds\, g(s)}^{k}
\end{equation}
applied with the process $g(s) = O(\theta_s) \expmax_s^0$, we conclude that
\begin{equation}
\|D^{n+1}-D^n\|_t
\lesssim_{\beta,\lambda} \frac{1}{n!} \Big( \int_t^\infty O(\theta_s) \expmax_s^0 ds \Big)^n 
.
\end{equation}
By Corollary \ref{cor:gff-max-upper-bound-along-all-scales}, the integral on the right hand side is finite for every $t\geq 0$ a.s.,
which shows that the bound on right-hand side is summable in $n\in \N$.
Thus, we have that $D^n \to D$ for some $D = D^*(E) \in C_0([t,\infty),C(\Omega))$ in $\|\cdot\|_t$,
and the limit satisfies $F_s^0(D^*(E),E)=D^*(E)_s$ for $s\geq t$.
By uniqueness, $D^*(E)_t$ is consistent in $t$ and we can thus define
$D^*(E) \in C_0((0,\infty), L^\infty(\Omega))$.
In summary, $\Phi_t^{\Lv_0} = D^*(\Phi_t^{\GFF_0}) + \Phi_t^{\GFF_0}$ is the desired solution for $t>0$.

Further noticing that, for $t>0$,
\begin{equation}
D^*(\Phi^{\GFF_0})_t = -\int_t^\infty \dot c_s^0 \nabla v_s^0(\Phi_s^{\Lv_0}) \, ds,
\end{equation}
we may extend the solution to $t=0$ by continuity.
Indeed, by \eqref{eq:lv-dcnablav-0-upper-bound},
and the fact that $\Phi_t^{\Lv_0} = \Phi_t^{\GFF_0} + \Phi_t^{\Delta_0} \leq \Phi_t^{\GFF_0}$ for all $t>0$,
we have 
\begin{equation}
\Big\| \int_0^t \dot c_s^0 \nabla v_s^0(\Phi_s^{\Lv_0}) \, ds \Big\|_{L^\infty(\Omega)}
\leq
\int_0^t \big\|\dot c_s^0 \nabla v_s^0 (\Phi_s^{\Lv_0}) \big\|_{L^\infty(\Omega)} ds
\lesssim_{\beta,\lambda} \int_0^t O(\theta_s)
\expmax_s^0 ds .
\end{equation}
By Corollary \ref{cor:expmax-integrable-at-0}
the integrand on the right hand side is a.s.\ integrable at $0$,
and thus the limit $\lim_{t\to 0} D^*(\Phi^{\GFF_0})_t$ exists a.s.\ in $C(\Omega)$.
\end{proof}

\subsection{Realisation of the Liouville measure: Proof of Theorem \ref{thm:law-phi-0}}

The following proof is again a variation of the argument given in \cite[Section 3.4]{MR4399156} for the sine-Gordon model.
We present the details here to demonstrate that this assertion also holds in the case of the Liouville model.

\begin{proof}[Proof of Theorem~\ref{thm:law-phi-0}]
Throughout this proof, $\epsilon>0$ is fixed and dropped from the notation. Moreover, we write $\nu_t \equiv \nu_t^\Lv$ for the renormalised measure defined in \eqref{eq:lv-renormalised-measure}, i.e., for  $f\colon X_\epsilon \to \R$ smooth and bounded and $t\geq 0$, we have
\begin{equation}
\E_{\nu_t} [f] = e^{v_\infty(0)} \EE_{c_\infty -c_t}[f(\zeta) e^{-v_t(\zeta)}],
\end{equation}
where $\EE_{c_t}$ is the expectation of the Gaussian measure with covariance $c_t$
and $v_t$ is as in Section~\ref{ssec:vt-eps}.
In \cite[Section~2]{MR4303014}, it is shown that
for every bounded and smooth function $f\colon X_\epsilon \to \R$,
\begin{equation}
\label{e:nu0nutf}
\E_{\nu_0}f
= \E_{\nu_t} \PP_{0,t} f,
\end{equation}
where
\begin{equation}
  \PP_{s,t} f(\phi) = e^{v_t(\phi)} \EE_{c_t-c_s}(e^{-v_s(\phi+\zeta)} f(\phi+\zeta)).
\end{equation}
By \cite[Proposition~2.1]{MR4303014}, this semigroup is characterised by its infinitesimal generator
\begin{equation}
  \LL_t = \frac12 \Delta_{\dot c_t} - (\nabla v_t,\nabla)_{\dot c_t},
\end{equation}
in the sense that
\begin{equation}
\label{eq:semigroup-p-derivatives}
\ddp{}{s} \PP_{s,t} f = -\PP_{s,t} \LL_s f,
\qquad
\ddp{}{t} \PP_{s,t} f = +\LL_t \PP_{s,t} f.
\end{equation}
We make the comment that the ergodicity assumption \cite[(2.5)]{MR4303014} on the semigroup $\PP_{s,t}$ is not needed to have \eqref{eq:semigroup-p-derivatives}, 
but only enters in the proof of \cite[(2.10)]{MR4303014}.
While it is not hard to show that the ergodicity assumption also holds for in the present case,
we only need the weaker statement below, namely that $\nu_t$ converges weakly to $\delta_0$ as $t\to \infty$.
We give a direct proof of this fact further below. 

As shown in \cite[Remark~2.2]{MR4303014}, the semigroup $\PP_{s,t}$ has a stochastic representation, which we recall here for convenience and to further introduce notation.
Let $(W_t)$ be an $X_\epsilon$-valued Brownian motion normalised so that
$(W_t(x))$ is a Brownian motion with quadratic variation $t/\epsilon^2$
for each $x\in \Omega_\epsilon$, with completed forward and backward filtrations $(\cF_t)$
and $(\cF^t)$.
For any $T>0$, the time-reversed process $\tilde W^{T}_t = W_T-W_{T-t}$ is then again a Brownian motion.
Let $\tilde\Phi^T$ be the unique strong solution to the following (forward) SDE:
\begin{equation}
  d\tilde\Phi_t^T = -\dot c_{T-t} \nabla v_{T-t}(\tilde\Phi_t^T) \, dt + q_{T-t} \, d\tilde W_t^T,
  \qquad (0 \leq t \leq T).
\end{equation}
This SDE does not have Lipschitz coefficients, as the gradient of $v_t$ is an exponential function in the field,
but using similar arguments as in the proof of Theorem \ref{thm:lv-coupling},
one can prove that it has a unique strong solution for every $T>0$.
Similarly to \cite[Remark 2.2]{MR4303014} we have by It\^o's formula 
\begin{equation}
\ddp{}{t} \E_{\tilde\Phi_0^T=\phi} f(\tilde\Phi_t^T)
=
\E_{\tilde\Phi_0^T=\phi} (L_{T-t}f(\tilde\Phi_t^T)).
\end{equation}
Therefore,
$t\mapsto \E_{\tilde\Phi_0^T=\phi} f(\tilde\Phi_t^T)$
defines an inhomogeneous Markov semigroup with generator $L_{T-t}$ and uniqueness
of such semigroups implies $\PP_{T-t,T} f(\phi) = \E_{\tilde \Phi_0^T=\phi} f(\tilde \Phi_t^T)$.
We again refer to \cite[Section 3.4]{MR4399156} for additional details.
We now reverse the $t$-direction by setting $\Phi_t^T = \tilde\Phi_{T-t}^T$.
Thus, using the change of variable $s\mapsto T-s$ on the second line of the following display, we have
\begin{align}
\label{e:varphiT-SDE}
\Phi_t^T
&= \Phi_T^T - \int_0^{T-t} \dot c_{T-s} \nabla v_{T-s}(\tilde\Phi_s^T) \, ds + \int_0^{T-t} q_{T-s} \, d\tilde W_s^T
\nnb
&= \Phi_T^T - \int_t^T \dot c_{s} \nabla v_{s}(\Phi_s^T) \, ds + \int_t^T q_{s} \, dW_s
.
\end{align}
Then $(\Phi^T_t)$ is adapted to the backward filtration $\cF^t$, and
\begin{equation}
\PP_{0,T}f (\phi)
= \E_{\Phi_T^T=\phi} f(\Phi_0^T).
\end{equation}
In particular, by the semigroup property \eqref{e:nu0nutf}, if $\Phi_T^T$ is distributed according to $\nu_T$,
then $\Phi_0^T$ is distributed according to the Liouville measure $\nu_0 = \nu^{\Lv_\epsilon}$.

It therefore suffices to show that, as $T\to\infty$, the solution \eqref{e:varphiT-SDE}
with $\Phi_T^T$ distributed according to $\nu_T$ converges to
the solution $(\Phi_t)_{t\geq 0}$ to \eqref{eq:Phi-Lv-eps-coupling}
constructed using the same Brownian motion $W$.
Setting $\Phi_\infty=0$ we first note that
\begin{equation}
\label{eq:difference-phi-phiT}
\Phi_t - \Phi_t^T
= (\Phi_\infty-\Phi_T^T)
- \int_t^T \qa{\dot c_{s} \nabla v_{s}(\Phi_s)-\dot c_{s} \nabla v_{s}(\Phi_s^T)} \, ds  - \int_T^\infty \dot c_{s} \nabla v_{s}(\Phi_s) \, ds + \int_T^\infty q_{s} \, dW_s.
\end{equation}
Since $\epsilon>0$ is fixed, we may use any norm on $X_\epsilon$ and denote it by $\norm{\cdot}$.
The first, third, and fourth terms on the right-hand side above are independent of $t$ and
we claim that they converge to $0$ in probability as $T \to\infty$.
For the first term this follows from the weak convergence of the
measure $\nu_T$ to $\nu_\infty\equiv \delta_0$.
The latter convergence holds by the following argument. Let $f\colon X_\epsilon \to \R$ be bounded and continuous.
Then we have
\begin{equation}
\label{eq:expectation-under-nu-T}
\E_{\nu_T} [f] = \frac{\int_{X_\epsilon} f(\phi) e^{-v_T(\phi)} e^{-\frac{1}{2}\phi \cdot (c_\infty-c_T)^{-1}\phi} d\phi }
{\int_{X_\epsilon} e^{-v_T(\phi)} e^{-\frac{1}{2}\phi \cdot (c_\infty-c_T)^{-1}\phi} d\phi }
= \frac{\EE_{c_\infty - c_T}[f e^{-v_T}]}{\EE_{c_\infty - c_T}[ e^{-v_T}]}
.
\end{equation}
We claim that the last right hand side converges to $f(0)$ as $T\to \infty$. 
To see this we write $Z_T$ for the normalisation constant of centred Gaussian measure on $X_\epsilon$ with covariance $c_T$, i.e.,
\begin{equation}
Z_T = \int_{X_\epsilon} e^{-\frac{1}{2}\zeta c_T^{-1} \zeta} d\zeta.
\end{equation}
Since $c_T^{-1} \downarrow c_\infty^{-1}$ as $T\to \infty$ in the sense of quadratic forms, we have that $Z_T \uparrow Z_\infty$ as $T\to \infty$.
Then
\begin{align}
\label{eq:difference-exp-vt-vinfty}
e^{-v_T(\phi)} - e^{-v_\infty(\phi)} 
&= \EE_{c_T}[ e^{-v_0(\phi+ \zeta)}] - \EE_{c_\infty}[e^{-v_0(\phi+ \zeta)}]
\nnb
&= \frac{\int_{X_\epsilon} e^{-v_0(\phi+\zeta)} e^{-\frac{1}{2} \zeta c_T^{-1} \zeta} d\zeta}{Z_T}
- 
\frac{\int_{X_\epsilon} e^{-v_0(\phi+\zeta)} e^{-\frac{1}{2} \zeta c_\infty^{-1} \zeta} d\zeta}{Z_\infty}
\nnb
&= \frac{
Z_\infty \int_{X_\epsilon} e^{-v_0(\phi+ \zeta)} \big(e^{-\frac{1}{2}\zeta c_T^{-1}\zeta}\!-\!e^{-\frac{1}{2}\zeta c_\infty^{-1}\zeta} \big) d\zeta
- \big( Z_T\!-\!Z_\infty\big) \int_{X_\epsilon} e^{-v_0(\phi+ \zeta) } e^{-\frac{1}{2}\zeta c_\infty^{-1} \zeta} d\zeta
}
{Z_T Z_\infty},
\end{align}
and thus, using that $c_T^{-1} \geq c_\infty^{-1}$ as quadratic forms on $X_\epsilon$
as well as $|e^{-v_0(\phi+ \zeta)} | \leq 1$,
we have
\begin{align}
\label{eq:estimate-difference-exp-vt-vinfty}
\big| e^{-v_T(\phi)} - e^{-v_\infty(\phi)} \big|
\leq 
\frac{\int_{X_\epsilon} \big| e^{-\frac{1}{2}\zeta c_T^{-1} \zeta} - e^{-\frac{1}{2}\zeta c_\infty^{-1} \zeta}   \big| d\zeta}{Z_T}
+
\frac{\big|Z_T - Z_\infty \big|}{Z_T}
=
\frac{2 \big|Z_T - Z_\infty \big|}{Z_T}.
\end{align}
The right hand side of the previous display does not depend on $\phi$ and converges to $0$ as $T\to \infty$.
Thus, we have
\begin{align}
\EE_{c_\infty - c_T} [e^{-v_T}] &= \EE_{c_\infty - c_T} [e^{-v_\infty}] + o_T(1)  ,
\\
\EE_{c_\infty - c_T} [f e^{-v_T}] &= \EE_{c_\infty - c_T} [f e^{-v_\infty}] + \| f\|_{L_\infty(X_\epsilon)} o_T(1) ,
\end{align}
where $o_T(1) \to 0$ as $T\to \infty$.
From \eqref{eq:expectation-under-nu-T} we then deduce that
\begin{equation}
\label{eq:EnuTf-oT-terms}
\\E_{\nu_T} [f] 
= \frac{\EE_{c_\infty - c_T}[f e^{-v_T}]}{\EE_{c_\infty - c_T}[ e^{-v_T}]}
=  \frac{\EE_{c_\infty - c_T}[f e^{-v_\infty}] +\|f\|_{L^\infty(X_\epsilon)} o_T(1)}{\EE_{c_\infty - c_T}[ e^{-v_\infty} ]  + o_T(1)} .
\end{equation}
Since $c_\infty -c_T \downarrow 0$ in the sense of quadratic forms on $X_\epsilon$, we have that the centred Gaussian measures with covariance $c_\infty - c_T$ converge to the Dirac measures $\delta_0$ on $X_\epsilon$.
Hence, as $T\to \infty$, we have
\begin{align}
\EE_{c_\infty - c_T}[f e^{-v_\infty}] \to  f(0) e^{-v_\infty(0)}, 
\\
\EE_{c_\infty - c_T}[e^{-v_\infty}] \to e^{-v_\infty(0)},
\end{align}
which, together with \eqref{eq:EnuTf-oT-terms}, shows that
\begin{equation}
\\E_{\nu_T} [f] \to \frac{f(0) e^{-v_\infty(0)}}{e^{-v_\infty(0)}} = f(0)
\end{equation}
as claimed.
Since $\Phi_T^T \to \Phi_\infty$ in distribution and the limiting random field is constant,
we have that this convergence is also in probability.
This concludes the proof of the convergence to $0$ of the first term in \eqref{eq:difference-phi-phiT}.

By \eqref{eq:lv-partial-dcnablav-eps-bd} and $\Phi_s \leq \Phi_s^\GFF$, which holds by Theorem \ref{thm:lv-coupling}, the third term is bounded by
\begin{equation}
\label{eq:upper-bound-third-law-of-phi-0}
\int_T^\infty \norm{\dot c_s \nabla v_s(\Phi_s)} \, ds
\leq \int_T^\infty O_{\beta,m}(\theta_s) L_s^{\beta/4\pi} e^{\sqrt{\beta} \max_{\Omega_\epsilon} \Phi_s} \, ds
\leq
\int_T^\infty O_{\beta,m}(\theta_s) L_s^{\beta/4\pi} e^{\sqrt{\beta} \max_{\Omega_\epsilon} \Phi_s^\GFF}\, ds,
\end{equation}
where the last display converges to $0$ a.s.\ as $T\to \infty$ by Corollary \ref{cor:gff-max-upper-bound-along-all-scales}.

The fourth term in \eqref{eq:difference-phi-phiT} is a Gaussian field on $\Omega_\epsilon$ with covariance matrix
$c_\infty - c_T \to 0$
as $T\to\infty$,
which, since $\Omega_\epsilon$ is finite, implies that it converges to $0$ in probability.

In summary, have shown that there is $R_T$ such that $\|R_T\| \to 0$ in probability, and
\begin{equation}
\Phi_t - \Phi_t^T
= - \int_t^T \qa{\dot c_{s} \nabla v_{s}(\Phi_s)-\dot c_{s} \nabla v_{s}(\Phi_s^T)} \, ds  + R_T.
\end{equation}
By the continuity estimate \eqref{eq:dcnablav-eps-cont}, and again using $\Phi_s \leq \Phi_s^\GFF$ and $\Phi_s^T \leq \Phi_s^{\GFF,T}\equiv \Phi_s^{\GFF,T} = \int_s^T q_s dW_s$,
\begin{equation}
  \int_t^T \norm{\dot c_{s} \nabla v_{s}(\Phi_s)-\dot c_{s} \nabla v_{s}(\Phi_s^T)} \, ds
  \leq \int_t^T  M_s \norm{\Phi_s-\Phi_s^T}  \, ds
  .
\end{equation}
where
\begin{equation}
M_t=O_\beta(\theta_t) L_t^{\beta/4\pi} \big(
e^{\sqrt{\beta} \max_{\Omega_\epsilon} \Phi_t^{\GFF,T}} + e^{\sqrt{\beta} \Phi_t^\GFF}
\big).
\end{equation}
Thus, we have shown that $D_t = \Phi_{t}-\Phi^T_{t}$ satisfies
\begin{align}
\norm{D_t}
\leq \norm{R_T} +  \int_{t}^T  M_s \norm{D_s}  \, ds.
\end{align}
Now, we have that $\cov(\Phi_t^{\GFF,T}) = c_T- c_t \leq c_\infty - c_t = \cov(\Phi_t^\GFF)$, and thus, standard results on the maxima of Gaussian processes imply that $\max_{\Omega_\epsilon}\Phi_t^{\GFF,T}$ is stochastically dominated by $\max_{\Omega_\epsilon} \Phi_t^\GFF$.
In particular, \eqref{eq:max-phi-t-eps-eventually-upper-bounded} holds with $\Phi_t^{\GFF,T}$ in place of $\Phi_t^\GFF$.
The same version of Gronwall's inequality as in the proof of Theorem~\ref{thm:lv-coupling}
implies that
\begin{equation}
\norm{D_t}
\leq \norm{R_T}\exp\pa{\int_t^T M_s \, ds}
\leq \norm{R_T}\exp\pa{\int_0^\infty M_s \, ds}
\lesssim \norm{R_T}.
\end{equation}
This bound is uniform in $t$ and
hence $\sup_{t \in [0,T]} \norm{D_t} \to 0$ in probability as $T\to\infty$.
\end{proof}

\subsection{Estimates on the difference field: Proof of Theorem \ref{thm:lv-coupling-bounds-lattice}}

In what follows, we use the following result to argue that the contribution to $\Phi_0^{\GFF_\epsilon}$ from scales $t\leq \epsilon^2$ is negligible uniformly in $\epsilon>0$.

\begin{lemma}
\label{lem:contributions-0-eps2}
Let $\beta\in (0,8\pi)$ and let $\hdphs$ be as in \eqref{eq:parameter-hoelder-phase}.
For any $\rho < (2-\hdphs)/\sqrt{\beta}$, there is a positive and a.s.\ finite random variable $K_\rho$,
such that
\begin{equation}
\label{eq:integral-0-eps2-expmax}
\int_0^{\epsilon^2} \expmax_s^\epsilon ds \lesssim K_\rho \epsilon^{2-(\hdphs + \rho \sqrt{\beta})}.
\end{equation}
Moreover, for any $\delta\in (0,2-\hdphs)$ there is a deterministic constant $C_{\beta,\delta}>0$
\begin{equation}
\label{eq:tiny-scales-expmax-eps-integrable-at-0}
\P \Big(
\exists t_0 >0 \colon \forall t\in(0, t_0) \colon \sup_{\epsilon>0}\int_0^{t\wedge \epsilon^2} \expmax_s^\epsilon ds 
\leq
C_{\beta, \delta} L_t^{\delta} 
\Big) = 1.
\end{equation}
\end{lemma}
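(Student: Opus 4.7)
For concreteness assume $\epsilon<1/m$, so that $L_{s\vee\epsilon^2}=L_{\epsilon^2}=\epsilon$ for all $s\in(0,\epsilon^2]$. Then $\expmax_s^\epsilon = e^{\sqrt\beta\max_{\Omega_\epsilon}\Phi_s^{\GFF_\epsilon}}\epsilon^{\beta/4\pi}$, and the prefactor does not depend on $s$; thus the entire task reduces to an upper bound on $\max_{\Omega_\epsilon}\Phi_s^{\GFF_\epsilon}$ that holds uniformly for $s\in(0,\epsilon^2]$ and such that the resulting exponential, multiplied by $\epsilon^{\beta/4\pi}$, integrates to the desired power of $\epsilon$.

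The plan is to write, for $s\in(0,\epsilon^2]$, the decomposition
\begin{equation*}
\Phi_s^{\GFF_\epsilon} = \Phi_{\epsilon^2}^{\GFF_\epsilon} + R_s^\epsilon, \qquad R_s^\epsilon = \int_s^{\epsilon^2} q_u^\epsilon\,dW_u^\epsilon.
\end{equation*}
For the first summand I apply Corollary~\ref{cor:gff-max-upper-bound-along-all-scales} at $t=\epsilon^2$ (noting $L_{\epsilon^2}=\epsilon$, so the supremum over $\epsilon'\le L_t$ includes $\epsilon'=\epsilon$) to obtain, for any chosen $\rho'\in(0,\rho)$,
\begin{equation*}
\max_{\Omega_\epsilon}\Phi_{\epsilon^2}^{\GFF_\epsilon} \le \bigl(\tfrac{2}{\sqrt{2\pi}} + \rho'\bigr)\log\tfrac{1}{\epsilon} + M_{\rho'},
\end{equation*}
with $M_{\rho'}$ an a.s.\ finite random variable independent of $\epsilon$. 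For the residual, observe that the pointwise variance of $R_s^\epsilon(x)$ is bounded by $c_{\epsilon^2}^\epsilon(x,x)=O_m(1)$ uniformly in $s$, while for $u\le\epsilon^2$ the kernel $\dot c_u^\epsilon(x,x)$ is bounded by $O(\epsilon^{-2})$, which gives equicontinuity of the Gaussian process $(s,x)\mapsto R_s^\epsilon(x)$ on $(0,\epsilon^2]\times\Omega_\epsilon$ (a set of cardinality $\sim\epsilon^{-2}$). Fernique's criterion then yields $\E[Y_\epsilon]\lesssim\sqrt{\log(1/\epsilon)}$ for $Y_\epsilon\coloneqq\sup_{s\in(0,\epsilon^2]}\max_{\Omega_\epsilon}R_s^\epsilon$, and Borell--TIS with variance $O_m(1)$ produces the tail bound $\P(Y_\epsilon\ge C\sqrt{\log(1/\epsilon)})\le\epsilon^{c}$ for suitable $C,c>0$. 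Since the paper's $\epsilon$ runs along dyadic values, Borel--Cantelli furnishes an a.s.\ finite random variable $C_Y$ with $Y_\epsilon\le C_Y\sqrt{\log(1/\epsilon)}$ for all sufficiently small dyadic $\epsilon$.

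Combining the two bounds and absorbing the sub-logarithmic $\sqrt\beta\,C_Y\sqrt{\log(1/\epsilon)}$ term into the $\rho'\to\rho$ increment (legitimate for $\epsilon$ below some random threshold), I obtain
\begin{equation*}
e^{\sqrt\beta\max_{\Omega_\epsilon}\Phi_s^{\GFF_\epsilon}}\,\epsilon^{\beta/4\pi} \le K_\rho\,\epsilon^{-(\hdphs+\sqrt\beta\rho)}
\end{equation*}
with $K_\rho$ a.s.\ finite, using $\hdphs=\tfrac{2\sqrt\beta}{\sqrt{2\pi}}-\tfrac{\beta}{4\pi}$. Integrating over $s\in(0,\epsilon^2]$ gives \eqref{eq:integral-0-eps2-expmax} after noticing that the bound is independent of $s$. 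For \eqref{eq:tiny-scales-expmax-eps-integrable-at-0} I fix $\rho$ via $\sqrt\beta\rho=2-\hdphs-\delta$ and split into two cases: when $\epsilon^2\ge t$ the integral $\int_0^t\expmax_s^\epsilon\,ds$ inherits the same integrand bound and is majorised by $t\,\epsilon^{-(\hdphs+\sqrt\beta\rho)}\le t^{1-(\hdphs+\sqrt\beta\rho)/2}=L_t^\delta$ since $\epsilon\ge\sqrt t=L_t$; when $\epsilon^2<t$ the integral equals $\int_0^{\epsilon^2}\expmax_s^\epsilon\,ds\lesssim\epsilon^{2-(\hdphs+\sqrt\beta\rho)}\le L_t^\delta$ since $\epsilon\le L_t$.

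The main obstacle I expect is the control of $Y_\epsilon$: although its one-point variance is harmless, the Gaussian process $R_s^\epsilon$ has a singular time derivative $\dot c_u^\epsilon(x,x)\lesssim\epsilon^{-2}$ as $u\downarrow 0$, so na\"ive chaining across the $s$-variable gives diverging bounds. The resolution is that this singularity is integrated over an interval of length $O(\epsilon^2)$, making total increments of order $1$, after which the standard Fernique/Borell--TIS machinery applies just as in Lemma~\ref{lem:max-g-exp-bounded-fernique}. Everything else is bookkeeping that parallels the proof of Corollary~\ref{cor:expmax-integrable-at-0}.
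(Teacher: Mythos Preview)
Your proposal is correct and follows essentially the same route as the paper: decompose $\Phi_s^{\GFF_\epsilon}=\Phi_{\epsilon^2}^{\GFF_\epsilon}+R_s^\epsilon$, control the first term via Corollary~\ref{cor:gff-max-upper-bound-along-all-scales}, control $\sup_{s\le\epsilon^2}\max_{\Omega_\epsilon}R_s^\epsilon$ via Fernique/Borell--TIS/Borel--Cantelli, combine, and then do the case split $\epsilon^2\ge t$ versus $\epsilon^2<t$ for the second display. The only cosmetic difference is that the paper applies Fernique in the $s$-variable pointwise in $x$ (obtaining $\E[\sup_s R_s^\epsilon(x)]\le C$) and then union-bounds over the $\epsilon^{-2}$ lattice points, concluding $\sup_s\max_x R_s^\epsilon\le\tfrac{\rho}{2}\log\tfrac1\epsilon$ eventually, whereas you chain over $(s,x)$ jointly to get the (sharper) $C_Y\sqrt{\log(1/\epsilon)}$ bound; both are clearly sufficient once absorbed into the $\rho$-room.
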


\begin{proof}
For $\epsilon>0$ and $t\leq \epsilon^2$ and $x\in \Omega_\epsilon$, we define
\begin{equation}
s_t^\epsilon(x) = \Phi_t^{\GFF_\epsilon}(x) - \Phi_{\epsilon^2}^{\GFF_\epsilon}(x).
\end{equation}
Then we have for $t_1\leq t_2 \leq \epsilon^2$
\begin{equation}
\E[(s_{t_2}^{\epsilon}(x) - s_{t_1}^{\epsilon}(x))^2] =
\int_{t_1}^{t_2} \dot c_s^\epsilon(x^\epsilon,x^\epsilon) ds
\lesssim \frac{t_2-t_1}{\epsilon^2},
\end{equation}
where we used that $\dot c_s^\epsilon (x,x) = p_s^{1,\epsilon}(0) \lesssim \epsilon^{-2}$ for $s\leq \epsilon^2$.
By Fernique's criterion, we have, for some constant $C>0$ which does not depend on $\epsilon>0$,
\begin{equation}
\E\big[ \sup_{t\leq \epsilon^2} s_t^\epsilon(x) \big] \leq C,
\end{equation}
and moreover, $\sup_{t\leq \epsilon^2} \var\big( s_t^\epsilon(x) \big) \leq c$ for some constant $c>0$.
Then, the Borell-Tsirelson-Sudakov inequality and a union bound imply that, for $r\geq 2C$,
\begin{equation}
\P\big( \max_{\Omega_\epsilon} \sup_{t \leq \epsilon^2} s_t^\epsilon(x) \geq r \big) \lesssim \frac{1}{\epsilon^2} e^{-r^2/c}.
\end{equation}
In particular, we have, for any $\rho>0$,
\begin{equation}
\P\big( \max_{\Omega_\epsilon} \sup_{t\leq \epsilon^2} s_t^\epsilon(x) \geq \rho \log \frac{1}{\epsilon} \big)
\lesssim \frac{1}{\epsilon^2} e^{-\big(\rho \log \frac{1}{\epsilon}\big)^2/c}
\lesssim
e^{-\big(\frac{\rho}{2} \log \frac{1}{\epsilon}\big)^2/c}.
\end{equation}
Thus, by the Borel-Cantelli Lemma, we have, for any $\rho>0$,
\begin{equation}
\label{eq:gff-tiny-scales-eventually-bounded}
\P\big(\exists \epsilon_0 >0 \colon \forall \epsilon<\epsilon_0 \colon \max_{\Omega_\epsilon} \sup_{t\leq \epsilon^2}  \Phi_t^{\GFF_\epsilon} - \Phi_{\epsilon^2}^{\GFF_\epsilon} \leq \rho \log \frac{1}{\epsilon} \big) =1.
\end{equation}

To prove \eqref{eq:integral-0-eps2-expmax} we choose $\rho < (2-\hdphs)/\sqrt{\beta}$ and note that by Corollary \ref{cor:gff-max-upper-bound-along-all-scales} there is a positive and a.s.\ finite random variable $M_{\rho/2}$ such that 
\begin{equation}
\max_{\Omega_\epsilon} \Phi_{\epsilon^2}^{\GFF_\epsilon} \leq (\fom + \frac{\rho}{2}) \log \frac{1}{\epsilon} + M_{\rho/2}.
\end{equation}
Moreover, by \eqref{eq:gff-tiny-scales-eventually-bounded}, for every $\rho>0$ there is a positive and a.s.\ finite random variable $\bar M_{\rho}$, such that for all $\epsilon<1$, we have
\begin{equation}
\Phi_t^{\GFF_\epsilon} - \Phi_{\epsilon^2}^{\GFF_\epsilon} \leq \frac{\rho}{2} \log \frac{1}{\epsilon} + \bar M_{\rho/2}.
\end{equation}
It follows that
\begin{align}
\int_0^{\epsilon^2} \expmax_s^\epsilon ds 
&\lesssim
\int_0^{\epsilon^2} L_{\epsilon^2}^{\beta/4\pi} e^{\sqrt{\beta} \max_{\Omega_\epsilon} \Phi_s^{\GFF_\epsilon} } ds
\nnb
&\leq
\int_0^{\epsilon^2} L_{\epsilon^2}^{\beta/4\pi}
e^{\sqrt{\beta}\big( \max_{\Omega_\epsilon} \big( \Phi_s^{\GFF_\epsilon} - \Phi_{\epsilon^2}^{\GFF_\epsilon} \big) + \max_{\Omega_\epsilon} \Phi_{\epsilon^2}^{\GFF_\epsilon} \Big)} ds
\nnb
&\leq
e^{\sqrt{\beta} \bar M_{\rho'/2}}e^{\sqrt{\beta} M_{\rho/2}}
\epsilon^{2 - \hdphs - \sqrt{\beta}\rho} .
\end{align}
The last right hand side is uniform in $\epsilon>0$ for $\rho < (2-\hdphs)/\sqrt{\beta}$,
and thus, \eqref{eq:integral-0-eps2-expmax} follows with $K_\rho = e^{\sqrt{\beta}( \bar M_{\rho/2}+ M_\rho/2)}$.

To prove \eqref{eq:tiny-scales-expmax-eps-integrable-at-0}, we let  $\delta \in (2-\hdphs)$ and choose $\rho>0$ according to
\begin{equation}
\delta = 2-(\hdphs + \rho \sqrt{\beta}).
\end{equation}
Then we similarly have
\begin{equation}
\int_0^{\epsilon^2 \wedge t} \expmax_s^\epsilon ds \lesssim
K_\rho (\epsilon^2 \wedge t) \epsilon^{-(\hdphs +\rho \sqrt{\beta})}
\leq K_\rho (\epsilon^2 \wedge t)^{1-\frac{1}{2}(\hdphs +\rho \sqrt{\beta})}
\leq K_\rho L_t^{2-(\hdphs + \rho \sqrt{\beta})},
\end{equation}
from which \eqref{eq:tiny-scales-expmax-eps-integrable-at-0} follows.
\end{proof}

In the arguments below, we use Lemma \ref{lem:contributions-0-eps2} when estimating integrals over all scales $t\geq 0$. 
For instance, we have
\begin{align}
\label{eq:estimate-expmax-split-integral}
\int_0^\infty O(\theta_s) \expmax_s^\epsilon ds
&= \int_0^{\epsilon^2} \expmax_s^\epsilon ds
+
\int_{\epsilon^2}^\infty O(\theta_s) \expmax_s^\epsilon ds
\nnb
&\leq
\int_0^{\epsilon^2} \expmax_s^\epsilon ds
+
\int_{\epsilon^2}^\infty O(\theta_s) \sup_{\epsilon \leq L_s}\expmax_s^\epsilon ds
\nnb
&
\leq \sup_{\epsilon>0} \int_0^{\epsilon^2} \expmax_s^\epsilon ds
+
\int_0^\infty O(\theta_s) \sup_{\epsilon \leq L_s} \expmax_s^\epsilon ds.
\end{align}
Note that the last right hand side is uniform in $\epsilon>0$ and $t\geq 0$, and moreover, by Lemma \ref{lem:contributions-0-eps2} and Corollary \ref{cor:gff-max-upper-bound-along-all-scales} a.s.\ finite.

\begin{proof}[Proof of Theorem \ref{thm:lv-coupling-bounds-lattice}]
The proofs of the resuts are similar for $\epsilon=0$ and $\epsilon>0$,
when the estimates in Theorem \ref{thm:lv-dcnablav-limit}, Corollary \ref{cor:gff-max-upper-bound-along-all-scales}, Corollary \ref{cor:expmax-integrable-at-0} and Lemma \ref{lem:dc-difference} for $\epsilon=0$ by their analogues for $\epsilon>0$.
The only essential difference is that, for $\epsilon>0$, we use Lemma \ref{lem:contributions-0-eps2} to control the contributions on sclaes $t\leq \epsilon^2$.
We demonstrate this for the proof of \eqref{eq:lv-phi-delta-bd-max-continuity},
and, to keep the notation clear, prove all other statements for $\epsilon=0$. 

We first show that for any $\beta\in (0,8\pi)$ and $\rho>0$ there is an a.s.\ finite random variable $\tilde M_{\beta,\rho}$,
which does not depend on $\epsilon>0$ and $t\geq 0$
such that
\begin{equation}
\label{eq:phi-delta-uniformly-bounded}
\sup_{\epsilon>0} \| \Phi_t^{\Delta_\epsilon}  \|_{L^\infty(\Omega_\epsilon)} \leq \tilde M_{\beta,\rho}.
\end{equation}
%
Using \eqref{eq:definition-difference-field} together with Lemma \ref{lem:lv-fkg}
we have, for $x\in \Omega_\epsilon$,
\begin{align}
\label{eq:phi-delta-eps-pointwise-upper-proof}
0 \leq - \Phi_t^{\Delta_\epsilon} (x)
&= \int_t^\infty \big( \dot c_s^\epsilon \nabla v_s^\epsilon(\Phi_s^{\Lv_\epsilon}) \big)_x ds 
\leq \lambda \sqrt{\beta} \int_t^\infty \int_{\Omega_\epsilon} \dot c_s^\epsilon(x,y) \wick{e^{\sqrt{\beta} \Phi_s^{\Lv_\epsilon}(y)} }_{L_s}dy ds
\nnb
& \leq \lambda \sqrt{\beta} \int_0^\infty \int_{\Omega_\epsilon} \dot c_s^\epsilon(x,y)  \wick{ e^{\sqrt{\beta} \Phi_s^{\GFF_\epsilon}(y)} }_{L_s} dy ds.
\end{align}
In the last step, we used that $\Phi_s^{\Lv_\epsilon} \leq \Phi_s^{\GFF_\epsilon}$, which holds by Theorem \ref{thm:lv-coupling}.
By Lemma \ref{lem:C-limit}, specifically \eqref{e:c-limit} with $k=0$,
we can further estimate the last display in \eqref{eq:phi-delta-eps-pointwise-upper-proof} to obtain
\begin{align}
\label{eq:estimate-difference-field-uniformly}
0 \leq - \Phi_t^{\Delta_\epsilon} (x) 
&\lesssim_\beta \lambda \int_0^\infty
L_{s\vee \epsilon^2}^{\beta/4\pi} e^{\sqrt{\beta} \max \Phi_s^{\GFF_\epsilon}} \sup_{x\in \Omega_\epsilon} \| \dot c_s^\epsilon \|_{L^1(\Omega_\epsilon)} ds
\nnb
&\leq \lambda \int_0^\infty L_{s\vee \epsilon^2}^{\beta/4\pi} e^{ \sqrt{\beta} \max \Phi_s^{\GFF_\epsilon}}  O(\theta_s) \, ds.
\end{align}
The bound on the right hand side is uniform in $x\in \Omega_\epsilon$, and thus
\begin{align}
\| \Phi_t^{\Delta_\epsilon} \|_{L^\infty(\Omega_\epsilon)}
&\lesssim_{\beta} \lambda \int_0^\infty L_{s\vee \epsilon^2}^{\beta/4\pi} e^{\sqrt{\beta} \max \Phi_s^{\GFF_\epsilon} } O(\theta_s) ds
\nnb
&\lesssim \lambda \Big( \int_0^{\epsilon^2} \expmax_s^\epsilon  ds
+
\int_0^\infty O(\theta_s) \sup_{\epsilon \leq L_s}\expmax_s^\epsilon ds
\Big)
,
\end{align}
where we used the estimate \eqref{eq:estimate-expmax-split-integral} in the last step.
In particular,
we have
\begin{align}
\sup_{\epsilon >0} \| \Phi_t^{\Delta_\epsilon} \|_{L^\infty(\Omega_\epsilon)}
&\lesssim_\beta \lambda \Big( \sup_{\epsilon>0} \int_0^{\epsilon^2} \expmax_s^\epsilon  ds
+
 \int_0^\infty O(\theta_s) \sup_{\epsilon \leq L_s}\expmax_s^\epsilon ds
\Big)
.
\end{align}
For $\rho <(2-\hdphs)/\sqrt{\beta}$ let $M_\rho$ be as in Corollary \ref{cor:gff-max-upper-bound-along-all-scales}.
Then we have from the previous display
together with Lemma \ref{lem:contributions-0-eps2}
\begin{equation}
\sup_{\epsilon >0} \| \Phi_t^{\Delta_\epsilon} \|_{L^\infty(\Omega_\epsilon)}
\lesssim_{\beta} \lambda \Big( K_\rho +  e^{\sqrt{\beta} M_\rho} \int_{0}^\infty L_s^{-(\hdphs+\sqrt{\beta}\rho)} O(\theta_s) ds
\Big)
\end{equation}
and the last right hand side is finite a.s.\ by the choice of $\rho$.

As explained above, we keep the notation clear and only discuss the case $\epsilon=0$ from here on.
We continue with the estimates for the gradient of $\Phi^{\Delta_0}$ in the case $\hdphs<1$.
To this end, we first note that
\begin{equation}
\partial \Phi_t^{\Delta_0}
= -\int_t^\infty \big( \partial \dot c_s^0 \nabla v_s^0\big) (\Phi_s^{\Lv_0}) ds
\end{equation}
and thus, \eqref{eq:lv-dcnablav-0-upper-bound}, $\Phi_s^{\Lv_0}\leq \Phi_s^{\GFF_0}$ and Lemma \ref{lem:C-limit}, specifically \eqref{e:c-limit} now with $k=1$,
imply
\begin{align}
\|\partial \Phi_t^{\Delta_0} \|_{L^\infty(\Omega)}
&\lesssim_{\beta} \lambda  \int_0^\infty \sup_{x\in \Omega}\|\partial \dot c_s^0 (x,\cdot) \|_{L^1(\Omega_\epsilon)} 
L_{s}^{\beta/4\pi} e^{\sqrt{\beta} \max_{\Omega} \Phi_s^{\GFF_0}} ds 
\nnb
&\lesssim
\lambda \int_0^\infty O(\theta_s) L_{s}^{-1} \expmax_s^0 ds,
\end{align}
which gives similarly to before
\begin{align}
\label{eq:grad-phi-t-delta-0-bounded-linfty}
 \|\partial \Phi_t^{\Delta_0} \|_{L^\infty(\Omega)}
&\lesssim_\beta \lambda 
\int_0^\infty O(\theta_s) L_{s}^{-1} \expmax_s^0 ds
.
\end{align}
Now, since $\hdphs<1$, we choose $\rho< (1-\hdphs)/\sqrt{\beta}$ and obtain, with $M_\rho$ as in Corollary \ref{cor:gff-max-upper-bound-along-all-scales},
\begin{align}
\|\partial \Phi_t^{\Delta_0} \|_{L^\infty(\Omega)}
&\lesssim_\beta
\lambda \Big(
e^{\sqrt{\beta} M_\rho}
\int_0^\infty L_s^{-1} L_s^{-(\hdphs + \sqrt{\beta}\rho)} O(\theta_s) ds \Big)
,
\end{align}
and the last right hand side is again a.s.\ finite by the choice of $\rho$.

To prove \eqref{eq:lv-phi-delta-0-t-bound},
we follow the same reasoning to obtain, for any $x\in \Omega$ and $t\geq 0$,
\begin{align}
\big| \Phi_0^{\Delta_0} (x) - \Phi_t^{\Delta_0} (x) \big|
& \lesssim_{\beta} \lambda \int_0^t \int_{\Omega_\epsilon} \dot c_s^0(x,y)  \wick{ e^{\sqrt{\beta} \Phi_s^{\GFF_0}(y)} }_{L_s} dy ds
\nnb
&\lesssim \lambda \int_0^t L_{s}^{\beta/4\pi} e^{\sqrt{\beta} \max \Phi_s^{\GFF_0}} O(\theta_s) ds
\lesssim
\lambda
\int_{0}^t \expmax_s^0 ds ,
\end{align}
which is again uniform in $x\in \Omega$.
Therefore, we also have
\begin{align}
\| \Phi_0^{\Delta_0} - \Phi_t^{\Delta_0} \|_{L^\infty(\Omega)}
&\lesssim_\beta \lambda
\int_0^t  \expmax_s^0 ds
,
\end{align}
and thus, \eqref{eq:lv-phi-delta-0-t-bound} follows from Corollary \ref{cor:expmax-integrable-at-0}, specifically \eqref{eq:expmax-0-integrable-at-0}.
For the corresponding statement with $\epsilon>0$, we use in addition \eqref{eq:tiny-scales-expmax-eps-integrable-at-0}.

Next, we prove the H\"older continuity estimates in \eqref{eq:lv-phi-delta-bd-max-continuity}.
We first discuss the continuity of the gradient of $\Phi_t^{\Delta_0}$ in case $\hdphs<1$.
Using 
again $\Phi_s^{\Lv_0} \leq \Phi_s^{\GFF_0}$ and Lemma \ref{lem:dc-difference}, specifically \eqref{eq:diffdc-0-difference-l1},
we have for $x,y \in \Omega$,
\begin{align}
|\partial \Phi_t^{\Delta_0}(x) - \partial \Phi_t^{\Delta_0} (y) |
&\leq \lambda \sqrt{\beta}
\int_t^\infty \int_{\Omega} | \partial \dot c_s^0(x,z) - \partial \dot c_s^0 (y,z) |
\wick{e^{\sqrt{\beta} \Phi_s^{\Lv_0}(z)}}_{L_s} dzds
\nnb
&\lesssim_\beta \lambda
\int_0^\infty \int_{\Omega} | \partial \dot c_s^0(x,z) - \partial \dot c_s^0 (y,z) | dz 
L_{s}^{\beta/4\pi} e^{\sqrt{\beta} \max_{\Omega} \Phi_s^{\GFF_0}} ds
\nnb
&\lesssim \lambda  \int_0^\infty O(\theta_s)L_{s}^{-1} (1 \wedge \frac{|x-y|}{L_{s}}) \expmax_s^0 ds
.
\end{align}
We now split the second integral according to $s\in [0, |x-y|^2]$ and $s>|x-y|^2$ to obtain
\begin{align}
\label{eq:continuity-diff-phi-t-delta-eps-two-terms}
|\partial \Phi_t^{\Delta_0}(x) - \partial \Phi_t^{\Delta_0} (y) |
&\lesssim_\beta
\lambda 
\Big( \int_{0}^{|x-y|^2} O(\theta_s) L_s^{-1} \expmax_s^0 ds
\nnb
&\qquad \qquad +
|x-y|\int_{|x-y|^2}^\infty O(\theta_s) L_s^{-2} \expmax_s^0 ds 
\Big)
\nnb
&\lesssim
\lambda 
\Big(
\int_{0}^{|x-y|^2} O(\theta_s) L_s^{-1} \expmax_s^0 ds
\nnb
&\qquad \qquad +
|x-y|\int_{|x-y|^2}^\infty O(\theta_s) L_s^{-2} \expmax_s^0 ds 
\Big)
.
\end{align}
For $\hdphs<1$ we choose again $\rho <(1-\hdphs)/\sqrt{\beta}$. Then, with $M_\rho$ as in Corollary \ref{cor:gff-max-upper-bound-along-all-scales}, we have for the first integral in \eqref{eq:continuity-diff-phi-t-delta-eps-two-terms}
\begin{align}
\int_0^{|x-y|^2} O(\theta_s) L_s^{-1} \expmax_s^0 ds
&\lesssim_{\beta, \rho}
e^{\sqrt{\beta} M_\rho}
|x-y|^{-(\hdphs +1+ \sqrt{\beta} \rho) + 2}
\end{align}
while for the second integral in \eqref{eq:continuity-diff-phi-t-delta-eps-two-terms} we similarly have
\begin{align}
\int_{|x-y|^2}^\infty O(\theta_s) L_s^{-2} \expmax_s^0 ds
&\leq
e^{\sqrt{\beta}M_\rho} \int_{|x-y|^2}^{\infty} O(\theta_s) L_s^{-(2+\hdphs+ \sqrt{\beta}\rho)} ds
\nnb
&\lesssim_{\beta,\rho} 
e^{\sqrt{\beta}M_\rho} 
\big( |x-y|^{-(\hdphs+\sqrt{\beta}\rho)} - 1
\big).
\end{align}
The latter estimate shows that the total contribution of the second term in \eqref{eq:continuity-diff-phi-t-delta-eps-two-terms} is
\begin{equation}
|x-y| \int_{|x-y|^2}^\infty O(\theta_s) L_s^{-2}  \expmax_s^0 ds
\lesssim_{\beta, \rho}
\big(e^{\sqrt{\beta}M_\rho} +1\big) |x-y|^{1-(\hdphs+\sqrt{\beta}\rho)}
\end{equation}
as needed.
This concludes the proof of \eqref{eq:lv-phi-delta-bd-max-continuity} for $\hdphs<1$.

To finish the proof of \eqref{eq:lv-phi-delta-bd-max-continuity}, 
it remains to show the continuity estimate for $\Phi_t^{\Delta_0}$ in the case $\hdphs<2$.
For $x,y \in \Omega$ and $t\geq 0$ we have,
using \eqref{eq:lv-dcnablav-0-upper-bound}, the fact that $\Phi_s^{\Lv_0} \leq \Phi_s^{\GFF_0}$ and Lemma \ref{lem:dc-difference}, specifically \eqref{eq:dc-0-difference-l1}, 
\begin{align}
| \Phi_t^{\Delta_0} (x) - \Phi_t^{\Delta_0}(y)  |
&\leq 
\lambda \sqrt{\beta} \int_0^\infty 
\int_{\Omega} \big|
\dot c_s^0 (x,z) - \dot c_s^0(y,z) 
\big|
\wick{ e^{\sqrt{\beta}\Phi_s^{\Lv_0}(z)}}_{L_s} dz ds
\nnb
&\lesssim_{\beta} 
\lambda \int_0^\infty 
\int_{\Omega}
\big| \dot c_s^0 (x,z) - \dot c_s^0(y,z) 
\big| 
\wick{ e^{\sqrt{\beta} \Phi_s^{\GFF_0}(z)} }_{L_s} dz ds
\nnb
&\lesssim_{\beta}
\lambda \int_0^\infty
O(\theta_s) \big(1 \wedge \frac{|x-y|}{L_{s}}\big) 
L_{s}^{\beta/4\pi} e^{\sqrt{\beta} \max \Phi_s^{\GFF_0}}  ds
\nnb
&\lesssim
\lambda 
\int_{0}^\infty O(\theta_s) (1 \wedge \frac{|x-y|}{L_{s}}) \expmax_s^0 ds
%
 .
\end{align}
To estimate the last disply,
we split the integral again at $s=|x-y|^2$ and write
\begin{align}
\label{eq:phi-delta-at-x-y-upper-bound}
| \Phi_t^{\Delta_0} (x) - \Phi_t^{\Delta_0} (y) |
&\lesssim_\beta
\lambda \Big(
\int_{0}^{|x-y|^2}
O(\theta_s) \expmax_s^0
ds 
 +
|x-y| \int_{|x-y|^2}^\infty O(\theta_s) L_s^{-1} \expmax_s^0 ds
\Big)
.
\end{align}
Similar to before, we have, now with $\rho < (2-\hdphs)/\sqrt{\beta}$ and $M_\rho$ as in Corollary \ref{cor:gff-max-upper-bound-along-all-scales}, for the first integral
\begin{align}
\int_0^{|x-y|^2}O(\theta_s) \expmax_s^0 ds
&\lesssim e^{\sqrt{\beta} M_\rho}
\int_0^{|x-y|^2} L_s^{-(\hdphs + \sqrt{\beta}\rho)} ds
\lesssim_{\rho, \beta} 
e^{\sqrt{\beta}M_\rho} |x-y|^{2-(\hdphs + \sqrt{\beta}\rho)}.
\end{align}
Moreover, we have for the second integral in \eqref{eq:phi-delta-at-x-y-upper-bound}
\begin{align}
\int_{|x-y|^2}^\infty O(\theta_s) L_s^{-1}  \expmax_s^0  ds
&\leq 
e^{\sqrt{\beta}M_\rho}\int_{|x-y|^2}^{\infty} O(\theta_s) L_s^{-(\hdphs+1+\sqrt{\beta}\rho)} ds
\nnb
&\leq e^{\sqrt{\beta}M_\rho}\int_{|x-y|^2}^{\infty} O(\theta_s) s^{-(\hdphs+1+\sqrt{\beta}\rho)/2} ds 
\nnb
&\lesssim_{\beta, \rho}
e^{\sqrt{\beta}M_\rho} \big(
|x-y|^{1-(\hdphs +\sqrt{\beta}\rho)}
+1\big).
\end{align}
Thus, the total contribution from the second term in \eqref{eq:phi-delta-at-x-y-upper-bound} for $1\leq \hdphs <2$ is
\begin{equation}
|x-y| \int_{|x-y|^2}^\infty O(\theta_s) L_s^{-1} \expmax_s^0 ds
\lesssim_{\beta, \rho}
\big(e^{\sqrt{\beta}M_\rho}+1\big) |x-y|^{2-(\hdphs+ \sqrt{\beta}\rho)}
\end{equation}
as needed. This concludes the proof of \eqref{eq:lv-phi-delta-bd-max-continuity} for the case $1\leq \hdphs < 2$. 

It remains to prove \eqref{eq:lv-del-phi-delta-bd}.
To this end, we first note that, for $t>0$,
\begin{equation}
\partial^k \Phi_t^{\Delta_0} 
= 
- \int_t^\infty \big(\partial^k \dot c_s^0 \nabla v_s^0\big) (\Phi_s^{\Lv_0}) ds,
\end{equation}
and thus, using \eqref{eq:lv-dcnablav-0-upper-bound} and $\Phi_t^{\Lv_0} \leq \Phi_t^{\GFF_0}$ and Lemma \ref{lem:C-limit},
we have
\begin{align}
\| \partial^k \Phi_t^{\Delta_0} \|_{L^\infty(\Omega)}
&\leq \lambda \sqrt{\beta} 
\int_t^\infty \sup_{x\in \Omega} \| \partial^k \dot c_s^0(x,\cdot) \|_{L^1(\Omega)}
 \wick{ e^{\sqrt{\beta} \max_{\Omega} \Phi_s^{\Lv_0}} }_{L_t} ds
\nnb
& \lesssim_\beta \lambda \int_t^\infty \sup_{x\in \Omega} \| \partial^k \dot c_s^0(x,\cdot) \|_{L^1(\Omega)}
L_s^{\beta/4\pi} e^{\sqrt{\beta} \max_{\Omega} \Phi_s^{\GFF_0}  } ds
\nnb
&\leq \lambda \int_t^\infty L_s^{-k} O(\theta_s)  \expmax_s^0 ds, 
\end{align}
and thus, we have, for $t>0$,
\begin{equation}
\| \partial^k \Phi_t^{\Delta_0} \|_{L^\infty(\Omega)}
\lesssim_\beta \lambda \frac{1}{L_t^k} \int_t^\infty O(\theta_s) \expmax_s^0 ds
\leq \lambda \frac{1}{L_t^k} \int_t^\infty O(\theta_s) \expmax_s^0 ds
\end{equation}
where the last right hand side is a.s.\ finite by Corollary \ref{cor:gff-max-upper-bound-along-all-scales}.
\end{proof}

\subsection{Convergence of the lattice fields: Proof of Theorem \ref{thm:lv-phi-delta-t-limit}}

With the lattice fields and the continuum fields constructed, we can now address the convergence as $\epsilon \to 0$.
In the proof below we apply Theorem \ref{thm:lv-dcnablav-limit} with the random, but fixed, data $\varphi = \Phi_t^{\GFF_0}$ and $\varphi^\epsilon = \Phi_t^{\GFF_\epsilon}$.
Indeed, as shown in \cite[(2.65)]{MR4399156} we have for $t>0$ that a.s.,
\begin{equation}
\sup_{x\in \Omega} |\Phi_t^{\GFF_0}(x) - \Phi_t^{\GFF_\epsilon}(x^\epsilon)| \to 0.
\end{equation}

\begin{proof}[Proof of Theorem~\ref{thm:lv-phi-delta-t-limit}]
We first show \eqref{eq:lv-phi-delta-t-limit}, i.e., that for any fixed $t_0>0$,
\begin{equation}
\label{e:Phi-Delta-limit-bis}
\sup_{t\geq t_0}\norm{\Phi_t^{\Delta_\epsilon} - \Phi_t^{\Delta_0}}_{L^\infty(\Omega_\epsilon)}
\to 0\qquad
\text{in probability as $\epsilon \to 0$.}
\end{equation}
To this end, let $t\geq t_0$ and define $F^0$ as in \eqref{e:F-Picard-def}
and $F^\epsilon$ analogously with $\dot c_s^0\nabla v_s^0$ replaced by $\dot c_s^\epsilon \nabla v_s^\epsilon$.
Let $D = \Phi^\Delta$ be the corresponding fixed points with $E = \Phi^{\GFF}$, i.e.,
\begin{equation}
\Phi^{\Delta_\epsilon} = F^\epsilon(\Phi^{\Delta_\epsilon},\Phi^{\GFF_\epsilon}),
\qquad \Phi^{\Delta_0} = F^0(\Phi^{\Delta_0},\Phi^{\GFF_0}).
\end{equation}
Then, identifying $\Phi_t^{\Delta_0}$ with its restriction to $\Omega_\epsilon$, we have, for $t\geq t_0$,
\begin{align}
\label{e:Phi-limit-twoterms}
\Phi_t^{\Delta_0} -  \Phi_t^{\Delta_\epsilon}
&=F_t^0(\Phi^{\Delta_0}, \Phi^{\GFF_0})-F_t^\epsilon(\Phi^{\Delta_\epsilon}, \Phi^{\GFF_\epsilon})
\nnb
&= [F_t^\epsilon(\Phi^{\Delta_0}, \Phi^{\GFF_0})-F_t^\epsilon(\Phi^{\Delta_\epsilon}, \Phi^{\GFF_\epsilon})]
+ [F_t^0(\Phi^{\Delta_0}, \Phi^{\GFF_0})-F_t^{\epsilon}(\Phi^{\Delta_0},\Phi^{\GFF_0})].
\end{align}
The first term in \eqref{e:Phi-limit-twoterms} is bounded as in \eqref{e:F-Lip} by
\begin{align}
\label{eq:difference-f-eps-gff-0-f-eps-gff-eps}
\| F_t^\epsilon(\Phi^{\Delta_0},\Phi^{\GFF_0}) &-F_t^\epsilon(\Phi^{\Delta_\epsilon}, \Phi^{\GFF_\epsilon}) \|_{L^\infty(\Omega_\epsilon)}
\nnb
&= 
\| F_t^\epsilon(\Phi^{\Delta_0},\Phi^{\GFF_0})-F_t^\epsilon(\Phi^{\Delta_\epsilon}, \Phi^{\GFF_0}) \|_{L^\infty(\Omega_\epsilon)}
\nnb
&\qquad\qquad   + 
\| F_t^\epsilon(\Phi^{\Delta_\epsilon},\Phi^{\GFF_0})-F_t^\epsilon(\Phi^{\Delta_\epsilon}, \Phi^{\GFF_\epsilon}) \|_{L^\infty(\Omega_\epsilon)}
\nnb
&\lesssim_{\beta,\lambda} \int_t^\infty O(\theta_s) e^{\sqrt{\beta} \max_{\Omega_\epsilon} \Phi_s^{\GFF_0}} L_s^{\beta/4\pi} \norm{\Phi^{\Delta_0}_s-\Phi^{\Delta_\epsilon}_s}_{L^\infty(\Omega_\epsilon)} \, ds
\nnb
&\qquad \qquad +
\int_t^\infty O(\theta_s) e^{\sqrt{\beta}  \max_{\Omega_\epsilon}\Phi_s^{\Delta_\epsilon} } L_s^{\beta/4\pi} \norm{\Phi_s^{\GFF_0}-\Phi_s^{\GFF_\epsilon}}_{L^\infty(\Omega_\epsilon)} \, ds
  .
\end{align}
In \cite[Lemma 3.5]{MR4399156} it is shown that
\begin{equation}
\E \Big[\sup_{t\geq t_0}\| \Phi_t^{\GFF_0} -\Phi_t^{\GFF_\epsilon} \|_{L^\infty(\Omega_\epsilon)}^2 \Big] \to 0 
\end{equation}
as $\epsilon \to 0$.
Using this together with $\Phi_s^{\Delta_\epsilon}\leq 0$,
we have that the second term on the right hand side of \eqref{eq:difference-f-eps-gff-0-f-eps-gff-eps} converges to $0$ in probability.

To bound the second term in \eqref{e:Phi-limit-twoterms}, we write
\begin{align}
\label{eq:phi-limit-second-term}
\norm{F_t^0(\Phi^{\Delta_0}, \Phi^{\GFF_0})&-F_t^{\epsilon}(\Phi^{\Delta_0},\Phi^{\GFF_0})}_{L^\infty(\Omega_\epsilon)}
\nnb
&\leq \int_t^\infty \norm{
\dot c_s^0\nabla v_s^0(\Phi_s^{\Delta_0}+\Phi_s^{\GFF_0})
-
\dot c_s^\epsilon\nabla v_s^\epsilon(\Phi_s^{\Delta_0}+\Phi_s^{\GFF_0})}_{L^\infty(\Omega_\epsilon)} \, ds.
\end{align}
By \cite[Lemma 3.5]{MR4399156}, $\Phi_t^{\GFF_0}$ is smooth for all $t \geq t_0>0$ a.s.,
and by \eqref{eq:lv-del-phi-delta-bd},
$\Phi_t^{\Delta_0}$ is also smooth for all $t>0$.
By \eqref{eq:lv-dcnablav-0-upper-bound} the integrand of the right hand side of \eqref{eq:phi-limit-second-term} is a.s.\ bounded and decays exponentially.
Hence, using 
\eqref{eq:lv-dcnablav-limit} and dominated convergence,
we have as $\epsilon \to 0$
\begin{align}
\sup_{t \geq t_0} \norm{F_t^0(\Phi^{\Delta_0}, \Phi^{\GFF_0})-F_t^{\epsilon}(\Phi^{\Delta_0},\Phi^{\GFF_0})}_{L^\infty(\Omega_\epsilon)} \to 0 \qquad \text{a.s.}
\end{align}
In particular,
the same convergence holds true in probability.

In summary, for every $t\geq t_0$, there is a random variable $R_{t_0}^\epsilon$, which is independent of $t\geq t_0$ and converges to $0$ as $\epsilon \to 0$ in probability,
such that
\begin{equation}
\norm{\Phi^{\Delta_0}_t-\Phi^{\Delta_\epsilon}_t}_{L^\infty(\Omega_\epsilon)}
\lesssim_{\beta,\lambda} R_{t_0}^\epsilon
+  \int_t^\infty  O(\theta_s) e^{\sqrt{\beta} \max_{\Omega_\epsilon} \Phi_s^{\GFF_0}} L_s^{\beta/4\pi} \norm{\Phi^{\Delta_0}_s-\Phi^{\Delta_\epsilon}_s}_{L^\infty(\Omega_\epsilon)}  \, ds.
\end{equation}
Now, we estimate
\begin{equation}
\int_t^\infty O(\theta_s) e^{\sqrt{\beta} \max_{\Omega_\epsilon} \Phi_s^{\GFF_0}} L_s^{\beta/4\pi}
ds
\leq
\int_{t_0}^\infty O(\theta_s)
\expmax_s^0 ds , 
\end{equation}
where the right hand side is independent of $\epsilon>0$ and a.s.\ finite by Corollary \ref{cor:gff-max-upper-bound-along-all-scales}.
The same version of Gronwall's inequality as in the proof of 
Theorem~\ref{thm:lv-coupling} thus implies
\begin{equation}
\norm{\Phi^{\Delta_0}_t-\Phi^{\Delta_\epsilon}_t}_{L^\infty(\Omega_\epsilon)}
\lesssim_{\beta,\lambda}
R_{t_0}^\epsilon \int_{t_0}^\infty O(\theta_s) 
\expmax_s^0 ds.
\end{equation}
The last right hand side is unifrom in $t\geq t_0$ and thus, the convergence \eqref{e:Phi-Delta-limit-bis} follows by taking first the supremum over $t\geq t_0$ and then $\epsilon \to 0$.

We now conclude the proof of the convergence $\extepsI \Phi^{\Lv_\epsilon} \to \Phi^{\Lv_0}$ in $H^{-\kappa}(\Omega)$ for $\kappa >0$.
Since $\extepsI \Phi^{\GFF_\epsilon} \to \Phi^{\GFF_0}$ in $H^{-\kappa}(\Omega)$ for any $\kappa>0$ by \cite[Lemma 3.5]{MR4399156}, specifically \cite[(3.45)]{MR4399156},
it suffices to show that the $L^2(\Omega)$-norm of the following
right-hand side converges to $0$ in probability as $\epsilon \to 0$:
\begin{equation}
\label{eq:weak-convergence-phi-lv-0-eps}
(\Phi^{\Lv_0}_0-\extepsI \Phi^{\Lv_\epsilon}_0)
-
(\Phi^{\GFF_0}_0-\extepsI \Phi^{\GFF_\epsilon}_0)
=
(\Phi^{\Delta_0}_0-\Phi^{\Delta_0}_t)
+ \extepsI(\Phi^{\Delta_\epsilon}_t-\Phi^{\Delta_\epsilon}_0)
+
(\Phi^{\Delta_0}_t-\extepsI \Phi^{\Delta_\epsilon}_t).
\end{equation}
The first term on the right-hand side is bounded and independent of $\epsilon>0$,
and by \eqref{eq:lv-phi-delta-0-t-bound} its $L^\infty(\Omega)$-norm converges to $0$ a.s.\ as first $\epsilon\to 0$ and then $t\to 0$.
For the second term, we have
\begin{equation}
\| I_\epsilon (\Phi_t^{\Delta_\epsilon} - \Phi_t^{\Delta_0}) \|_{L^2(\Omega)}
= \| \Phi_t^{\Delta_\epsilon} - \Phi_t^{\Delta_0} \|_{L^2(\Omega_\epsilon)}
\leq \| \Phi_t^{\Delta_\epsilon} - \Phi_t^{\Delta_0} \|_{L^\infty(\Omega_\epsilon)}
\end{equation}
and the last right hand side converges to $0$ a.s.\ as $\epsilon \to 0$ again by \eqref{eq:lv-phi-delta-0-t-bound}.
To see that also the third term vanishes in the said limits, we write
\begin{align}
\| \Phi_t^{\Delta_0} - I_\epsilon \Phi_t^{\Delta_\epsilon} \|_{L^2(\Omega)} 
&\leq
\| \Phi_t^{\Delta_0} - I_\epsilon (\Phi_t^{\Delta_0}|_{\Omega_\epsilon} ) \|_{L^2(\Omega)} 
+ \| I_\epsilon (\Phi_t^{\Delta_0}|_{\Omega_\epsilon}) - I_\epsilon\Phi_t^{\Delta_\epsilon} \|_{L^2(\Omega)}
\nnb
&=
\| \Phi_t^{\Delta_0} - I_\epsilon (\Phi_t^{\Delta_0}|_{\Omega_\epsilon} ) \|_{L^2(\Omega)} 
+ \| \Phi_t^{\Delta_0} - \Phi_t^{\Delta_\epsilon} \|_{L^2(\Omega_\epsilon)}
\nnb
&\leq
\| \Phi_t^{\Delta_0} - I_\epsilon (\Phi_t^{\Delta_0}|_{\Omega_\epsilon} ) \|_{L^2(\Omega)} 
+ \| \Phi_t^{\Delta_0} - \Phi_t^{\Delta_\epsilon} \|_{L^\infty(\Omega_\epsilon)}.
\end{align}
Now, since $\Phi_t^{\Delta_0} \in C^\infty(\Omega)$ a.s., we have that the first term converges to $0$ a.s.\ as $\epsilon \to 0$.
Moreover, by \eqref{eq:lv-phi-delta-t-limit}, the second term converges to $0$ in probability as $\epsilon \to 0$.
This shows that the $L^2(\Omega)$-norm of right hand side of \eqref{eq:weak-convergence-phi-lv-0-eps} converges to $0$ in probability as first $\epsilon \to 0$ and then $t\to 0$,
which then gives
\begin{equation}
\|I_\epsilon \Phi_0^{\Lv_\epsilon} - \Phi_0^\Lv \|_{H^{-\kappa}(\Omega)} \to 0
\end{equation} 
as $\epsilon \to 0$ in probability.
\end{proof}

\section{Convergence of the maximum}

As demonstrated previously in \cite{MR4399156} in the case of the sine-Gordon field,
the most economic way to prove convergence of the centred maximum of $\Phi_0^{\Lv_\epsilon}$ is to first remove the small scales of the difference field $\Phi^{\Delta_\epsilon}$,
and then prove convergence in law for an auxiliary field, below denoted as $\tPhisLv$,
which admits an independent decomposition into a log-correlated Gaussian field and a smooth non-Gaussian field.
More precisely, we write
\begin{equation}
\label{eq:approximate-independent-decomposition}
\Phi_0^{\Lv_\epsilon} = \tPhisLv +  R_s^\epsilon,
\qquad \tPhisLv =  (\Phi_0^{\GFF_\epsilon} - \Phi_s^{\GFF_\epsilon}) + \Phi_s^{\Lv_\epsilon}, \qquad R_s^\epsilon = \Phi_0^{\Delta_\epsilon} - \Phi_s^{\Delta_\epsilon} 
\end{equation}
and note that the maximum of $R_s^\epsilon$ is bounded uniformly in $\epsilon>0$ and converges to $0$ as $s\to 0$ by Theorem \ref{thm:lv-coupling-bounds-lattice}, specifically \eqref{eq:lv-phi-delta-0-t-bound}.


The following result allows to reduce attention to $\tPhisLv$. Its proof follows similar lines as the one of \cite[Lemma 4.1]{MR4399156} the only difference being that the bound on $R_s^\epsilon$ is a.s.\ finite rather than deterministically finite.
It can be easily verified that the statement is also true under these slightly weaker assumptions, for which we omit the proof here.
We emphasise that the proof of this statement uses the tightness of the sequence $\big(\max_{\Omega_\epsilon} \sqrtpi \Phi_0^\Lv - m_\epsilon\big)_{\epsilon>0}$.
This is immediate from Theorem \ref{thm:lv-coupling} and the tightness of the sequence $(\max_{\Omega_\epsilon} \Phi_0^{\GFF_\epsilon} -m_\epsilon)_{\epsilon>0}$,
which is proved in \cite{MR2846636}.

\begin{lemma}[Analog to {\cite[Lemma 4.1]{MR4399156}}]
\label{lem:lv-reduction-to-phi-tilde}
Assume that the limiting law $\tilde \mu_s$ of $\max_{\Omega_\epsilon} \sqrtpi \tPhisLv -m_\epsilon$ as $\epsilon\to 0$ exists for every $s>0$,
and that there are a.s.\ positive random variables $\ZDM_s$ (on the above common probability space) such that
\begin{equation}   
\label{eq:mu-s-gumbel}
\tilde \mu_s((-\infty,x])=\E[e^{-\alpha^* \ZDM_s e^{-\scaling x}}]
\end{equation}
for some constant $\alpha^*>0$.
Then the law of $\max_{\Omega_\epsilon} \sqrtpi \Phi_0^\Lv - m_\epsilon$ converges weakly to
some probability measure $\mu_0$ as $\epsilon \to 0$ and $\tilde \mu_s \rightharpoonup \mu_0$ weakly as $s\to 0$.
Moreover,  there is a positive random variable
$\ZDM^\Lv$ such that
\begin{equation}  
\mu_0((-\infty,x])=\E[e^{-\alpha^* \ZDM^\Lv e^{-\scaling x}}].
\end{equation}
\end{lemma}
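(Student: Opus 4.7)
The plan is to exploit the decomposition $\Phi_0^{\Lv_\epsilon} = \tPhisLv + R_s^\epsilon$ from \eqref{eq:approximate-independent-decomposition} together with the uniform control of $R_s^\epsilon$ supplied by Theorem~\ref{thm:lv-coupling-bounds-lattice}. Let $X_\epsilon = \max_{\Omega_\epsilon}\sqrtpi\Phi_0^{\Lv_\epsilon} - m_\epsilon$ and $\tilde X_{s,\epsilon} = \max_{\Omega_\epsilon}\sqrtpi\tPhisLv - m_\epsilon$. Since $|\max_x f - \max_x g| \leq \|f-g\|_{L^\infty}$, we have $|X_\epsilon - \tilde X_{s,\epsilon}| \leq \sqrtpi\|R_s^\epsilon\|_{L^\infty(\Omega_\epsilon)}$ and analogously $|\tilde X_{s,\epsilon} - \tilde X_{s',\epsilon}| \leq \sqrtpi(\|R_s^\epsilon\|_{L^\infty(\Omega_\epsilon)} + \|R_{s'}^\epsilon\|_{L^\infty(\Omega_\epsilon)})$. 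By \eqref{eq:lv-phi-delta-0-t-bound}, there is a deterministic constant $C_{\beta,\delta}$ and, on an event of full probability, an $s_0 > 0$ independent of $\epsilon$ such that $\|R_s^\epsilon\|_{L^\infty(\Omega_\epsilon)} \leq C_{\beta,\delta} L_s^\delta$ for all $s \leq s_0$ and all $\epsilon > 0$. Since $s_0 > 0$ almost surely, this delivers, for every $\eta > 0$,
\begin{equation}
\lim_{s \to 0} \sup_{\epsilon > 0} \P\bigl(|X_\epsilon - \tilde X_{s,\epsilon}| > \eta\bigr) = 0, \qquad \lim_{s,s' \to 0} \sup_{\epsilon > 0} \P\bigl(|\tilde X_{s,\epsilon} - \tilde X_{s',\epsilon}| > \eta\bigr) = 0.
\end{equation}

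I would next use these Slutsky-type bounds to show that $(\tilde\mu_s)_{s > 0}$ is Cauchy in the Lévy metric. Applying the inclusion $\{\tilde X_{s,\epsilon} \leq x - \eta\} \cap \{|\tilde X_{s,\epsilon} - \tilde X_{s',\epsilon}| \leq \eta\} \subseteq \{\tilde X_{s',\epsilon} \leq x\}$ and passing to $\epsilon \to 0$ at continuity points using the hypothesis $\tilde X_{s,\epsilon} \Rightarrow \tilde\mu_s$, one obtains
\begin{equation}
\tilde\mu_s((-\infty, x - \eta]) \leq \tilde\mu_{s'}((-\infty, x]) + o_{s,s'}(1), \qquad \tilde\mu_s((-\infty, x + \eta]) \geq \tilde\mu_{s'}((-\infty, x]) - o_{s,s'}(1),
\end{equation}
where $o_{s,s'}(1) \to 0$ as $s, s' \to 0$. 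These inequalities would force $(\tilde\mu_s)$ to be Cauchy in the Lévy metric and hence to admit a weak limit $\mu_0$ as $s \to 0$. Tightness of $(X_\epsilon)_{\epsilon > 0}$, inherited from the tightness of the centred maximum of the Gaussian free field \cite{MR2846636} combined with the uniform bound $\|\Phi_0^{\Delta_\epsilon}\|_{L^\infty(\Omega_\epsilon)} \leq \lambda M_{\beta,\delta}$ from Theorem~\ref{thm:lv-coupling-bounds-lattice}, together with the first Slutsky bound, would then yield $X_\epsilon \Rightarrow \mu_0$ and $\tilde\mu_s \Rightarrow \mu_0$.

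It remains to identify $\mu_0$ as a randomly shifted Gumbel. Setting $y = e^{-\scaling x} \in (0,\infty)$, hypothesis \eqref{eq:mu-s-gumbel} rewrites as $\mathcal{L}_s(y) := \E[e^{-\alpha^* y \ZDM_s}] = \tilde\mu_s((-\infty, -\scaling^{-1}\log y])$. From $\tilde\mu_s \Rightarrow \mu_0$ the Laplace transforms $\mathcal{L}_s$ would converge pointwise on $(0,\infty)$, first at continuity points of $\mu_0$ and then everywhere by monotonicity in $y$, to a limit $\mathcal{L}$. The function $\mathcal{L}$ is completely monotone as a pointwise limit of completely monotone functions, satisfies $\mathcal{L}(0^+) = \mu_0(\R) = 1$, and tends to $0$ at infinity because $\mu_0((-\infty, x]) \to 0$ as $x \to -\infty$ by properness of $\mu_0$. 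Bernstein's theorem, equivalently the Lévy continuity theorem for Laplace transforms on $[0,\infty)$, would then produce a random variable $\alpha^* \ZDM^\Lv$ valued in $(0,\infty)$ whose Laplace transform is $\mathcal{L}$, and \eqref{eq:mu-s-gumbel} would then hold at $s = 0$ with $\ZDM^\Lv$ in place of $\ZDM_s$. The main obstacle will be the last step: ruling out escape of mass of $\ZDM_s$ to $0$ or $\infty$ as $s \to 0$, which is exactly what the uniform bounds from Theorem~\ref{thm:lv-coupling-bounds-lattice} and the tightness input from \cite{MR2846636} provide.
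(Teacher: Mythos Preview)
Your proposal is correct and follows essentially the same route as the paper, which itself omits the proof and only refers to \cite[Lemma~4.1]{MR4399156}, noting that the sole change is that the bound on $R_s^\epsilon$ is now a.s.\ finite rather than deterministic. Your sketch handles exactly this difference: the random but $\epsilon$-independent threshold $s_0>0$ from \eqref{eq:lv-phi-delta-0-t-bound} is enough to obtain the Slutsky-type estimates in probability, and you correctly invoke tightness of $(X_\epsilon)_{\epsilon>0}$ via \cite{MR2846636} and Theorem~\ref{thm:lv-coupling-bounds-lattice}, which the paper explicitly flags as the essential input. One small remark: completeness of the L\'evy metric on $\mathcal{P}(\R)$ already guarantees that the Cauchy limit $\mu_0$ is a genuine probability measure, so the tightness of $(X_\epsilon)$ is needed only to identify the limit of $X_\epsilon$ with $\mu_0$, not to rule out mass escape of $\ZDM_s$; the positivity $\ZDM^{\Lv}>0$ then follows, as you say, from $\mathcal{L}(\infty)=\mu_0(\{-\infty\})=0$.
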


\begin{proof}[Proof of Theorem \ref{thm:lv-max-convergence-to-gumbel}]
Following the same steps as in \cite[Section 4]{MR4399156}, it can be shown that, for any $s>0$, $\max_{\Omega_\epsilon} \tPhisLv - m_\epsilon$ converges in distribution as $\epsilon \to 0$ to a randomly shifted Gumbel distribution with a.s.\ positive random shift $\ZDM_s$.
Indeed, the small scales part of $\tPhisLv$ is identical to the field $\tilde \Phi_s^\mathrm{SG}$, and the non-Gaussian smooth part can be dealt with in a similar way thanks to the estimates in Theorem \ref{thm:lv-coupling-bounds-lattice}.

This convergence ensures that the assumptions of Lemma \ref{lem:lv-reduction-to-phi-tilde} are satisfied, thereby completing the proof of Theorem \ref{thm:lv-max-convergence-to-gumbel}.
\end{proof}

\appendix

\section{Covariance estimates}
\label{app:covariance-estimates}

Let $p_t^\epsilon(x)$ be the heat kernel on $\epsilon\Z^2$
and $p_t^0(x)$ the continuous heat kernel on $\R^2$, i.e.,
\begin{equation}
p_t^\epsilon(x) = \epsilon^{-2}\tilde p_{t/\epsilon^2}(x/\epsilon),
\qquad
p_t^0(x) = \frac{e^{-|x|^2/4t}}{4 \pi t}
,
\end{equation}
where $\tilde p_t$ is the heat kernel on the unit lattice $\Z^2$.
Moreover, let $\partial_\epsilon f$ denote the derivative for the vector of lattice gradients
of a function $f\colon \epsilon\Z^2 \to \R$.
Thus, if $\alpha= (\alpha_1,\dots,\alpha_{|\alpha|})$
is a sequence of $|\alpha|$ unit directions in $\Z^2$,
i.e., $\alpha_i \in \{(0, \pm 1), (\pm 1, 0 )\}$,
then 
$\partial_\epsilon^\alpha=\prod_{i=1}^{|\alpha|} \partial_\epsilon^{\alpha_i}$
where $\partial_\epsilon^{\alpha_i} f(x) = \epsilon^{-1} (f(x+  \epsilon \alpha_i)-f(x))$.

\begin{lemma}
\label{lem:pt}
The heat kernel $p_t^\epsilon$ on $\epsilon\Z^2$ satisfies the following upper bounds for $t \geq \epsilon^2$, $x\in\Z^2$,
and all sequences of unit vectors $\alpha$:
\begin{equation}
\label{e:ptbounds}
|\partial_\epsilon^\alpha p_t^\epsilon(x)| \leq O_\alpha(t^{-1 -|\alpha|/2} e^{-c|x|/\sqrt{t}}).
\end{equation}
Moreover, for all $x \in \epsilon\Z^2$ and $k \geq 4$,
\begin{align}
\label{e:ptlimit}
|p_t^\epsilon(x)- p_t^0(x)|
&\leq O(\frac{1}{t}) \times \frac{\epsilon^2}{t}
\qa{
\pa{(\frac{|x|}{\sqrt{t}})^k+1} e^{-|x|^2/t}
+ (\frac{\epsilon^2}{t})^{(k-3)/2}
}.
\end{align}
\end{lemma}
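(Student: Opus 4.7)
The plan is to reduce everything to the unit lattice via the scaling identity $p_t^\epsilon(x) = \epsilon^{-2} \tilde p_{t/\epsilon^2}(x/\epsilon)$ (and $p_t^0(x) = \epsilon^{-2} p_{t/\epsilon^2}^0(x/\epsilon)$), so that with $s = t/\epsilon^2 \geq 1$ and $y = x/\epsilon$ it suffices to prove the analogous estimates for $\tilde p_s(y)$ with $s \geq 1$. The discrete gradient $\partial_\epsilon$ rescales as $\epsilon^{-1}$ times the unit-lattice discrete gradient, so \eqref{e:ptbounds} reduces to showing $|\tilde\partial^\alpha \tilde p_s(y)| \lesssim_\alpha s^{-1-|\alpha|/2} e^{-c|y|/\sqrt{s}}$ and \eqref{e:ptlimit} reduces to a corresponding comparison for $\tilde p_s - p_s^0$ on $\R^2$.

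For \eqref{e:ptbounds}, I would use the Fourier representation $\tilde p_s(y) = (2\pi)^{-2} \int_{[-\pi,\pi]^2} e^{ik\cdot y - s\hat\Delta(k)} dk$, where $\hat\Delta(k) = 2\sum_{j=1}^2(1-\cos k_j)$ is the lattice Laplacian symbol and satisfies $c_1|k|^2 \leq \hat\Delta(k) \leq c_2|k|^2$ on $[-\pi,\pi]^2$. A discrete derivative $\tilde\partial^{\alpha_i}$ brings down a factor $e^{ik\cdot \alpha_i} - 1$, whose modulus is bounded by $|k|$. Substituting $\tau = \sqrt{s}\, k$ gives
\begin{equation}
|\tilde\partial^\alpha \tilde p_s(y)| \lesssim s^{-1-|\alpha|/2} \int_{\R^2} |\tau|^{|\alpha|} e^{-c|\tau|^2} d\tau,
\end{equation}
producing the polynomial prefactor. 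To upgrade the trivial $O(s^{-1-|\alpha|/2})$ bound to Gaussian decay in $y$, I would use the standard exponential shift (Davies trick): for any $\lambda \in \R^2$ with $|\lambda| \leq c_0$, one has the contour estimate $|\tilde p_s(y)| \leq e^{-\lambda \cdot y} e^{C|\lambda|^2 s}$; optimising in $\lambda = c|y|/s$ yields the factor $e^{-c|y|/\sqrt{s}}$ when $|y| \lesssim s$, while for $|y| \gtrsim s$ a direct random-walk (or Carne--Varopoulos) bound gives even stronger decay.

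For \eqref{e:ptlimit}, I would write
\begin{equation}
\tilde p_s(y) - p_s^0(y) = (2\pi)^{-2}\int_{[-\pi,\pi]^2} e^{ik\cdot y}\bigl(e^{-s\hat\Delta(k)} - e^{-s|k|^2}\bigr)\, dk - (2\pi)^{-2} \int_{\R^2 \setminus [-\pi,\pi]^2} e^{ik\cdot y - s|k|^2}\, dk.
\end{equation}
The tail term decays faster than any polynomial in $s$ once $s \geq 1$ and contributes to the $(\epsilon^2/t)^{(k-3)/2}$ term. For the main term, Taylor expansion of the symbol gives $|k|^2 - \hat\Delta(k) = \tfrac{1}{12}(k_1^4 + k_2^4) + O(|k|^6)$, so
\begin{equation}
|e^{-s\hat\Delta(k)} - e^{-s|k|^2}| \leq s|k|^4 e^{-cs|k|^2}.
\end{equation}
Plugging in and rescaling $\tau = \sqrt{s} k$ yields an overall factor $s^{-2} = (\epsilon^2/t) \cdot (1/t)$ (back in the original variables), with the spatial decay obtained either by integration by parts in $k$ (giving the $(|y|/\sqrt{s})^k$ factor from repeated integration against $e^{ik\cdot y}$) or by the shift trick, after truncating the integration domain where the Taylor remainder is valid and controlling the residual by the tail bound, which is exactly the structure displayed in \eqref{e:ptlimit}.

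The main obstacle is getting the sharp form of \eqref{e:ptlimit} with both the Gaussian factor $e^{-|y|^2/t}$ (in the rescaled variables) and the $(\epsilon^2/t)^{(k-3)/2}$ error simultaneously: the Fourier integral must be split so that in the region $|k| \ll 1$ the Taylor expansion of $\hat\Delta$ is used to extract the $\epsilon^2/t$ gain, while the complementary region, where the Taylor expansion fails, is handled by a separate bound that produces the $(k-3)/2$ algebraic decay via repeated integration by parts in $k$. Balancing these two contributions and tracking dependence on $k$ is the only delicate point; the rest is routine Fourier calculus.
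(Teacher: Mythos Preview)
The paper does not actually prove this lemma: it is merely recorded in Appendix~A as a standard heat-kernel estimate, alongside Lemmas~\ref{lem:pttorus}, \ref{lem:C-limit} and \ref{lem:dc-difference}, none of which are proved in the paper (Lemma~\ref{lem:C-limit} is explicitly flagged as identical to a lemma in \cite{MR4399156}, and the others are of the same provenance). So there is no ``paper's own proof'' to compare against.

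Your Fourier-analytic outline is the standard route and is essentially correct. A few minor remarks: in the Davies/complex-shift step your optimisation actually gives Gaussian decay $e^{-c|y|^2/s}$ in the regime $|y|\lesssim s$, which is stronger than the sub-Gaussian $e^{-c|y|/\sqrt s}$ claimed in \eqref{e:ptbounds} once $|y|\gtrsim \sqrt s$ (and for $|y|\lesssim\sqrt s$ the bound is trivial), so the stated inequality follows; just be careful with the case $|y|\gtrsim s$ where the constraint $|\lambda|\le c_0$ binds and one gets $e^{-c|y|}$, which again dominates $e^{-c|y|/\sqrt s}$ since $s\ge 1$. For \eqref{e:ptlimit}, your identification of the main difficulty---balancing the Taylor region against the large-$k$ tail via repeated integration by parts to produce the $(|x|/\sqrt t)^k$ and $(\epsilon^2/t)^{(k-3)/2}$ terms---is exactly the point, and the local-central-limit-theorem literature (or the appendix of \cite{MR4399156}) carries this out in detail.
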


Moreover, we write $p_t^{\epsilon,L}$ for the heat kernel on a torus of side length $L$ and mesh size $\epsilon>0$,
which can be expressed as
\begin{equation}
\label{e:pttorus}
p_t^{\epsilon,L}(x) = \sum_{y\in\Z^d} p_t^\epsilon(x+yL).
\end{equation}
Note that we have $x,y\in \Omega_\epsilon$
\begin{equation}
\dot c_t^\epsilon(x,y) = p_t^{\epsilon,1}(x-y),
\end{equation}
where the difference of $x,y\in \Omega_\epsilon$ is understood coordinatewise and modulo the length of the torus.
The representation \eqref{e:pttorus} allows to transfer estimates for $p_t^{\epsilon}$ to $p_t^{\epsilon,L}$.

\begin{lemma}
\label{lem:pttorus}
The torus heat kernel satisfies, for $t\geq \epsilon^2$,  $|x|_\infty \leq L/2$, 
\begin{equation} 
\label{e:pttorusbounds}
\partial_\epsilon^\alpha p_t^{\epsilon,L}(x)
= \partial_\epsilon^\alpha p_t^\epsilon(x)
+ O_\alpha(t^{-|\alpha|/2}  L^{-d} e^{-cL/\sqrt{t}} ).
\end{equation}
Moreover, for any $t>0$, as $\epsilon \to 0$,
\begin{equation}
\label{e:pttoruslimit}
\sup_{x\in \Omega_\epsilon} |p_t^{\epsilon,L}(x)-p_t^{0,L}(x)|
\lesssim
\p{t^{-1}+L^{-2}}
\frac{\epsilon^2}{t} \log(\frac{\epsilon^2}{t})^2
.
\end{equation}
\end{lemma}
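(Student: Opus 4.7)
My plan is to deduce both assertions from the full-space estimates of Lemma \ref{lem:pt} by using the folding representation \eqref{e:pttorus}, namely
\begin{equation*}
p_t^{\epsilon,L}(x) = \sum_{y\in\Z^d} p_t^\epsilon(x+yL),
\end{equation*}
together with the analogous identity at $\epsilon=0$. Throughout, I will use that $|x|_\infty \leq L/2$ implies $|x+yL| \geq |y|L/2$ for every $y\neq 0$, so that all tail terms in the fold carry strong Gaussian decay.

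For \eqref{e:pttorusbounds}, I would differentiate the folding representation term by term, isolate the $y=0$ contribution, and estimate the remainder by inserting \eqref{e:ptbounds} into each term, giving $|\partial_\epsilon^\alpha p_t^\epsilon(x+yL)| \lesssim_\alpha t^{-1-|\alpha|/2}e^{-c|y|L/\sqrt{t}}$. Summing over $y\in\Z^d\setminus\{0\}$ then splits naturally into two regimes: when $L \gtrsim \sqrt{t}$ the nearest-neighbour shell $|y|=1$ dominates and yields $t^{-1-|\alpha|/2}e^{-cL/\sqrt{t}}$, whereas when $L \lesssim \sqrt{t}$ a Riemann sum comparison with $\int e^{-c|y|L/\sqrt{t}}\,dy \sim (\sqrt{t}/L)^d$ yields $t^{d/2-1-|\alpha|/2}L^{-d}$. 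After slightly relaxing the constant $c$, both cases are majorised by the claimed bound $t^{-|\alpha|/2}L^{-d}e^{-cL/\sqrt{t}}$.

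For \eqref{e:pttoruslimit}, I would subtract the two folding representations and truncate the sum at a radius $R=C\sqrt{t}\log(t/\epsilon^2)$ with $C$ large. On the inner region $|yL|\leq R$, I apply \eqref{e:ptlimit} with $k=5$, so that the per-term uniform error $(\epsilon^2/t)^{(k-3)/2}$ equals $\epsilon^2/t$; the polynomial--Gaussian contribution is summed exactly as in the proof of \eqref{e:pttorusbounds} above, producing the prefactor $t^{-1}+L^{-2}$ multiplied by $\epsilon^2/t$, while the error accumulated over the $\sim (R/L)^d$ inner lattice points remains subdominant once multiplied by $\epsilon^2/t$. On the outer region $|yL|>R$ the two kernels are bounded individually via \eqref{e:ptbounds} and the explicit Gaussian formula for $p_t^0$, and the resulting geometric tail is of order $t^{-1}(\epsilon^2/t)^{cC}$ by the choice of $R$, hence absorbed.

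The hard part will be the careful bookkeeping of the logarithmic losses: the cardinality $(R/L)^d$ of inner lattice points and the polynomial prefactor that remains after extracting the exponential gain $e^{-cR/\sqrt{t}}=(\epsilon^2/t)^{cC}$ each produce a factor of $\log(t/\epsilon^2)$, and these combine to give the $\log(\epsilon^2/t)^2$ prefactor that appears in the statement. The balancing argument must ensure that $C$ is chosen large enough so that all of these logs are absorbed, and that the constants in \eqref{e:ptbounds} and \eqref{e:ptlimit} do not conspire to spoil the exponent of $\epsilon^2/t$.
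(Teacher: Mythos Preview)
The paper does not actually prove this lemma: it is stated in Appendix~\ref{app:covariance-estimates} as a standard covariance estimate, with the only indication of proof being the sentence immediately preceding it, ``The representation \eqref{e:pttorus} allows to transfer estimates for $p_t^{\epsilon}$ to $p_t^{\epsilon,L}$.'' Your proposal carries out precisely this transfer via the folding identity and the full-space bounds of Lemma~\ref{lem:pt}, which is the intended (and standard) argument; the detailed proof presumably appears in the companion reference \cite{MR4399156}. Your outline for both parts is correct, including the two-regime analysis for \eqref{e:pttorusbounds} and the truncation at radius $R\sim\sqrt{t}\log(t/\epsilon^2)$ for \eqref{e:pttoruslimit}.
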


\begin{lemma}[Identical to Lemma 2.3 of \cite{MR4399156}] 
\label{lem:C-limit}
There exists a function $\gamma_t=\gamma_t(m)$ of $(t,m) \in [0,\infty) \times (0,\infty)$
with $\gamma_t = \gamma_0 + O(m^2t)$ as $t\to 0$ such that
\begin{equation}
\label{e:c0-limit}
e^{-\frac12 c_t^{\epsilon}(0,0)}\epsilon^{-1/4\pi} \to  \gamma_t L_t^{-1/4\pi} .
\end{equation}
For all $x \neq y \in \Omega$ and $t>0$, the integral $c_t^0(x,y) = \int_0^t \dot c_s^0(x,y) \, ds$ exists and
uniformly on compact subsets of $x\neq y$,
\begin{equation}
\label{e:cxy-limit}
c_t^\epsilon(x^\epsilon,y^\epsilon) \to c_t^0(x,y).
\end{equation}
For all $t \geq \epsilon^2$, and all $k \in \N$,
\begin{equation}
\label{e:c-limit}
L_t^k \sup_x \norm{\partial_\epsilon^k \dot c_t^\epsilon(x,\cdot)}_{L^1(\Omega_\epsilon)}  
\leq O_k(\theta_t),
\qquad
\sup_{x}
\norm{E_\epsilon \dot c_t^\epsilon(x,\cdot)-\dot c_t^0(x,\cdot)}_{L^1(\Omega)}
\leq O(\frac{\epsilon^2}{t}) \theta_t.
\end{equation}
\end{lemma}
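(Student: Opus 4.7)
The plan is to reduce every assertion to pointwise heat-kernel estimates via the identity $\dot c_s^\epsilon(x,y) = e^{-m^2 s} p_s^{\epsilon,1}(x-y)$ (and its $\epsilon=0$ analogue) that follows from \eqref{eq:GFF-scale-regularisation}; all bounds then come from integrating the estimates supplied by Lemmas \ref{lem:pt} and \ref{lem:pttorus} in $s\in(0,t)$. The natural strategy is to split the $s$-integral at the cutoffs $\epsilon^2$ and $L_t^2$, since qualitatively different bounds govern the ultraviolet, diffusive, and mass-dominated regimes.

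For \eqref{e:c0-limit}, write $c_t^\epsilon(0,0) = \int_0^t e^{-m^2 s} p_s^{\epsilon,1}(0)\,ds$. On $(0,\epsilon^2)$ the integrand is $O(\epsilon^{-2})$ so the contribution is $O_m(1)$. On $(\epsilon^2, L_t^2)$ I would use \eqref{e:pttorusbounds} to replace the torus heat kernel by its full-plane counterpart up to exponentially small errors, and then \eqref{e:ptlimit} with $k=4$ to extract the leading term $\tfrac{1}{4\pi s}$; integrating yields $\tfrac{1}{2\pi}\log(L_t/\epsilon)$ plus a bounded quantity that converges as $\epsilon\to 0$ by dominated convergence. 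On $(L_t^2, t)$ the mass factor $e^{-m^2 s}$ keeps the contribution uniformly bounded. Combining the three regimes gives $c_t^\epsilon(0,0) = \tfrac{1}{2\pi}\log(L_t/\epsilon) + \gamma'_t + o_\epsilon(1)$, and exponentiating produces the claim with $\gamma_t = e^{-\gamma'_t/2}$. The expansion $\gamma_t = \gamma_0 + O(m^2 t)$ follows by Taylor-expanding $e^{-m^2 s}$ in the bounded-scale contribution. For \eqref{e:cxy-limit}, on any compact subset of $\{x\neq y\}$ with $|x-y|\geq \delta$, the estimate $|\dot c_s^\epsilon(x^\epsilon, y^\epsilon)| \lesssim s^{-1} e^{-c\delta/\sqrt{s}} e^{-m^2 s}$ from \eqref{e:ptbounds} and \eqref{e:pttorusbounds} is integrable in $s$ uniformly in $\epsilon$, and pointwise convergence to $\dot c_s^0(x,y)$ at each fixed $s>0$ is supplied by \eqref{e:pttoruslimit}; dominated convergence then yields uniform convergence on the compact.

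For \eqref{e:c-limit}, the first bound follows by integrating \eqref{e:ptbounds}: the Gaussian tail gives $\sup_x \|\partial_\epsilon^k p_t^{\epsilon,1}(x-\cdot)\|_{L^1(\Omega_\epsilon)} \lesssim_k t^{-k/2}$ for $t\geq\epsilon^2$, the mass factor $e^{-m^2 t}$ contributes $\theta_t$, and $t^{-k/2} \asymp L_t^{-k}$ on $t\leq 1/m^2$ with the mass dominating beyond, so the torus correction from \eqref{e:pttorusbounds} is absorbed. The second bound combines \eqref{e:ptlimit} and \eqref{e:pttoruslimit}: in the regime $|x-y|\lesssim\sqrt{t}$ the pointwise difference of kernels is of order $(\epsilon^2/t) t^{-1}$ and integrates to the stated $O(\epsilon^2/t)\theta_t$ over a region of area $O(t)$, while the off-diagonal Gaussian tails yield a strictly smaller contribution. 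The main technical obstacle is the precise asymptotic \eqref{e:c0-limit}, which requires showing that $p_s^\epsilon(0) - \tfrac{1}{4\pi s}$ is uniformly integrable on $(\epsilon^2, L_t^2)$ with a genuine limit as $\epsilon\to 0$; this is the point where \eqref{e:ptlimit} must be invoked with enough quantitative control to justify dominated convergence rather than merely a $\limsup$, and where the normalisation of $\gamma_t$ as a proper limit (rather than along subsequences) is extracted.
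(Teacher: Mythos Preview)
The paper does not supply a proof of this lemma: it is recorded in the appendix with the annotation ``Identical to Lemma 2.3 of \cite{MR4399156}'' and no argument is given beyond that citation. Your sketch is therefore not comparable to a proof in the present paper, but it does follow what is the natural and expected line of reasoning---reducing to the full-plane heat-kernel bounds of Lemmas~\ref{lem:pt} and~\ref{lem:pttorus} via the identity $\dot c_s^\epsilon(x,y)=e^{-m^2s}p_s^{\epsilon,1}(x-y)$, splitting the $s$-integral at the scales $\epsilon^2$ and $L_t^2$, and applying dominated convergence---and this is almost certainly how \cite{MR4399156} proceeds as well. The outline is sound; the only place requiring genuine care, as you correctly flag, is extracting a genuine limit (not merely a bounded quantity) in \eqref{e:c0-limit}, which demands that the error in \eqref{e:ptlimit} be integrable in $s$ over $(\epsilon^2,L_t^2)$ uniformly in $\epsilon$ so that dominated convergence applies to the remainder after subtracting $\tfrac{1}{4\pi s}$.
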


The following estimates are then a consequence of Lemma \ref{lem:pttorus} and Lemma \ref{lem:C-limit}.

\begin{lemma}
\label{lem:dc-difference}
We have for $t\geq \epsilon^2$ and $x,y\in \Omega_\epsilon$
\begin{align}
\label{eq:dc-difference-l1}
\int_{\Omega_\epsilon}
\big|
\dot c_t^\epsilon (x,z) - \dot c_t^\epsilon(y,z) 
\big|
dz
&\lesssim
\big(1\wedge \frac{|x-y|}{L_t} \big) \theta_t,
\\
\label{eq:diffdc-difference-l1}
\int_{\Omega_\epsilon}
\big|
\partial_\epsilon \dot c_t^\epsilon (x,z) - \partial_\epsilon \dot c_t^\epsilon(y,z) 
\big|
dz
&\lesssim
\frac{1}{L_t}\big(1\wedge \frac{|x-y|}{L_t} \big) \theta_t.
\end{align}
Similarly, we have for $t>0$ and $x,y \in \Omega$
\begin{align}
\label{eq:dc-0-difference-l1}
\int_{\Omega}
\big|
\dot c_t^0 (x,z) - \dot c_t^0(y,z) 
\big|
dz
&\lesssim
\big(1\wedge \frac{|x-y|}{L_t} \big) \theta_t,
\\
\label{eq:diffdc-0-difference-l1}
\int_{\Omega}
\big|
\partial \dot c_t^0 (x,z) - \partial \dot c_t^0(y,z) 
\big|
dz
&\lesssim
\frac{1}{L_t}\big(1\wedge \frac{|x-y|}{L_t} \big) \theta_t.
\end{align}
\end{lemma}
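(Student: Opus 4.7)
The plan is to establish all four inequalities by a dichotomy on whether $|x-y|$ exceeds $L_t$, combining the $L^1$ bounds of Lemma~\ref{lem:C-limit} with the fundamental theorem of calculus. The continuum estimates \eqref{eq:dc-0-difference-l1} and \eqref{eq:diffdc-0-difference-l1} follow from the same scheme, replacing the discrete telescoping below by the straight-line representation $\dot c_t^0(x,z)-\dot c_t^0(y,z) = -\int_0^1 (y-x)\cdot \partial \dot c_t^0(x+s(y-x),z)\,ds$ and invoking the evident continuum analog of the derivative bound in \eqref{e:c-limit}, which holds with the same proof via Lemma~\ref{lem:pttorus}. I therefore focus on the lattice versions.

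For \eqref{eq:dc-difference-l1} in the regime $|x-y| > L_t$, the triangle inequality together with the $k=0$ case of \eqref{e:c-limit} yields
$$\int_{\Omega_\epsilon} |\dot c_t^\epsilon(x,z)-\dot c_t^\epsilon(y,z)|\,dz \leq 2\sup_{w\in\Omega_\epsilon}\|\dot c_t^\epsilon(w,\cdot)\|_{L^1(\Omega_\epsilon)} \lesssim \theta_t,$$
which is dominated by $(1\wedge|x-y|/L_t)\theta_t$ in this regime. In the complementary regime $|x-y|\leq L_t$, I connect $x$ to $y$ by a lattice path $x=x_0,x_1,\ldots,x_n=y$ in $\Omega_\epsilon$ in which each step $x_{i+1}-x_i$ is $\pm\epsilon$ in a single coordinate direction $e_i$, with $n\epsilon \leq \sqrt{2}|x-y|$. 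Telescoping and applying the $k=1$ case of \eqref{e:c-limit} then yields
$$\int_{\Omega_\epsilon} |\dot c_t^\epsilon(x,z)-\dot c_t^\epsilon(y,z)|\,dz \leq \epsilon\sum_{i=0}^{n-1}\|\partial_\epsilon^{e_i} \dot c_t^\epsilon(x_i,\cdot)\|_{L^1(\Omega_\epsilon)} \lesssim n\epsilon\cdot \frac{\theta_t}{L_t} \lesssim \frac{|x-y|}{L_t}\theta_t,$$
as required.

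The bound \eqref{eq:diffdc-difference-l1} follows from the same dichotomy applied to $\partial_\epsilon\dot c_t^\epsilon$ in place of $\dot c_t^\epsilon$: the triangle inequality combined with the $k=1$ case of \eqref{e:c-limit} handles $|x-y|>L_t$ and produces the factor $L_t^{-1}\theta_t$, whereas the telescoping argument combined with the $k=2$ case of \eqref{e:c-limit} handles $|x-y|\leq L_t$ and produces the factor $|x-y|L_t^{-2}\theta_t$. Combining the two regimes gives the claimed $L_t^{-1}(1\wedge |x-y|/L_t)\theta_t$.

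As the argument is simply a matter of combining the standard $L^1$ heat-kernel estimates of Lemma~\ref{lem:C-limit} with path integration, no genuine obstacle arises. The only minor point to check is that, when $|x-y|\leq L_t\leq 1/m\leq 1$, a monotone coordinate path of length $\lesssim|x-y|$ connecting $x$ to $y$ exists inside the torus $\Omega_\epsilon$, which is immediate since the path stays in a Euclidean box of side $|x-y|\leq 1$ around $x$.
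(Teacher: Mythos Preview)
Your proposal is correct and matches precisely the approach the paper has in mind: the paper does not give an explicit proof of Lemma~\ref{lem:dc-difference} but simply states that ``the following estimates are then a consequence of Lemma~\ref{lem:pttorus} and Lemma~\ref{lem:C-limit}.'' Your dichotomy argument---triangle inequality plus the $k$-th order $L^1$ bound from \eqref{e:c-limit} when $|x-y|>L_t$, and telescoping along a lattice path (resp.\ straight-line integration in the continuum) combined with the $(k+1)$-th order bound when $|x-y|\leq L_t$---is exactly the standard way to extract the $(1\wedge |x-y|/L_t)$ factor from derivative estimates, and it is what the paper intends.
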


\section{Change of the regularised Gaussian fields in $\epsilon>0$}
\label{app:eps-change-of-gff}

In this section we give the remaining proof of Lemma \ref{lem:gaussian-fields-eps-change}, thereby completing the proof of \eqref{eq:max-phi-t-eps-eventually-upper-bounded}.

\begin{proof}[Proof of Lemma \ref{lem:gaussian-fields-eps-change}]
Let $\hat q_s^\epsilon(k)$ for $k\in \Omega_\epsilon^*$ be the Fourier multipliers of $(\dot c_s^\epsilon)^{1/2}$, i.e.,
\begin{equation}
\label{eq:fourier-coefficients-dc-upper}
\hat q_s^{\epsilon} (k) = e^{-\frac{s}{2} (- \hat \Delta^{\epsilon}(k) + m^2 )  } ,
\end{equation}
where we recall that for $k\in \Omega_\epsilon^*$ the Fourier multipliers of $-\Delta^\epsilon$ are given by
\begin{equation}
-\hat \Delta^\epsilon (k) = \sum_{i=1}^2 \epsilon^{-2} \big( 2 - 2\cos(\epsilon k_i) \big) .
\end{equation}
It can be shown that, for $k\in \Omega_\epsilon^*$,
\begin{equation}
0 \leq \hat q_s^{\epsilon}(k) \leq e^{-\frac{1}{2} (\kappa|k|^2 +m^2)} ,
\end{equation}
where $\kappa = 4/\pi^2$.
For a proof of this estimate, we refer to \cite[(2.59)]{MR4399156}.
Then, we have, for $\epsilon_1 \leq \epsilon_2 \leq L_t$ and $x\in \Omega$,
\begin{align}
\label{eq:gff-difference-eps}
\rem_t(x,\epsilon_2) - \rem_t(x,\epsilon_1) 
&= \sum_{k \in \Omega_{\epsilon_2}^*}
\Big(
e^{ikx^{\epsilon_2}} \int_t^\infty \hat q_s^{\epsilon_2} (k)d\hat W_s(k)
- e^{ikx^{\epsilon_1}} \int_t^\infty \hat q_s^{\epsilon_1} (k) d\hat W_s(k) 
\Big)
\nnb
&\qquad -
\sum_{ k\in \Omega_{\epsilon_1}^*\setminus \Omega_{\epsilon_2}^*} e^{ikx^{\epsilon_1}}
\int_t^\infty \hat q_s^{\epsilon_1} (k) d\hat W_s(k)
\nnb
&=\sum_{k \in \Omega_{\epsilon_2}^*}\big( e^{ikx^{\epsilon_2}} - e^{ikx^{\epsilon_1}} \big) \int_t^\infty \hat q_s^{\epsilon_2} (k) d \hat W_s(k)
\nnb
&\qquad + \sum_{k \in \Omega_{\epsilon_2}^*}
e^{ikx^{\epsilon_1}}
\Big(
\int_t^\infty \hat q_s^{\epsilon_2} (k) d \hat W_s(k)
-  \int_t^\infty \hat q_s^{\epsilon_1} (k) d\hat W_s(k) 
\Big)
\nnb
&\qquad -
\sum_{k\in \Omega_{\epsilon_1}^*\setminus \Omega_{\epsilon_2}^*}
e^{ikx^{\epsilon_1}}\int_t^\infty \hat q_s^{\epsilon_1} (k) d \hat W_s(k) .
\end{align}
Thus, using the estimate $|e^{ikx^{\epsilon_2}} - e^{ikx^{\epsilon_1}}|\leq |k|(\epsilon_2-\epsilon_1)$,
we have
\begin{align}
\label{eq:var-gaussian-field-different-eps}
\E \big[  \big( \rem_t(x,\epsilon_2) - \rem_t(x,\epsilon_1) \big)^2 \big]
& \lesssim 
\sum_{k\in \Omega_{\epsilon_2}^*} |k|^2 (\epsilon_2 - \epsilon_1)^2 \int_t^\infty \big| \hat q_s^{\epsilon_2} (k) \big|^2 ds
\nnb
&\qquad + 
\sum_{k\in \Omega_{\epsilon_2}^*} \int_t^\infty \big| \hat q_s^{\epsilon_2} (k) - \hat q_s^{\epsilon_1}(k) \big|^2 ds
\nnb
&\qquad + \sum_{k \in \Omega_{\epsilon_1}^* \setminus \Omega_{\epsilon_2}^*} \int_t^\infty | \hat q_s^{\epsilon_1}(k) |^2 ds 
\end{align}
In what follows, we estimate the three sums separately.
For the first sum, we have
\begin{align}
\label{eq:estimate-first-sum}
\sum_{k\in \Omega_{\epsilon_2}^*} |k|^2 (\epsilon_2 - \epsilon_1)^2 \int_t^\infty \big| \hat q_s^{\epsilon_2} (k) \big|^2 ds
&\leq
(\epsilon_2-\epsilon_1)^2 \sum_{k\in \Omega_{\epsilon_2}^*}
|k|^2 \frac{1}{\kappa |k|^2 + m^2} e^{-t(\kappa |k|^2 + m^2)}
\nnb
&\lesssim (\epsilon_2-\epsilon_1)^2 \sum_{k\in \Omega^*} e^{-t(\kappa |k|^2 +m^2)} 
\lesssim \frac{(\epsilon_2-\epsilon_1)^2}{t} .
\end{align}
For the second sum, we first note that for $\epsilon_1 \leq \epsilon_2 \leq L_t$ and $k\in \Omega_{\epsilon_2}^*$, 
\begin{align}
\label{eq:difference-laplacian-fourier-multipliers-different-eps}
0 \leq -\hat \Delta^{\epsilon_1}(k) + \hat \Delta^{\epsilon_2}(k)
&=
\sum_{i=1}^2\big(
 \epsilon_1^{-2} (2 - \cos(\epsilon_1 k_i))
- \epsilon_2^{-2} (2 - \cos(\epsilon_2 k_i)) 
\big)
\nnb
&= \sum_{i=1}^2 |k_i|^2 (f(\epsilon_1 k_i) - f(\epsilon_2 k_i) \big),
\end{align}
with $f(x) = x^{-2} \big( 2-2\cos(x)\big) )$.
Note that $f$ is symmetric, decreasing in $|x|$ and continuously differentiable on $\R$.
In particular, $f$ is Lipschitz continuous on a the compact set $[-\pi, \pi]$.
Thus, we have for $x_1,x_1 \in [-\pi, \pi]$
\begin{equation}
|f(x_1) - f(x_2) | \lesssim |x_1 - x_2|.
\end{equation}
Applying this to \eqref{eq:difference-laplacian-fourier-multipliers-different-eps} with $x_1 = \epsilon_1 k_i$ and $x_2 = \epsilon_2 k_i$ gives
\begin{align}
0 \leq -\hat \Delta^{\epsilon_1}(k) + \hat \Delta^{\epsilon_2}(k)
&\lesssim \sum_{i=1}^2 |k_i|^2 |\epsilon_2 k_i - \epsilon_1 k_i|
= \sum_{i=1}^2 |k_i|^3 (\epsilon_2  - \epsilon_1)
\lesssim |k|^3 (\epsilon_2 - \epsilon_1).
\end{align}
Thus, we obtain
\begin{align}
0 \leq \hat q_s^{\epsilon_2} (k) - \hat q_s^{\epsilon_1}(k)
&=
e^{-\frac{s}{2} (-\hat \Delta^{\epsilon_2}(k) + m^2)} - e^{-\frac{s}{2} (-\hat \Delta^{\epsilon_1}(k) + m^2)}
\nnb
&\leq e^{-\frac{s}{2}(- \hat \Delta^{\epsilon_2}(k) + m^2 ) }
\big( 1 - e^{-\frac{s}{2} \big(- \hat \Delta^{\epsilon_1} (k)  + \hat \Delta^{\epsilon_2}(k) \big) } \big)
\nnb
&\leq
e^{-\frac{s}{2}(- \hat \Delta^{\epsilon_2}(k) + m^2 ) }
\frac{s}{2}
\big(
-\hat \Delta^{\epsilon_1} (k)  + \hat \Delta^{\epsilon_2}(k) 
\big)
\nnb
&\lesssim 
e^{-\frac{s}{2}(\kappa |k|^2 +m^2) } |k|^3 s (\epsilon_2 - \epsilon_1)
.
\end{align}
It follows that, for $k\in \Omega_{\epsilon_2}^*$,
\begin{align}
\label{eq:int-difference-fourier-coefficients-eps}
\int_t^\infty | \hat q_s^{\epsilon_2}(k) - \hat q_s^{\epsilon_1 } |^2 ds
\lesssim 
(\epsilon_2 - \epsilon_1)^2 |k|^6 \int_t^\infty  s^2 e^{-s(\kappa |k|^2 + m^2 ) } ds .
\end{align}
%
Now, explicit integration gives
\begin{equation}
\int_t^\infty s^2 e^{-s (\kappa |k|^2+m^2)} ds
= 
e^{-t (\kappa |k|^2+m^2)}
\Big(
\frac{t^2}{\kappa |k|^2+m^2} + \frac{2t}{(\kappa |k|^2+m^2)^2} + \frac{2}{(\kappa |k|^2+m^2)^3}
\Big) .
\end{equation}
Thus, summing \eqref{eq:int-difference-fourier-coefficients-eps} over $k\in \Omega_{\epsilon_2}^*$ amounts to three terms,
which are estimated by
\begin{align}
\sum_{k\in \Omega_{\epsilon_2}^*} |k|^6 e^{-t (\kappa |k|^2+m^2)}\frac{2}{(\kappa |k|^2+m^2)^3}
\lesssim
\sum_{k\in \Omega^*} e^{-t (\kappa |k|^2+m^2)}
\lesssim 
\frac{1}{t} 
\end{align}
and
\begin{align}
\sum_{k\in \Omega_{\epsilon_2}^*} |k|^6 e^{-t (\kappa |k|^2+m^2)} \frac{t^2}{\kappa |k|^2+m^2}
&\lesssim
t^2\sum_{k\in \Omega^*} |k|^4 e^{-t (\kappa |k|^2+m^2)}
\lesssim \frac{1}{t}
\end{align}
and similarly
\begin{align}
\sum_{k\in \Omega_{\epsilon_2}^*} |k|^6e^{-t (\kappa |k|^2+m^2)}\frac{2t}{(\kappa |k|^2+m^2)^2}
\lesssim t \sum_{k\in \Omega^*}
|k|^2 e^{-t \kappa |k|^2}
\lesssim
\frac{1}{t} .
\end{align}
Therefore, we get in total for the second sum in \eqref{eq:var-gaussian-field-different-eps}
\begin{equation}
\label{eq:estimate-second-sum}
\sum_{k\in \Omega_{\epsilon_2}^*}
\int_t^\infty | \hat q_s^{\epsilon_2}(k) - \hat q_s^{\epsilon_1 } |^2 ds
\lesssim
\frac{(\epsilon_2-\epsilon_1)^2}{t} .
\end{equation}
To bound the third sum in \eqref{eq:var-gaussian-field-different-eps}, we use \eqref{eq:fourier-coefficients-dc-upper} and note that
\begin{equation}
\int_t^\infty |\hat q_s^{\epsilon_1}(k)|^2 ds
\leq \int_t^\infty e^{-s(\kappa |k|^2 +m^2 )} ds
\lesssim
\frac{1}{|k|^2 + m^2} e^{-t(\kappa |k|^2+m^2)} .
\end{equation}
Therefore, we have
\begin{align}
\label{eq:estimate-third-sum}
\sum_{k \in \Omega_{\epsilon_1}^*\setminus \Omega_{\epsilon_2}^*} 
\int_t^\infty |\hat q_s^{\epsilon_1}(k)|^2 ds
&\lesssim
\sum_{k \in \Omega_{\epsilon_1}^*\setminus \Omega_{\epsilon_2}^*} \frac{1}{ |k|^2 + m^2} e^{-t(\kappa |k|^2+m^2)}
\lesssim \int_{\pi/\epsilon_2}^{\pi/\epsilon_1} \frac{1}{r} e^{-t \kappa r^2} dr
\nnb
&= \int_{\epsilon_1/\sqrt{t\kappa}\pi}^{\epsilon_2/\sqrt{t\kappa}\pi} s^{-1} e^{-\frac{1}{s^2}} ds 
\lesssim \frac{\epsilon_2 - \epsilon_1}{\sqrt{t}}.
\end{align}

Putting \eqref{eq:estimate-first-sum}, \eqref{eq:estimate-second-sum} and \eqref{eq:estimate-third-sum} together,
and using that 
$(\epsilon_2- \epsilon_1)^2/t \leq (\epsilon_2 - \epsilon_1)/L_t$ for $\epsilon_1\leq \epsilon_2\leq L_t$,
we arrive at
\begin{align}
\label{eq:estimate-cov-of-rem}
\E \big[  \big( \rem_t(x,\epsilon_2) - \rem_t(x,\epsilon_1) \big)^2 \big]
\lesssim 
 \frac{(\epsilon_2- \epsilon_1)^2}{t}
+\frac{(\epsilon_2-\epsilon_1)^2}{t}
+\frac{\epsilon_2- \epsilon_1}{\sqrt{t}}
\lesssim \frac{\epsilon_2 - \epsilon_1}{L_t}
\end{align}
as claimed.

It remains to prove the estimate \eqref{eq:relation-eps-gs-to-non-eps-gs}.
To this end, we let $x,y\in \bx{t}{i}$ and note that we can relate the spatial differences of $\bGs{i}_t$ and $\Gs{i}_t$ by 
\begin{align}
\bGs{i,\epsilon_2}_t(x) - \bGs{i,\epsilon_1}_t(y)
&= 
\big( \rem_t(\epsilon_2,x) - \rem_t(\epsilon_1,x) \big)
- \big( \rem_t(\epsilon_2,z_i) - \rem_t(\epsilon_1,z_i)  \big)
\nnb
&\qquad  + \Gs{i,\epsilon_1}_t(x^{\epsilon_1}) - \Gs{i,\epsilon_1}_t(y^{\epsilon_1}).
\end{align}
Thus, using \eqref{eq:estimate-cov-of-rem}, we obtain
\begin{align}
\E[\big( \bGs{i,\epsilon_2}_t(x) - \bGs{i,\epsilon_1}_t(y)\big)^2]
&\lesssim
\E[ \big( \rem_t(\epsilon_2,x) - \rem_t(\epsilon_1,x) \big)^2 ]
+ \E[ \big( \rem_t(\epsilon_2,z_i) - \rem_t(\epsilon_1,z_i)  \big)^2 ]
\nnb
&\qquad +\E[ \big( \Gs{i,\epsilon_1}_t(x^{\epsilon_1}) - \Gs{i,\epsilon_1}_t(y^{\epsilon_1}) \big)^2] 
\nnb
&\lesssim 
\frac{\epsilon_2 - \epsilon_1}{L_t} + 
\E[ \big( \Gs{i,\epsilon_1}_t(x^{\epsilon_1}) - \Gs{i,\epsilon_1}_t(y^{\epsilon_1}) \big)^2]
\end{align}
as claimed.
\end{proof}

\section*{Acknowledgements}

We thank Roland Bauerschmidt for comments on the project in early stages.
We  further thank Christophe Garban, Colin Guillarmou and R\'emi Rhodes for helpful discussions on the Liouville model,
as well as Francesco De Vecchi for additional comments on the results.

This work was supported by Israel Science Foundation grant number 615/24. 
Part of the work on this paper was carried out while both authors were guests at the Hausdorff Institute of Mathematics;
we thank the institute for the hospitality.

\bibliography{lvmax.bbl}

\end{document}